\newcommand{\quot}[2]{%
	\raise1ex\hbox{$#1$}\Big/\lower1ex\hbox{$#2$}%
}
\newcommand{\colim}{\varinjlim}
\renewcommand{\lim}{\varprojlim}
\newcommand{\expl}[2]{\underset{\mathclap{\minibox[c]{$\uparrow$\\ \fbox{\footnotesize #2}}}}{#1}}
\newcommand{\ra}{\rightarrow}
\newcommand{\dra}{\dashrightarrow}
\newcommand{\surj}{\twoheadrightarrow}
\newcommand{\inj}{\hookrightarrow}
\newcommand{\ttilde}{\widetilde}
\newcommand{\HHom}{\mathcal{H}om}
\newcommand{\EExt}{\mathcal{E}xt}
\newcommand{\et}{\acute{e}t}
\newcommand{\inc}{\subseteq}
\newcommand{\cni}{\supseteq}
\newcommand{\nor}{\unlhd}
\newcommand{\esp}{\mbox{ }}
\newcommand{\bighat}{\widehat}
\DeclareMathAlphabet{\mathchanc}{OT1}{pzc}%
{m}{it}                      
\renewcommand{\set}[2]{\{ \ #1 \  | \ #2 \ \}}
\newcommand{\bigset}[2]{\big\{ \ #1 \  \big| \ #2 \ \big\}}
\newcommand{\biggset}[2]{\bigg\{ \ #1 \  \bigg| \ #2 \ \bigg\}}
\newcommand{\bA}{\mathbb{A}}
\newcommand{\bC}{\mathbb{C}}
\newcommand{\bD}{\mathbb{D}}
\newcommand{\bF}{\mathbb{F}}
\newcommand{\bP}{\mathbb{P}}
\newcommand{\bQ}{\mathbb{Q}}
\newcommand{\bZ}{\mathbb{Z}}
\newcommand{\scr}{\mathcal}
\newcommand{\cA}{\scr{A}}
\newcommand{\cB}{\scr{B}}
\newcommand{\cC}{\scr{C}}
\newcommand{\cE}{\scr{E}}
\newcommand{\cF}{\scr{F}}
\newcommand{\cG}{\scr{G}}
\newcommand{\cH}{\scr{H}}
\newcommand{\cI}{\scr{I}}
\newcommand{\cJ}{\scr{J}}
\newcommand{\cL}{\scr{L}}
\newcommand{\cM}{\scr{M}}
\newcommand{\cN}{\scr{N}}
\newcommand{\cO}{\scr{O}}
\newcommand{\cR}{\scr{R}}
\newcommand{\cV}{\scr{V}}
\DeclareMathOperator{\nil}{{nil}}
\DeclareMathOperator{\Nil}{{Nil}}
\DeclareMathOperator{\ind}{{ind}}
\DeclareMathOperator{\Loc}{Loc}
\DeclareMathOperator{\Zar}{Zar}
\DeclareMathOperator{\codim}{codim}
\DeclareMathOperator{\coker}{{coker}}
\DeclareMathOperator{\Ext}{Ext}
\DeclareMathOperator{\Frac}{Frac}
\DeclareMathOperator{\Hom}{Hom}
\DeclareMathOperator{\im}{{im}}
\DeclareMathOperator{\Pic}{Pic}
\DeclareMathOperator{\GL}{{GL}}
\DeclareMathOperator{\red}{red}
\DeclareMathOperator{\Spec}{{Spec}}
\DeclareMathOperator{\Supp}{{Supp}}
\DeclareMathOperator{\Stab}{{Stab}}
\DeclareMathOperator{\Perv}{Perv}
\DeclareMathOperator{\Crys}{Crys}
\DeclareMathOperator{\coh}{coh}
\DeclareMathOperator{\RGamma}{R\Gamma}
\DeclareMathOperator{\unit}{unit}
\DeclareMathOperator{\Mod}{Mod}
\DeclareMathOperator{\indcoh}{indcoh}
\DeclareMathOperator{\IndCoh}{IndCoh}
\DeclareMathOperator{\IndCrys}{IndCrys}
\DeclareMathOperator{\indcrys}{indcrys}
\DeclareMathOperator{\qcoh}{qcoh}
\DeclareMathOperator{\Sh}{Sh}
\DeclareMathOperator{\QCoh}{QCoh}
\DeclareMathOperator{\QCrys}{QCrys}
\DeclareMathOperator{\crys}{crys}
\DeclareMathOperator{\LNil}{LNil}
\DeclareMathOperator{\RH}{RH}
\DeclareMathOperator{\Coh}{Coh}
\DeclareMathOperator{\Sol}{Sol}
\DeclareMathOperator{\RHom}{RHom}
\DeclareMathOperator{\ShHom}{\mathscr{H}\text{\kern -3pt {\calligra\large om}}\,}
\DeclareMathOperator{\sep}{sep}  
\DeclareFontFamily{OT1}{pzc}{}
\DeclareFontShape{OT1}{pzc}{m}{it}{<-> s * [1.200] pzcmi7t}{}
\DeclareMathAlphabet{\mathpzc}{OT1}{pzc}{m}{it}
\renewcommand{\phi}{\varphi}
\newcommand{\factor}[2]{\left. \raise 2pt\hbox{\ensuremath{#1}} \right/
	\hskip -2pt\raise -2pt\hbox{\ensuremath{#2}}}
\renewcommand\subsection{
	\renewcommand{\sfdefault}{pag}
	\@startsection{subsection}%
	{2}{0pt}{.8\baselineskip}{.4\baselineskip}{\raggedright
		\sffamily\itshape\small\bfseries
}}
\renewcommand\section{
	\renewcommand{\sfdefault}{phv}
	\@startsection{section} %
	{1}{0pt}{\baselineskip}{.8\baselineskip}{\centering
		\sffamily
		\scshape
		\bfseries
}}
\definecolor{gr}{rgb}{0,0.5,0}
\newcommand{\ptofpf}[1]{{\ \\[-9pt] \noindent\color{gr}\fbox{\textit{\it\color{black} #1}}}}
\newcommand{\fm}{\mathfrak{m}}
\newcommand{\fp}{\mathfrak{p}}
\newcommand{\Addresses}{{
		\bigskip
		\footnotesize
		
		\textsc{\'Ecole Polytechnique F\'ed\'erale de Lausanne, SB MATH CAG, MA C3 615 (B\^atiment MA), Station 8, CH-1015 Lausanne, Switzerland}\par\nopagebreak
		\textit{E-mail address}: \texttt{jefferson.baudin@epfl.ch}

}}
\author[J.~Baudin]{Jefferson Baudin} 
\date{}
\setlist{  
	listparindent=\parindent,
	parsep=0pt,
}
\newcommand{\Acal}{\mathcal{A}}
\newcommand{\Bcal}{\mathcal{B}}
\newcommand{\Pcal}{\mathcal{P}}
\newcommand{\Mcal}{\mathcal{M}}
\newcommand{\Ncal}{\mathcal{N}}
\newcommand{\Fcal}{\mathcal{F}}
\newcommand{\Scal}{\mathcal{S}}
\newcommand{\Jcal}{\mathcal{J}}
\newcommand{\Ical}{\mathcal{I}}
\newcommand{\Rcal}{\mathcal{R}}
\newcommand{\Ocal}{\mathcal{O}}
\newcommand{\Lcal}{\mathcal{L}}
\newcommand{\Ff}{\mathbb{F}}
\newcommand{\Zz}{\mathbb{Z}}
\newcommand{\Dd}{\mathbb{D}}
\newcommand{\pfr}{\mathfrak{p}}
\newcommand{\qfr}{\mathfrak{q}}
\newcommand{\mfr}{\mathfrak{m}}
\subjclass[2020]{14G17, 13A35, 14K99}
\keywords{Perverse sheaves, Cartier crystals, Riemann--Hilbert correspondence, Generic vanishing}
\title[Duality between Cartier crystals and perverse $\bF_p$-sheaves]{Duality between Cartier crystals and perverse $\Ff_p$-sheaves, and application to generic vanishing}
\begin{document}
	\maketitle
	
	\begin{abstract}
		We show that on any separated, Noetherian and $F$-finite $\mathbb{F}_p$-scheme, there is an anti-equivalence of categories between Cartier crystals and étale perverse $\mathbb{F}_p$-sheaves, commuting with derived proper pushforwards. We use this duality to construct an upper shriek functor for Cartier crystals, and give new proofs of Kashiwara's equivalence and the finite length of Cartier crystals.
				
		Finally, we deduce a generic vanishing statement for perverse $\overline{\mathbb{F}}_p$-sheaves on abelian varieties of characteristic $p > 0$, reminiscent of the characteristic zero and $l$-adic statements.
	\end{abstract}
	
\tableofcontents

\section{Introduction}
\subsection{Generic vanishing for perverse $\overline{\bF}_p$-sheaves}

%
%

Along the previous decades, several interesting vanishing theorems were proven for perverse sheaves on abelian varieties. In characteristic zero, we have for example \cite{Kramer_Weissauer_Vanishing_theorems_for_constructible_sheaves_on_abelian_varieties}, \cite{Schnell_Holonomic_D_modules_on_abelian_varieties}, \cite{Popa_Schnell_Generic_vanishing_theory_via_mixed_Hodge_modules},  \cite{Bhatt_Schnell_Scholze_Vanishing_theorems_for_perverse_sheaves_on_abelian_varieties_revisited}. In positive characteristic, we have \cite{Gabber_Loeser_Faisceaux_Pervers_l_adiques_sur_un_tore}, \cite{Weissauer_Vanishing_theorems_for_constructible_sheaves_on_abelian_varieties_over_finite_fields} and \cite{Esnault_Kerz_etale_cohomology_of_rank_one_l_adic_local_systems_in_pos_char} that deals with the case of $l$-adic sheaves, where $l$ is prime to the characteristic. However, we have no result for $\bF_p$-sheaves in characteristic $p > 0$. One objective of this paper is to fill this gap. \\

Let us first explain the setup. Let $A$ be an abelian variety over an algebraically closed field of characteristic $p > 0$, let $\bighat{A}$ denote its dual abelian variety, and let $\Loc^1(A_{\et}, \overline{\bF}_p)$ denote the group of étale $\overline{\bF}_p$-local systems of rank one on $A$.

Given a complex of étale $\overline{\Ff}_p$-sheaves $\cF^{\bullet}$, we would like to understand the loci
\[ S^i(\Fcal^{\bullet}) \coloneqq \bigset{\Lcal \in \Loc^1(A_{\et},  \overline{\bF}_p)}{H^i(X_{\et}, \Fcal^{\bullet} \otimes \Lcal) \neq 0}. \] The point is that the group $\Loc^1(A_{\et},  \overline{\bF}_p)$ can be naturally identified with $\bighat{A}^{(p)}$, the subgroup of prime-to-$p$ elements of $\bighat{A}$. Indeed, let $\cO_{A_{\et}}$ denote the extension on the Zariski sheaf $\cO_A$ to the étale site. Then the function 

\[ \begin{tikzcd}[row sep = tiny]
	{\Loc^1(A_{\et}, \overline{\bF}_p)} \arrow[rr] &  & \bighat{A}                                                    \\
	\cL \arrow[rr, maps to]                        &  & \left(\cL \otimes_{\overline{\bF}_p} \cO_{A_{\et}}\right)\Big|_{A_{\Zar}}
\end{tikzcd} \] 
induces a group isomorphism $\Loc^1(A_{\et}, \overline{\bF}_p) \cong \bighat{A}^{(p)}$ (see \autoref{local systems and torsion line bundles}). Thus, for all $i \in \bZ$, we can think of $S^i(\cF^{\bullet})$ as a subset of $\bighat{A}$. In particular, we can consider its closure $\overline{S^i(\cF^{\bullet})} \inc \bighat{A}$, and try to understand it. 

As for the already known generic vanishing theorems, we cannot do this for any complex $\cF^{\bullet}$, but we can prove something when $\cF^{\bullet}$ is \emph{perverse and constructible}. 

\begin{itemize}
	\item The constructibility notion is a finiteness condition, analogous to that of coherent $\cO_X$-modules (see \cite[{\href{https://stacks.math.columbia.edu/tag/03RW}{Tag 03RW}}]{Stacks_Project} for the definition). In particular, constructible $\overline{\bF}_p$-sheaves are Noetherian, and any $\overline{\bF}_p$-sheaf is a union of constructible $\overline{\bF}_p$-subsheaves (see \cite[{\href{https://stacks.math.columbia.edu/tag/09YV}{Tag 09YV}} and {\href{https://stacks.math.columbia.edu/tag/0F0N}{Tag 0F0N}}]{Stacks_Project}). In our case, we ask that each cohomology sheaf of the complex $\cF^{\bullet}$ is constructible.
	\item Being perverse is a combination of two conditions. 
	
	\begin{enumerate}
		\item The first one is about bounding the dimension of the support of $\cF^{\bullet}$: we want that for all $i \in \bZ$, $\dim(\cH^i(\cF^{\bullet})) \leq -i$, where $\cH^i(\cF^{\bullet})$ denotes the $i$'th cohomology sheaf of the complex $\cF^{\bullet}$.
	
		\item The second condition is a vanishing condition on its local cohomology: for all point $i_x \colon x \to A$,  \[ \cH^j(i^!_{\overline{\{x\}}}\cF^{\bullet})_{\overline{x}} = 0 \: \: \forall j > \dim\overline{\{x\}}. \] In particular, it depends on the whole complex $\cF^{\bullet}$ and not only on its cohomology sheaves (unlike the first condition). 
	\end{enumerate}
\end{itemize} 

A motivation for this definition is that if we impose the exact same conditions on complexes of coherent $\cO_X$-modules, then we obtain the complexes whose Grothendieck dual is only supported in degree $0$. By this analogy, the following example of perverse sheaf is not suprising: if $i \colon Z \to A$ is a closed immersion from a smooth subvariety and $\cL$ is a local system on $Z$, then $i_*\cL[\dim Z]$ is perverse (and constructible).

Similarly to the $l$-adic and characteristic zero cases, perverse $\overline{\bF}_p$-sheaves are only supported in degrees $\{-\dim(A), \dots, 0\}$. However, there is no Verdier duality, since constructibility is not preserved under $\EExt^i$ (see \autoref{ex:RHom_does_not_preserve_constructible_sheaves}). Furthermore, if $\cF^{\bullet}$ is a perverse $\overline{\bF}_p$-sheaf, then \[ H^i(A_{\et}, \cF^{\bullet}) = 0 \esp \forall i \notin \{-\dim(A), \dots, 0\}.\] This contrasts again the $l$-adic or characteristic zero cases, since for example for any $l$-adic perverse sheaf $\cG^{\bullet}$, \[ H^i(A_{\et}, \cG^{\bullet}) = 0 \: \forall i \notin \{-\dim(A), \dots, \dim(A)\}. \] The reason is that $\overline{\bF}_p$-sheaves behave much like coherent sheaves, due to Artin-Schreier sequences (and ultimately Riemann-Hilbert correspondences, see \cite{Emerton_Kisin_Riemann-Hilbert_correspondence}, \cite{Bockle_Pink_Cohomological_Theory_of_crystals_over_function_fields}, \cite{Bhatt_Lurie_RH_corr_pos_char}, \cite{Schedlmeier_Cartier_crystals_and_perverse_sheaves}). An other way to think about this different behaviour is that $\bF_p$-cohomology in characteristic $p > 0$ connects only to the slope zero part of the crystalline cohomology (i.e. the right analogue of de Rham cohomology), via $\bQ_p$-cohomology. On the other hand, in characteristic zero, cohomology with coefficients in $\bC$ captures the entire de Rham cohomology. \\

We obtain the following generic vanishing result:

\begin{theorem*}[\autoref{main thm generic vanishing}]
	Let $A$ be an abelian variety of dimension $g$ over an algebraically closed field of characteristic $p > 0$, and let $\Fcal^{\bullet}$ be an \'etale constructible perverse $\overline{\Ff}_p$-sheaf. Then we have
	\begin{enumerate}
		\item $S^i(\Fcal^{\bullet}) = \emptyset$ for all $i \notin \{-g, \dots, 0\}$;
		\item for all $0 \leq i \leq g$, \[ \codim \overline{S^{-i}(\Fcal^{\bullet})} \geq i.\] In particular, for a general element $\cL \in \Loc^1(A, \overline{\bF}_p)$, we have \[ H^i(A_{\et}, \Fcal^{\bullet} \otimes \cL) = 0 \: \: \: \forall i \neq 0, \]  
		and hence \[ \chi(A_{\et}, \cF^{\bullet} \otimes \cL) \coloneqq \sum_{i \in \bZ} (-1)^i\dim H^i(A_{\et}, \cF^{\bullet} \otimes \cL) \geq 0. \] 
	\end{enumerate}
	If $A$ is ordinary, then we also have:
	\begin{enumerate}[start=3]
		\item $S^{-g}(\Fcal^{\bullet}) \inc \dots \inc S^0(\Fcal^{\bullet})$;
		\item for all $0 \leq i \leq g$, the closed subset $\overline{S^i(\Fcal^{\bullet})}$ is a finite union of torsion translates of abelian subvarieties;
		\item $\chi(A_{\et}, \Fcal^{\bullet}) \geq 0$.
	\end{enumerate}
\end{theorem*}

Note that for any constructible perverse $\bF_{p^r}$-sheaf $\cG^{\bullet}$, we can apply the generic vanishing theorem above to its base change to $\overline{\bF}_p$. Hence, for all $r \geq 1$, we have a generic vanishing statement for all constructible perverse $\bF_{p^r}$-sheaves as well.

Our approach is to show that there exists an anti-equivalence of categories between perverse constructible sheaves and \emph{Cartier crystals}, and that this equivalence preserves taking derived pushforwards (in particular, cohomology groups). In \cite{Hacon_Pat_GV_Geom_Theta_Divs} and \cite{Baudin_Generic_vanishing_theory_in_positive_characteristic}, an analogous generic vanishing statement is proven for Cartier crystals, so we can use these results and our duality to deduce our theorem. In fact, most of this paper is about this duality.

\subsection{A duality between Cartier crystals and perverse constructible $\overline{\bF}_p$-sheaves}

A coherent Cartier module is a coherent sheaf $\cM$ on an $\bF_p$-scheme, together with a $p^{-1}$-linear endomorphism. An important example is the the sheaf of top forms $\omega_{X/k}$, for $X$ smooth over a perfect field. The $p^{-1}$-linear endomorphism comes from the Cartier isomorphism (see \cite{Cartier_une_nouvelle_operation_sur_les_formes_differentielles} or \cite[Example 2.0.3]{Blickle_Bockle_Cartier_crystals}).

The category of \emph{Cartier crystals} is the category of coherent Cartier modules up to nilpotence (with respect to their endomorphism). For example, a Cartier module with nilpotent endomorphism is trivial as a Cartier crystal. \\

Here is our main result:

\begin{theorem*}[\autoref{eq cat Cartier crystals and perverse Fp sheaves} and \autoref{thm_upper_shriek_functor}]
	Let $X$ be a Noetherian, $F$-finite and semi-separated $\bF_p$-scheme. Then there is a canonical equivalence of categories \[ D^b(\Crys_X^{C})^{op} \cong D^b_c(X_{\et}, \Ff_p), \]
	where $D^b(\Crys_X^{C})$ denotes the derived category of coherent Cartier crystals, and $D^b_c(X_{\et}, \Ff_p)$ denotes the derived category of \'etale $\Ff_p$-sheaves with constructible cohomology sheaves. \\
	
	Moreover, this equivalence sends the canonical t-structure on $D^b(\Crys_X^{C})$ to the perverse t-structure on $D^b_c(X_{\et}, \Ff_p)$, hence inducing an equivalence \[ (\Crys_X^C)^{op} \cong \Perv_c(X, \Ff_p). \]

	Finally, this duality preserves proper derived pushforwards.
\end{theorem*}

This is analogous to the Riemann-Hilbert correspondence in characteristic zero (\cite{Hotta_Takeuchi_Tanisaki_D_modules_perverse_sheaves_and_representation_theory}, \cite{Kashiwara_the_Riemann_Hilbert_problem_for_hoonomic_systems}).\\

Although we only stated a duality with perverse $\bF_p$-sheaves, we can make a similar duality for $\bF_{p^r}$-sheaves for any $r \geq 1$. Taking direct limits, we then obtain a result for perverse $\overline{\bF}_p$-sheaves, which we then apply for deduce the generic vanishing theorem mentioned before. \\

In fact, any dualizing complex $\omega_X^{\bullet}$ together with an isomorphism $\omega_X^{\bullet} \cong F^!\omega_X^{\bullet}$ naturally induces such a duality. We call such an object a \emph{unit dualizing complex}. Indeed, somewhat surprisingly, there is a unique way of making $\omega_X^{\bullet}$ into a complex of Cartier crystals. Our duality functor can be expressed by \[ \cM^{\bullet} \mapsto \cR\HHom_{\Crys_X^C}(\cM^{\bullet}, \omega_X^{\bullet}) \] (taken in the étale site). However, this is not the way we define it first to be able to show our main theorem (see two paragraphs below). \\

The existence of such a unit dualizing complex is guaranteed by ongoing work of Bhargav Bhatt, Manuel Blickle, Karl Schwede and Kevin Tucker. We only prove this existence in the special case of schemes which are of finite type over a Noetherian, $F$-finite and affine $\bF_p$-scheme (see \autoref{finite type over a unit dc has a unit dc}). \\

The duality from our main theorem was already known to exist by Tobias Schedlmeier if our scheme embeds in some smooth, finite type variety over a perfect field (see \cite{Schedlmeier_Cartier_crystals_and_perverse_sheaves}). However, due to the way it was defined, it was not clear how their correspondence behaves cohomologically. Therefore we chose to define this equivalence differently: instead of twisting by the dualizing complex and then applying the Riemann-Hilbert correspondence from \cite{Emerton_Kisin_Riemann-Hilbert_correspondence}, we first take $\Rcal\HHom$ with the dualizing complex, and then apply the Riemann-Hilbert correspondence from \cite{Bockle_Pink_Cohomological_Theory_of_crystals_over_function_fields} (see also \cite{Bhatt_Lurie_RH_corr_pos_char}). It turns out that Schedlmeier's construction and ours agree, see \autoref{Schedlmeier's construction and ours agree}.

\subsection{Other applications of the duality}

We construct an upper shriek functor on Cartier crystals:

\begin{theorem*}[\autoref{thm_upper_shriek_functor}]
	Let $f \colon X \to Y$ be a morphism of finite type between separated, Noetherian and $F$-finite $\bF_p$-schemes. Then there exists a unique functor \[ f^! \colon D^b(\Crys_Y^C) \to D^b(\Crys_X^C) \] with the following properties:
	\begin{itemize}
		\item under both duality functors on $X$ and $Y$, $f^!$ corresponds to the pullback $f^* \colon D^b_c(Y_{\et}, \bF_p) \to D^b_c(X_{\et}, \bF_p)$;
		\item if $f$ is an open immersion, then $f^! = (\cdot)|_X$;
		\item if $f$ is proper, then $f^!$ is the right adjoint to $Rf_*$;
		\item the functor $f^!$ is perverse $t$-exact.
	\end{itemize}
\end{theorem*}

Other consequences of our result include many new proofs of known results (at least to the experts):

\begin{theorem*}[\autoref{section_finiteness_and_exactness_results}]
	Let $X$ be a Noetherian, $F$-finite $\bF_p$-scheme. 
	\begin{itemize}
		\item\cite{Blickle_Bockle_Cartier_modules_finiteness_results} Any Cartier crystal on $X$ is both Noetherian and Artinian, and hence has finite length.
		\item \cite{Blickle_Bockle_Cartier_crystals} For any closed immersion $i \colon Z \inj X$, the functor $i_*$ induces an equivalence of categories between Cartier crystals on $Z$ and Cartier crystals on $X$, whose restriction to $X \setminus Z$ is trivial. The inverse is given by $i^!$.
		\item \cite{Bhatt_and_co_Applications_of_perv_sheaves_in_commutative_algebra} For any separated morphism $f \colon X \to Y$ of Noetherian $F$-finite $\bF_p$-schemes, the functor $Rf_! \colon D^b_c(X_{\et}, \bF_p) \to D^b_c(Y_{\et}, \bF_p)$ is right perverse $t$-exact. If $f$ is affine, then $Rf_!$ is furthermore perverse $t$-exact.
	\end{itemize}
\end{theorem*} 

In \autoref{section local duality}, we also prove a local duality theorem for both Cartier modules and Frobenius modules. \\

After the first draft of this paper was posted, some of our results were also proved in \cite{Bhatt_and_co_Applications_of_perv_sheaves_in_commutative_algebra}. Some important structural results about $\infty$--categories of Cartier and Frobenius modules are obtained in \cite{Klaus_Weiss_The_derived_infty_category_of_Cartier_modules}. We believe that they could lead to alternative contructions/proofs of upper-schriek functors and our duality.

\subsection{Acknowledgements}\label{sec:aknowledgements}

I would like to thank my supervisor Zsolt Patakfalvi for giving this subject and for his continuous support, it was very interesting to work on this. I also warmly thank Fabio Bernasconi and Léo Navarro Chafloque for giving many comments on earlier drafts of this paper. Finally, I thank Stefano Filipazzi for his support in the writing process, and Lucas Gerth for some interesting input to an earlier draft. \\

Financial support was provided by grant \#200020B/192035 from the Swiss National Science Foundation (FNS/SNF).

\subsection{Notations}
Let $\Acal$ be an abelian category, $R$ a ring and $X$ a scheme.
\begin{itemize}
	\item Throughout the paper, we fix a prime number $p > 0$.
	\item The symbol $F$ always denotes the absolute Frobenius.
	\item Given $R$ an $\bF_p$-algebra and $M$ an $R$-module, elements in $F_*M$ are denoted $F_*m$, with $m \in M$. By definition, we have the formula \[ \lambda F_*m = F_*(\lambda^pm) \] for all $m \in M$ and $\lambda \in R$.
	
	\item The category of $\Ocal_X$-modules (resp. $R$-modules) is denoted by $\Mod(\Ocal_X)$ (resp. $\Mod(R)$).
	
	\item The category of quasi-coherent (resp. coherent) sheaves on $X$ is denoted by $\QCoh_X$ (resp. $\Coh_X$).
	
	\item Given any $r \geq 1$, the category of $\Ff_{p^r}$-sheaves on the \'etale site is denoted $\Sh(X_{\et}, \Ff_{p^r})$, and its derived category is $D(X_{\et}, \Ff_{p^r})$. We use the same notations for $\overline{\bF}_p$. 
	
	\item Given a point $x \in X$, we write $\overline{x} \coloneqq \Spec k(x)^{\sep}$, where $k(x)^{\sep}$ denotes a separable closure of $k(x)$. We also define $\cO_{X, \overline{x}}$ to be the strict Henselization of $\cO_{X, x}$.
	
	\item The symbol $D(\Acal)$ denotes the derived category of $\cA$. Given $G \colon \cA \to \cB$ a left exact functor, its right derived functor is denoted $RG$ (when it exists).
	
	\item Let $\cR$ be a sheaf of rings on $X$, and let $\Acal = \Mod(\cR)$ be the category of left $\cR$-modules. We will write $D(\cR)$ instead of $D(\Mod(\cR))$.
	
	\item For a cochain complex $A^\bullet = (A^n)_{n \in \Zz}$ in $\Acal$, the $i$'th cohomology object of this cochain is denoted by $\cH^i(A^\bullet)$. 
	
	\item Given a weak Serre subcategory $\Bcal \inc \Acal$ (see \cite[{\href{https://stacks.math.columbia.edu/tag/02MO}{Tag 02MO}}]{Stacks_Project}), the triangulated category $D_\Bcal(\Acal)$ denotes the full subcategory of $D(\Acal)$ of complexes $A^\bullet$ such that $\cH^i(A^\bullet) \in \Bcal$ for all $i \in \Zz$ (see \cite[{\href{https://stacks.math.columbia.edu/tag/06UQ}{Tag 06UQ}}]{Stacks_Project}). 
	
	\item Given an abelian category $\cA$ and a Serre subcategory $\cB$ (see \cite[{\href{https://stacks.math.columbia.edu/tag/02MO}{Tag 02MO}}]{Stacks_Project}), the quotient will be denoted $\cA/\cB$ (see \cite[{\href{https://stacks.math.columbia.edu/tag/02MS}{Tag 02MS}}]{Stacks_Project}).
	
	\item We loosely denote $D_{coh}(\QCoh_X) \coloneqq D_{\Coh_X}(\QCoh_X)$. Throughout the paper, we will make comparable abuse of notations.
	
	\item The full subcategories $D^+(\Acal)$, $D^-(\Acal)$ $D^b(\Acal)$ denote those complexes $A^\bullet$ for which $\cH^i(A^\bullet) = 0$ for $i \ll 0$ (resp. $i \gg 0$ and $\abs{i} \gg 0$). 
			
	\item We can combine the two notations above.
	
	\item In order to lighten notations, whenever the context is clear, we shall remove the bullets in the notation of cochain complexes. However, the $\Hom$-complex between two complexes $A$ and $B$ will always be denoted $\Hom^{\bullet}(A, B)$.
	
	\item Let $M \in D^+(\Acal)$ and let $I$ be a complex of injectives which is quasi-isomorphic to $M$. We call  $I$ an \emph{injective resolution} of $M$.
\end{itemize}

\section{Preliminaries}\label{section preliminaries}

\subsection{Categorical preparations}

\begin{defn}\label{def of qcoh A-modules}
	Let $X$ be a scheme, and let $\Acal$ be a quasi-coherent sheaf of (non-necessarily commutative) $\Ocal_X$-algebras. This means that $\cA$ is a quasi-coherent sheaf of $\Ocal_X$-modules, together with a ring structure such that the map $\Ocal_X \to \Acal$ is a ring morphism. 
	
	A \emph{quasi-coherent $\Acal$-module} is the datum of a quasi-coherent $\Ocal_X$-module, together with a left action of $\Acal$ compatible with $\cO_X \to \cA$. The category of quasi-coherent $\Acal$-modules is denoted by $\QCoh_X^{\Acal}$. When $\Acal = \Ocal_X$, we simply write $\QCoh_X$.
\end{defn}

\begin{lem}\label{qcoh A-modules is a Grothendieck category}
	Let $X$ be a scheme and $\Acal$ a quasi-coherent sheaf of $\Ocal_X$-algebras. Then there is a set $S(\Acal)$ such that any quasi-coherent $\Acal$-module is the union of elements in $S(\Acal)$. 
	
	In particular, the category $\QCoh_X^{\Acal}$ is a Grothendieck category, and hence has enough injectives.
\end{lem}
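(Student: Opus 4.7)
The plan is to verify the Grothendieck axioms for $\QCoh_X^{\cA}$, inheriting everything except the generator from $\Mod(\cO_X)$. By Grothendieck's theorem, a cocomplete abelian category with exact filtered colimits and a set of generators has enough injectives, so the ``in particular'' clause follows at once from the main statement.

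First, one checks that $\QCoh_X^{\cA}$ is abelian and that arbitrary direct sums and filtered colimits exist and are exact. Kernels, cokernels, arbitrary direct sums and filtered colimits taken in $\Mod(\cO_X)$ of quasi-coherent $\cA$-modules remain in $\QCoh_X^{\cA}$: this is a local check on an affine open cover, where the corresponding operations on modules over the ring $\cA(U)$ preserve both quasi-coherence and the $\cA$-action. Exactness of filtered colimits is then inherited from $\Mod(\cO_X)$.

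For the generating set $S(\cA)$, fix an affine open cover $\{U_i\}_{i \in I}$ of $X$ and observe that the intersection of any family of quasi-coherent $\cA$-submodules of a given $\cM \in \QCoh_X^{\cA}$ is again a quasi-coherent $\cA$-submodule: on each affine open $V$, it equals the intersection of the corresponding $\cA(V)$-submodules of $\cM(V)$. Therefore, for any finite tuple of local sections $m_k \in \cM(U_{i_k})$, there is a smallest quasi-coherent $\cA$-submodule $\langle m_1, \ldots, m_n \rangle \subseteq \cM$ containing them (namely the intersection of all quasi-coherent $\cA$-submodules of $\cM$ with this property), and $\cM$ is the filtered union of these ``finitely generated'' submodules as the tuples of local sections over opens in the cover vary. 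A uniform cardinality bound $\kappa$, depending only on $X$, $\cA$ and the chosen cover (but not on $\cM$), then shows that the isomorphism classes of such finitely generated quasi-coherent $\cA$-submodules form a set $S(\cA)$.

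The principal obstacle is obtaining the uniform cardinality bound when $X$ is neither quasi-compact nor separated, since one then cannot in general describe $\langle m_1, \ldots, m_n \rangle$ by a clean global formula and must argue via the intersection construction above together with an estimate on $\cN(V)$ over affine opens in the cover. This is handled exactly as in the standard proof that $\QCoh_X$ itself is Grothendieck; the $\cA$-module structure adds no essential difficulty once the affine description of $\QCoh_X^{\cA}|_{U_i}$ as $\cA(U_i)$-modules is in place. With AB5 and the generator $\bigoplus_{N \in S(\cA)} N$ in hand, $\QCoh_X^{\cA}$ is a Grothendieck category, and hence has enough injectives.
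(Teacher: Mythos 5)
Your central construction — forming, for a finite tuple of local sections, the smallest quasi-coherent $\cA$-submodule $\langle m_1,\dots,m_n\rangle$ as the intersection of \emph{all} quasi-coherent $\cA$-submodules of $\cM$ containing them — rests on the claim that arbitrary intersections of quasi-coherent $\cA$-submodules are again quasi-coherent. That claim is false, already for $\cA = \cO_X$ on an affine scheme. Take $X = \Spec\Zz$, $\cM = \cO_X$, and $\cN_n = \widetilde{p^n\Zz}$ for a fixed prime $p$. The sheaf intersection $\cN = \bigcap_n \cN_n$ has $\cN(X) = \bigcap_n p^n\Zz = 0$, but $\cN(D(p)) = \bigcap_n p^n\Zz[1/p] = \Zz[1/p]$, so $\cN$ cannot be quasi-coherent: localization commutes with finite intersections (which are kernels of maps to finite products) but not with arbitrary ones. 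Consequently the family of quasi-coherent $\cA$-submodules containing a given set of local sections need not have a smallest element, and the ``finitely generated'' filtration you want to bound is not well-defined by this recipe. The remark that the non-quasi-compact/non-separated case is ``handled exactly as in the standard proof'' doesn't repair this: the standard proof (Stacks Project Tag 077K, or Enochs--Estrada) does not construct a smallest containing subsheaf by intersecting; it builds \emph{some} quasi-coherent subsheaf of controlled size by a more careful local-to-global process, and the glueing over overlaps is precisely where the work happens.

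The paper sidesteps all of this: it identifies $\QCoh_X^{\cA}$ with $\QCoh(R_{\cA})$ in the Enochs--Estrada quiver-representation framework (vertex per open, edge per inclusion, $R_{\cA}(U) = \cA(U)$), using quasi-coherence of $\cA$ to pass back and forth with $\QCoh(R_{\cO_X}) \cong \QCoh_X$, and then cites their Corollary 3.5. If you want a direct argument instead, replace the intersection step by the explicit bounded-subsheaf construction of Tag 077K, checking at each stage that the subsheaf produced can be taken stable under the $\cA$-action (which on affines is just stability of module-theoretic constructions under $\cA(U)$); the rest of your outline (AB5 inherited from $\Mod(\cO_X)$, the generator $\bigoplus_{N \in S(\cA)} N$) is fine.
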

\begin{proof}
	The first statement after ``In particular'' follows by definition, and the second one by \cite[{\href{https://stacks.math.columbia.edu/tag/079H}{Tag 079H}}]{Stacks_Project}. 
	
	In the case $\Acal = \Ocal_X$, this is \cite[Section 2 and Corollary 3.5]{Enochs_Estrada_Relative_homological_algebra_in_the_category_of_qcoh_sheaves}. For the general case, let us freely use their language and notations. Given the quiver they consider (a vertex for each open, and an edge for each inclusion of opens), we can define the representation $R_{\Acal}$ given by $R_{\Acal}(U) = \Acal(U)$ (and the restriction maps for the edges). Note that an object in $\QCoh(R_{\Acal})$ (see \cite[Section 2]{Enochs_Estrada_Relative_homological_algebra_in_the_category_of_qcoh_sheaves}) is also in $\QCoh(R_{\Ocal_X})$, since $\Acal$ is quasi-coherent. 
	
	From the equivalence $\QCoh(R_{\cO_X}) \cong \QCoh_X$, we deduce that $\QCoh(R_{\Acal}) \cong \QCoh_X^{\Acal}$. Thus, we conclude by \cite[Corollary 3.5]{Enochs_Estrada_Relative_homological_algebra_in_the_category_of_qcoh_sheaves}.
\end{proof}

\begin{lem}{\cite[Theorem 2.9.4]{Bockle_Pink_Cohomological_Theory_of_crystals_over_function_fields}}\label{D_B(A) = D(B) if B objects are B-acyclic}
	Let $\Acal$ be an abelian category, let $\Bcal$ be a weak Serre subcategory of $\Acal$ (see \cite[{\href{https://stacks.math.columbia.edu/tag/02MO}{Tag 02MO}}]{Stacks_Project}) such that the inclusion $\iota \colon \Bcal \ra \Acal$ admits a right adjoint $G$. Suppose furthermore that for all $B \in \Bcal$, $R^iG(B) = 0$ for all $i > 0$. 
	
	Then for $* \in \{+, b\}$, the natural triangulated functor $D^*(\Bcal) \ra D^*_{\Bcal}(\Acal)$ is an equivalence of triangulated categories with quasi-inverse $RG$.
\end{lem}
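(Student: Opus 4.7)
The plan is to show that the triangulated functors $\iota \colon D^*(\Bcal) \to D^*_{\Bcal}(\Acal)$ and $RG \colon D^*_{\Bcal}(\Acal) \to D^*(\Bcal)$ are mutually quasi-inverse. First I would observe that $\iota$ is exact (as the inclusion of a weak Serre subcategory) and lands in $D^*_{\Bcal}(\Acal)$ tautologically, while $G$, being right adjoint to the exact functor $\iota$, is left exact, so $RG$ exists as a triangulated functor out of $D^+(\Acal)$ (implicitly assuming enough injectives, as elsewhere in the paper). Since $\iota$ is fully faithful, the unit $\eta \colon \id_{\Bcal} \to G \circ \iota$ is an isomorphism; combined with the triangle identity $\epsilon_{\iota B} \circ \iota\eta_B = \id_{\iota B}$, this forces the counit $\epsilon_{\iota B} \colon \iota G \iota B \to \iota B$ to also be an isomorphism for every $B \in \Bcal$.

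The key technical input is the hypercohomology spectral sequence $E_2^{p,q} = R^q G(\cH^p A^\bullet) \Rightarrow \cH^{p+q}(RG\, A^\bullet)$. When $A^\bullet \in D^*_{\Bcal}(\Acal)$, the hypothesis $R^q G|_{\Bcal} = 0$ for $q > 0$ makes $E_2^{p,q} = 0$ for $q \neq 0$, so the spectral sequence degenerates at $E_2$ and produces natural isomorphisms $\cH^p(RG\, A^\bullet) \cong G(\cH^p A^\bullet) \cong \cH^p(A^\bullet)$, the second one via the unit iso. This simultaneously shows that $RG$ preserves cohomological bounds (hence restricts to $D^*_{\Bcal}(\Acal) \to D^*(\Bcal)$ for both $* = +$ and $* = b$) and identifies $RG$ and $\iota$ as mutually inverse at the level of cohomology objects.

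To conclude, I would verify that the derived unit $\id \to RG \circ \iota$ and derived counit $\iota \circ RG \to \id$ are quasi-isomorphisms. Naturality of the spectral sequence, combined with the fact that the underived unit and counit are both isomorphisms on $\Bcal$-objects viewed in $\Acal$, implies that these natural transformations realize the cohomological isomorphisms of the previous paragraph in every degree. Since a quasi-isomorphism is an isomorphism in $D^+$, and the functors preserve bounded complexes, this yields the desired equivalence for $* \in \{+, b\}$. The main obstacle is to check carefully that the abstract cohomological isomorphisms coincide with those genuinely induced by the derived unit and counit; this is a naturality argument for the degenerating spectral sequence, made tractable by the fact that the underlying adjunction restricts to an honest isomorphism on $\Bcal$-objects.
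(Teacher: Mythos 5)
Your proposal is correct and follows essentially the same route as the paper's proof: verify the derived unit and counit are isomorphisms by noting they are isomorphisms on objects concentrated in a single degree (the full faithfulness of $\iota$ plus the acyclicity hypothesis), and then propagate to general complexes via the hypercohomology spectral sequence. The paper deduces the bounded case from the $D^+$ case using exactness of $\iota$, while you extract boundedness of $RG$ directly from the degeneration of the spectral sequence, but these amount to the same observation.
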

\begin{proof}
	For $* = +$, we have to show that for all $B \in D^+(\cB)$ and $A \in D^+_{\cB}(\cA)$, the natural morphisms $B \to RG(\iota(B))$ and $\iota(RG(A)) \to A$ are isomorphism. In both cases, this holds by assumption if the complex is supported in a single degree ($G \circ \iota = id$ since $\iota$ is fully faithful). The case of general complexes follows from a hypercohomology spectral sequence argument.
	
	The case $* = b$ is now deduced from the fact that for any $A \in D(\Bcal)$, $A \in D^b(\Bcal)$ if and only if $\iota(A) \in D^b(\Acal)$ (by assumption, $\iota$ is fully faithful and exact).
\end{proof}

\begin{lem}\label{derived_functors_and_localization}
	Let $\cA$ be a Grothendieck category, let $\cB \inc \cA$ be a Serre subcategory which is closed under filtered colimits, and let $G \colon \cA \to \cC$ be a left exact functor. Assume that $RG \colon D^+(\cA) \to D^+(\cC)$ annihilates the subcategory $D^+_{\cB}(\cA)$. Then there exist factorizations 
	
	\[
		\begin{tikzcd}
			\cA \arrow[rr] \arrow[rrrr, "G"', bend right] &  & \cA/\cB \arrow[rr, "\overline{G}"] &  & \cC
		\end{tikzcd}
	\]
	 and
	\[
		\begin{tikzcd}
			D^+(\cA) \arrow[rr] \arrow[rrrr, "RG"', bend right] &  & D^+(\cA/\cB) \arrow[rr, "R\overline{G}"] &  & D^+(\cC).
		\end{tikzcd}
	\]
\end{lem}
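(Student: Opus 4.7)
The plan is to first establish the factorization at the abelian level, and then lift it to the derived categories via Gabriel's theory of localizing subcategories.

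For the abelian factorization, we first observe that for any $B \in \cB$, viewing $B$ as a complex in degree zero places it in $D^+_\cB(\cA)$, so the hypothesis gives $RG(B) = 0$ and in particular $G(B) = \cH^0(RG(B)) = 0$. Next, we verify that $G$ inverts every morphism $f \colon X \to Y$ in $\cA$ whose kernel and cokernel both lie in $\cB$: applying $RG$ to the distinguished triangles associated to
\[ 0 \to \ker f \to X \to \im f \to 0 \quad \text{and} \quad 0 \to \im f \to Y \to \coker f \to 0, \]
and using $RG(\ker f) = RG(\coker f) = 0$, we obtain isomorphisms $RG(X) \xrightarrow{\sim} RG(\im f) \xrightarrow{\sim} RG(Y)$; taking $\cH^0$ shows that $G(f)$ is an isomorphism. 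By the universal property of the Serre quotient as the categorical localization of $\cA$ at precisely this class of morphisms, $G$ factors uniquely as $G = \overline{G} \circ Q$, where $Q \colon \cA \to \cA/\cB$ denotes the canonical quotient functor.

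For the derived factorization, we invoke Gabriel's theorem on localizing subcategories: since $\cB$ is a Serre subcategory of the Grothendieck category $\cA$ closed under filtered colimits, $\cA/\cB$ is again Grothendieck, and the exact functor $Q$ admits a fully faithful, left exact right adjoint $s \colon \cA/\cB \to \cA$ with $Q \circ s \cong \id$. In particular $\cA/\cB$ has enough injectives, and $\overline{G} \cong G \circ s$ is left exact, so $R\overline{G}$ is well-defined. A classical consequence of the same theorem is that $Q$ preserves injectives (the key point being that for $I \in \cA$ injective, the adjunction $\Hom_{\cA/\cB}(-, Q(I)) \cong \Hom_\cA(s(-), I)$ combined with the left exactness of $s$ reduces injectivity of $Q(I)$ to that of $I$, once one checks that $I \to sQ(I)$ is an isomorphism for injective $I$).

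Granting this, for any $X \in D^+(\cA)$ with injective resolution $X \to I$, the complex $Q(I)$ is an injective resolution of $Q(X)$ in $\cA/\cB$ (a quasi-isomorphism since $Q$ is exact, and termwise injective by the above), and hence
\[ R\overline{G}(Q(X)) \cong \overline{G}(Q(I)) = G(I) \cong RG(X), \]
yielding the required natural isomorphism. The only substantive input is Gabriel's theorem furnishing the section functor $s$ and the preservation of injectives by $Q$; this is precisely where the hypothesis that $\cB$ is closed under filtered colimits enters. Everything beyond that is essentially formal, and no genuine obstacle is expected.
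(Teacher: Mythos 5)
The derived-level step rests on the assertion that $Q \colon \cA \to \cA/\cB$ preserves injectives; this is not a consequence of Gabriel's theorem and fails at the generality of the statement. Preservation of injectives under $Q$ requires the torsion theory generated by $\cB$ to be \emph{stable} (closed under injective envelopes), which does not follow from $\cB$ being a Serre subcategory of a Grothendieck category closed under filtered colimits. Your supporting step---that the unit $\eta_I \colon I \to sQ(I)$ is an isomorphism for every injective $I$---is already false: it holds precisely when $I$ has no nonzero subobject in $\cB$; taking $\cA$ to be abelian groups, $\cB$ the torsion groups and $I = \Qq/\Zz$, one has $sQ(I)=0$. For a failure of the main claim itself, take a non-Noetherian commutative ring $R$, a multiplicative set $S$, and $\cB = \ker(S^{-1}(-)) \inc \Mod(R) = \cA$; then $\cA/\cB$ is equivalent to $\Mod(S^{-1}R)$ with $Q$ identified with localization, and Dade (\emph{Localization of injective modules}, J.\ Algebra 69 (1981)) produces an injective $R$-module whose localization is not an injective $S^{-1}R$-module.

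The rest of your strategy survives once this step is replaced. Given $X \in D^{+}(\cA)$, take an injective resolution $Q(X) \to J^{\bullet}$ in $\cA/\cB$ and form the unit $X \to Rs(Q(X)) = s(J^{\bullet})$. Applying the exact $Q$ yields the quasi-isomorphism $Q(X) \to Qs(J^{\bullet}) = J^{\bullet}$, so the cone of $X \to s(J^{\bullet})$ has cohomology annihilated by $Q$, i.e.\ lies in $D^{+}_{\cB}(\cA)$. Each $s(J^{n})$ is injective in $\cA$ (for $s$, being right adjoint to the exact $Q$, does preserve injectives), whence $RG(s(J^{\bullet})) = G(s(J^{\bullet})) = \overline{G}(J^{\bullet}) = R\overline{G}(Q(X))$; combined with the hypothesis that $RG$ annihilates $D^{+}_{\cB}(\cA)$, this yields $R\overline{G}(Q(X)) \cong RG(X)$. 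The paper sidesteps the question entirely by invoking the Verdier-quotient description $D^{+}(\cA/\cB) \cong D^{+}(\cA)/D^{+}_{\cB}(\cA)$ and its universal property from \cite[Theorems 2.6.2 and 2.7.1]{Bockle_Pink_Cohomological_Theory_of_crystals_over_function_fields}, needing neither a section functor nor any facts about injectives.
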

\begin{proof}
	Since for all $B \in \cB$, $RG(B) = 0$, we obtain by the universal property of localization that there exists functor $\overline{G} \colon \cA/\cB \to \cC$ giving the first factorization. 
	
	Let us show that $\overline{G}$ is left exact. Since $D^+(\cA) \to D^+(\cA/\cB)$ is essentially surjective (see \cite[Theorem 2.6.2]{Bockle_Pink_Cohomological_Theory_of_crystals_over_function_fields}), we have to show that if \[ 0 \to A_1 \to A_2 \to A_3 \] is a complex which becomes right exact in $\cA/\cB$, then \[ 0 \to G(A_1) \to G(A_2) \to G(A_3) \] is right exact. This is again straightforward from the fact that $RG$ annihilates elements in $\cB$.
	
	Let $f \colon A_1 \to A_2$ be a morphism whose cone is in $D^+_{\cB}(\cA)$. Then by assumption, $RG(f)$ is an isomorphism. Hence, we conclude the proof by \cite[Theorems 2.7.1 and 2.6.2]{Bockle_Pink_Cohomological_Theory_of_crystals_over_function_fields}.
\end{proof}

\begin{lem}\label{injective qcoh is injective O_X}
	Let $X$ be a locally Noetherian scheme, and let $\Ical$ be an injective object in the category of quasi-coherent sheaves. Then $\Ical$ is also an injective object in the category of $\Ocal_X$-modules.
\end{lem}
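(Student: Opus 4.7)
The strategy is to reduce to the affine Noetherian case and then exploit the Matlis structure theorem together with standard facts about injective sheaves over Noetherian bases.

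First, I would verify that injectivity of $\cI$ as an $\cO_X$-module is a local question: if $\cI|_U$ is injective in $\Mod(\cO_U)$ for each open $U$ in some cover of $X$, then $\cI$ is injective. This is a standard Zorn's lemma argument applied to the poset of partial extensions of a given $\phi \colon \cF \to \cI$ along an inclusion $\cF \hookrightarrow \cG$ in $\Mod(\cO_X)$: if the maximal extension is not all of $\cG$, local injectivity on some affine open in the cover lets one enlarge it, contradicting maximality. Since $X$ is locally Noetherian, this reduces the question to the case $X = \Spec A$ with $A$ Noetherian. One also has to check that $\cI|_U$ remains an injective object of $\QCoh(U)$; this follows from the exactness of the restriction functor $\QCoh(X) \to \QCoh(U)$ together with standard adjoint-functor arguments valid on locally Noetherian schemes.

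Second, on $X = \Spec A$ with $A$ Noetherian, the equivalence $\QCoh(X) \simeq \Mod(A)$ identifies $\cI$ with an injective $A$-module $M$. The Matlis structure theorem expresses $M = \bigoplus_\alpha E(A/\fp_\alpha)$ as a direct sum of injective hulls of prime quotients. It then suffices to show: (a) each $\widetilde{E(A/\fp)}$ is an injective $\cO_X$-module; and (b) arbitrary direct sums of injective $\cO_X$-modules on a Noetherian scheme are again injective. For (a), I would use that $E(A/\fp)$ is $\fp$-local to write $\widetilde{E(A/\fp)} \cong (j_\fp)_* \bigl(\widetilde{E(A/\fp)}|_{\Spec A_\fp}\bigr)$, where $j_\fp \colon \Spec A_\fp \to \Spec A$ is flat; flatness makes $j_\fp^*$ exact, so $(j_\fp)_*$ preserves injectivity of $\cO$-modules, reducing the claim to an analogous but ``more local'' statement over $A_\fp$ which can then be checked directly.

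The main obstacle is (b), the preservation of injectivity under arbitrary direct sums of $\cO_X$-modules. This is the sheaf analog of the classical Bass theorem (for rings, Noetherianness is equivalent to direct sums of injectives being injective), and depends crucially on the Noetherian hypothesis: the key input is that a quasi-coherent ideal on a Noetherian scheme is finitely generated, so that $\Ext^1$ with such an ideal in the first argument commutes with direct sums in the second argument. Combining (a) and (b) then yields that $\cI|_U$ is injective in $\Mod(\cO_U)$, and by the first step we conclude that $\cI$ itself is injective in $\Mod(\cO_X)$.
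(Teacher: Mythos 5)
Your proposal follows the classical ``from-scratch'' route (localize to the affine case, invoke Matlis' decomposition of an injective module over a Noetherian ring, and check each piece by hand), which is genuinely different from---and considerably longer than---the paper's argument. The paper simply applies the theorem it cites (Hartshorne, \emph{Residues and Duality}, II.7.18): every quasi-coherent sheaf on a locally Noetherian scheme embeds into a \emph{quasi-coherent} sheaf $\cJ$ that is injective as an $\cO_X$-module. Given such an embedding $\cI \hookrightarrow \cJ$, injectivity of $\cI$ in $\QCoh$ together with quasi-coherence of $\cJ$ makes the inclusion split in $\QCoh$, so $\cI$ is a direct summand of an injective $\cO_X$-module and hence itself injective in $\Mod(\cO_X)$. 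All the Matlis-type structure theory you set out to redo is packaged inside that one citation.

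Two of your individual justifications are off. First, the claim that $j^* \colon \QCoh(X) \to \QCoh(U)$ preserves injectives via ``exactness of restriction plus standard adjoint-functor arguments'' is wrong as stated: extension by zero $j_!$ does not preserve quasi-coherence, so $j^*$ has no exact left adjoint \emph{inside} $\QCoh$, and the usual ``right adjoint of an exact functor preserves injectives'' argument does not apply; the statement is true, but its standard proof runs through the structure theorem for injectives in $\QCoh(X)$, which you were deferring to step~2---so as organized, steps~1 and~2 feed each other. Second, and more seriously, in step~(b) you reduce injectivity of a direct sum of injective $\cO_X$-modules to $\Ext^1$ commuting with direct sums ``against finitely generated quasi-coherent ideals.'' That is the Baer test in $\QCoh(X)$; the Baer test in $\Mod(\cO_X)$ runs over sub-$\cO_X$-modules of the generators $j_!\cO_U$, which are typically \emph{not} quasi-coherent, and whose compactness properties in $\Mod(\cO_X)$ are a separate question from Noetherianity of $X$. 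This is exactly the delicate content of Hartshorne's II.7.18, and the argument sketched does not close the gap you correctly identified as the main obstacle.
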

\begin{proof}
	By \cite[Theorem 7.18]{Hartshorne_Residues_and_Duality}, there exists a quasi-coherent sheaf $\Jcal$, injective as an $\Ocal_X$-module, such that $\Ical \inc \Jcal$. Since $\Jcal$ is quasi-coherent and $\Ical$ is injective as a quasi-coherent module, $\Ical$ is a direct summand of $\Jcal$. Thus, $\Ical$ is a direct summand of an injective $\Ocal_X$-module, so it is injective as an $\Ocal_X$-module as well.
\end{proof}

\subsection{Results about Frobenius modules}
Here, we collect mostly well-known results on Frobenius modules that will be useful later. We mostly prove basic categorical properties and recall important structural results on Frobenius modules, such as the Riemann-Hilbert correspondence. We point out that using \cite{Bhatt_Lurie_RH_corr_pos_char}, many results we state can be adapted to work in a non-Noetherian setting. Since we do not need such generality, we restrict to Noetherian schemes and use results from \cite{Bockle_Pink_Cohomological_Theory_of_crystals_over_function_fields}. \\

Fix $r \geq 1$, and let $q \coloneqq p^r$.

\begin{defn}\label{def F-module}
	Let $X$ be an $\Ff_p$-scheme, and $\Mcal$ an $\Ocal_X$-module over $X$.
	\begin{itemize}
		\item An \emph{$r$-Frobenius module} (or simply Frobenius module when the context is clear) structure on $\Mcal$ is a morphism of $\Ocal_X$-modules $\tau_{\cM} \colon \Mcal \ra F^r_*\Mcal$. A morphism of Frobenius modules is a morphism $\theta \colon \Mcal \ra \Ncal$ between two $\Ocal_X$-modules such that the square \[ \begin{tikzcd}
			\Mcal \arrow[r, "\theta"] \arrow[d, "\tau_{\cM}"'] & \Ncal \arrow[d, "\tau_{\cN}"] \\
			F^r_*\Mcal \arrow[r, "F^r_*\theta"'] & F^r_*\Ncal			
		\end{tikzcd} \] commutes. This forms the category of Frobenius modules, denoted $\Mod(\Ocal_X[F^r])$.
		\item The full subcategory of Frobenius modules which are quasi-coherent as $\cO_X$-modules is denoted by $\QCoh_X^{F^r}$.
		\item By adjunction, a morphism $\tau_{\cM} \colon \cM \to F^r_*\cM$ is equivalent to a morphism $\tau_{\cM}^* \colon F^{r, *}\cM \to \cM$. This morphism is called the \emph{adjoint structural morphism}. a Frobenius module is said to be \emph{unit} if its adjoint structural morphism is an isomorphism.
	\end{itemize}
\end{defn}
\begin{rem}\label{operations_on_F_modules}
	\begin{itemize}
		\item Unless stated otherwise, the Frobenius module structure on $\cM$ is always denoted $\tau_{\cM}$, and its adjoint structural morphism is denoted $\tau_{\cM}^*$.
		
		\item We can take pushforwards, pullbacks and tensor products of Frobenius modules. For pushforwards, this is immediate. 
		
		For pullbacks, this follows from using the adjoint structural morphism. Explicitly, if $f \colon X \to Y$ is a morphism, and $\cM$ is a Frobenius module on $Y$, then for all $x \in X$, the Frobenius module structure on $(f^*\cM)_x = \cM_{f(x)} \otimes_{\cO_{Y, f(x)}} \cO_{X, x}$ is given by $m \otimes \lambda \mapsto \tau_{\cM}(m) \otimes \lambda^q$. 
		
		Finally, given two Frobenius modules $\cM$ and $\cN$, the $\cO_X$-module $\cM \otimes \cN$ naturally acquires the structure of a Frobenius module via the formula $m \otimes n \mapsto \tau_{\cM}(m) \otimes \tau_{\cN}(n)$.
		\item It is immediate form the definitions that the projection formula holds for Frobenius modules.
	\end{itemize}
\end{rem}

Now we want to prove that the categories we have defined so far are Grothendieck categories. 

\begin{constr}
	For and $\bF_p$-algebra $R$, define $R[F^r]$ to be the set of polynomials of the form $\sum_{i = 1}^na_iF^{ri}$, where we impose the non-commutative relation $F^ra = a^qF^r$. 
	
	For and $\bF_p$-scheme $X$, define a sheaf $\Ocal_X[F^r]$ by $\Gamma(U, \Ocal_X[F^r]) \coloneqq \Ocal_X(U)[F^r]$ for all open $U \inc X$. It is immediate to this defines a quasi-coherent sheaf of $\cO_X$-algebras (see \autoref{def of qcoh A-modules}).
\end{constr}
\begin{obs*}
	There is an equivalence of categories between left $\Ocal_X[F^r]$-modules and Frobenius modules (multiplying by $F^r$ on the left corresponds to applying the structural morphism), inducing an equivalence of categories between $\QCoh_X^{\Ocal_X[F^r]}$ (see \autoref{def of qcoh A-modules}) and $\QCoh_X^{F^r}$.
\end{obs*}
\begin{cor}\label{qcoh F-modules is a Groth cat}
	For any scheme $X$ over $\Ff_p$, the category $\QCoh_X^{F^r}$ is a Grothendieck category.
\end{cor}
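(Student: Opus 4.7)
The plan is to immediately reduce the statement to the previously proven \autoref{qcoh A-modules is a Grothendieck category}, using the construction and observation that precede the corollary. First, I would note that the construction exhibits $\Ocal_X[F^r]$ as a quasi-coherent sheaf of (non-commutative) $\Ocal_X$-algebras, so that the notion of quasi-coherent $\Ocal_X[F^r]$-module from \autoref{def of qcoh A-modules} is defined and $\QCoh_X^{\Ocal_X[F^r]}$ is a Grothendieck category by \autoref{qcoh A-modules is a Grothendieck category}.

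Next, the observation just stated provides an equivalence of categories $\QCoh_X^{\Ocal_X[F^r]} \cong \QCoh_X^{F^r}$, where the left multiplication by $F^r$ on a left $\Ocal_X[F^r]$-module $\Mcal$ corresponds to the adjoint data of a structural morphism $\tau_{\Mcal} \colon \Mcal \to F^r_*\Mcal$. Since being a Grothendieck category is stable under equivalences of categories, we conclude that $\QCoh_X^{F^r}$ is a Grothendieck category as well.

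There is essentially no obstacle here; the only thing worth verifying (implicit in the \emph{Observation}) is that the equivalence genuinely identifies quasi-coherent left $\Ocal_X[F^r]$-modules with $F$-modules whose underlying $\Ocal_X$-module is quasi-coherent, which is routine because the $\Ocal_X$-module structure is preserved under the equivalence and the $F^r$-action does not alter this underlying structure. Hence the proof will be only one or two lines, simply citing \autoref{qcoh A-modules is a Grothendieck category} together with the preceding observation.
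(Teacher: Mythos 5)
Your proof is correct and follows exactly the paper's own argument: invoke the preceding observation that $\QCoh_X^{\Ocal_X[F^r]} \cong \QCoh_X^{F^r}$ and then apply \autoref{qcoh A-modules is a Grothendieck category}. Nothing to add.
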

\begin{proof}
	This follows from the above observation and \autoref{qcoh A-modules is a Grothendieck category}.
\end{proof}

\begin{defn}
	Let $X$ be a Noetherian $\bF_p$-scheme, and let $\cM \in \QCoh_X^{F^r}$. Then $\cM$ is said to be \emph{ind-coherent} if $\cM$ is a fitered colimit of coherent Frobenius modules. The full category of these objects is denoted by $\IndCoh_X^{F^r}$.
\end{defn}
\begin{rem}\label{rem def ind-coherence}
	Since a filtered colimit of Frobenius modules is simply a filtered colimit of the underlying $\Ocal_X$-modules (with the natural Frobenius module structure), we see that ind-coherence is equivalent to being the union of its coherent Frobenius submodules.
\end{rem}
\begin{example}
	The quasi-coherent Frobenius module $\cO_X[F^r]$ is never ind-coherent. In addition, for $R$ a domain over $\bF_p$ which is not a field, $\Frac(R)$ (with its canonical Frobenius module structure) is never ind-coherent. We will see in \autoref{direct sum of residue fields is ind-coh} that Cartier modules behave very differently.
\end{example}

\begin{defn}
	Let $X$ be a Noetherian $\bF_p$-scheme, and let $\Mcal \in \QCoh_X^{F^r}$.
	\begin{itemize}
		\item The Frobenius module $\cM$ is said to be \emph{nilpotent} if $\tau_{\cM}^n = 0$ for some $n \geq 0$, where we make the abuse of notations \[ \tau_{\cM}^n \coloneqq  F^{r(n - 1)}_*\tau_{\cM} \circ \dots \circ F^r_*\tau_{\cM} \circ \tau_{\cM}. \]
		\item It is said to be \emph{locally nilpotent} if for all open affine $U \inc X$, $\Mcal|_U$ is a union of nilpotent Frobenius modules. The full subcategory of locally nilpotent modules is called $\LNil$ (note that $\LNil \subseteq \IndCoh_X^{F^r}$).
		\item We set $\Nil \coloneqq \LNil \cap \Coh_X^{F^r}$.
		\item Define the category of \emph{$r$-Frobenius quasi-crystals} by $\QCrys_X^{F^r} \coloneqq \QCoh_X^{F^r}/\LNil$, the category of \emph{ind-$r$-Frobenius quasi-crystals} to be $\IndCrys_X^{F^r} \coloneqq \IndCoh_X^{F^r}/\LNil$, and the category of \emph{$r$-Frobenius crystals} to be $\Crys_X^{F^r} \coloneqq \Coh_X^{F^r}/\Nil$ (as for Frobenius modules, we will usually omit the $r$ in discussions).
	\end{itemize}
\end{defn}
\begin{rem}
	\begin{itemize}
		\item For example, a Frobenius crystal is zero if and only if the underlying Frobenius module is nilpotent.
		\item The category $\LNil$ is a Serre subcategory by \cite[Proposition 3.3.4]{Bockle_Pink_Cohomological_Theory_of_crystals_over_function_fields}.
		\item If $\cM$, $\cN \in \QCoh_X^{F^r}$ are objects which become isomorphic in $\QCrys_X^{F^r}$, we write $\cM \sim_F \cN$. We use the same notation for complexes in the derived category.
	\end{itemize}
\end{rem}

\begin{lem}\label{all cats of F-mods are Grothendieck}
	Let $X$ be a Noetherian $\bF_p$-scheme, and let $\cC$ be one of the following categories:  \[\Mod(\cO_X), \QCoh_X, \Mod(\cO_X[F^r]), \QCoh_X^{F^r}, \IndCoh_X^{F^r}, \QCrys_X^{F^r}, \IndCrys_X^{F^r}.\]
	Then $\cC$ is a Grothendieck category. In particular, it has enough injectives.
\end{lem}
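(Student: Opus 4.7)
The plan is to handle the seven categories in roughly three layers, working from the most standard to the most derived.

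First, I would dispose of the module categories. $\Mod(\cO_X)$ is Grothendieck by the standard result that the category of modules over any sheaf of rings is a Grothendieck category (e.g. Stacks Project \href{https://stacks.math.columbia.edu/tag/079H}{Tag 079H}), and the exact same argument applied to the (non-commutative) sheaf of rings $\cO_X[F^r]$ handles $\Mod(\cO_X[F^r])$. For $\QCoh_X$ and $\QCoh_X^{F^r}$, I would invoke \autoref{qcoh A-modules is a Grothendieck category} with $\cA = \cO_X$ and $\cA = \cO_X[F^r]$ respectively, using the equivalence $\QCoh_X^{F^r} \cong \QCoh_X^{\cO_X[F^r]}$ noted right after the construction of $\cO_X[F^r]$.

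Next I would treat $\IndCoh_X^{F^r}$. By definition this is the full subcategory of $\QCoh_X^{F^r}$ consisting of filtered colimits of coherent $F$-modules, or equivalently (by \autoref{rem def ind-coherence}) those objects which are unions of their coherent $F$-submodules. I would check that $\IndCoh_X^{F^r}$ is abelian: if $f \colon \cM \to \cN$ is a morphism between ind-coherent $F$-modules, then $\ker f$, $\mathrm{im}\, f$ and $\mathrm{coker}\, f$ are themselves unions of their coherent $F$-submodules (using that $X$ is Noetherian, so coherent $F$-submodules of a coherent $F$-module are coherent, and a quotient of a coherent $F$-module is coherent). Closure under arbitrary small colimits follows because any small colimit can be written as a cokernel between filtered colimits, and filtered colimits of ind-coherent $F$-modules are obviously ind-coherent. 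Since $X$ is Noetherian, there is (up to isomorphism) a set of coherent $F$-modules, which then provides a generating set, and exactness of filtered colimits is inherited from $\QCoh_X^{F^r}$. So $\IndCoh_X^{F^r}$ is Grothendieck.

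Finally, I would dispatch the two quotient categories $\QCrys_X^{F^r} = \QCoh_X^{F^r}/\LNil$ and $\IndCrys_X^{F^r} = \IndCoh_X^{F^r}/\LNil$ using Gabriel's theorem: if $\cA$ is a Grothendieck category and $\cB \subseteq \cA$ is a Serre subcategory closed under small filtered colimits, then $\cA/\cB$ is again Grothendieck (\cite[\href{https://stacks.math.columbia.edu/tag/02MS}{Tag 02MS}]{Stacks_Project} together with its complements on localization). The subcategory $\LNil$ is a Serre subcategory by \cite[Proposition 3.3.4]{Bockle_Pink_Cohomological_Theory_of_crystals_over_function_fields}, and it is closed under filtered colimits essentially by definition (a local filtered union of locally nilpotent $F$-modules is locally nilpotent). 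The ``enough injectives'' conclusion is then automatic from being Grothendieck. The step I expect to be mildly delicate is checking that $\LNil$ truly meets the hypotheses of the localization lemma on the nose for both $\QCoh_X^{F^r}$ and $\IndCoh_X^{F^r}$ (in particular that $\LNil \cap \IndCoh_X^{F^r}$ remains Serre and colimit-closed inside $\IndCoh_X^{F^r}$), but this follows from the fact that $\Nil = \LNil \cap \Coh_X^{F^r}$ is stable under subobjects, quotients and extensions of coherent $F$-modules.
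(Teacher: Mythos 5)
Your proposal is correct and follows essentially the same decomposition as the paper: use \autoref{qcoh A-modules is a Grothendieck category} for $\QCoh_X$ and $\QCoh_X^{F^r}$, exhibit (or cite the standard existence of) generators for the module categories and for $\IndCoh_X^{F^r}$, and handle the two quotient categories via Gabriel localization, where the paper simply cites \cite[Theorem 3.4.5]{Bockle_Pink_Cohomological_Theory_of_crystals_over_function_fields} for what your Gabriel-localization argument does internally. The only places you go a bit further than the paper are in spelling out why $\IndCoh_X^{F^r}$ is abelian and why $\LNil$ is a localizing subcategory of both $\QCoh_X^{F^r}$ and $\IndCoh_X^{F^r}$, which is fine and consistent with the intended argument.
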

\begin{proof}
	If $\cC = \QCoh_X^{F^r}$, this is  \autoref{qcoh F-modules is a Groth cat}. If $\cC = \QCoh_X$, this follows from \autoref{qcoh A-modules is a Grothendieck category}. If $\cC = \Mod(\cO_X[F^r])$ (resp. $\Mod(\cO_X)$), then the collection $\{j_!\cO_U[F^r]\}_{j \colon U \inj X}$ (resp. $\{j_!\cO_U\}_{j \colon U \inj X}$) forms a set of generators, where we range over all open inclusions $j \colon U \inj X$. If $\cC = \IndCoh_X^{F^r}$, then coherent Frobenius modules (up to isomorphism) form a set of generators by definition. Finally, for $\cC \in \{\QCrys_X^{F^r}, \IndCrys_X^{F^r}\}$, this is a consequence of \cite[Theorem 3.4.5]{Bockle_Pink_Cohomological_Theory_of_crystals_over_function_fields}. 
	
	The fact that these categories have enough injectives now follows from \cite[{\href{https://stacks.math.columbia.edu/tag/079H}{Tag 079H}}]{Stacks_Project}.
\end{proof}

\begin{prop}\label{all derived pushforwards are the same for F-modules}
	Let $f \colon X \to Y$ be a proper morphism of Noetherian $\bF_p$-schemes. Then the diagram 
		\[ \begin{tikzcd}
		D^+(\IndCoh_X^{F^r}) \arrow[d] \arrow[rr, "Rf_*"] &  & D^+(\IndCoh_Y^{F^r}) \arrow[d] \\
		D^+(\cC_X) \arrow[rr, "Rf_*"]                     &  & D^+(\cC_Y)                 
	\end{tikzcd} \] commutes for any \[ \cC \in \left\{\Mod(\cO), \Mod(\cO[F^r]), \QCoh, \IndCrys^{F^r}\right\}.\]
\end{prop}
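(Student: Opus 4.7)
The plan is to produce, for any $\cM^\bullet \in D^+(\IndCoh_X^{F^r})$, a single resolution that simultaneously computes $Rf_*$ in each of the four target categories. The crucial structural observation is that $\cO_X[F^r]$ is a free (hence flat) left $\cO_X$-module, with basis $\{1, F^r, F^{2r}, \dots\}$; consequently the forgetful functor $\Mod(\cO_X[F^r]) \to \Mod(\cO_X)$ has exact left adjoint $\cO_X[F^r] \otimes_{\cO_X} (-)$ and therefore preserves injectives.

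Concretely, I would view $\cM^\bullet$ as a complex in $D^+(\Mod(\cO_X[F^r]))$ (which has enough injectives by \autoref{all cats of F-mods are Grothendieck}), take an injective resolution $\cI^\bullet$ there, and observe that $\cI^\bullet$ is then also injective, hence $f_*$-acyclic, in $\Mod(\cO_X)$. Thus $f_*\cI^\bullet$ computes $Rf_*\cM^\bullet$ simultaneously in $D^+(\Mod(\cO_Y[F^r]))$ and $D^+(\Mod(\cO_Y))$; for the $\QCoh_Y$ variant, one combines this with \autoref{injective qcoh is injective O_X} (which makes quasi-coherent injective resolutions also injective in $\Mod(\cO_X)$), once one knows that the cohomology sheaves $R^if_*\cM^\bullet$ are quasi-coherent.

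To identify these with the intrinsic $Rf_*$ computed inside $\IndCoh_X^{F^r}$, the key step is to show that $R^if_*$ preserves ind-coherence. Writing $\cM = \colim_\alpha \cM_\alpha$ with $\cM_\alpha$ coherent (as in \autoref{rem def ind-coherence}), Grothendieck's theorem on proper pushforward of coherent $\cO$-modules gives coherence of each $R^if_*\cM_\alpha$ (with the induced $F^r$-module structure from functoriality), while the standard commutation of $R^if_*$ with filtered colimits for proper morphisms of Noetherian schemes yields $R^if_*\cM = \colim R^if_*\cM_\alpha \in \IndCoh_Y^{F^r}$. A standard comparison argument, either through a Cartan--Eilenberg bi-resolution or via the universal property of derived functors, then identifies $f_*\cI^\bullet$ with the intrinsic $Rf_*\cM^\bullet$ computed in $\IndCoh^{F^r}$.

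For the remaining case $\cC = \IndCrys^{F^r}$, I would invoke \autoref{derived_functors_and_localization} with $\cA = \IndCoh_X^{F^r}$, $\cB = \LNil \cap \IndCoh_X^{F^r}$, and $G$ the composition of $f_*$ with the quotient $\IndCoh_Y^{F^r} \to \IndCrys_Y^{F^r}$. The hypothesis to verify is that $Rf_*$ annihilates objects with locally nilpotent cohomology; via the filtered colimit reduction above this comes down to the coherent case, where $\tau_\cM^n = 0$ implies $\tau_{R^if_*\cM}^n = 0$ by functoriality of $R^if_*$ in the $F$-module structure. I expect the main technical obstacle to be the bookkeeping that identifies the intrinsic $Rf_*$ on $\IndCoh^{F^r}$ with the one obtained via the ambient category $\Mod(\cO[F^r])$, since $\IndCoh^{F^r}$ is not closed under taking injective envelopes inside that ambient category, and some care is needed to pass between injective resolutions in the two settings.
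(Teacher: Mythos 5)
There is a genuine error in the opening structural observation. You claim that $\cO_X[F^r]\otimes_{\cO_X}(-)$ is exact because $\cO_X[F^r]$ is free as a \emph{left} $\cO_X$-module, and deduce that the forgetful functor $\Mod(\cO_X[F^r])\to\Mod(\cO_X)$ preserves injectives. But exactness of the extension-of-scalars left adjoint along $\cO_X\to\cO_X[F^r]$ requires $\cO_X[F^r]$ to be flat as a \emph{right} $\cO_X$-module, which is the side that enters the tensor product. As a right $\cO_X$-module, $\cO_X[F^r]\cong\bigoplus_{n\geq 0}F^{rn}_*\cO_X$ (since $F^{rn}\cdot a=a^{p^{rn}}F^{rn}$), and $F^{rn}_*\cO_X$ is flat over $\cO_X$ if and only if $X$ is regular, by Kunz. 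The proposition assumes only that $X$ is Noetherian, so this step fails in the required generality; \autoref{remark injectives F-modules are also injectives O_X for X regular} points out exactly this obstruction for $F$-modules. What does hold unconditionally is the weaker fact that injectives in $\Mod(\cO_X[F^r])$ are flasque (via the extension-by-zero functors $j_!$); that weaker fact is what the paper actually uses to compare $Rf_*$ in $\Mod(\cO_X[F^r])$ with $Rf_*$ in $\Mod(\cO_X)$.

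Beyond that, you yourself flag, but do not close, the load-bearing gap: identifying the intrinsic $Rf_*$ computed with injective resolutions inside $\IndCoh_X^{F^r}$ with any of the ambient derived functors. Injective objects of $\IndCoh_X^{F^r}$ are not a priori injective, or even $f_*$-acyclic, as $\cO_X$-modules, and $\IndCoh_X^{F^r}$ is not closed under injective hulls in $\QCoh_X^{F^r}$, so ``a standard Cartan--Eilenberg or universal-property comparison'' has nothing to grip: the resolution one would want to transport acyclicity from is precisely the one whose acyclicity is in question. The paper bridges this by observing that restriction to opens preserves injectives both in $\IndCoh_X^{F^r}$ (quoting \cite[Proposition 5.2.7]{Bockle_Pink_Cohomological_Theory_of_crystals_over_function_fields}) and in $\QCoh_X$, reducing to $Y$ affine and $X$ separated, and then identifying both higher direct images with \v{C}ech cohomology; the remaining cases follow from there. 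Your filtered-colimit and functoriality remarks show that $R^if_*$ (computed in a fixed, already-identified category) preserves ind-coherence and sends nilpotents to nilpotents, which is needed for the $\IndCrys^{F^r}$ case, but they do not show that the several constructions of $R^if_*$ agree in the first place.
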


\begin{proof} 
	Let us show that the diagram
	\[ \begin{tikzcd}
		D^+(\IndCoh_X^{F^r}) \arrow[rr, "Rf_*"] \arrow[d] &  & D^+(\IndCoh_Y^{F^r}) \arrow[d] \\
		D^+(\QCoh_X) \arrow[rr, "Rf_*"]                 &  & D^+(\QCoh_Y)                
	\end{tikzcd} \] commutes. Note that the restriction of an injective in both $\IndCoh_X^{F^r}$ and $\QCoh_X$ to an open subset is still injective (see \cite[Proposition 5.2.7]{Bockle_Pink_Cohomological_Theory_of_crystals_over_function_fields} for $\IndCoh_X^{F^r}$, and apply the same proof for $\QCoh_X$). Thus, their respective higher pushforward commute with restrictions to opens, so we may assume that $Y$ is affine. In particular, $X$ is then semi-separated. By the same argument as in \cite[Proposition B.8]{Thomason_Trobaugh_Higher_Algebraic_K_Theory_of_schemes_and_of_derived_categories}, we deduce in this case that the functor $Rf_*$ in both $\QCoh_X^{F^r}$ and $\QCoh_X$ are given by \v{C}ech cohomology, so they agree. Thus, it is enough to show that in this case, the diagram \[ \begin{tikzcd}
		D^+(\IndCoh_X^{F^r}) \arrow[rr, "Rf_*"] \arrow[d] &  & D^+(\IndCoh_Y^{F^r}) \arrow[d] \\
		D^+(\QCoh_X^{F^r}) \arrow[rr, "Rf_*"]                 &  & D^+(\QCoh_Y^{F^r})                
	\end{tikzcd} \] commutes. This is an immediate consequence of the non-crystal version of \cite[Theorem 6.5.2]{Bockle_Pink_Cohomological_Theory_of_crystals_over_function_fields} (which their proof shows), and the fact that given $\cM \in D(\IndCoh_X^{F^r})$, we have $R\ind(\cM) = \cM$ (use \cite[Theorem 5.2.10.(d)]{Bockle_Pink_Cohomological_Theory_of_crystals_over_function_fields} and a hypercohomology spectral sequence argument).

	We automatically deduce the case of $\cC_X = \Mod(\cO_X)$ by \autoref{injective qcoh is injective O_X}. 
	
	Since injectives in $\Mod(\cO_X[F^r])$ are flasque (use the extension by zero functors), taking derived pushforwards in $\Mod(\cO_X)$ or in $\Mod(\cO_X[F^r])$ give the same object (as $\cO_X$-modules). Thus, we deduce the case $\cC_X = \Mod(\cO_X[F^r])$ from the case $\cC_X = \Mod(\cO_X)$.
	
	Finally, for $\cC = \IndCrys^{F^r}$, note that the composition \[ \begin{tikzcd}
		D^+(\IndCoh_X^{F^r}) \arrow[rr, "Rf_*"] &  & D^+(\IndCoh_Y^{F^r}) \arrow[rr] &  & D^+(\IndCrys_X^{F^r})
	\end{tikzcd} \] annihilates $D^+_{\LNil}(\IndCoh_X^{F^r})$. Indeed, by a hypercohomology spectral sequence argument, we have to show that if $\cM \in \LNil$, then $R^if_*(\cM)$ for all $i \geq 0$. Since $R^if_*$ commutes with filtered colimits (this follows from our work above, since the $Rf_*$ on $\cO_X$-modules does), we have to show that the functors $R^if_*$ preserve nilpotent Frobenius modules. This holds by functoriality. 

	Thus, we conclude by \autoref{derived_functors_and_localization} and essential surjectivity of $D^+(\IndCoh_X^{F^r}) \to D^+(\IndCrys_X^{F^r})$, see \cite[Theorem 2.6.2]{Bockle_Pink_Cohomological_Theory_of_crystals_over_function_fields}.
\end{proof}
\begin{rem}\label{rem_ok_if_semisep}
	Unfortunately, we do not know in general the result for $\QCoh_X^{F^r}$. However, the same proof as above shows this result when $Y$ is semi-separated. Indeed, we then obtain that higher direct images for both $\QCoh_X^{F^r}$ and $\QCoh_X$ are given by \v{C}ech cohomology, hence they agree.
\end{rem}

\begin{defn-constr}[\cite{Bockle_Pink_Cohomological_Theory_of_crystals_over_function_fields}, \cite{Bhatt_Lurie_RH_corr_pos_char}]\label{extending F-modules to the etale site}
	Let $\cM \in \QCoh_X^{F^r}$. We can extend both $\Mcal$ and $\tau_{\cM}$ to the étale site, and obtain a morphism $\tau_{{\cM}_{\et}} \colon \Mcal_{\et} \ra F^r_*\Mcal_{\et}$ (recall that we can pullback Frobenius modules, see \autoref{operations_on_F_modules}).
	
	Define $\Sol(\Mcal) \coloneqq \ker(\tau_{{\cM}_{\et}} - 1)$, taken in the abelian category $\Sh(X_{\et}, \Ff_q)$ of \'etale $\Ff_q$-sheaves on $X$. This construction defines a functor $\Sol \colon \QCoh_X^{F^r} \ra \Sh(X_{\et}, \Ff_q)$.
\end{defn-constr}

\begin{theorem}\label{main thm Bhatt Lurie}
	Let $X$ be a Noetherian $\bF_q$-scheme. The functor $\Sol \colon \QCoh_X^{F^r} \to \Sh(X_{\et}, \bF_q)$ vanishes on elements in $\LNil$, and induces equivalences of categories 
	\[ \begin{tikzcd}[row sep=small]
		\IndCrys_X^{F^r} \arrow[rr, "\cong"] &  & {\Sh_{\et}(X, \bF_q)}, \mbox{ and }  \\
		\Crys_X^{F^r} \arrow[rr, "\cong"]    &  & {\Sh^c_{\et}(X, \bF_q)},
	\end{tikzcd} \] where $\Sh^c_{\et}(X, \bF_q)$ denotes the category of constructible étale $\bF_q$-sheaves.

	This functor preserves tensor products, pullbacks and proper pushforwards.
\end{theorem}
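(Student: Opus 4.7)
The plan is to tackle the three claims in sequence: vanishing on $\LNil$, the two equivalences, and the stated compatibilities. For vanishing, if $\cM \in \QCoh_X^{F^r}$ satisfies $\tau_{\cM}^n = 0$ and $s$ is a local section of $\cM_{\et}$ with $\tau_{\cM_{\et}}(s) = s$, then iterating yields $s = \tau_{\cM_{\et}}^n(s) = 0$, so $\Sol(\cM) = 0$. Since $\Sol$ is a kernel and hence commutes with filtered colimits of $F$-submodules, the vanishing extends to the locally nilpotent case. By \autoref{derived_functors_and_localization}, $\Sol$ then factors through $\QCrys_X^{F^r}$, and restricts to $\IndCrys_X^{F^r}$ and, after verifying that coherent $F$-modules produce constructible \'etale sheaves, to $\Crys_X^{F^r}$.

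For the equivalences, I would construct a quasi-inverse Riemann--Hilbert functor $\RH \colon \Sh(X_{\et}, \bF_q) \to \IndCrys_X^{F^r}$ whose value on a sheaf $\cF$ is, informally, $\cF \otimes_{\bF_q} \cO_{X, \et}$ descended to the Zariski site, equipped with the $F$-structure induced from $\id \otimes F^r$. The content is to verify that the unit $\cF \to \Sol(\RH(\cF))$ and counit $\RH(\Sol(\cM)) \to \cM$ are isomorphisms in the respective localized categories. \'Etale-locally, the unit is the Artin--Schreier exact sequence $0 \to \bF_q \to \cO_{X, \et} \xrightarrow{F^r - 1} \cO_{X, \et} \to 0$. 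The counit is deeper: it expresses the statement that every object of $\IndCoh_X^{F^r}$ becomes, modulo $\LNil$, a filtered colimit of unit $F$-modules, which is the substance of \cite{Bhatt_Lurie_RH_corr_pos_char} (and in the smooth case of \cite{Bockle_Pink_Cohomological_Theory_of_crystals_over_function_fields}). The passage to the constructible/coherent equivalence then follows by tracking finiteness conditions through $\Sol$ and $\RH$, using that $\Sol$ of a coherent $F$-module is constructible (Artin--Schreier cuts out a constructible subsheaf of a coherent sheaf extended to the \'etale site) and that $\RH$ sends constructible sheaves to coherent $F$-crystals up to nilpotence.

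Compatibility with tensor products and pullbacks is essentially tautological from the $\RH$ formula, since each operation acts separately on the sheaf factor and on $\cO_{X, \et}$, and both sides are determined by their behaviour on generators. The main obstacle is proper pushforward, where one must produce a natural isomorphism $R(f_{\et})_*\Sol(\cM) \cong \Sol(Rf_*\cM)$ in $D^+(\Sh(Y_{\et}, \bF_q))$. My strategy is to use \autoref{all derived pushforwards are the same for F-modules} to compute $Rf_*\cM$ via \v{C}ech cohomology on an affine cover of $X$, and to match the resulting complex with $R(f_{\et})_*$ applied to $\Sol(\cM)$ through the Artin--Schreier triangle; properness keeps both sides bounded and reduces the comparison to a single-affine check of two Frobenius-equivariant structures on the same underlying complex. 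In practice, I would cite \cite{Bockle_Pink_Cohomological_Theory_of_crystals_over_function_fields} and \cite{Bhatt_Lurie_RH_corr_pos_char} for the core equivalence and focus the work on matching conventions with the rest of the paper.
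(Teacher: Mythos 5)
Your proposal ends up in the same place as the paper's actual proof: both ultimately defer to Böckle--Pink (and Bhatt--Lurie) for the Riemann--Hilbert equivalence, and your expository paragraphs are a faithful sketch of what those references contain rather than an independent argument. The one thing your write-up does not surface is the paper's explicit observation that Böckle--Pink state their Theorem 10.3.6 and Proposition 10.2.1 only for separated schemes of finite type over $\bF_q$, but never actually use these hypotheses in the relevant proofs; checking that this reduction is legitimate is the only substantive work the paper performs before citing, so it should be flagged.
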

\begin{proof}
	See \cite[Theorem 10.3.6 and Proposition 10.2.1]{Bockle_Pink_Cohomological_Theory_of_crystals_over_function_fields}. Although they assume that their schemes are separated and of finite type over $\bF_q$, this does not appear in their proof. The reader can also look at \cite[Theorem 10.2.7]{Bhatt_Lurie_RH_corr_pos_char} in the general case.
\end{proof}

\begin{prop}\label{D^b(Coh) = D^b_{coh}(QCoh) for F-modules}
	Let $X$ be a Noetherian $\Ff_p$-scheme. Then for $* \in \{-, b\}$, the natural functors 
	\[ \begin{tikzcd}[row sep=small]
			D^*(\Coh_X^{F^r}) \arrow[rr]  &  & D^*_{\coh}(\QCoh_X^{F^r}),  \mbox{ and } \\
			D^*(\Crys_X^{F^r}) \arrow[rr] &  & D^*_{\crys}(\QCrys_X^{F^r}) 
		\end{tikzcd}  \] 
	are equivalences of categories, where by $D^*_{\crys}(\QCrys_X^{F^r})$ we mean complexes whose cohomology sheaves are in the essential image of $\Crys_X^{F^r} \to \QCrys_X^{F^r}$. If $X$ is in addition semi-separated or regular, then also the functor \[ \begin{tikzcd}	D^b(\Coh_{X}^{F^r}) \arrow[rr] & & D^b_{\coh}(\cO_X[F^r])
	\end{tikzcd} \] is an equivalence.
\end{prop}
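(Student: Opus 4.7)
The plan is to establish the two equivalences in sequence: first the $F$-module version, and then deduce the crystal version via a Serre quotient argument. As a preliminary observation, the category $\Coh_X^{F^r}$ sits inside $\QCoh_X^{F^r}$ as a weak Serre subcategory, since kernels, cokernels, and extensions of coherent $F$-modules within $\QCoh_X^{F^r}$ remain coherent (the corresponding statement holds at the level of underlying $\cO_X$-modules on a Noetherian scheme, and coherence of the underlying $\cO_X$-module is exactly what we need). In particular, the natural functor $D^*(\Coh_X^{F^r}) \to D^*_{\coh}(\QCoh_X^{F^r})$ is well-defined.

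For the first equivalence, the strategy is to prove fully faithfulness and essential surjectivity, with the two steps intertwined. Fully faithfulness amounts to an isomorphism $\Ext^i_{\Coh}(\cM, \cN) \xrightarrow{\sim} \Ext^i_{\QCoh}(\cM, \cN)$ for $\cM, \cN \in \Coh_X^{F^r}$: the degrees $i = 0, 1$ follow directly from the weak Serre property, while higher degrees are obtained via a Yoneda Ext argument, replacing middle terms in an $i$-fold extension by coherent sub-$F$-modules. Essential surjectivity then proceeds by induction on the cohomological amplitude of $K \in D^b_{\coh}(\QCoh_X^{F^r})$ supported in $[a, b]$, using the truncation triangle
\[ \tau_{\leq b-1}K \to K \to \cH^b(K)[-b] \to \tau_{\leq b-1}K[1]. \]
The term $\cH^b(K)[-b]$ is a single coherent $F$-module in degree $b$ and hence lies in $D^b(\Coh_X^{F^r})$; the truncation $\tau_{\leq b-1}K$ has strictly smaller amplitude and lies in $D^b(\Coh_X^{F^r})$ by induction; and fully faithfulness realizes the gluing class in $\Ext^{b+1}_{\Coh}(\cH^b(K), \tau_{\leq b-1}K)$. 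The bounded-above case $* = -$ is then deduced from the $* = b$ case by the standard downward induction starting from the top cohomology degree.

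For the second equivalence, I would invoke the Serre quotient formalism. Since $\Nil = \LNil \cap \Coh_X^{F^r}$ inside $\QCoh_X^{F^r}$, combining \autoref{derived_functors_and_localization} with \cite[Theorems 2.6.2 and 2.7.1]{Bockle_Pink_Cohomological_Theory_of_crystals_over_function_fields} shows that the first equivalence descends compatibly to the Serre quotients $\Crys_X^{F^r} = \Coh_X^{F^r}/\Nil$ and $\QCrys_X^{F^r} = \QCoh_X^{F^r}/\LNil$, yielding the stated second equivalence.

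The hard part will be fully faithfulness in degrees $i \geq 2$: unlike the classical setting $\Coh_X \subset \QCoh_X$, where one exploits that every quasi-coherent sheaf is a filtered union of coherent subsheaves, in the $F$-module setting the $F$-submodule of a quasi-coherent $F$-module generated by a coherent subsheaf need not itself be coherent. The Yoneda replacement must therefore be executed more carefully, and one plausible route is to factor through the intermediate category $\IndCoh_X^{F^r}$ and apply an abstract ind-categorical principle in the spirit of \autoref{D_B(A) = D(B) if B objects are B-acyclic}.
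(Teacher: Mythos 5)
The paper's own ``proof'' of this proposition is a one-line citation to \cite[Theorem 5.3.1]{Bockle_Pink_Cohomological_Theory_of_crystals_over_function_fields}, so there is no detailed argument in the paper to compare against. Your proposal outlines a reasonable devissage (Ext comparison in $\Coh$ versus $\QCoh$, then amplitude induction for essential surjectivity, then Serre quotient), but it has a genuine gap that you acknowledge and do not close, and the proposed escape route is in trouble for reasons the paper itself points to.

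The gap is exactly where you flag it: the Yoneda replacement in degrees $i\geq 2$ requires, at the rightmost stage, a coherent sub-$F$-module $E'\subset E$ of a quasi-coherent $F$-module $E$ surjecting onto a coherent quotient $\cM$. This genuinely fails: for instance over $R=k[x]$ take $E=(R_x/R)\oplus R$ with twisted structure $\tau_E(a,r)=(a^p+r^p\cdot\overline{x^{-1}},\,r^p)$ and $\cM=R$. Any $F$-submodule of $E$ mapping onto $R$ contains an element $(k',1)$, hence contains $\tau^j(k',1)-(k',1)$ whose poles grow like $p^j$; the absence of an Artin--Schreier solution of $k'^p-k'=-\overline{x^{-1}}$ inside $R_x/R$ means these do not stabilize, so no such $F$-submodule is coherent. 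This does not contradict the proposition (the module $E$ itself has non-coherent $\cH^0$, so $E$ in degree $0$ is not in $D^b_{\coh}$), but it does kill the naive Yoneda replacement, and some other mechanism is needed.

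Your proposed detour through $\IndCoh_X^{F^r}$ is unlikely to provide that mechanism. First, ind-coherence for $F$-modules is very restrictive: the paper explicitly records that $\cO_X[F^r]$ and $\Frac(R)$ (for $R$ a non-field domain), with their natural $F$-structures, are never ind-coherent. Second, the analogous equivalence for Cartier ind-coherent modules in the paper rests on \autoref{inj qalg is inj O_X}, which is proved under an $F$-finiteness hypothesis and which the remark after \autoref{remark injectives F-modules are also injectives O_X for X regular} says fails for $F$-modules without regularity; but this proposition is stated for an arbitrary Noetherian $\bF_p$-scheme. Third, \autoref{D_B(A) = D(B) if B objects are B-acyclic} is formulated for $* \in \{+,b\}$ via a right adjoint, whereas the present proposition is stated for $* \in \{-, b\}$, which is a strong hint that the correct argument is dual -- resolving from the left, presumably using presentations by free $\cO_X[F^r]$-modules or a careful bounded-above approximation as in Böckle--Pink -- not via the right-adjoint machinery you invoke. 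So as it stands the key technical step is not established, and the suggested route would need an entirely different foundation.
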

\begin{proof}
	The first two statements are part of \cite[Theorem 5.3.1]{Bockle_Pink_Cohomological_Theory_of_crystals_over_function_fields} (their proof only uses Noetherianness of $X$). Let us show the last one. By our work above, we only have to show that $D(\QCoh_X^{F^r}) \to D_{\qcoh}(\cO_X[F^r])$ is an equivalence of categories. By \cite[{\href{https://stacks.math.columbia.edu/tag/077P}{Tag 077P}}]{Stacks_Project}, there exists a right adjoint $Q$ of the inclusion $\QCoh_X \ra \Mod(\Ocal_X)$. Furthermore, we have that $Q \circ F^r_* = F^r_* \circ Q$ (this equality hold for their left adjoints). Hence, if $\Mcal$ is a Frobenius module, then $Q(\Mcal)$ is naturally a (quasi-coherent) Frobenius module. This defines a functor $Q \colon \Mod(\Ocal_X[F^r]) \ra \QCoh_X^{F^r}$, right adjoint of the inclusion $\QCoh_X^{F^r} \ra \Mod(\Ocal_X[F^r])$. 
	
	By definition, the square \[ \begin{tikzcd}
		{\Mod(\Ocal_X[F^r])} \arrow[rr, "Q"] \arrow[d] &  & \QCoh_X^{F^r} \arrow[d] \\
		\Mod(\Ocal_X) \arrow[rr, "Q"]                  &  & \QCoh_X          
	\end{tikzcd} \] commutes. Assume for now that we know that its derived version	\[ \begin{tikzcd}
		{D^+(\Ocal_X[F^r])} \arrow[rr, "RQ"] \arrow[d] &  & D^+(\QCoh_X^{F^r}) \arrow[d] \\
		D^+(\Ocal_X) \arrow[rr, "RQ"]                  &  & D^+(\QCoh_X)          
	\end{tikzcd} \] also commutes. Then to show that $R^iQ(\Mcal) = 0$ for all $\Mcal \in \QCoh_X^{F^r}$ (which concludes the proof by \autoref{D_B(A) = D(B) if B objects are B-acyclic}), it suffices to show that $R^iQ(\Mcal) = 0$ for all $\Mcal \in \QCoh_X$. Given that injectives in $\QCoh_X$ are also injectives in $\Mod(\Ocal_X)$, in order to compute $R^iQ(\Mcal)$, we may resolve $\Mcal$ by injectives quasi-coherent modules. Since $Q$ is the identity on quasi-coherent modules (it is the right adjoint of a fully faithful functor), we deduce that also $RQ$ is the identity on quasi-coherent modules. 

	Thus, we are left to show that the square above commutes. In other words, we have to show that an injective object in $\Mod(\cO_X[F^r])$ is $Q$-acyclic. If $X$ is regular, this is immediate, since an injective in $\Mod(\cO_X[F^r])$ is also injective in $\Mod(\cO_X)$ by \cite[1.7.2]{Emerton_Kisin_Riemann-Hilbert_correspondence} (see also \autoref{remark injectives F-modules are also injectives O_X for X regular}).
	
	Let us now assume that $X$ is semi-separated, pick an injective object $\cI$ in $\Mod(\cO_X[F^r])$, and let $\{U_i\}$ be a semi-separating affine cover of $X$ (i.e. each inclusion $j_{U_i} \colon U_i \to X$ is affine). Then by the sheaf condition, we have an injection $\cI \inj \bigoplus_i j_{U_i, *}\cI|_{U_i}$ of Frobenius modules, so it splits by injectivity of $\cI$. It is then enough to show that $\bigoplus_i j_{U_i, *}\cI|_{U_i}$ is $Q$-acyclic, and hence that each $j_{U_i, *}\cI|_{U_i}$ is $Q$-acyclic. Since $\cI$ is flasque, so is $\cI_{U_i}$, which implies that this latter sheaf is $j_{U_i, *}$-acyclic. Thus, \[ RQ(j_{U_i, *}\cI|_{U_i}) = RQ(Rj_{U_i, *}\cI|_{U_i}) \expl{\cong}{\cite[{\href{https://stacks.math.columbia.edu/tag/08D8}{Tag 08D8}}]{Stacks_Project}} Rj_{U_i, *}RQ(\cI|_{U_i}). \]
	Given $j \geq 0$, we know by \cite[{\href{https://stacks.math.columbia.edu/tag/08D9}{Tag 08D9}}]{Stacks_Project} that $R^jQ(\cI_{U_i})$ is the quasi--coherent module corresponding to $H^j(U_i, \cI|_{U_i})$. By flasqueness of $\cI|_{U_i}$, this vanishes for $j > 0$, so $RQ(\cI|_{U_i}) = Q(\cI|_{U_i})$. We are thus left to show that $Q(\cI|_{U_i})$ is $j_{U_i, *}$-acyclic. This is immediate, since $j_{U_i}$ is affine.
\end{proof}

\begin{defn}
	Let $X$ be a Noetherian $\bF_p$-scheme, and let $\cM \in D^b(\QCrys_X^{F^r})$. Define \[ \Supp_{\crys}(\cM) \coloneqq \set{x \in X}{\cM_x \not\sim_F 0}. \]
\end{defn}

\begin{lemma}\label{support F-crystals}
	Let $X$ be a Noetherian $\bF_q$-scheme, and let $\cM \in D^b(\Crys_X^{F^r})$.
	\begin{enumerate}
		\item\label{support_Sol} We have an equality \[ \set{x \in X}{\cM \otimes k(x) \not\sim_F 0} = \Supp(\Sol(\cM)), \] and this subset is constructible. In particular, $\cM \sim_F 0$ if and only if $\cM \otimes k(x) \sim_F 0$ for all $x \in X$.
		\item\label{supp_crys} We have that \[ \Supp_{\crys}(\cM) = \overline{\Supp(\Sol(\cM))}, \] so in particular it is closed.
		\item\label{restricts_and_comes_back} Let $Z \coloneqq \Supp_{\crys}(\cM)$. Then $\cM \sim_F i_{Z, *}i_Z^*\cM$, where $i_Z \colon Z \to X$ denotes the inclusion (the functor $i_Z^*$ is exact by \autoref{main thm Bhatt Lurie} and the analogous statement for étale $\bF_q$-sheaves).
	\end{enumerate}
\end{lemma}
\begin{proof}
	\begin{enumerate}
		\item Note that $\cM \otimes k(x) \sim_F 0$ if and only if $\cM \otimes k(x)^{\sep} \sim_F 0$, so the first result follows since by \autoref{main thm Bhatt Lurie}, we have $\Sol(\cM \otimes k(x)^{\sep}) = \Sol(\cM)_{\overline{x}}$ (the Riemann-Hilbert correspondence commutes with pullbacks). The constructibility result then follows from \cite[{\href{https://stacks.math.columbia.edu/tag/09YS}{Tag 09YS}}]{Stacks_Project}, and the statement after ``In particular'' is now immediate from \autoref{main thm Bhatt Lurie}.
		\item By \autoref{support_Sol}, we have that $\Supp(\Sol(\cM)) \inc \Supp_{\crys}(\cM)$. Let $Z$ denote the closure of $\Supp(\Sol(\cM))$. Then $\Sol(\cM) \cong i_{Z, *}i_Z^*\Sol(\cM)$, so by \autoref{main thm Bhatt Lurie}, we have $\cM \sim_F i_{Z, *}i_Z^*\cM$. Thus, $\Supp_{\crys}(\cM) \inc Z$. We are left to show that $\Supp_{\crys}(\cM)$ is closed. We may then assume that $\cM$ is a sheaf. Let $\cM' \in \Coh_X^{F^r}$ be such that $\cM' \sim_F \cM$ and $\tau_{\cM'}$ is injective (e.g. take $\cM'$ to be the image of $\cM \to \cM^{1/p^{\infty}}$). Then we have \[ \Supp_{\crys}(\cM) = \Supp_{\crys}(\cM') = \Supp(\cM'), \] so we are done.
		\item This follows from the proof of \autoref{supp_crys}.
	\end{enumerate}
\end{proof}

\section{Cartier modules}
Our goal is to show some structural results on Cartier modules, which will be crucial for the duality to come. The most important result is an analogue of \autoref{D^b(Coh) = D^b_{coh}(QCoh) for F-modules} for Cartier modules (see \autoref{D^b(Coh) = D^b_{coh}(QCoh) for Cartier modules}). The two main ingredients to prove this are Gabber's result saying that a Noetherian F-finite ring is a quotient of a regular ring admitting a $p$-basis (see \cite[Remark 13.6]{Gabber_notes_on_some_t_structures}), and the fact that injective objects in the category of ind-coherent Cartier modules are also injective as $\cO_X$-modules (\autoref{inj qalg is inj O_X}). This latter fact heavily relies on finiteness properties of Cartier modules, together with the classification of injective quasi-coherent modules (\cite{Matlis_Injective_modules_over_Noetherian_rings}, \cite{Hartshorne_Residues_and_Duality}). \\

As before, fix $r \geq 1$ and set $q = p^r$.

\subsection{The category of Cartier modules}
\begin{defn}\label{def Cart module}
	Let $X$ be an $\bF_p$-scheme, and let $\Mcal$ be an $\Ocal_X$-module. An \emph{$r$-Cartier module} (or simply \emph{Cartier module}) structure on $\Mcal$ is the datum of a morphism $\kappa_{\cM} \colon F^r_*\Mcal \ra \Mcal$. A morphism of Cartier modules is a morphism of underlying $\cO_X$--modules commuting with the respective Cartier module structures (i.e. satisfying the analogue of the diagram in \autoref{def F-module}). We denote the category of Cartier modules by $\Mod(\Ocal_X[F^r]^{op})$ or $\Mod(\Ocal_X[C^r])$. 
	
	The full subcategory of quasi-coherent Cartier modules is denoted by $\QCoh_X^{C^r}$. 
\end{defn}

\begin{rem}\label{rem def Cartier module}
	\begin{enumerate}
		\item Unless stated otherwise, the structural morphism of a Cartier module $\cM$ will always be denoted $\kappa_{\cM}$.
		\item The notation $\Mod(\Ocal_X[F^r]^{op})$ makes more sense because of the next observation. However it is quite cumbersome, so we will usually write it $\Mod(\Ocal_X[C^r])$.
		\item We can take pushforward of Cartier modules, and as we will see in \autoref{eq def of Cartier mod} we can also take the right adjoint of the pushforward for finite morphisms (on quasi-coherent Cartier modules). 
		\item\label{tensor product of Cartier mod and unit F-mod} Let $\cM \in \QCoh_X^{C^r}$ and let $\cN$ be a unit Frobenius module (see \autoref{def F-module}). Then $\cM \otimes \cN$ can be given the structure of a Cartier module as follows: \[ \begin{tikzcd}
			F^r_*(\cM \otimes \cN) \arrow[rr, "F^r_*\left(id \otimes (\tau_{\cN}^*)^{-1}\right)"] && {F^r_*(\cM \otimes F^{r, *}\cN)} \arrow[rr, "\cong"] && F^r_*\cM \otimes \cN \arrow[rr, "\kappa_{\cM} \otimes id"] && \cM \otimes \cN.
		\end{tikzcd} \]
	\end{enumerate}
\end{rem}
\begin{obs*}
	Let $X$ be an $\bF_p$-scheme. The category of (quasi-coherent) Cartier modules is equivalent to the category of (quasi-coherent) sheaves of right $\Ocal_X[F^r]$-modules (or equivalently of left $\Ocal_X[F^r]^{op}$-modules).
\end{obs*}
\begin{cor}\label{Cartier modules is a Groth cat}
	The category $\QCoh_X^{C^r}$ is a Grothendieck category. In particular it has enough injectives.
\end{cor}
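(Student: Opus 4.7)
The plan is to deduce this directly from \autoref{qcoh A-modules is a Grothendieck category} by means of the observation immediately preceding the corollary. That observation identifies $\QCoh_X^{C^r}$ with the category of quasi-coherent left $\cO_X[F^r]^{op}$-modules, so it suffices to verify that $\cO_X[F^r]^{op}$ is a quasi-coherent sheaf of (non-commutative) $\cO_X$-algebras in the sense of \autoref{def of qcoh A-modules}, and then invoke that lemma.

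The ring structure on $\cO_X[F^r]^{op}$ is inherited from $\cO_X[F^r]$, and the structural map $\cO_X \to \cO_X[F^r]^{op}$ is unchanged (since $\cO_X$ is commutative, the image lies in the centre). The only point requiring verification is that $\cO_X[F^r]^{op}$ is quasi-coherent as an $\cO_X$-module: by definition of the opposite algebra, its left $\cO_X$-action corresponds to the right $\cO_X$-action on $\cO_X[F^r]$. Using the commutation relation $F^r a = a^q F^r$ to move scalars to one side, every element of $\cO_X[F^r]$ admits a unique expression of the form $\sum_{i} a_i F^{ri}$ with $a_i$ on the left, giving a decomposition of right $\cO_X$-modules $\cO_X[F^r] \cong \bigoplus_{n \geq 0} F^{rn}_* \cO_X$, which is manifestly quasi-coherent.

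Having established this, \autoref{qcoh A-modules is a Grothendieck category} applied to $\cA = \cO_X[F^r]^{op}$ yields that $\QCoh_X^{C^r} \cong \QCoh_X^{\cO_X[F^r]^{op}}$ is a Grothendieck category. The ``in particular'' statement about the existence of enough injectives is a standard consequence of being a Grothendieck category \cite[\href{https://stacks.math.columbia.edu/tag/079H}{Tag 079H}]{Stacks_Project}, exactly as in the proof of the cited lemma. The only mildly subtle step is the quasi-coherence verification above; everything else is bookkeeping.
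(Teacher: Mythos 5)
Your proof is correct and follows the same route as the paper: identify $\QCoh_X^{C^r}$ with quasi-coherent left $\cO_X[F^r]^{op}$-modules via the preceding observation, verify that $\cO_X[F^r]^{op}$ is quasi-coherent using the decomposition $\cO_X[F^r]^{op} \cong \bigoplus_{n \geq 0} F^{rn}_*\cO_X$, and invoke \autoref{qcoh A-modules is a Grothendieck category}. (One small notational nit: the isomorphism $\cO_X[F^r]^{op} \cong \bigoplus_{n\geq 0} F^{rn}_*\cO_X$ is of left $\cO_X$-modules once you pass to the opposite algebra, as the paper states it; calling it a ``decomposition of right $\cO_X$-modules'' of $\cO_X[F^r]$ is of course the same content but mixes the two viewpoints mid-sentence.)
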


\begin{proof}
	This follows from  \autoref{qcoh A-modules is a Grothendieck category}, once we know that $\Ocal_X[F^r]^{op}$ is quasi-coherent. This is immediate, since as $\cO_X$-modules, \[ \Ocal_X[F^r]^{op} \cong \bigoplus_{n \geq 0} F^{rn}_*\cO_X. \]
\end{proof}

\begin{lem}\label{Mod(O_X[C]) has enough injectives}
	The category $\Mod(\Ocal_X[C^r])$ is a Grothendieck category.
\end{lem}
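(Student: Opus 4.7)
The plan is to mimic precisely the argument used for $\Mod(\Ocal_X[F^r])$ in \autoref{all cats of F-mods are Grothendieck}: since $\Mod(\Ocal_X[C^r])$ is by the observation above nothing else than the category of sheaves of left modules over the sheaf of rings $\Ocal_X[F^r]^{op}$, it fits into the standard framework of modules over a ringed space, and the Grothendieck property comes essentially for free.

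First I would verify the Grothendieck axioms directly. The category is clearly abelian, since kernels and cokernels of morphisms of $\Ocal_X[C^r]$-modules are computed as sheaves of $\Ocal_X$-modules with the induced Cartier structure. For axiom AB5, filtered colimits of sheaves of modules over a sheaf of rings are computed sectionwise (then sheafified), and since the forgetful functor to sheaves of abelian groups creates filtered colimits, exactness follows from the exactness of filtered colimits in $\mathrm{Ab}$ applied stalkwise.

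For the existence of a generating set, I would take the collection \[ \bigl\{\,j_{U,!}\bigl(\Ocal_U[C^r]\bigr)\,\bigr\}_{U \subseteq X \text{ open}}, \] where $j_U \colon U \hookrightarrow X$ is the open inclusion and $\Ocal_U[C^r] = \Ocal_X[C^r]|_U$. As in the proof of \autoref{all cats of F-mods are Grothendieck}, a standard adjunction computation gives \[ \Hom_{\Mod(\Ocal_X[C^r])}\bigl(j_{U,!}\Ocal_U[C^r],\Mcal\bigr) \cong \Mcal(U), \] so for any nontrivial morphism $\varphi \colon \Mcal \to \Ncal$ one can detect nontriviality on some section over an open $U$, giving a map from $j_{U,!}\Ocal_U[C^r]$ whose postcomposition with $\varphi$ is nonzero. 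Thus this set generates.

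The only subtlety, and the place I would be most careful, is to confirm that $\Ocal_X[F^r]^{op}$ really is a sheaf of rings (not just of $\Ocal_X$-modules), so that the framework of \cite[{\href{https://stacks.math.columbia.edu/tag/079H}{Tag 079H}}]{Stacks_Project} applies. This is immediate from the description $\Ocal_X[F^r]^{op} \cong \bigoplus_{n \geq 0} F^{rn}_*\Ocal_X$ already used in \autoref{Cartier modules is a Groth cat}, with multiplication given on generators by the formula dual to $F^r a = a^q F^r$. Once this is in place, \cite[{\href{https://stacks.math.columbia.edu/tag/079H}{Tag 079H}}]{Stacks_Project} additionally provides, for free, the existence of enough injectives, which is what the lemma will later be used for.
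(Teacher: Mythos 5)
Your proposal is correct and takes exactly the same route as the paper: the paper's proof simply says ``Same proof as in \autoref{all cats of F-mods are Grothendieck},'' where the generators are taken to be $\{j_!\Ocal_U[F^r]\}_{j\colon U\hookrightarrow X}$, and the corresponding argument for $\Ocal_X[F^r]^{op}$-modules is what you have written out. Nothing to add.
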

\begin{proof}
	Same proof as in \autoref{all cats of F-mods are Grothendieck}.
\end{proof}

Now we want to compare injective quasi-coherent Cartier modules and injective quasi-coherent $\Ocal_X$-modules.
\begin{prop}\label{inj qcoh Cartier is inj O_X}
	Let $X$ be an $\Ff_p$-scheme, and let $\Ical$ be an injective object in $\QCoh_X^{C^r}$ (resp. $\Mod(\Ocal_X[C^r])$). Then $\Ical$ is also injective in $\QCoh_X$ (resp. $\Mod(\Ocal_X)$). 
	
	In particular, if $X$ is locally Noetherian and $\Ical$ is injective in $\QCoh_X^{C^r}$, then it is also injective in $\Mod(\Ocal_X)$.
\end{prop}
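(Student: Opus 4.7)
The plan is to produce an exact left adjoint to the forgetful functor $U \colon \QCoh_X^{C^r} \to \QCoh_X$ (respectively $\Mod(\cO_X[C^r]) \to \Mod(\cO_X)$), from which preservation of injectives by $U$ is automatic.

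For any $\cO_X$-module $\cN$, I would define the \emph{free Cartier module} on $\cN$ by
\[ \mathrm{Free}(\cN) \coloneqq \bigoplus_{n \geq 0} F^{rn}_* \cN, \]
equipped with the Cartier structure
\[ \kappa \colon F^r_* \mathrm{Free}(\cN) \;\cong\; \bigoplus_{n \geq 0} F^{r(n+1)}_* \cN \;\longrightarrow\; \mathrm{Free}(\cN) \]
that identifies the $n$-th summand on the left with the $(n+1)$-th summand on the right. Unwinding the definition, a morphism of Cartier modules $\phi \colon \mathrm{Free}(\cN) \to \cM$ is a sequence $\phi_n \colon F^{rn}_* \cN \to \cM$ satisfying $\phi_{n+1} = \kappa_{\cM} \circ F^r_* \phi_n$, so $\phi$ is uniquely determined by $\phi_0$; conversely any $\cO_X$-linear map $\cN \to \cM$ extends to such a $\phi$ by iterating $\kappa_{\cM}$. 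Hence
\[ \Hom_{C^r}(\mathrm{Free}(\cN), \cM) \;=\; \Hom_{\cO_X}(\cN, U(\cM)), \]
which gives the desired adjunction. Quasi-coherence is preserved by $\mathrm{Free}$ because Frobenius is affine, and the same construction works verbatim for arbitrary $\cO_X$-modules in the second variant.

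Exactness of $\mathrm{Free}$ is the key remaining point, but it is immediate: $F^{rn}_*$ is exact since the absolute Frobenius is a universal homeomorphism, and small direct sums are exact in $\QCoh_X$ and in $\Mod(\cO_X)$. Since $U$ is right adjoint to an exact functor, it preserves injectives, proving the first assertion in both variants. The ``in particular'' clause follows by composition: given $\cI$ injective in $\QCoh_X^{C^r}$ with $X$ locally Noetherian, the first part makes $\cI$ injective in $\QCoh_X$, and then \autoref{injective qcoh is injective O_X} upgrades this to injectivity in $\Mod(\cO_X)$.

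I do not expect a genuine obstacle; the only point requiring care is verifying that the definition of $\kappa$ on $\mathrm{Free}(\cN)$ — shifting summands rather than, say, summing — is precisely the one that forces a morphism out of $\mathrm{Free}(\cN)$ to be determined by its $n = 0$ component. This is purely mechanical.
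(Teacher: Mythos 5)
Your proof is correct, and it takes a genuinely different — and cleaner — route than the paper's. You construct the left adjoint $\mathrm{Free}(\cN) = \bigoplus_{n\ge 0} F^{rn}_*\cN$ to the forgetful functor (this is just $\cN \otimes_{\cO_X}\cO_X[F^r]^{op}$ viewed as a right $\cO_X[F^r]$-module), observe that it is exact because $F^{rn}_*$ is exact (Frobenius is affine, in fact a homeomorphism on underlying spaces) and direct sums are exact, and conclude formally that the right adjoint preserves injectives. Your verification that a Cartier-module map out of $\mathrm{Free}(\cN)$ is determined by, and freely specified by, its degree-$0$ component is exactly the recursion $\phi_{n+1}=\kappa_{\cM}\circ F^r_*\phi_n$, and the commutativity with the shift on $\mathrm{Free}(\cN)$ is immediate. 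Note that while $F^{rn}_*\cO_X$ is typically \emph{not} flat over $\cO_X$ (Kunz), this is irrelevant: exactness of $\mathrm{Free}$ requires $\cO_X[F^r]^{op}$ to be flat as a \emph{left} $\cO_X$-module, and as such it is free of countable rank, so there is no regularity hypothesis hiding here.

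The paper instead argues by hand: given $\cM\hookrightarrow\cN$ in $\QCoh_X$ and $f\colon\cM\to\cI$, it takes the smallest Cartier submodule $\cM'\subseteq\cI$ containing $\im f$, embeds $\cM'$ into an injective $\cJ$ of $\QCoh_X$, transfers the Cartier structure to $\cJ$ using injectivity of $\cJ$, extends $\cM'\hookrightarrow\cI$ along $\cM'\hookrightarrow\cJ$ using injectivity of $\cI$ in Cartier modules, and then extends $\cM\to\cJ$ to $\cN\to\cJ$ using plain injectivity of $\cJ$. Your adjunction argument is shorter and more conceptual, and also immediately re-proves that these module categories have enough injectives. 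What the paper's ad hoc argument buys, though, is flexibility: essentially the same argument is redeployed later in \autoref{inj qalg is inj O_X} for the category of \emph{ind-coherent} Cartier modules, where your free functor does not help (the free Cartier module on a coherent sheaf is never ind-coherent), and that later result is the one the paper actually needs for the duality. So for the present statement your proof is arguably the right one to write down, but the paper's template is the one that generalizes to its setting of interest.
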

\begin{proof}
	By \autoref{injective qcoh is injective O_X}, it is enough to show the first statement. We only show the quasi-coherent case, as the non-necessarily quasi-coherent case is identical. The main point is that any injective $\cO_X$-module containing a Cartier module also acquires a Cartier structure.
	
	Let $\Mcal \inj \Ncal$ be an inclusion of quasi-coherent modules, and $f \colon \Mcal \ra \Ical$ a morphism of $\Ocal_X$-modules. Let $\Mcal'$ be the smallest quasi-coherent Cartier submodule of $\Ical$ containing $\im(f)$. 
	
	Let $\Jcal$ be an injective in $\QCoh_X$ containing $\Mcal'$. We have a diagram \[ \begin{tikzcd}
		F^r_*\Mcal' \arrow[r, "\kappa_{\cM'}"] \arrow[d, hook] & \Mcal' \arrow[d, hook] \\
		F^r_*\Jcal                             & {\Jcal.}               
	\end{tikzcd} \]
	By injectivity of $\Jcal$, there exists a Cartier structure on $\Jcal$ such that the injection $\Mcal' \inj \Jcal$ becomes an injection of Cartier modules. We have a diagram \[ \begin{tikzcd}
		\Mcal' \arrow[d] \arrow[r, hook] & \Jcal \\
		\Ical                       &      
	\end{tikzcd} \] of Cartier modules, so given that $\Jcal$ is a quasi-coherent Cartier module and $\Ical$ is injective in $\QCoh_X^{C^r}$, there exists a morphism $\mu \colon \Jcal \ra \Ical$ completing the above diagram. Thus, we obtain
	\[ \begin{tikzcd}
		\cN                            &                          &                        &     \\
		\cM  \arrow[u, hook] \arrow[r] & \cM' \arrow[r, "\inc"'] & \cJ \arrow[r, "\mu"'] & \cI.
	\end{tikzcd} \]
	Since $\Jcal$ is injective in $\QCoh_X$, we are done.
\end{proof}

\begin{rem}\label{remark injectives F-modules are also injectives O_X for X regular}
	One could try to mimic the above proof in the case of Frobenius modules, but there is a problem: if $\Jcal$ is injective in $\QCoh_X$, is $F_*\Jcal$ also injective in $\QCoh_X$? 
	
	This holds when $X$ is regular. Indeed, $F$ is then flat by Kunz' theorem (see \cite[Theorem 2.1]{Kunz_Characterizations_of_regular_local_rings_of_char_p}), so $F_*$ is the right adjoint of an exact functor, and hence preserves injectives.
\end{rem}

\begin{prop}\label{D(QCoh) = D_qcoh(O_X[C])}
	Let $X$ be a Noetherian $\bF_p$-scheme. Then for $* \in \{+, b\}$, the natural functor $D^*(\QCoh_X^{C^r}) \ra D^*_{\qcoh}(\Ocal_X[C^r])$ is an equivalence of categories.
\end{prop}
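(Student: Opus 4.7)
The plan is to apply \autoref{D_B(A) = D(B) if B objects are B-acyclic} with $\cA = \Mod(\Ocal_X[C^r])$ and $\cB = \QCoh_X^{C^r}$. That $\cB$ is a weak Serre subcategory of $\cA$ is immediate, since quasi-coherence is preserved under taking kernels, cokernels, and extensions in $\Mod(\Ocal_X)$. It then remains to construct a right adjoint $Q^C$ to the inclusion $\iota$ and to verify $R^iQ^C|_{\cB} = 0$ for $i > 0$; the equivalence for both $* \in \{+, b\}$ will then follow directly from the lemma.

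First I would construct $Q^C \colon \cA \to \cB$ by combining the classical quasi-coherator with the Cartier structure. Let $Q \colon \Mod(\Ocal_X) \to \QCoh_X$ denote the usual quasi-coherator, which exists since $X$ is Noetherian. Because $F^r \colon X \to X$ is affine, a Yoneda argument yields a canonical isomorphism $Q \circ F^r_* \cong F^r_* \circ Q$. Thus, given $\cN \in \Mod(\Ocal_X[C^r])$ with structural map $\kappa_\cN$, applying $Q$ produces a morphism $F^r_*Q(\cN) \cong Q(F^r_*\cN) \to Q(\cN)$, endowing $Q(\cN)$ with a canonical Cartier module structure; call the resulting object $Q^C(\cN)$. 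A standard diagram chase, using the adjunction of $Q$ together with the naturality of $Q \circ F^r_* \cong F^r_* \circ Q$, confirms that $Q^C$ is right adjoint to $\iota$.

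The key step will be to show that $R^iQ^C(\cM) = 0$ for $\cM \in \QCoh_X^{C^r}$ and $i > 0$. Let $U \colon \Mod(\Ocal_X[C^r]) \to \Mod(\Ocal_X)$ and $U' \colon \QCoh_X^{C^r} \to \QCoh_X$ denote the forgetful functors; both are exact, and $U'$ reflects zero objects. By construction, $U' \circ Q^C = Q \circ U$. The crucial input is \autoref{inj qcoh Cartier is inj O_X}, which says that an injective object of $\Mod(\Ocal_X[C^r])$ remains injective as an $\Ocal_X$-module. This means $U$ preserves injectives, so composition of right-derived functors gives $R(Q \circ U) = RQ \circ U$. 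Combined with the exactness of $U'$, this yields
\[ U' \circ R^iQ^C(\cM) \;\cong\; R^iQ(U\cM) \quad \text{for all } \cM \in \Mod(\Ocal_X[C^r]) \text{ and } i \geq 0. \]
For $\cM \in \QCoh_X^{C^r}$, $U\cM$ is quasi-coherent, so the classical equivalence $D^+(\QCoh_X) \cong D^+_{\qcoh}(\Mod(\Ocal_X))$ (valid for locally Noetherian $X$) gives $R^iQ(U\cM) = 0$ for $i > 0$, whence $R^iQ^C(\cM) = 0$ since $U'$ reflects zero objects.

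The main obstacle lies entirely in this key step, which is only possible thanks to \autoref{inj qcoh Cartier is inj O_X}. With it in hand, \autoref{D_B(A) = D(B) if B objects are B-acyclic} directly yields the desired equivalences for both $* = +$ and $* = b$, with quasi-inverse given by $RQ^C$.
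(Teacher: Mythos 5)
Your proposal is correct and follows essentially the same route as the paper: construct the right adjoint by transporting the quasi-coherator $Q$ along the commutation $Q \circ F^r_* \cong F^r_* \circ Q$, use \autoref{inj qcoh Cartier is inj O_X} to identify the derived functor with the classical $RQ$ on underlying $\Ocal_X$-modules, and finish via \autoref{D_B(A) = D(B) if B objects are B-acyclic}. The only cosmetic difference is that you invoke the known equivalence $D^+(\QCoh_X) \cong D^+_{\qcoh}(\Ocal_X)$ as a black box for the vanishing of $R^iQ$ on quasi-coherent inputs, while the paper re-derives that vanishing by resolving directly via injective quasi-coherent sheaves; both are standard and interchangeable.
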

\begin{proof}
	Since an injective object in $\Mod(\cO_X[C^r])$ is also injective in $\Mod(\cO_X)$ (\autoref{inj qcoh Cartier is inj O_X}), we can apply the exact same proof as in the last point of \autoref{D^b(Coh) = D^b_{coh}(QCoh) for F-modules} in the regular case.
\end{proof}
\begin{rem}
	The above result can be generalized to quasi-compact and semi-separated schemes (mimic the proof of \cite[{\href{https://stacks.math.columbia.edu/tag/09T6}{Tag 09T6}}]{Stacks_Project}). 
\end{rem}

\subsection{Ind-coherent Cartier modules}

\begin{defn} 
	Let $X$ be a Noetherian $\bF_p$-scheme. A quasi-coherent Cartier module $\Mcal$ is said to be \emph{ind-coherent} if it is a filtered colimit of coherent Cartier modules. The full subcategory of ind-coherent Cartier modules is denoted by $\IndCoh_X^{C^r}$.
\end{defn}

\begin{lem}\label{basic props of ind-coherent objects}
	Let $X$ be a Noetherian $\bF_p$-scheme.
	\begin{itemize}
		\item Ind-coherence is stable under taking quotients, subobjects and colimits.
		\item A Cartier module $\Mcal$ is ind-coherent if and only if it is locally ind-coherent (i.e. there exists an open cover $\{U_i\}_{i \in I}$ of $X$ such that $\Mcal|_{U_i}$ is ind-coherent over $U_i$ for all $i$).
	\end{itemize}
\end{lem}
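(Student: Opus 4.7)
The plan is to exploit the Grothendieck category structure of $\QCoh_X^{C^r}$ (\autoref{Cartier modules is a Groth cat}) together with the key observation that on a Noetherian scheme, any Cartier submodule of a coherent Cartier module is again coherent (since $\cO_X$-submodules of coherent $\cO_X$-modules are coherent). With this in hand, the stability properties reduce to filtered colimit manipulations, whereas the local-to-global statement needs one extra idea: to generate a coherent Cartier submodule starting from a coherent $\cO_X$-submodule.

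For the first bullet, I would write $\cM = \colim_\alpha \cM_\alpha$ as a filtered union of coherent Cartier submodules. Given a Cartier submodule $\cN \subseteq \cM$, exactness of filtered colimits gives $\cN = \colim_\alpha (\cN \cap \cM_\alpha)$, and each intersection is coherent by the observation above. The quotient case is dual: the images of the $\cM_\alpha$ exhaust $\cQ$ and are coherent. For arbitrary colimits I reduce to direct sums and cokernels via the Grothendieck structure; cokernels fall under quotients, and a direct sum of ind-coherent modules is the filtered colimit of its finite partial direct sums, which are ind-coherent since finite direct sums of coherent Cartier modules are coherent. Finally, a filtered colimit of filtered colimits is itself a filtered colimit, so a filtered colimit of ind-coherent modules remains ind-coherent.

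For the second bullet, only the direction ``locally ind-coherent implies ind-coherent'' requires argument. Since $X$ is Noetherian, $\cM$ is the filtered union of its coherent $\cO_X$-submodules $\cF$. For each such $\cF$, I would consider the Cartier submodule it generates,
\[ \cN_\cF \;:=\; \sum_{k \geq 0} \kappa_\cM^k\!\left(F^{rk}_*\cF\right) \;\subseteq\; \cM, \]
and the hard part will be to prove that $\cN_\cF$ is coherent. Fix a finite affine cover $X = U_1 \cup \cdots \cup U_n$ on which $\cM|_{U_i} = \colim_j \cM_{i,j}$ with each $\cM_{i,j}$ a coherent Cartier submodule. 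Since $\cF|_{U_i}$ is coherent on a Noetherian affine, it is generated by finitely many sections, each of which lies in some $\cM_{i,j}$, so by directedness $\cF|_{U_i} \subseteq \cM_{i, j_i}$ for a single index $j_i$. Then $\cN_\cF|_{U_i} \subseteq \cM_{i,j_i}$ is a Cartier submodule of a coherent Cartier module, hence coherent by the observation from the first paragraph; coherence being local on a Noetherian scheme over a finite cover, $\cN_\cF$ is coherent globally. The $\cN_\cF$ then form a directed system of coherent Cartier submodules of $\cM$ whose union is all of $\cM$, establishing ind-coherence. The main trick is to generate the Cartier structure from a coherent $\cO_X$-piece rather than attempting to glue coherent Cartier modules across the cover directly.
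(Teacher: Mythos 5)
Your argument is correct and follows essentially the same route as the paper: for the first bullet you pass through the characterization of ind-coherence as being the union of all coherent Cartier submodules, and for the second you form the Cartier submodule $\cN_\cF$ generated by a coherent $\cO_X$-submodule $\cF$, then verify its coherence locally by trapping $\cN_\cF|_{U_i}$ inside a single coherent Cartier submodule of $\cM|_{U_i}$ using Noetherianness of $\cF|_{U_i}$. The paper's proof is identical in substance, using the same construction of the generated Cartier submodule and the same Noetherian-stabilization step on each affine piece.
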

\begin{proof}
	\begin{itemize}
		\item This is immediate from the analogue of \autoref{rem def ind-coherence} for Cartier modules.
		\item If $\Mcal$ is ind-coherent, then it is automatically locally ind-coherent. Conversely, since $X$ is Noetherian, we know by \cite[{\href{https://stacks.math.columbia.edu/tag/01PG}{Tag 01PG}}]{Stacks_Project} that $\Mcal$ is the union of its coherent $\Ocal_X$-submodules. Therefore, it is enough to show that for all coherent $\Ncal \inc \Mcal$, there exists a coherent Cartier module $\Pcal$ containing $\Ncal$. 
		
		 Define $\Pcal'$ to be the image of the following composition 
		\begin{gather}
			\bigoplus_{l = 1}^\infty F^{rl}_*\Ncal \xrightarrow{\inc} \bigoplus_{l = 1}^\infty F^{rl}_*\Mcal \ra \Mcal,
		\end{gather} where the last arrow is induced by the Cartier structure on $\cM$ (applied the adequate number of times). Then $\Pcal \coloneqq \Pcal' + \Ncal$ is the smallest quasi-coherent Cartier module containing $\Ncal$, and the same holds over any open subset. 
	
		Now, let $i \in I$, and let $\Mcal|_{U_i}$ be the union of the coherent Cartier modules $\{\Scal_j\}_j$. Then \[ \Ncal|_{U_i} = \bigcup_j(\Scal_j \cap \Ncal|_{U_i}). \] Since $\Ncal|_{U_i}$ is coherent, it is a Noetherian object in the category of quasi-coherent sheaves. Hence, $\Ncal|_{U_i} \inc \Scal_j$ for some $j$, so \[ \Ncal|_{U_i} \inc \Pcal|_{U_i} \inc \Scal_j. \] Thus, $\Pcal|_{U_i}$ is a subobject of a coherent module, so it is coherent.
	\end{itemize}
\end{proof}

\begin{cor}\label{indcoh Cartier is a Groth cat}
	Let $X$ be a Noetherian $\Ff_p$-scheme. Then the category $\IndCoh_X^{C^r}$ is a Grothendieck category.
\end{cor}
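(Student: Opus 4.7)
The plan is to verify the three defining properties of a Grothendieck category for $\IndCoh_X^{C^r}$: it is an abelian category, it admits exact filtered colimits, and it possesses a set of generators. I will leverage the fact that $\QCoh_X^{C^r}$ is already known to be a Grothendieck category (see \autoref{Cartier modules is a Groth cat}), together with the closure properties established in \autoref{basic props of ind-coherent objects}.

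First, I would note that $\IndCoh_X^{C^r}$ is a full subcategory of $\QCoh_X^{C^r}$ which is stable under subobjects, quotients, and arbitrary (hence in particular finite) colimits, by the first bullet of \autoref{basic props of ind-coherent objects}. Taking kernels (which are subobjects in $\QCoh_X^{C^r}$) and cokernels (which are quotients) stays inside $\IndCoh_X^{C^r}$, and the standard identification of image and coimage transfers from $\QCoh_X^{C^r}$. Hence $\IndCoh_X^{C^r}$ is an abelian subcategory and the inclusion is exact. Arbitrary small colimits exist and are computed in $\QCoh_X^{C^r}$; in particular filtered colimits are exact, since they are exact in the Grothendieck category $\QCoh_X^{C^r}$.

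For the generator, I would observe that the (essentially small) collection of coherent Cartier modules on $X$ forms a set of generators: by the very definition of ind-coherence, every object of $\IndCoh_X^{C^r}$ is a filtered colimit of its coherent Cartier submodules, so any nonzero morphism between ind-coherent Cartier modules is detected by a morphism from some coherent Cartier module. The essential smallness of $\Coh_X^{C^r}$ follows from the essential smallness of $\Coh_X$ on the Noetherian scheme $X$, since a coherent Cartier module is a coherent $\cO_X$-module together with the datum of a morphism $F^r_*\cM \to \cM$.

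There is no real obstacle here: the only mildly delicate point is ensuring that filtered colimits of coherent Cartier modules remain in $\IndCoh_X^{C^r}$ and that they agree with the filtered colimit taken in $\QCoh_X^{C^r}$, but this is immediate from the fact that a filtered colimit of filtered colimits of coherent objects is again a filtered colimit of coherent objects, combined with the closure under colimits proved in \autoref{basic props of ind-coherent objects}.
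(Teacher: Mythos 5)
Your proof is correct and follows the same route as the paper: invoke the closure properties from \autoref{basic props of ind-coherent objects} to get an abelian AB5 subcategory of $\QCoh_X^{C^r}$, and note that coherent Cartier modules form a (essentially small) generating set by the definition of ind-coherence. The only difference is that you spell out the essential-smallness of $\Coh_X^{C^r}$ and the exactness of filtered colimits, which the paper leaves implicit.
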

\begin{proof}
	By \autoref{basic props of ind-coherent objects}, this category is closed under kernels, cokernels and colimits, so it is an abelian category satisfying Grothendieck's axiom AB5. By definition, coherent objects form a set of generators so it is a Grothendieck category.
\end{proof}

\begin{notation}
	Recall that a Cartier module $M$ on an $\bF_p$-algebra $R$ has a right action of $R[F^r]$. For $m \in M$ and $s \in R[F^r]$, we denote this action by $m \cdot s$.
\end{notation}

\begin{lem}\label{locally ind-coh iff pseudo qalg}
	Let $R$ be a Noetherian $\bF_p$-algebra, let $M$ be a Cartier module on $R$, and let $m \in M$. For all $j \geq 1$, let $\{F^{rj}_*\lambda_{i, j}\}_{1 \leq i \leq n_j}$ be a set of generators of $F^{rj}_*R$. Then $m$ is included in a finitely generated Cartier submodule of $M$ if and only if there exists $j \geq 0$ such that for all $1 \leq i \leq n_j$, there exists $s_i \in R[F^r]$ of degree $rj$ with leading term $\lambda_{i, j}$ satisfying $m \cdot s_i = 0$.
\end{lem}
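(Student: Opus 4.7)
The plan is to filter the Cartier submodule $\langle m \rangle := m \cdot R[F^r] \inc M$ generated by $m$ by the ascending chain of $R$-submodules
\[ N_k := \{m \cdot s : s \in R[F^r],\ \deg s \leq rk\},\quad k \geq 0, \]
whose union is $\langle m \rangle$, and then to translate finite generation of $\langle m \rangle$ as an $R$-module into the stabilization of this filtration.

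Using the commutation $F^r a = a^q F^r$ in $R[F^r]$ together with the given generators $\{F^{rj}_*\lambda_{i,j}\}_i$ of $F^{rj}_*R$, one checks that every element $m \cdot a F^{rj}$ lies in the $R$-span of $\{m \cdot \lambda_{i, j} F^{rj}\}_i$. Hence each $N_k$ is generated as an $R$-module by the finite set $\{m \cdot \lambda_{i, j} F^{rj}\}_{0 \leq j \leq k,\, 1 \leq i \leq n_j}$, and in particular is finitely generated. By Noetherianity of $R$, the Cartier submodule $\langle m \rangle$ is finitely generated as an $R$-module if and only if the chain $(N_k)_k$ stabilizes, if and only if $m$ is contained in some coherent Cartier submodule of $M$.

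From there, the two implications reduce to bookkeeping. If $N_{j-1} = N_j$, then each generator $m \cdot \lambda_{i, j} F^{rj}$ of $N_j$ already lies in $N_{j-1}$, so $m \cdot \lambda_{i, j} F^{rj} = m \cdot s_i'$ for some $s_i' \in R[F^r]$ of degree $< rj$; the element $s_i := \lambda_{i, j} F^{rj} - s_i'$ then has degree $rj$ with leading coefficient $\lambda_{i, j}$ and satisfies $m \cdot s_i = 0$. Conversely, if such $s_i$ exist, each generator $m \cdot \lambda_{i, j} F^{rj}$ of $N_j$ lies in $N_{j-1}$, so $N_j = N_{j-1}$, and the containment $N_l \cdot F^r \inc N_{l+1}$ (a direct consequence of $(m \cdot s) \cdot F^r = m \cdot (sF^r)$) lets one propagate the stabilization to all $l \geq j-1$, concluding that $\langle m \rangle = N_{j-1}$ is finitely generated. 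The only mildly subtle point of the argument is the bookkeeping converting generators of $F^{rj}_*R$ via the commutation rule $F^r a = a^q F^r$ into elements of the form $m \cdot \lambda_{i, j} F^{rj}$; once this is in place, everything reduces to a standard Noetherian chain argument.
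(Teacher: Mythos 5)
Your proof is correct and follows essentially the same route as the paper: both identify the Cartier submodule $m\cdot R[F^r]$ with the union of the $R$-spans of $\{m\cdot\lambda_{i,j}F^{rj}\}_{j\le k}$ and reduce the statement to Noetherian stabilization of this chain, with the converse being a degree-reduction/propagation argument. The one step you state rather tersely is the propagation — what is actually needed (and is easy) is the equality $N_{l+1}=N_l+N_l\cdot F^r$, not merely the containment $N_l\cdot F^r\subseteq N_{l+1}$ — but this is implicit in your setup and does not affect the correctness of the argument.
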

\begin{proof}
	In order to make the proof nicer to read, we will omit the $F_*$ notations.
	
	The Cartier submodule generated by $m$ is exactly the submodule generated by the elements \[ \kappa_M^j(\lambda_{i, j}m) \] with $j \geq 1$, and $1 \leq i \leq n_j$. By Noetherianness of $R$, we deduce the ``left-to-right'' statement. 
	
	Conversely, assume the existence of $\{s_i\}_{1 \leq i \leq n_j}$ as in the statement, and write $s_i = \lambda_{i, j}F^{rj} + \sum_{k < j}\alpha_{i, k}F^{rk}$. Then for any $l > j$ and $a \in R$ (which we write $a = \sum_{1 \leq i \leq n_j} a_i^{p^{rj}}\lambda_{i, j}$), 
	\begin{align*}
		 \kappa_M^l(am) & = \kappa_M^{l - j}\left(\kappa_M^j(am)\right) \\
		 & = \sum_{1 \leq i \leq n_j}\kappa_M^{l - j}\left(a_i\kappa_M^j(\lambda_{i, j}m)\right) \\
		 & = - \sum_{1 \leq i \leq n_j}\kappa_M^{l - j}\left(a_i\sum_{k < j}\kappa_M^k(\alpha_{i, k}m)\right) \\
		 & \eqqcolon m\cdot t,
	\end{align*}	 
	 where $\deg(t) < rl$. Hence, we have proven by induction that for any $t_1 \in R[F^r]$, there exists $t_2 \in R[F^r]$ with $\deg(t_2) < rj$ such that $m \cdot t_1 = m \cdot t_2$. Thus, we conclude the proof.
\end{proof}

\begin{rem}\label{nilp is ind-coh}
	As an immediate corollary, a nilpotent Cartier module (i.e. a Cartier module $\Mcal$ such that $\kappa_{\cM}^n = 0$ for some $n > 0$) is ind-coherent.
\end{rem}

\begin{cor}[{\cite[Proposition 2.0.8]{Blickle_Bockle_Cartier_crystals}\label{ind-coh closed under extensions}}] Let $X$ be a Noetherian $F$-finite $\bF_p$-scheme. Then the category of ind-coherent modules is closed under extensions.
\end{cor}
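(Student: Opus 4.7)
The plan is to reduce to the affine case and use the explicit criterion of \autoref{locally ind-coh iff pseudo qalg}.

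First, by \autoref{basic props of ind-coherent objects} ind-coherence is a local property, so we may assume $X = \Spec(R)$ with $R$ Noetherian and $F$-finite, and work with a short exact sequence $0 \to M' \to M \to M'' \to 0$ of Cartier modules with $M'$ and $M''$ ind-coherent. To show $M$ is ind-coherent, it suffices to show that any $m \in M$ is contained in a finitely generated Cartier submodule of $M$.

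Let $\overline{m}$ denote the image of $m$ in $M''$. Since $M''$ is ind-coherent, \autoref{locally ind-coh iff pseudo qalg} applied to $\overline{m}$ yields $j \geq 0$ and, for each generator $\lambda_{i,j}$ of $F^{rj}_*R$, an element $s_i = \lambda_{i,j} F^{rj} + \sum_{k<j} \alpha_{i,k} F^{rk} \in R[F^r]$ such that $\overline{m}\cdot s_i = 0$. Then $m \cdot s_i \in M'$ for each $i$, and since $M'$ is ind-coherent the finitely many elements $\{m \cdot s_i\}_i$ are contained in a coherent Cartier submodule $N' \subseteq M'$.

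I claim that
\[
P \coloneqq \sum_{k \leq j} \sum_{b \in R} R \cdot \kappa_M^k(bm) + N'
\]
is a Cartier submodule of $M$ containing $m$. It is finitely generated as an $R$-module: $F$-finiteness ensures that for each $k \leq j$ the set $\{\kappa_M^k(bm) : b \in R\}$ spans a finitely generated $R$-module (one may take $b$ to range over a finite set of generators of $F^{rk}_*R$), and $N'$ is coherent. So once the claim is proven, $P$ is a coherent Cartier submodule of $M$ containing $m$, which completes the proof.

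It remains to verify that $P$ is stable under $\kappa_M$. For $k < j$ and $b,c \in R$, $\kappa_M(c\kappa_M^k(bm)) = \kappa_M^{k+1}((c^{p^r}b)m)$ lies in the first summand of $P$ since $k+1 \leq j$. For $k = j$, exactly the computation in the proof of \autoref{locally ind-coh iff pseudo qalg}, now carried out in $M$ rather than modulo $M'$, shows that for any $l > j$ and $a = \sum a_i^{p^{rj}}\lambda_{i,j} \in R$,
\[
\kappa_M^l(am) = -\sum_i \kappa_M^{l-j}\!\left(a_i \sum_{k<j}\kappa_M^k(\alpha_{i,k}m)\right) + \sum_i \kappa_M^{l-j}\!\left(a_i (m\cdot s_i)\right),
\]
where the first sum lies in $\sum_{k<l} R \cdot \kappa_M^k(R \cdot m)$ and the second sum lies in $N'$ (since $N'$ is Cartier-closed). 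Induction on $l$ shows every $\kappa_M^l(am)$ lies in $P$, so $P$ is indeed Cartier-closed. The main subtlety — and the only place where the argument differs from the one of \autoref{locally ind-coh iff pseudo qalg} — is the appearance of the error terms $m \cdot s_i$, which is exactly handled by absorbing them into the coherent Cartier submodule $N' \subseteq M'$ provided by the ind-coherence of $M'$.
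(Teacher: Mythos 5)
Your proof is correct, and it takes a slightly different route from the paper's, though both pivot on \autoref{locally ind-coh iff pseudo qalg}.

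The paper's argument, after lifting annihilators of the image $\overline{m}$ in $M''$, applies ind-coherence of $M'$ to get a second family of annihilators $s_{i',i}$ for the error terms $m\cdot s_i$, arranges (via $F$-finiteness and the monotonicity of the criterion in $j$) that the second family lives in a single degree $rj'$, and then observes that the compositions $s_i s_{i',i}$ have degree $r(j+j')$ and leading terms $\lambda_{i,j}\lambda_{i',j'}^{p^{rj}}$ that generate $F^{r(j+j')}_*R$, so \autoref{locally ind-coh iff pseudo qalg} applies again to $m$ directly. Your proof instead collects the finitely many error terms $m\cdot s_i$ into a single coherent Cartier submodule $N'\subseteq M'$ (using the filteredness implicit in ind-coherence), and then explicitly builds a coherent Cartier submodule $P$ of $M$ containing $m$ by re-running the computation in the proof of \autoref{locally ind-coh iff pseudo qalg} with the nonzero error terms absorbed into $N'$. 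This avoids needing to equalize the degrees $j'_i$, at the cost of verifying Cartier-closedness of $P$ by hand. Both are complete and rely on the same criterion; yours is somewhat more explicit, the paper's is shorter because it recycles the criterion rather than the computation behind it.

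One small slip: the identity you invoke should read $\kappa_M(c\,\kappa_M^k(bm)) = \kappa_M^{k+1}\bigl((c^{\,p^{rk}}b)\,m\bigr)$, with exponent $p^{rk}$ rather than $p^{r}$; this is immaterial to your argument since $c^{\,p^{rk}}b$ is still just an element of $R$.
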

\begin{proof}
	By the local nature of ind-coherence (see \autoref{basic props of ind-coherent objects}), we may assume that $X = \Spec R$ is affine. For all $j \geq 1$, fix generators $F^{rj}_*\lambda_{1, j}, \dots, F^{rj}_*\lambda_{n_j, j}$ of $F^{rj}_*R$. Let \[ 0 \ra M \ra N \ra P \ra 0\] be a short exact sequence of Cartier modules with $M$ and $P$ ind-coherent, and let $n \in N$. By \autoref{locally ind-coh iff pseudo qalg}, there exists $j \geq 0$, $s_1, \dots, s_{n_{j}} \in R[F^r]$ such that each $s_i$ has degree $rj$, leading term $\lambda_{i, j}$ and annihilates the image of $n$ in $P$. By exactness, $n\cdot s_i \in M$ for all $i$.
	
	Since $M$ is ind-coherent, for all $i$, there exist $j_i'$ and $s_{1, i}, \dots, s_{j_i', i}$ such that each $s_{i', i}$ has degree $rj_i'$, leading coefficient $\lambda_{i', j_i'}$ and $n \cdot s_is_{i', i} = 0$. 
	
	By $F$-finiteness of $R$, we may assume that all $j'_i$ are equal (see the proof of \autoref{locally ind-coh iff pseudo qalg}). Denote this number by $j'$. We then have that for all $i$, $i'$, the element $s_is_{i', i} \in R[F^r]$ has degree $r(j + j')$ and has leading term $\lambda_{i, j}\lambda_{i', j'}^{p^{rj}}$. Since the elements $\lambda_{i, j}\lambda_{i', j'}^{p^{rj}}$ generate $F^{r(j + j')}_*R$, we conclude by \autoref{locally ind-coh iff pseudo qalg}.
\end{proof}

\vspace{0.5 cm}
\noindent\textit{\textbf{Comparison between injective ind-coherent Cartier modules and injective} $\Ocal_X$\textbf{-modules}.} \\

\noindent Our goal is to show the following result.

\begin{prop}\label{inj qalg is inj O_X}
	Let $X$ be an $F$-finite Noetherian $\Ff_p$-scheme. An injective object of $\IndCoh_X^{C^r}$ is also injective as an object of $\Mod(\Ocal_X)$.
\end{prop}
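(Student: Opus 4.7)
The plan is to mimic the strategy of the proof of \autoref{inj qcoh Cartier is inj O_X}. Given an inclusion $\cM \inj \cN$ of $\Ocal_X$-modules and a morphism $f \colon \cM \to \cI$, I first form $\cM'$, the smallest quasi-coherent Cartier submodule of $\cI$ containing $\im(f)$. By \autoref{basic props of ind-coherent objects}, $\cM'$ is ind-coherent as a subobject of the ind-coherent Cartier module $\cI$. Next, I embed $\cM'$ into an injective $\Ocal_X$-module $\cJ$ and use the injectivity of $\cJ$ in $\Mod(\Ocal_X)$ to extend the Cartier structure on $\cM'$ to one on $\cJ$, exactly as in the proof of \autoref{inj qcoh Cartier is inj O_X}.

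Once we know that $\cJ$ is ind-coherent as a Cartier module, the proof concludes formally: injectivity of $\cI$ in $\IndCoh_X^{C^r}$ extends the inclusion $\cM' \inj \cI$ to a Cartier morphism $\mu \colon \cJ \to \cI$; injectivity of $\cJ$ in $\Mod(\Ocal_X)$ extends the composition $\cM \to \cM' \inj \cJ$ to a morphism $\cN \to \cJ$; composing these with $\mu$ gives the desired extension of $f$ to a morphism $\cN \to \cI$, which proves that $\cI$ is injective in $\Mod(\Ocal_X)$.

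The heart of the argument, and the main obstacle, is thus the verification that $\cJ$ is ind-coherent. The Matlis structure theorem for injective modules over a Noetherian ring (see \cite{Matlis_Injective_modules_over_Noetherian_rings} and \cite[\S 7]{Hartshorne_Residues_and_Duality}) implies that $\cJ$ decomposes locally as a direct sum of indecomposable injective hulls $E(\Ocal_X/\pfr)$ of residue fields at primes $\pfr$. Since ind-coherence is both local and closed under direct sums (\autoref{basic props of ind-coherent objects}), it suffices to prove that for any Noetherian $F$-finite $\bF_p$-algebra $R$, any prime $\pfr \inc R$, and any Cartier structure on $E(R/\pfr)$, the resulting Cartier module is ind-coherent. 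The key observation is the torsion-reducing property of the structural morphism: if $\pfr^n x = 0$ in $E(R/\pfr)$, then the $R$-linearity of $\kappa$ together with the inclusion $\bigl(\pfr^{\lceil n/p^r \rceil}\bigr)^{p^r} \inc \pfr^n$ yields $\pfr^{\lceil n/p^r \rceil}\kappa(F^r_* x) = 0$. Since every $x \in E(R/\pfr) = E_{R_\pfr}(k(\pfr))$ is automatically $\pfr R_\pfr$-power torsion, iterating this observation and combining it with $F$-finiteness and the criterion of \autoref{locally ind-coh iff pseudo qalg} produces, for any such $x$, finitely many elements $s_i \in R[F^r]$ of the required degree and leading coefficient satisfying $x \cdot s_i = 0$, which exhibits $x$ in a coherent Cartier submodule and thereby witnesses ind-coherence.
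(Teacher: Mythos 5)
Your skeleton---manufacture a Cartier structure on an injective $\Ocal_X$-module $\cJ \supseteq \cM'$, show $\cJ$ is ind-coherent, then conclude formally---is exactly the paper's. The two genuine gaps are both in the ind-coherence of $\cJ$. First, the Matlis decomposition $\cJ \cong \bigoplus_i \Jcal(x_i)$ is a decomposition of $\Ocal_X$-modules, \emph{not} of Cartier modules: the structural map $\kappa_{\cJ}$ was obtained by extending $\kappa_{\cM'}$ through sheer injectivity of $\cJ$, and it has no reason to respect the Matlis summands. So you cannot reduce to a single $E(R/\pfr)$ and invoke closure of ind-coherence under direct sums. The paper (proof of \autoref{inj hull of coh is indcoh}) first takes $\cJ$ to be an injective \emph{hull} of the coherent $\cM'$, so the sum is finite, then uses \autoref{there is no non-zero map between too different inj hulls} (a nonzero map $F^r_*\Jcal(x)\to\Jcal(y)$ forces $y\in\overline{\{x\}}$) to put the structural map in block-triangular form and produce a finite filtration by Cartier submodules whose quotients are $\Jcal(y)^{\oplus n}$ with \emph{induced} Cartier structures, and then reduces via extension-closedness (\autoref{ind-coh closed under extensions}). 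Note in particular that the reduction lands on $\Jcal(x)^{\oplus n}$ for arbitrary $n$, not $n=1$.

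Second, even granting that reduction, the torsion-reducing observation alone cannot close the argument: it only pushes an element down through the filtration $A_n = \{m : \pfr^n m = 0\}$ and, after finitely many Cartier iterations, lands in the socle $A_1 = \Frac(R/\pfr)$. There the iteration stalls---every element of $\Frac(R/\pfr)$ trivially satisfies $\pfr\cdot m = 0$, yet for $\pfr$ non-maximal $\Frac(R/\pfr)$ is infinitely generated over $R$, so nothing in the torsion observation produces the annihilators $s_i$ required by \autoref{locally ind-coh iff pseudo qalg}. (The case $\pfr = 0$, where $E(R/\pfr) = \Frac(R)$, makes the gap vivid: your observation then says nothing at all.) What is missing is the clearing-denominators argument of \autoref{direct sum of residue fields is ind-coh}, showing that \emph{any} Cartier structure on $\Frac(R/\pfr)^{\oplus n}$ is ind-coherent; the paper combines this with the observations that each $A_i^{\oplus n}$ is a Cartier submodule and that $A_{i+1}/A_i$ is a finite-dimensional $\Frac(R/\pfr)$-vector space, again using extension-closedness. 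Without some version of that input, your line ``iterating this observation \ldots\ produces the $s_i$'' is an unsupported assertion, and it is precisely where the real work of the lemma lives.
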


The proof of this statement requires a few preliminary results.

\begin{notation}\label{notation inj hull}
	Given a Noetherian ring $R$ and a $R$-module $M$, $E(M)$ denotes an injective hull of $M$. Furthermore, given a Noetherian $F$-finite scheme $X$ and $x \in X$, set \[ \Jcal(x) \coloneqq (i_x^X)_*\ttilde{E(k(x))}, \] where $i_x^X$ denotes the natural map $\Spec \Ocal_{X, x} \to X$.
\end{notation}

\begin{lem}\label{E(Rp/pRp)|R = E(R/p)}
	Let $R$ be a Noetherian ring, and let $\pfr \nor R$ be a prime ideal. Then \[ E(R_\pfr/\pfr R_\pfr)|_R \cong E(R/\pfr), \] where the first injective hull is taken over $R_\pfr$.
\end{lem}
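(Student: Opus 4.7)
The plan is to apply the standard characterization: $E_R(R/\pfr)$ is, up to $R$-isomorphism, the unique essential extension of $R/\pfr$ which is injective as an $R$-module. So writing $E \coloneqq E_{R_\pfr}(k(\pfr))$, I would equip $E|_R$ with the obvious inclusion $R/\pfr \hookrightarrow R_\pfr/\pfr R_\pfr = k(\pfr) \hookrightarrow E$, and then verify the two defining properties of an injective hull.

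For injectivity of $E|_R$, I would invoke the adjunction
\[ \Hom_R(-, E) \cong \Hom_{R_\pfr}\bigl(- \otimes_R R_\pfr,\; E\bigr), \]
which is the composition of the exact localization functor $- \otimes_R R_\pfr$ with the exact functor $\Hom_{R_\pfr}(-, E)$ (exact since $E$ is $R_\pfr$-injective by definition). Hence $\Hom_R(-, E)$ is exact, as required.

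For essentiality of $R/\pfr \subset E$ as $R$-modules, I would split along the intermediate $k(\pfr)$. The first inclusion $R/\pfr \subset k(\pfr)$ is essential because $k(\pfr) = \Frac(R/\pfr)$: any nonzero $a/b \in k(\pfr)$ (with $a \in R \setminus \pfr$, $b \in R \setminus \pfr$) satisfies $b \cdot (a/b) = a$ with nonzero image in $R/\pfr$. For the second inclusion $k(\pfr) \subset E$, the slightly delicate step — and really the only place where one has to pay attention — is to show $R$-essentiality starting from the known $R_\pfr$-essentiality. The key observation is that every $t \in R \setminus \pfr$ is a unit in $R_\pfr$, hence acts invertibly on the $R_\pfr$-module $E$; therefore any $R$-submodule $N \subseteq E$ is automatically closed under multiplication by $R_\pfr$ and is itself an $R_\pfr$-submodule, so $N \cap k(\pfr) \neq 0$ by the $R_\pfr$-essentiality. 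Composing the two essential extensions finishes the argument.

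The main (and only real) obstacle is the passage from $R_\pfr$-essentiality to $R$-essentiality in the second step; once one notices that elements outside $\pfr$ act invertibly on any $R_\pfr$-module, this is immediate. Everything else is a direct application of adjunction and the definition of the injective hull.
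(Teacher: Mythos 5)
Your overall structure matches the paper's: show $E|_R$ is $R$-injective via the adjunction with the exact localization functor, then show $R/\pfr \subset k(\pfr) \subset E$ is a chain of essential extensions. The injectivity step and the first essentiality step are fine. But the justification you give for the second essentiality step — the passage from $R_\pfr$-essentiality to $R$-essentiality — contains a genuine error. You claim that since every $t \in R \setminus \pfr$ acts invertibly on $E$, every $R$-submodule of $E$ is automatically an $R_\pfr$-submodule. This is false: invertibility of $t$ on $E$ makes $t \colon N \to N$ injective for any $R$-submodule $N$, but not surjective, so $N$ need not be stable under $t^{-1}$. A concrete counterexample sits already inside $k(\pfr)$: take $R = k[x,y]$ and $\pfr = (x)$, so $k(\pfr) = k(y)$; the $R$-submodule $k[y] \subset k(y) \subset E$ is not closed under multiplication by $y^{-1}$, hence is not an $R_\pfr$-submodule.

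The conclusion you want is still correct, but it needs a different argument. Given a nonzero $R$-submodule $N' \subseteq E$, pick $0 \neq n \in N'$. The $R_\pfr$-submodule $R_\pfr n$ is nonzero, so by $R_\pfr$-essentiality it meets $k(\pfr)$: there exist $r \in R$, $s \in R \setminus \pfr$ with $0 \neq (r/s)\, n \in k(\pfr)$. Then $r n = s \cdot ((r/s)n)$ is nonzero (since $s$ is a unit of $R_\pfr$ and so acts injectively on $k(\pfr)$), it lies in $k(\pfr)$, and it lies in $N'$ because $N'$ is an $R$-submodule. Hence $N' \cap k(\pfr) \neq 0$. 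This is the standard way to see that an essential extension of $R_\pfr$-modules restricts to an essential extension of $R$-modules, and it is what the paper is implicitly invoking when it says the inclusion $k(\pfr) \subset E$ is essential over $R_\pfr$ ``so also over $R$.''
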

\begin{proof}
	By uniqueness of injective hulls, it is enough to show that $E(R_\pfr/\pfr R_\pfr)|_R$ is an injective hull of $R/\pfr$ over $R$. First, the forgetful functor $(\cdot)|_R \colon \Mod(R_\pfr) \ra \Mod(R)$ is the right adjoint of the pullback via $R \to R_{\pfr}$ (which is exact). Therefore it preserves injectives, whence $E(R_\pfr/\pfr R_\pfr)|_R$ is injective. 
	
	We have a chain of inclusions \[ R/\pfr \inc R_\pfr/\pfr R_\pfr  \inc E(R_\pfr/\pfr R_\pfr)|_R. \] The second inclusion is an essential extension over $R_\pfr$, so also over $R$. and the first one is also an essential extension (we have 
	$R_\pfr/\pfr R_\pfr \cong \Frac(R/\pfr)$). Thus, $R/\pfr \inc E(R_\pfr/\pfr R_\pfr)|_R$ is an essential extension as well, which proves the lemma. 
\end{proof}

\begin{lem}\label{there is no non-zero map between too different inj hulls}
	Let $X$ be a Noetherian scheme, and suppose that there exists a non-zero morphism $F^r_*\Jcal(x) \to \Jcal(y)$, then $y \in \overline{\{x\}}$. 
\end{lem}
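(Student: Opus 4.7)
The plan is to translate the statement into a stalk-level assertion via the pullback-pushforward adjunction for the localization morphism $i_y^X \colon \Spec \Ocal_{X,y} \to X$, and then exploit the fact that $E(k(x))$ is $\fm_x$-power torsion in order to show that the relevant stalk vanishes outside $\overline{\{x\}}$.

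First I would apply the adjunction $\bigl((i_y^X)^*, (i_y^X)_*\bigr)$ to the description $\Jcal(y) = (i_y^X)_*\widetilde{E(k(y))}$, which yields an identification
\[ \Hom_{\Ocal_X}\bigl(F^r_*\Jcal(x),\, \Jcal(y)\bigr) \cong \Hom_{\Ocal_{X,y}}\bigl((F^r_*\Jcal(x))_y,\, E(k(y))\bigr). \]
This reduces the claim to showing that $(F^r_*\Jcal(x))_y = 0$ whenever $y \notin \overline{\{x\}}$. Since the absolute Frobenius acts as the identity on the underlying topological space, the stalk $(F^r_*\Jcal(x))_y$ coincides with $\Jcal(x)_y$ as an abelian group (with an $F^r$-twisted $\Ocal_{X,y}$-module structure, which is irrelevant for vanishing), so it suffices to prove $\Jcal(x)_y = 0$.

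To compute this stalk, I would unwind the definition of $\Jcal(x) = (i_x^X)_*\widetilde{E(k(x))}$: for an open $U \ni y$ its sections are $\Gamma\bigl((i_x^X)^{-1}(U),\, \widetilde{E(k(x))}\bigr)$. Since $y \notin \overline{\{x\}}$ I can choose $U$ with $x \notin U$, so that $(i_x^X)^{-1}(U)$ is an open of $\Spec \Ocal_{X,x}$ that misses the closed point. Because every element of $E(k(x))$ is annihilated by some power of $\fm_x$, the quasi-coherent sheaf $\widetilde{E(k(x))}$ is supported at the closed point, hence its sections over any open avoiding that point vanish. Passing to the colimit over such $U$ gives $\Jcal(x)_y = 0$, and the lemma follows.

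The argument is essentially bookkeeping; the only step worth double-checking is the identification of the stalk of $(i_x^X)_*\widetilde{E(k(x))}$ at a point $y$ outside $\overline{\{x\}}$, and this is immediate from the $\fm_x$-power torsion nature of $E(k(x))$.
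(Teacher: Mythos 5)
Your argument is correct, and it takes a genuinely different and shorter route than the paper's. You prove the contrapositive: assuming $y \notin \overline{\{x\}}$, you apply the adjunction $(i_y^X)^* \dashv (i_y^X)_*$ to identify $\Hom_{\Ocal_X}\bigl(F^r_*\Jcal(x), \Jcal(y)\bigr)$ with $\Hom_{\Ocal_{X,y}}\bigl((F^r_*\Jcal(x))_y, E(k(y))\bigr)$, and then show that the source stalk vanishes because $\widetilde{E(k(x))}$ on $\Spec\Ocal_{X,x}$ is supported only at the closed point (every element is killed by a power of $\fm_x$). This uses only the $\fm_x$-power-torsion property of $E(k(x))$ and the pullback-pushforward adjunction. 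The paper instead argues by contradiction in two steps: it first records that $\Supp(\Jcal(x)) = \overline{\{x\}}$ (your stalk computation is precisely the content of that step), then uses a nonzero morphism to produce a common specialization $z \in \overline{\{x\}} \cap \overline{\{y\}}$, passes to an affine open containing $x$, $y$, $z$, and derives a contradiction from the Matlis-theoretic fact that multiplication by $\lambda \in \pfr \setminus \qfr$ is simultaneously nilpotent on $E(R/\pfr)$ and an isomorphism on $E(R/\qfr)$. Your approach avoids the contradiction and the second Matlis lemma entirely, at the cost only of invoking adjunction; the paper's approach makes the incompatibility of the two injective hulls more explicit, which may be pedagogically useful but is not logically necessary here.
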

\begin{proof} We decompose the argument in two steps. \\
	\ptofpf{Step 1:} \esp For all $x \in X$, $\Supp(\Jcal(x)) = \overline{\{x\}}$.
	\begin{itemize}
		\item Since $\cJ(x)$ is quasi-coherent, its support is stable under specialization. Thus, $\overline{\{x\}} \inc \Supp(\Jcal(x))$. 
		\item Let $V$ be an open subset such that $V \cap \overline{\{x\}} = \emptyset$. Then by definition, \[ \Gamma(V, \Jcal(x)) = \Gamma\left(W, \: \ttilde{E(k(x))}\right) \] where $W \coloneqq (i_x^X)^{-1}(V) \inc \Spec\Ocal_{X, x} \setminus \mfr_{X, x}$. The latter group is trivial because for all $m \in E(k(x))$ and $r \in \mfr_{X, x}$, $r^Nm = 0$ for $N \gg 0$ (see \cite[Theorem 3.4]{Matlis_Injective_modules_over_Noetherian_rings}).
	\end{itemize}		
	\ptofpf{Step 2:} \esp Conclude the argument. 
	
	Let $\mu \colon F^r_*\Jcal(x) \ra \Jcal(y)$ be a non-zero morphism. By the previous step, there exists $z \in \overline{\{x\}} \cap \overline{\{y\}}$ such that $\mu_z \neq 0$. Let $U = \Spec R$ be an open affine containing $z$. Since open subsets are closed under generization, we also obtain that $x, y \in U$. Let $x$ (resp. $y$) correspond to $\pfr \nor R$ (resp. $\qfr \nor R$). By \autoref{E(Rp/pRp)|R = E(R/p)}, there exists a non-trivial morphism \[ \mu \colon F^r_*E(R/\pfr) \ra E(R/\qfr). \] Suppose by contradiction that $\pfr \not\inc \qfr$, and let $\lambda \in \pfr \setminus \qfr$. Since $\mu$ is non-zero, there exists $F^r_*m \in F^r_*E(R/\pfr)$ such that $\mu(F^r_*m) \neq 0$. By \cite[Theorem 3.4]{Matlis_Injective_modules_over_Noetherian_rings}, there exists $N \geq 1$ such that $\pfr^Nm = 0$, so in particular $\lambda^Nm = 0$. On the other hand, by \cite[Lemma 3.2]{Matlis_Injective_modules_over_Noetherian_rings}, multiplying by $\lambda^N$ in $E(R/\qfr)$ is an isomorphism, so we have \[ 0 \neq \lambda^N\mu(F^r_*m) = \mu(F^r_*{\lambda}^{Np^r}m) = 0. \] This is a contradiction. 
\end{proof}

\begin{lem}\label{direct sum of residue fields is ind-coh}
	Let $R$ be an $F$-finite Noetherian integral domain over $\bF_p$, and let $L = \Frac(R)$. Then for any $n \geq 1$ and any Cartier module structure $\kappa_{L^{\oplus n}}$ on $L^{\oplus n}$, the Cartier module $(L^{\oplus n}, \kappa_{L^{\oplus n}})$ is ind-coherent.
\end{lem}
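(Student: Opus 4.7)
The plan is to verify the criterion of \autoref{locally ind-coh iff pseudo qalg}: it suffices to show that every element $m \in L^{\oplus n}$ is contained in a finitely generated Cartier submodule. The main point is to exploit that $L^{\oplus n}$ is finite-dimensional over $L = \Frac(R)$.

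Since $L^{\oplus n}$ carries a compatible $L$-module structure (non-zero elements of $R$ being units in $L$), the $R$-linear structural morphism $\kappa \colon F^r_* L^{\oplus n} \to L^{\oplus n}$ extends automatically to an $L$-linear map for the natural $L$-structure on $F^r_*L^{\oplus n}$; equivalently, $\phi(v) := \kappa(F^r_* v)$ defines an additive endomorphism of $L^{\oplus n}$ satisfying $\phi(a^{p^r}v) = a\phi(v)$ for all $a \in L$ and $v \in L^{\oplus n}$. Fix generators $\{F^{rj}_*\lambda_{i,j}\}_{i}$ of $F^{rj}_*R$ over $R$ (finite in number by $F$-finiteness), set $w_{i,j} := \phi^j(\lambda_{i,j}\,m)$, and let $V_j := \sum_{k \leq j,\,i} R \cdot w_{i,k}$. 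Each $V_j$ is finitely generated over $R$, and the Cartier submodule generated by $m$ equals $\bigcup_j V_j$. By \autoref{locally ind-coh iff pseudo qalg}, it suffices to prove that $V_j = V_{j-1}$ for some $j$.

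A direct computation using the $p^{-r}$-semi-linearity of $\phi$ together with the decomposition $\lambda_{i,j+1} = \sum_{i'}\alpha_{i',i}^{p^{rj}}\lambda_{i',j}$ in the generators of $F^{rj}_*R$ (with $\alpha_{i',i} \in R$) yields the structural inclusion $\phi(V_j) \subseteq V_{j+1}$. Since $\dim_L L^{\oplus n} = n$, the ascending chain of $L$-subspaces $W_j := L \cdot V_j$ must stabilize at some $W := W_J$ of $L$-dimension $d \leq n$, so that $V_j \subseteq W \cong L^d$ for all $j \geq J$.

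The main obstacle, and the step that requires the most care, is to upgrade the stabilization of $L$-spans to stabilization of the $R$-submodules $V_j$ themselves. The idea is to bound the denominators of the $w_{i,j}$ uniformly in $j$: combining $\phi(V_j) \subseteq V_{j+1}$, the semi-linearity of $\phi$, and the stability of the $L$-span, one shows that there exist an element $s \in R \setminus \{0\}$ and a finitely generated $R$-submodule $N_0 \subseteq W$ such that $V_j \subseteq \frac{1}{s}N_0$ for every $j$. Since $\frac{1}{s}N_0$ is then a Noetherian $R$-module, the ascending chain $(V_j)$ must stabilize, giving the required equality $V_j = V_{j-1}$ for $j$ large enough and showing that the Cartier submodule generated by $m$ is finitely generated.
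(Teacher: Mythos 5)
Your plan is workable in outline, but the decisive step is hidden inside a single ``one shows,'' and the route you take is considerably more roundabout than necessary. The crux of the lemma is a small semi-linearity computation that your sketch never actually displays. Concretely: after stabilizing the $L$-span at $W$, pick a free $R$-submodule $N_0 \subseteq V_J$ spanning $W$ over $L$ and $c \in R\setminus\{0\}$ with $\phi(N_0)\subseteq \frac{1}{c}N_0$. Semi-linearity alone gives $\phi\bigl(\frac{1}{t^q}N_0\bigr)=\frac{1}{t}\phi(N_0)\subseteq\frac{1}{tc}N_0$, which does not, on its face, produce a uniform denominator for $\bigcup_j V_j$. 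What makes everything close is the observation — the entire mathematical content of the paper's proof — that $M\coloneqq \frac{1}{c^q}N_0$ is itself $\kappa$-stable: $\phi(M)=\frac{1}{c}\phi(N_0)\subseteq\frac{1}{c^2}N_0\subseteq\frac{1}{c^q}N_0=M$ since $q=p^r\geq 2$, and likewise $\frac{1}{\lambda}M$ is $\kappa$-stable for every $\lambda\neq 0$. Since your sketch never produces this stable witness, the ``one shows that there exist $s$ and $N_0$ with $V_j\subseteq\frac{1}{s}N_0$'' is asserting the conclusion rather than proving it.

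Moreover, once one has that $b^q$-trick, the rest of your scaffolding becomes superfluous. The paper applies the trick directly in $L^{\oplus n}$, with no appeal to \autoref{locally ind-coh iff pseudo qalg} and no stabilization of $L$-spans: taking $b$ a common denominator of $\kappa(F^r_*(a_ie_j))$ for generators $F^r_*a_i$ of $F^r_*R$, it shows $M=\frac{1}{b^q}R^{\oplus n}$ is a coherent Cartier submodule and then observes $L^{\oplus n}=\bigcup_{\lambda\neq 0}\frac{1}{\lambda}M$ with each $\frac{1}{\lambda}M$ coherent and $\kappa$-stable, which is ind-coherence by definition. Your reduction to $W$ does not simplify anything in general either: $\dim_L W$ may equal $n$, in which case you have reduced the lemma to a verbatim copy of itself.
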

\begin{proof}
	Let $F^r_*a_1, \dots, F^r_*a_m$ be generators of $F^r_*R$ as an $R$-module, and let $e_j$ denote the standard $j$'th coordinate vector. Write \[\kappa_{L^{\oplus n}}\left(F^r_*(a_ie_j)\right) = \left(\frac{s_{i, j, 1}}{b_{i, j, 1}}, \dots, \frac{s_{i, j, n}}{b_{i, j, n}}\right), \] and set $b \coloneqq \prod_{i, j, l}b_{i, j, l}$. We then have \[ \kappa_{L^{\oplus n}}(F^r_*R^{\oplus n}) \inc \frac{R^{\oplus n}}{b}, \] where for any $0 \neq \lambda \in R$, we set \[ \frac{R^{\oplus n}}{\lambda} \coloneqq \biggset{(x_1, \dots, x_n) \in L^{\oplus n}}{\forall i \esp \exists a_i \in R, \: x_i = \frac{a_i}{\lambda}}.\] Therefore, \[ \kappa_{L^{\oplus n}}\left(F^r_*\frac{R^{\oplus n}}{b^q}\right) = \frac{\kappa_{L^{\oplus n}}(F^r_*R^{\oplus n})}{b} \inc \frac{R^{\oplus n}}{b^2} \inc \frac{R^{\oplus n}}{b^q}, \] so $M \coloneqq \frac{R^{\oplus n}}{b^q}$ is a coherent Cartier submodule of $L^{\oplus n}$. 
	
	Note that for all $\lambda \neq 0$, $\frac{M}{\lambda}$ is a finitely generated submodule of $L^{\oplus n}$. Furthermore it is stable under $\kappa_{L^{\oplus n}}$, since we have \[ \kappa_{L^{\oplus n}}\left(F^r_*\frac{M}{\lambda}\right) = \kappa_{L^{\oplus n}}\left(F^r_*\frac{\lambda^{q - 1}M}{\lambda^q}\right) \inc \frac{\kappa_{L^{\oplus n}}(F^r_*M)}{\lambda} \inc \frac{M}{\lambda}  \] Thus, $\frac{M}{\lambda}$ is a coherent Cartier submodule of $L^{\oplus n}$. The fact that \[ L^{\oplus n} = \bigcup_{\lambda \neq 0} \frac{M}{\lambda} \] concludes the proof.
\end{proof}

\begin{lemma}\label{inj hull of coh is indcoh}
	Let $X$ be an $F$-finite Noetherian $\bF_p$-scheme, let $\Mcal \in \Coh_X$ and let $\Ical$ be an injective hull of $\Mcal$ in $\QCoh_X$. For any Cartier structure $\kappa_{\cI}$ on $\Ical$, $(\Ical, \kappa_{\cI})$ is ind-coherent.
\end{lemma}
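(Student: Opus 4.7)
The strategy is to reduce to showing that $E(R/\pfr)$, equipped with any Cartier structure, is ind-coherent; then the orbit of a given element under the Cartier operator is forced into the socle $\cong \Frac(R/\pfr)$, where \autoref{direct sum of residue fields is ind-coh} takes over.

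First, by \autoref{basic props of ind-coherent objects}, ind-coherence is local, so we may assume $X = \Spec R$ is affine. By Matlis theory, $\Ical$ decomposes as a \emph{finite} direct sum $\bigoplus_{j=1}^n E(R/\pfr_j)^{m_j}$ indexed by the associated primes of $\Mcal$ (with finite multiplicities). Order the $\pfr_j$ so that $\pfr_j \inc \pfr_l$ implies $j \leq l$. Combining \autoref{there is no non-zero map between too different inj hulls} with the identification of \autoref{E(Rp/pRp)|R = E(R/p)}, the components $F^r_*E(R/\pfr_j) \to E(R/\pfr_l)$ of $\kappa_{\Ical}$ vanish unless $l \geq j$. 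Thus each submodule $\Ical_{\geq k} := \bigoplus_{j \geq k} E(R/\pfr_j)^{m_j}$ is Cartier-stable, and the resulting filtration has successive quotients isomorphic to $E(R/\pfr_k)^{m_k}$ with induced Cartier structures. By \autoref{ind-coh closed under extensions}, it therefore suffices to prove that $E(R/\pfr)$ with any Cartier structure is ind-coherent.

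Fix $m \in E(R/\pfr)$ and choose $N$ with $\pfr^N m = 0$. The identity $\lambda F^r_*e = F^r_*(\lambda^q e)$ yields: if $\pfr^k e = 0$, then $\pfr^{\lceil k/q \rceil}\kappa(F^r_* e) = 0$. Iterating, once $q^L \geq N$ the $L$-th-order orbit $\kappa^L(F^{rL}_*(Rm))$ lies in the socle $V := \{ e \in E(R/\pfr) \mid \pfr e = 0\} \cong \Frac(R/\pfr)$. The socle is Cartier-stable and inherits an $R/\pfr$-linear Cartier structure, making it a Cartier module over the $F$-finite Noetherian domain $R/\pfr$.

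Setting $D_l := \kappa^l(F^{rl}_*(Rm))$, the Cartier submodule of $E(R/\pfr)$ generated by $m$ is $\sum_{l \geq 0} D_l$. By $F$-finiteness, $F^{rl}_* R$ is finitely generated over $R$ for each $l$, so each $D_l$ is finitely generated as an $R$-module; in particular the head $\sum_{l < L} D_l$ is finitely generated over $R$. The tail $\sum_{l \geq L} D_l$ equals the Cartier submodule of $V$ generated by the finite set of $R$-module generators of $D_L$, and by \autoref{direct sum of residue fields is ind-coh} applied to $R/\pfr$ it is coherent over $R/\pfr$, hence over $R$. The sum of head and tail is therefore a coherent Cartier $R$-submodule of $E(R/\pfr)$ containing $m$. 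The main obstacle is setting up the upper-triangular filtration cleanly via \autoref{there is no non-zero map between too different inj hulls}; the annihilator bookkeeping and appeal to the residue-field case that follow are routine.
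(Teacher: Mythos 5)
The core of your argument is correct and takes a route that is genuinely different from the paper's, but there is a gap in the reduction step that should be flagged.

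\emph{The gap.} After building the upper-triangular filtration from \autoref{there is no non-zero map between too different inj hulls}, the graded pieces are $E(R/\pfr_k)^{\oplus m_k}$ with induced Cartier structures, where the Bass multiplicities $m_k$ may be $> 1$. A Cartier structure on $E(R/\pfr)^{\oplus m}$ is an $R$-linear map $F^r_*E(R/\pfr)^{\oplus m} \to E(R/\pfr)^{\oplus m}$ given by an $m \times m$ matrix with arbitrary (possibly non-diagonal) entries, and \autoref{there is no non-zero map between too different inj hulls} says nothing about maps between two copies of the \emph{same} $E(R/\pfr)$. So you cannot reduce to $m = 1$, and the sentence ``it therefore suffices to prove that $E(R/\pfr)$ with any Cartier structure is ind-coherent'' is too strong; you need $E(R/\pfr)^{\oplus m}$ for all $m$. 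Fortunately your socle argument goes through verbatim for $E(R/\pfr)^{\oplus m}$: the socle is $\Frac(R/\pfr)^{\oplus m}$ and is Cartier-stable for the same reason, and \autoref{direct sum of residue fields is ind-coh} is stated precisely for arbitrary $L^{\oplus n}$ over the domain $R/\pfr$, so the tail is still coherent. So the gap is real as written but is a drop-in fix, not a conceptual problem.

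\emph{Comparison of approaches.} Once past the filtration, the paper filters $E(R/\pfr)^{\oplus n}$ by the Cartier-stable submodules $A_i^{\oplus n}$ with $A_i = \{e : \pfr^i e = 0\}$, observes that $A_1^{\oplus n}$ and each $A_{i+1}^{\oplus n}/A_i^{\oplus n}$ are finite-dimensional $\Frac(R/\pfr)$-vector spaces, so ind-coherent by \autoref{direct sum of residue fields is ind-coh}, and then climbs the filtration using \autoref{ind-coh closed under extensions} and closure under colimits. Your argument instead works elementwise: you make the same $q^{-1}$-linearity observation that lowers the $\pfr$-adic order, but use it quantitatively to show that after $L \approx \log_q N$ iterations the orbit lands in the socle, and then exhibit an explicit finitely generated Cartier submodule (head plus tail) containing the given element. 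Your version is more direct, avoids the transfinite climb through the $A_i$'s and the appeal to colimit-stability of ind-coherence, and invokes the extension lemma only once (for the upper-triangular filtration) rather than repeatedly. The cost is the slightly more careful bookkeeping with the sets $D_l$ (in particular, you should note explicitly that head $+$ tail is genuinely a Cartier submodule: $\kappa(F^r_*D_{L-1}) = D_L$ lands in the tail, and the tail is $\kappa$-stable, so the sum is closed under $\kappa$).

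Once you replace $E(R/\pfr)$ by $E(R/\pfr)^{\oplus m}$ throughout the second half, the proof is correct.
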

\begin{proof}
	By the proof of \cite[Theorem 7.18]{Hartshorne_Residues_and_Duality}, we can write \[ \Ical = \bigoplus_{i \in I}\Jcal(x_i) \] for some points $x_i \in X$. Since $\Mcal$ is a Noetherian object and $\Ical$ is an injective hull, there are only finitely many summands. Let us write $\Ical = \bigoplus_{i = 1}^n\Jcal(x_i)$. 
	
	Let $\kappa_{\cI}$ denote any Cartier structure on $\cI$. Since the direct sum is finite, we can see it as a collection of morphisms \[ F^r_*\Jcal(x_i) \xrightarrow{\kappa_{i, j}} \Jcal(x_j). \]
	By \autoref{there is no non-zero map between too different inj hulls}, we obtain that if $x_1, \dots, x_l$ are maximal under specialization (among the $x_i$'s), then each $\Jcal(x_i)$ is a Cartier submodule of $\Ical$. More generally, we obtain a filtration of Cartier submodules \[ 0 = \Jcal_0 \inc \Jcal_1 \inc \dots \inc \Jcal_s = \Ical, \] where for all $l$, there exists $y_l \in \{x_1, \dots, x_n\}$, $j_l \geq 1$ and a Cartier structure on $\Jcal(y_l)^{\oplus j_i}$ such that \[\Jcal_{l + 1}/\Jcal_l \cong \Jcal(y_l)^{\oplus j_l}. \]
	Since an extension of ind-coherent Cartier modules is ind-coherent (see \autoref{ind-coh closed under extensions}), we reduced our problem to showing that for all $x \in X$ and $n \geq 1$, the quasi-coherent sheaf $\Jcal(x)^{\oplus n}$, together with any Cartier structure, is ind-coherent. 
	
	By \autoref{basic props of ind-coherent objects}, we may do this computation locally, i.e. we may assume that $X = \Spec R$ with $R$ Noetherian and $F$-finite. Let $\pfr \nor R$ correspond to a point $x$. Given that $E(k(x))|_R = E(R/\pfr)$ by \autoref{E(Rp/pRp)|R = E(R/p)}, we have to show $(E(R/\pfr)^{\oplus n}, \kappa)$ is ind-coherent for any Cartier structure $\kappa$. 
	
	By \cite[Theorem 3.4]{Matlis_Injective_modules_over_Noetherian_rings}, we can write $E \coloneqq E(R/\pfr) = \bigcup_{i = 1}^\infty A_i$ where $A_i \coloneqq \set{m \in E}{\pfr^im = 0}$. Moreover, this result says that $A_0 = \Frac(R/\pfr)$, $A_i \inc A_{i + 1}$ for all $i$ and the quotient $A_{i + 1}/A_i$ is a finite dimensional $\Frac(R/\pfr)$-vector space. Let us write \[ E(R/\pfr)^{\oplus n} = \bigcup_{i = 1}^\infty A_i^{\oplus n}. \] First, we prove that all the modules $A_i^{\oplus n}$ are Cartier submodules. 
	
	Let $a = (a_1, \dots, a_n) \in A_i^{\oplus n}$. We have to show that $\kappa(F^r_*a) \in A_i^{\oplus n}$ (in other words, that $\pfr^i\kappa(F^r_*(ae_j)) = 0$). This holds, because for all $\lambda \in \pfr^i$, \[\lambda\kappa(F^r_*a) = \kappa(F^r_*(\lambda^qa)) = 0. \]
	Hence, all $A_i^{\oplus n}$ are Cartier submodules, so we are left to show that these are ind-coherent. For all $i$, there is an exact sequence of Cartier modules \[ 0 \ra A_i^{\oplus n} \ra A_{i + 1}^{\oplus n} \ra A_{i + 1}^{\oplus n}/A_i^{\oplus n} \ra 0, \] so by \autoref{ind-coh closed under extensions}, it is enough to show that $A_1^{\oplus n}$ is ind-coherent and $A_{i + 1}^{\oplus n}/A_i^{\oplus n}$ is ind-coherent for any $i$. 
	
	The first statement follows from \autoref{inj hull of coh is indcoh}. For the second one, recall that $A_{i + 1}/A_i$ is a finite dimensional $\Frac(R/\pfr)$-vector spaces, so there is an isomorphism \[ A_{i + 1}^{\oplus n}/A_i^{\oplus n} \cong  \Frac(R/\pfr)^{\oplus l_n}\] of $\Frac(R/\fp)$-vector spaces. We then conclude the proof by \autoref{inj hull of coh is indcoh}.
\end{proof}
\begin{proof}[Proof of \autoref{inj qalg is inj O_X}]
	The proof is now almost identical to that of \autoref{inj qcoh Cartier is inj O_X}, but let us repeat the argument. 
	
	Let $\Ical$ be an injective of $\IndCoh_X^{C^r}$. By \autoref{injective qcoh is injective O_X}, it is enough to show that it is injective as an object of $\QCoh_X$. 
	
	By \cite[{\href{https://stacks.math.columbia.edu/tag/01PG}{Tag 01PG}}]{Stacks_Project}, $\QCoh_X$ is a locally Noetherian category generated by coherent modules. Therefore, all we have to show that for any injection $\Mcal \inj \Ncal$ of coherent modules and any morphism $f \colon \Mcal \ra \Ical$, there exists an extension $\Ncal \ra \Ical$, i.e. the diagram \[ \begin{tikzcd}
		\Mcal \arrow[r, , hook] \arrow[d, "f"'] & \Ncal \arrow[ld] \\
		\Ical                             &                 
	\end{tikzcd} \] commutes.
	Let $\Mcal'$ be the smallest Cartier submodule of $\Ical$ containing $\im(f)$. Since $\Ical$ is ind-coherent and $\Mcal$ is a Noetherian object, $\Mcal'$ is a coherent Cartier module. 

	Let $\Jcal$ be an injective hull of $\Mcal'$ in $\QCoh_X$. We have a diagram \[ \begin{tikzcd}
		F^r_*\Mcal' \arrow[r, "\kappa_{\cM'}"] \arrow[d, hook] & \Mcal' \arrow[d, hook] \\
		F^r_*\Jcal                               & \Jcal.                   
	\end{tikzcd} \]
	By injectivity of $\Jcal$, there exists a Cartier structure on $\Jcal$ such that the injection $\Mcal' \inj \Jcal$ becomes an injection of Cartier modules. By \autoref{inj hull of coh is indcoh}, $\Jcal$ is an ind-coherent Cartier module. We have a diagram \[ \begin{tikzcd}
		\Mcal' \arrow[d] \arrow[r, hook] & \Jcal \\
		\Ical                       &      
	\end{tikzcd} \] of Cartier modules, so given that $\Jcal$ is ind-coherent and $\Ical$ is injective in $\IndCoh_X^{C^r}$, there exists an morphism $\mu \colon \Jcal \ra \Ical$ completing the above diagram. We then obtain a diagram 
	\[ \begin{tikzcd}
		\cN                            &                          &                        &     \\
	\cM  \arrow[u, hook] \arrow[r] & \cM' \arrow[r, "\inc"'] & \cJ \arrow[r, "\mu"'] & \cI.
	\end{tikzcd} \]
	Since $\Jcal$ is injective in $\QCoh_X$, the proof is complete.
\end{proof}

\vspace{0.5 cm}
\noindent\textit{\textbf{Comparison between some derived categories of ind-coherent Cartier modules.}}

\vspace{0.1 cm}
Throughout, fix a Noetherian $F$-finite $\bF_p$-scheme $X$. Our goal here is to show that the natural functor $D^b(\IndCoh_X^{C^r}) \to D^b_{\indcoh}(\QCoh_X^{C^r})$ is an equivalence of categories. We will use \autoref{inj qalg is inj O_X} in an essential way.

\begin{lem}\label{pushforward of qalg is qalg affine version}
	Let $R$ be a Noetherian $F$-finite ring, $S$ a multiplicative system over $R$ and $M$ an ind-coherent Cartier module over $S^{-1}R$. Then $M|_R$ is ind-coherent. 
\end{lem}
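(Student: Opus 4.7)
The plan is to reduce to the case where $M$ is coherent over $S^{-1}R$ and then, adapting the strategy from the proof of \autoref{direct sum of residue fields is ind-coh}, to exhibit an explicit filtered family of coherent $R$-Cartier submodules of $M$ whose union is $M$.

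First, since ind-coherence over $R$ is closed under filtered colimits (by definition) and $M$ is the filtered colimit of its coherent $S^{-1}R$-Cartier submodules, I may assume that $M$ is itself coherent over $S^{-1}R$. Picking $S^{-1}R$-generators $n_1, \dots, n_l$ of $M$ and using $F$-finiteness, the $R$-submodule $\kappa_M(F^r_* N_0)$ with $N_0 \coloneqq \sum_i R\, n_i$ is finitely generated over $R$; indeed it is the $R$-linear span of the finitely many elements $\kappa_M(F^r_*(\lambda_{k,1} n_i))$, where $\{F^r_*\lambda_{k,1}\}_k$ is an $R$-generating set of $F^r_* R$. Writing each such element in the $n_j$ and letting $u \in S$ be a common denominator yields $\kappa_M(F^r_* N_0) \subseteq \frac{1}{u} N_0$.

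For each $s \in S$ and $k \geq 1$ I would then define $N_{s,k} \coloneqq \frac{1}{s u^{kq}} N_0 \subseteq M$, a finitely generated $R$-submodule, and verify that it is Cartier-stable. The key computation is the identity
\[ F^r_*\!\left(\frac{y}{s u^{kq}}\right) \;=\; \frac{1}{s u^k} \cdot F^r_*(y s^{q-1}) \]
in the twisted $S^{-1}R$-action on $F^r_* M$, which combined with $S^{-1}R$-linearity of $\kappa_M$ gives $\kappa_M(F^r_* N_{s,k}) \subseteq \frac{1}{s u^{k+1}} N_0$; the final inclusion $\frac{1}{s u^{k+1}} N_0 \subseteq N_{s,k}$ holds because $k(q-1) \geq 1$ for $k \geq 1$ and $q \geq 2$.

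It remains to observe that the $N_{s,k}$ form a filtered family (from $N_{s_1, k_1} + N_{s_2, k_2} \subseteq N_{s_1 s_2,\, \max(k_1, k_2)}$) and that every element of $M$ lies in some $N_{s,1}$ by clearing denominators: if $x = \sum_i (a_i/s_i) n_i$ then $x \in N_{\prod_i s_i,\, 1}$. The main obstacle in this plan is the Cartier-stability verification for $N_{s,k}$, which requires careful bookkeeping of the twisted $S^{-1}R$-action on $F^r_* M$; once the displayed identity is written down, the rest of the argument is a routine denominator chase.
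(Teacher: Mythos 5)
Your proof is correct and is essentially the approach the paper itself takes: the paper's proof of this lemma simply reduces to $M$ coherent over $S^{-1}R$ and then invokes verbatim the argument of \autoref{direct sum of residue fields is ind-coh}, which is precisely what you carry out. The only cosmetic remark is that your second index $k$ is superfluous: since $u \in S$, you may absorb $u^{(k-1)q}$ into $s$, so that $N_{s,k} = N_{su^{(k-1)q},1}$. Equivalently, taking $M' = \frac{1}{u^{q}}N_0$ (a coherent Cartier $R$-submodule of $M$, by the same computation you give) and forming $\frac{1}{s}M'$ for $s \in S$ already yields a filtered family of coherent Cartier submodules exhausting $M$, exactly mirroring the one-parameter family $\frac{M}{\lambda}$ from the paper's model argument.
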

\begin{proof}
	We immediately reduce to the case where $M$ is coherent, and then the proof is the same as that of \autoref{direct sum of residue fields is ind-coh}.
\end{proof}
\begin{rem}
	Note that this is particular to Cartier modules, and fails for Frobenius modules ($S^{-1}R$ with the standard Frobenius structure is almost never ind-coherent, because powers in the denominator will only increase).
\end{rem}
\begin{cor}\label{pushforward of qalg is qalg}
	Let $j \colon U \ra X$ be the inclusion of an open subscheme, and let $\Mcal \in \IndCoh_U^{C^r}$. Then $j_*\Mcal \in \IndCoh_X^{C^r}$.
\end{cor}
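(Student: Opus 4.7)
The plan is to deduce this from the affine version (\autoref{pushforward of qalg is qalg affine version}) via a standard \v{C}ech-type reduction to the affine case.

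First I would use the local nature of ind-coherence (second bullet of \autoref{basic props of ind-coherent objects}) to reduce the question to showing that $(j_*\cM)|_V$ is ind-coherent for each open affine $V = \Spec R \inc X$. Fix such a $V$. Since $X$ is Noetherian, $U \cap V$ is a quasi-compact open subset of $V$, so I would cover it by finitely many principal opens $D(f_1), \dots, D(f_n) \inc V$.

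Next, the sheaf separation axiom applied to this finite cover gives an injection of $R$-Cartier modules
\[
0 \to (j_*\cM)(V) = \cM(U \cap V) \inj \bigoplus_{i = 1}^n \cM(D(f_i)).
\]
For each $i$, the restriction $\cM|_{D(f_i)}$ is ind-coherent on $D(f_i)$ (restriction to opens preserves ind-coherence, again by \autoref{basic props of ind-coherent objects}), so $\cM(D(f_i))$ is an ind-coherent Cartier module over $R_{f_i}$. Applying the affine version \autoref{pushforward of qalg is qalg affine version} to the localization $R \to R_{f_i}$, each $\cM(D(f_i))$ is then ind-coherent as an $R$-Cartier module.

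Finally, since ind-coherence is stable under colimits, the finite direct sum on the right is ind-coherent over $R$; and since it is stable under subobjects, so is the $R$-Cartier submodule $\cM(U \cap V)$. I do not anticipate any real obstacle here: the argument is a formal \v{C}ech reduction, and the analytic content has already been isolated in \autoref{pushforward of qalg is qalg affine version}. The only mild point to verify is that the restriction maps entering the sheaf condition are Cartier-linear, which is automatic from the way $j_*\cM$ inherits its Cartier structure.
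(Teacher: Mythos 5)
Your proof is correct and follows essentially the same route as the paper's (very terse) proof: reduce to $X$ affine via the local characterization of ind-coherence in \autoref{basic props of ind-coherent objects}, cover $U \cap V$ by finitely many distinguished opens so that the sheaf separation axiom embeds $\cM(U\cap V)$ into a finite direct sum of modules handled by \autoref{pushforward of qalg is qalg affine version}, and conclude using closure of ind-coherence under finite direct sums and subobjects. The one thing the paper leaves implicit, and which you correctly supply, is the \v{C}ech-type embedding needed because a general open $U\cap V\subseteq\Spec R$ is not itself of the form $\Spec S^{-1}R$.
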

\begin{proof}
	The affine case follows from \autoref{pushforward of qalg is qalg affine version}, and the general case follows from the affine one and \autoref{basic props of ind-coherent objects}.
\end{proof}

\begin{defn}
	Define the functor $\ind_X \colon \QCoh_X^{C^r} \ra \IndCoh_X^{C^r}$ by sending a Cartier module to its maximal ind-coherent Cartier submodule. This defines a functor, which is right adjoint to the inclusion $\IndCoh_X^{C^r} \ra \QCoh_X^{C^r}$. 
\end{defn}

We want to show that its higher derived functors vanish on ind-coherent Cartier modules. We first focus on the affine case, so assume that $X = \Spec R$ for now.

\begin{lem}\label{ind exact if first module is qalg}
	Let \[ 0 \ra M \ra N \ra P \ra 0 \] be an exact sequence of Cartier modules, with $M$ ind-coherent. Then the sequence \[ 0 \ra M \ra \ind_X(N) \ra \ind_X(P) \ra 0 \] is exact.
\end{lem}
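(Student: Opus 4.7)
The plan is to exploit the adjunction $\ind_X \dashv \iota$ (where $\iota\colon \IndCoh_X^{C^r} \to \QCoh_X^{C^r}$ is the inclusion) to get left exactness for free, and then to reduce the only nontrivial point -- surjectivity of $\ind_X(N) \to \ind_X(P)$ -- to the closure property \autoref{ind-coh closed under extensions}.

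First I would observe that $\ind_X$ sends a Cartier module to the sum of all its ind-coherent Cartier submodules; by \autoref{basic props of ind-coherent objects}, this sum is again ind-coherent, and a morphism from an ind-coherent Cartier module has ind-coherent image (quotient), so factors through $\ind_X$. Hence $\ind_X$ is right adjoint to $\iota$, so in particular left exact. This immediately yields exactness of
\[
0 \to \ind_X(M) \to \ind_X(N) \to \ind_X(P),
\]
and since $M$ is already ind-coherent we have $\ind_X(M) = M$.

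It remains to prove that $\ind_X(N) \to \ind_X(P)$ is surjective. Pick $p \in \ind_X(P)$, so $p$ lies in some coherent Cartier submodule $P_0 \subseteq P$; choose any lift $n \in N$. Let $\langle n \rangle$ denote the smallest Cartier submodule of $N$ containing $n$, and let $\langle p \rangle$ be its image in $P$. Then $\langle p \rangle \subseteq P_0$ is a Cartier submodule of a coherent Cartier module, hence ind-coherent by \autoref{basic props of ind-coherent objects}, and $\langle n \rangle \cap M$ is a Cartier submodule of the ind-coherent module $M$, hence likewise ind-coherent. The short exact sequence
\[
0 \to \langle n \rangle \cap M \to \langle n \rangle \to \langle p \rangle \to 0
\]
together with \autoref{ind-coh closed under extensions} shows that $\langle n \rangle$ itself is ind-coherent. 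Therefore $n \in \ind_X(N)$, and it maps to $p$, proving surjectivity.

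The only subtlety I anticipate is the extension step: it crucially uses $F$-finiteness (hidden in \autoref{ind-coh closed under extensions}) and the fact that the Cartier submodule generated by a single element of a coherent Cartier module is itself coherent, which ultimately rests on \autoref{locally ind-coh iff pseudo qalg}. Everything else is a formal manipulation of the adjunction, so this is the real content of the statement.
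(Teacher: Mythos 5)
Your proof is correct and follows essentially the same route as the paper's: both reduce to exhibiting an ind-coherent Cartier submodule of $N$ that surjects onto (a submodule containing) $p$, and both obtain its ind-coherence from the extension-closure lemma. The only cosmetic difference is that the paper takes $\phi^{-1}(L)$ for a coherent $L \subseteq P$ (so the kernel of the short exact sequence is all of $M$), whereas you take the cyclic submodule $\langle n\rangle$ and intersect with $M$; either choice works.
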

\begin{proof}
	As a right adjoint, $\ind_X$ is left exact, so it is enough to show that $\ind_X(N) \ra \ind_X(P)$ is surjective. Let $\phi$ denote the morphism $N \ra P$, let $x \in \ind_X(P)$, and let $L$ be a finitely generated Cartier submodule of $P$ containing $x$. By surjectivity of $\phi$, the sequence \[ 0 \ra \ker(\phi|_L) \ra \phi^{-1}(L) \ra L \ra 0 \] is exact, so since $L$ and $\ker(\phi|_L)$ are ind-coherent, $\phi^{-1}(L)$ is also ind-coherent by \autoref{ind-coh closed under extensions}. In other words, $\phi^{-1}(L) \inc \ind_X(N)$, so we are done.
\end{proof}

\begin{lem}\label{strongly ind-acyclic is ind-acyclic for regular}
	Assume that $X = \Spec R$ is connected, that $F^{rj}_*R$ is free for all $j \geq 1$ (with basis $\{F^{rj}_*\lambda_{i, j}\}_{1 \leq i \leq n_j}$), and let $M$ be a Cartier module on $R$ with the following property: for all $j \geq 1$, $1 \leq i \leq n_j$, $m_i \in M$ and $s_i \in R[F^r]$ of degree $rj$ with leading term $\lambda_{i, j}$, there exists $m \in M$ such that \[ m_i = m \cdot s_i\] for all $i$. 
	
	Then $R^l\ind_X(M) = 0$ for all $l > 0$. 
\end{lem}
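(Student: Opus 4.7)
The plan is to first establish a lifting property for strongly ind-acyclic Cartier modules, in the spirit of \autoref{ind exact if first module is qalg}, and then iterate it via dimension shifting.

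I would start by proving the following key lemma: if $M$ satisfies the strong-divisibility hypothesis and $0 \to M \to N \to P \to 0$ is a short exact sequence in $\QCoh_X^{C^r}$, then the induced map $\ind_X(N) \to \ind_X(P)$ is surjective. Indeed, given $p \in \ind_X(P)$, \autoref{locally ind-coh iff pseudo qalg} produces $j \geq 1$ and $s_1, \ldots, s_{n_j} \in R[F^r]$ of degree $rj$ with leading terms $\lambda_{i,j}$ such that $p \cdot s_i = 0$ for all $i$. Lifting $p$ to any $n_0 \in N$, the elements $n_0 \cdot s_i$ all lie in $M$, so the strong divisibility of $M$ yields $m \in M$ with $m \cdot s_i = n_0 \cdot s_i$ for every $i$. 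Setting $n := n_0 - m$, we obtain a lift of $p$ with $n \cdot s_i = 0$, whence $n \in \ind_X(N)$ by \autoref{locally ind-coh iff pseudo qalg} applied to $N$.

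From this lemma, the base case $R^1\ind_X(M) = 0$ follows immediately by embedding $M$ into an injective $I$ of $\QCoh_X^{C^r}$ and invoking the long exact sequence (noting $R^1\ind_X(I) = 0$, since $I$ is injective). For $l > 1$, the plan is a dimension-shifting induction. A direct verification shows that the class of strongly ind-acyclic modules is closed under quotients by strongly ind-acyclic submodules: lift a putative system in the quotient, apply the hypothesis upstairs, and project. Hence, if one can embed $M$ into an $\ind_X$-acyclic Cartier module $\tilde{M}$ which is itself strongly ind-acyclic, then $\tilde{M}/M$ again satisfies the hypothesis and the long exact sequence gives $R^l\ind_X(M) = R^{l-1}\ind_X(\tilde{M}/M) = 0$ for every $l > 0$ by induction.

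The main obstacle is constructing such an embedding, since an arbitrary injective envelope in $\QCoh_X^{C^r}$ need not be strongly divisible. I would attempt to produce $\tilde{M}$ either (a) as a suitably ``Frobenius-inverted'' overmodule of $M$, using the freeness of each $F^{rj}_*R$ to guarantee that every system $m \cdot s_i = m_i$ can be solved after enlargement, or (b) by constructing a strongly divisible injective directly, via extending morphisms off right ideals of $R[F^r]$ generated by the $s_i$ and exploiting the given $p$-basis $\{\lambda_{i,j}\}$. In either case, the $\ind_X$-acyclicity of $\tilde{M}$ can be verified by applying the key lemma above to exact sequences starting from $\tilde{M}$, closing the induction.
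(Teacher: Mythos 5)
Your outline captures the skeleton of the paper's argument: the same lifting lemma for $\ind_X$, the same base case via an injective envelope, and the same dimension-shifting induction. You also correctly identify the one point where your argument does not yet close: you need to embed $M$ into an $\ind_X$-acyclic Cartier module that is again strongly divisible, so that the quotient inherits the hypothesis and the induction can proceed.

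Where you go astray is in treating this as a genuine obstruction requiring a new construction. In fact, \emph{any} injective object $J$ of $\QCoh_X^{C^r}$ already satisfies the strong-divisibility hypothesis under the standing assumptions, so the envelope you already used for the base case works for every step. Here is why. Since $F^r_*R$ is free, $R$ is regular by Kunz, and since $\Spec R$ is connected, $R$ is an integral domain. Consequently each right ideal $s_iR[F^r] \subseteq R[F^r]$ is free of rank one as a right $R[F^r]$-module (generated by $s_i$, with no right zero-divisors). Given elements $x_1,\dots,x_{n_j}\in J$, one therefore gets Cartier-module maps $s_iR[F^r]\to J$ sending $s_i\mapsto x_i$, hence a single map $\bigoplus_i s_iR[F^r]\to J$. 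The inclusion $\bigoplus_i s_iR[F^r]\hookrightarrow R[F^r]$ is injective precisely because the $s_i$ have degree $rj$ with leading terms $\lambda_{i,j}$ forming a basis of $F^{rj}_*R$. Injectivity of $J$ as a right $R[F^r]$-module now extends $\bigoplus_i s_iR[F^r]\to J$ to a map $R[F^r]\to J$, and the image $m$ of $1$ satisfies $m\cdot s_i = x_i$ for all $i$. This shows $J$ is strongly divisible; since your own observation shows this property passes to arbitrary quotients, $J/M$ inherits it, and your induction is complete. So the gap is real as written, but it is filled by a short argument exploiting injectivity and the freeness of $F^{rj}_*R$, not by constructing a new $\tilde{M}$; your option (b) hints at this but you would need to carry it out as above rather than build a bespoke strongly divisible injective.
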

\begin{proof}
	Let $J$ be an injective Cartier module containing $M$, and let $N \coloneqq J/M$. We have a short exact sequence \[ 0 \ra M \ra J \ra N \ra 0, \] so it is enough to show the two following facts: \\
	
	\begin{enumerate}	
		\item the following sequence is exact \[ 0 \ra \ind_X(M) \ra \ind_X(J) \ra \ind_X(N) \ra 0; \]
		\item the Cartier module $N$ also satisfies the property of the lemma.  \\
	\end{enumerate}

	We begin with the first statement. By left exactness of $\ind_X$, it is enough to show that $\ind_X(J) \to \ind_X(N)$ is surjective.
	
	Let $x \in \ind_X(N)$, and let $y \in J$ be a lift of $x$. By \autoref{locally ind-coh iff pseudo qalg}, there exists $j \geq 0$, $s_1, \dots, s_{n_j} \in R[F^r]$ of degree $rj$ such that for all each $s_i$ has leading term $\lambda_{i, j}$ and $x \cdot s_i = 0$. Hence, $y \cdot s_i \in M$, so by assumption there exists $m \in M$ with the property that \[ m\cdot s_i = y \cdot s_i \] for all $i$.
	
	Then $(y - m) \cdot s_i = 0$ for all $i$, so $y - m \in \ind_X(J)$ by \autoref{locally ind-coh iff pseudo qalg}. Since $y - m$ is also a lift of $x$, we deduce that $\ind_X(J) \ra \ind_X(C)$ is surjective. \\
	
	Let us show the second statement now. It is enough to show that $J$ satisfies the property of the lemma, as this property is closed under taking quotients. Therefore, let $j \geq 0$, $x_1, \dots, x_{n_j} \in J$ and $s_1, \dots, s_{n_j}$ of degree $rj$, such that the leading coefficients of each $s_j$ is $\lambda_{i, j}$. Since $F_*R$ is free, we know by Kunz' theorem (see \cite[{\href{https://stacks.math.columbia.edu/tag/0EC0}{Tag 0EC0}}]{Stacks_Project}) that $R$ is a regular ring. Since $\Spec R$ is connected, we deduce that $R$ is in particular integral, so all the Cartier modules $s_iR[F^r]$ are free of rank one as right $R[F^r]$-modules. Therefore there exist (unique) Cartier module morphisms $s_iR[F^r] \ra J$ sending $s_i$ to $x_i$. Consider the diagram \[ \begin{tikzcd}
		{R[F^r]}                                                    &  & J \\
		{\bigoplus_i s_iR[F^r]} \arrow[rru] \arrow[u] &  &  
	\end{tikzcd} \] The vertical arrow is an injection because the elements $F^{rj}_*\lambda_{i, j}$ form a basis of $F^{rj}_*R$, and the elements $s_i$ are of degree $rj$ with leading term $\lambda_{i, j}$. By injectivity of $J$, there exists a morphism $R[F^r] \ra J$ completing the above diagram. The image of $1 \in R[F^r]$ is then the element of $J$ we were looking for.
\end{proof}

\begin{cor}\label{ind-coh are ind-acyclic over regulars}
	Assume that $\Spec R$ is connected and that $F^{rj}_*R$ is free for all $j \geq 1$. Then for any ind-coherent Cartier module $M$, $R^i\ind_X(M) = 0$ for all $i > 0$.
\end{cor}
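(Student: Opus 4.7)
The plan is to embed $M$ into an injective object $J$ of $\QCoh_X^{C^r}$, set $N := J/M$, and deduce the vanishing of $R^i\ind_X(M)$ for $i > 0$ from the long exact sequence associated with $0 \to M \to J \to N \to 0$.

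First, I will observe that the injective $J$ itself satisfies the ``strong'' lifting hypothesis of \autoref{strongly ind-acyclic is ind-acyclic for regular}. This is precisely what is established in the second half of that proof: given any $x_i \in J$ and $s_i \in R[F^r]$ of degree $rj$ with leading term $\lambda_{i,j}$, the freeness of $F^{rj}_*R$ ensures that $\bigoplus_i s_i R[F^r] \hookrightarrow R[F^r]$, and the injectivity of $J$ lets us extend the map $s_i \mapsto x_i$ to all of $R[F^r]$, producing the required $m$. Next, I will note that the strong property is manifestly stable under quotients: to produce a common antecedent of $\bar{x}_i \in N$, lift to $x_i \in J$, apply strongness of $J$ to get $m \in J$, and project to $N$. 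Hence $N$ is also strong, so by \autoref{strongly ind-acyclic is ind-acyclic for regular} we have $R^i\ind_X(N) = 0$ for all $i > 0$, while $R^i\ind_X(J) = 0$ for $i > 0$ is automatic since $J$ is injective.

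Finally, I will read off the conclusion from the long exact sequence of $R^i\ind_X$:
\[ 0 \to \ind_X(M) \to \ind_X(J) \to \ind_X(N) \to R^1\ind_X(M) \to R^1\ind_X(J) \to \cdots \]
By \autoref{ind exact if first module is qalg}, the ind-coherence of $M$ implies that $\ind_X(J) \to \ind_X(N)$ is surjective, which kills $R^1\ind_X(M)$. Combined with the vanishing $R^i\ind_X(J) = R^i\ind_X(N) = 0$ for $i > 0$, the long exact sequence then forces $R^i\ind_X(M) = 0$ for every $i > 0$.

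I do not foresee a substantive obstacle once the preceding lemmas are in hand: this corollary is a direct assembly of \autoref{ind exact if first module is qalg} and \autoref{strongly ind-acyclic is ind-acyclic for regular} via the standard long exact sequence. The only point that might initially look delicate is the passage of the strong property to quotients, but as indicated this is a one-line reduction, and it is essential because it is what transports the acyclicity from $J$ (which is injective but where we cannot apply \autoref{ind exact if first module is qalg} to its quotient without this) all the way to $N$.
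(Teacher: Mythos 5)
Your proof is correct and follows essentially the same route as the paper: embed $M$ in an injective $J$, observe (from the second half of the proof of \autoref{strongly ind-acyclic is ind-acyclic for regular}) that $J$ and hence its quotient $N=J/M$ satisfy the strong lifting property and are therefore $\ind_X$-acyclic, and then conclude via the long exact sequence together with the surjectivity supplied by \autoref{ind exact if first module is qalg}. Your write-up merely spells out in more detail the steps that the paper compresses into two sentences.
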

\begin{proof}
	Embed $M$ in an injective $J \in \QCoh_X^{C^r}$. By the proof of \autoref{strongly ind-acyclic is ind-acyclic for regular}, $J/M$ satisfies the property of \autoref{strongly ind-acyclic is ind-acyclic for regular}, so it is $\ind_X$-acyclic. Since $J$ is injective, it is also $\ind_X$-acyclic, so we conclude using \autoref{ind exact if first module is qalg}.
\end{proof}
\begin{rem}
	In fact, the same proof shows that for all $i > 1$ and any (non-necessarily ind-coherent) Cartier module $M$, $R^i\ind_X(M) = 0$.
\end{rem}

The idea now is to deduce the case of a general Noetherian $F$-finite ring, using the special case above.

\begin{lem}\label{ind-coh is ind-acyclic affine case}
	Assume that $X = \Spec R$ is connected, and let $M$ be an ind-coherent Cartier module. Then $R^i\ind_X(M) = 0$ for all $i > 0$.
\end{lem}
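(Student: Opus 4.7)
The plan is to reduce to the regular case handled in \autoref{ind-coh are ind-acyclic over regulars} via Gabber's theorem (cited in the introduction to this section), which lets us write $R = S/I$ where $S$ is a Noetherian $F$-finite $\bF_p$-algebra admitting a $p$-basis, i.e., $F^{rj}_*S$ is free for all $j \geq 1$. Let $\pi\colon \Spec R \hookrightarrow \Spec S$ denote the corresponding closed immersion; replacing $S$ by a suitable connected component, we may assume $\Spec S$ is connected as well.

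First I would verify the compatibility $\pi_* \circ \ind_X = \ind_{\Spec S} \circ \pi_*$ on $\QCoh_X^{C^r}$. This follows from two observations: the pushforward $\pi_*$ is exact and fully faithful, and coherent Cartier $S$-submodules of $\pi_*N$ are automatically killed by $I$, hence correspond bijectively to coherent Cartier $R$-submodules of $N$. In particular, $\pi_*$ preserves ind-coherence, so $\pi_*M$ is ind-coherent on $\Spec S$.

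Next, pick an injective resolution $M \hookrightarrow I^\bullet$ in $\QCoh_X^{C^r}$; applying the exact functor $\pi_*$ yields an exact resolution $\pi_*M \hookrightarrow \pi_*I^\bullet$ in $\QCoh_{\Spec S}^{C^r}$. The key technical claim is that each $\pi_*I^k$ is ind-coherent on $\Spec S$, hence $\ind_{\Spec S}$-acyclic by \autoref{ind-coh are ind-acyclic over regulars}. To see this, observe that $I^k$ is injective as a quasi-coherent $\cO_X$-module by \autoref{inj qcoh Cartier is inj O_X}; using Matlis's theorem together with \autoref{E(Rp/pRp)|R = E(R/p)}, we may identify $I^k \cong \bigoplus_\alpha \pi^!E(S/\mathfrak{p}'_\alpha)$, where $\mathfrak{p}'_\alpha \subseteq S$ denotes the preimage of the associated prime $\mathfrak{p}_\alpha \subseteq R$. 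Extending the Cartier structure of $\pi_*I^k$ to the ambient $S$-injective $\bigoplus_\alpha E(S/\mathfrak{p}'_\alpha)$ (using the fact that any direct sum of injective $S$-modules over a Noetherian ring is injective) and adapting the filtration argument of \autoref{inj hull of coh is indcoh} by specialization depth, now possibly transfinitely to accommodate the infinite direct sum, the ambient module is ind-coherent for any Cartier structure, and therefore so is its Cartier submodule $\pi_*I^k$ by \autoref{basic props of ind-coherent objects}.

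With the $\ind_{\Spec S}$-acyclicity of $\pi_*I^\bullet$ in hand, we compute
\[ R^i\ind_{\Spec S}(\pi_*M) \;=\; \cH^i(\ind_{\Spec S}(\pi_*I^\bullet)) \;=\; \cH^i(\pi_*\,\ind_X(I^\bullet)) \;=\; \pi_*\,R^i\ind_X(M). \]
Since $\pi_*M$ is ind-coherent and $S$ satisfies the hypotheses of \autoref{ind-coh are ind-acyclic over regulars}, the left-hand side vanishes for $i > 0$, yielding $\pi_*\,R^i\ind_X(M) = 0$; the faithfulness of $\pi_*$ then gives $R^i\ind_X(M) = 0$. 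The principal technical obstacle is the structural analysis required to establish the ind-coherence of $\pi_*I^k$, since the induced Cartier structure on the ambient $S$-injective need not respect the decomposition into Matlis summands, and one must therefore carefully adapt the filtration argument of \autoref{inj hull of coh is indcoh} to infinite direct sums of injective hulls.
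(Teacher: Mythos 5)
Your overall plan---reduce to a regular $S$ via Gabber and exploit the closed immersion $\pi\colon \Spec R\hookrightarrow\Spec S$---is in the same spirit as the paper's argument, and several of your intermediate steps are sound: the compatibility $\pi_*\circ\ind_X=\ind_{\Spec S}\circ\pi_*$ does hold (a coherent Cartier $S$-submodule of a module killed by $I$ is precisely a coherent Cartier $R$-submodule), $\pi_*$ preserves ind-coherence, and if each $\pi_*I^k$ really were $\ind_{\Spec S}$-acyclic then your chain of identifications followed by faithfulness of $\pi_*$ would finish the proof.

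The gap is in the ``key technical claim''. You need each $\pi_*I^k$ to be ind-coherent, equivalently each $I^k$ to be an ind-coherent Cartier $R$-module; but $I^k$ is only assumed injective in $\QCoh_X^{C^r}$, and injectives of $\QCoh_X^{C^r}$ need not be ind-coherent. Your proposed sub-argument---extend the Cartier structure to an ambient $S$-injective and run a transfinite version of the filtration in \autoref{inj hull of coh is indcoh}---is based on the claim that \emph{any} Cartier structure on a possibly infinite Matlis sum $\bigoplus_\alpha E(R/\mathfrak p_\alpha)$ is ind-coherent, and that claim is false. For instance, take $R=\bF_p[x]$, $L=\Frac(R)=E(R/(0))$, and put on $N=\bigoplus_{n\ge 1}L$ the Cartier structure determined $L$-linearly by $\kappa\bigl(F_*(x^{j}e_n)\bigr)=0$ for $0\le j<p-1$ and $\kappa\bigl(F_*(x^{p-1}e_n)\bigr)=e_{n+1}$. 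Then $N$ is injective as a quasi-coherent $R$-module, but by \autoref{locally ind-coh iff pseudo qalg} the element $e_1$ lies in no finitely generated Cartier submodule (iterating $\kappa$ produces $e_2,e_3,\dots$, and no operator of degree $j$ with the required leading term can annihilate $e_1$ because its image has a nontrivial $e_{j+1}$-component), so $N$ is not ind-coherent. Since ind-coherence is closed under subobjects (\autoref{basic props of ind-coherent objects}), any $\QCoh_R^{C^r}$-injective containing $N$ is also not ind-coherent. Finiteness of the Matlis sum (equivalently, coherence of the module being enveloped) is exactly what makes \autoref{inj hull of coh is indcoh} work, and it cannot be dispensed with transfinitely. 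The paper avoids precisely this trap by embedding $\pi_*M$ into an injective $I$ of $\IndCoh_{\Spec S}^{C^r}$---an object that is ind-coherent \emph{by membership in that category}---and then using the adjoint $\pi^\flat$ rather than an injective resolution in $\QCoh_X^{C^r}$ to produce $\ind_X$-acyclic ind-coherent covers of $M$. To repair your proof you would need to either restrict to injectives in $\IndCoh$ (and then justify their $\ind_X$-acyclicity as the paper does, using $\pi^\flat$ and \autoref{inj qalg is inj O_X}), or prove separately that the terms of a $\QCoh_X^{C^r}$-injective resolution of an ind-coherent $M$ are ind-coherent, which is not established anywhere in the paper and looks to be at least as hard as the lemma itself.
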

\begin{proof}
	By \cite[Remark 13.6]{Gabber_notes_on_some_t_structures}, there exists a Noetherian $F$-finite $\bF_p$-algebra $S$ such that $F_*S$ is free and $R$ is a quotient of $S$. Set $R \eqqcolon S/J$, $Y \coloneqq \Spec S$ and let $f \colon X \inj Y$ be the closed immersion corresponding to the surjection $S \surj R$. Since $X$ is connected, the image of $f$ lives in a connected component of $Y$. Hence, we may assume $Y$ is also connected. 
	
	Let $f^\flat$ denote the right adjoint of $f_*$, i.e. the functor given by taking an $S$-module $M$ to its $I$-torsion elements $M[I]$ (see \autoref{affine Grothendieck duality} for a more general setup). We can extend this functor to quasi-coherent Cartier modules. Indeed, if $M$ is a Cartier module on $S$, then the $J$-torsion elements $f^{\flat}M = M[J]$ of $M$ is stable under the Cartier structure of $M$. By construction, this functor preserves ind-coherent objects, and for any Cartier module $M$ on $R$, the natural map $M \ra f^\flat f_*M$ is an isomorphism. 
	
	Given that $f^\flat  \circ \ind_Y = \ind_X \circ f^\flat $ (this is true for their left adjoints), and all these functors preserve injectives (their left adjoints are exact), we obtain that $Rf^\flat  \circ R\ind_Y = R\ind_X \circ Rf^\flat $. 
	
	Let $M$ be an ind-coherent Cartier module on $R$, and let $I \in \IndCoh_S^{C^r}$ be an injective containing $f_*M$. We then have \begin{equation}\label{eq_lemma_ind_acyclic}
		Rf^\flat (R\ind_Y(I)) = R\ind_X(Rf^\flat (I)). 
	\end{equation}
	By \autoref{strongly ind-acyclic is ind-acyclic for regular}, the left-hand side is $Rf^\flat I$, which is exactly $f^\flat I$ since $I$ is injective. 
	
	For the right-hand side, note that the functor $f^\flat $ is taken in the category of quasi-coherent Cartier modules, and $I$ needs not be injective in this category, so a priori it is not clear what $Rf^\flat I$ is. However, since injectives in $\QCoh_X^{C^r}$ are also injective in $\QCoh_X$, this $Rf^\flat I$ is the same as the $Rf^\flat $ of quasi-coherent modules applied to $I$. Given that $I$ is injective in $\IndCoh_X^{C^r}$, it is also injective in $\QCoh_X$ by \autoref{inj qalg is inj O_X}, so $R^if^\flat I = 0$ for all $i > 0$. Thus, \autoref{eq_lemma_ind_acyclic} reads as \[ f^\flat I = R\ind_X(f^\flat I). \] In other words, $f^\flat I$ is $\ind_X$-acyclic too. Since $f_*M$ embeds in $I$ and $f^\flat$ is given by taking the $I$-torsion, $M$ embeds in $f^\flat I$.
	
	Doing the same for $f^\flat (I)/M$ and so on, we find a resolution of $M$ by $\ind_X$-acyclic Cartier modules which are ind-coherent. We can then use this resolution to compute $R\ind_X(M)$, and this gives $R\ind_X(M) = M$.
\end{proof}

Now we are ready to give the statement for general $X$.
\begin{cor}\label{ind-coh is ind-acyclic}
	Let $X$ be a Noetherian $F$-finite $\bF_p$-scheme, and let $\Mcal \in \IndCoh_X^{C^r}$. Then for all $i > 0$, $R^i\ind_X(\Mcal) = 0$.
\end{cor}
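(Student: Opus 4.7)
The plan is to reduce to the affine case established in \autoref{ind-coh is ind-acyclic affine case} by a local-to-global argument. Since $R^i\ind_X\cM$ is a sheaf, it suffices to show it vanishes on a cover by connected affine opens. The key step is to establish the compatibility $(R^i\ind_X\cM)|_U \cong R^i\ind_U(\cM|_U)$ for every open $U \subseteq X$: once this holds, covering $X$ by (connected) affine opens and invoking the previous lemma finishes the argument.

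At the underived level, this compatibility is immediate: both $(\ind_X\cM)|_U$ and $\ind_U(\cM|_U)$ are the subsheaf of $\cM|_U$ consisting of sections lying locally in a coherent Cartier submodule, and ind-coherence is a local notion by \autoref{basic props of ind-coherent objects}. To promote this to the derived functors, I would show that for any open immersion $j \colon U \hookrightarrow X$, the restriction functor $j^* \colon \QCoh_X^{C^r} \to \QCoh_U^{C^r}$ sends injective objects to $\ind_U$-acyclic objects. Given this, any injective resolution $\cM \to \cI^\bullet$ in $\QCoh_X^{C^r}$ restricts to an $\ind_U$-acyclic resolution of $\cM|_U$; applying $\ind_U$ and combining with the underived commutation yields the desired sheaf isomorphism.

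The main obstacle is precisely this acyclicity of restricted injectives. The natural approach is to mimic the argument of \autoref{ind-coh is ind-acyclic affine case} starting from $\cI|_U$ rather than from an abstract ind-coherent module on $U$. Concretely, for an affine open $U = \Spec R$, \autoref{inj qcoh Cartier is inj O_X} combined with \autoref{injective qcoh is injective O_X} ensures that $\cI|_U$ is injective as an $\cO_U$-module, which is the key input of the affine proof. Applying Gabber's presentation $R = S/I$ and the right-adjoint functor $f^\flat$ along $f \colon \Spec R \hookrightarrow \Spec S$, the identity $Rf^\flat \circ R\ind_{\Spec S} = R\ind_U \circ Rf^\flat$ together with the strong acyclicity of \autoref{strongly ind-acyclic is ind-acyclic for regular} on $\Spec S$ should yield the required acyclicity of $\cI|_U$. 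I expect the most delicate point to be verifying this extension of the affine argument, since $\cI|_U$ is a priori not obtained as a pushforward of any injective on $\Spec S$, and one has to transfer the argument carefully through the $f^\flat$ adjunction.
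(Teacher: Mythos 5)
Your outline takes a genuinely different route from the paper, and it contains a real gap. The paper does not try to commute $R\ind$ with restriction. Instead it argues by pushforward: cover $X$ by connected affine opens $U_i$, embed $\cM|_{U_i}$ into an injective $\cI_i$ of $\IndCoh_{U_i}^{C^r}$, and note that $\cM$ embeds into $\bigoplus_i(j_i)_*\cI_i$, which is both ind-coherent (by \autoref{pushforward of qalg is qalg}) and $\ind_X$-acyclic, the latter because $R\ind_X\circ R(j_i)_*\cong R(j_i)_*\circ R\ind_{U_i}$ (both functors are right adjoints of exact functors) and $\cI_i$ is $(j_i)_*$-acyclic and $\ind_{U_i}$-acyclic. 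Iterating gives a resolution of $\cM$ by $\ind_X$-acyclic \emph{and ind-coherent} objects, so applying $\ind_X$ to it reproduces the resolution and $R\ind_X(\cM)\cong\cM$.

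The crux of your plan is the claim that $\cI|_U$ is $\ind_U$-acyclic whenever $\cI$ is injective in $\QCoh_X^{C^r}$ and $U$ is affine, and the $f^\flat$-transfer you sketch does not prove this. The obstacle you flag is not merely delicate but structural: $\cI|_U$ is injective over $\cO_U$ but is in general not ind-coherent. In \autoref{ind-coh is ind-acyclic affine case} the argument starts from an ind-coherent $M$, so $f_*M$ embeds in an injective $I$ of $\IndCoh_S^{C^r}$ and $f^\flat I$ is simultaneously $\ind_R$-acyclic \emph{and ind-coherent}; every term of the resulting resolution $A^\bullet$ satisfies $\ind_R(A^n)=A^n$, hence $\ind_R(A^\bullet)$ resolves $M$ and $R\ind_R(M)=M$. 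Starting instead from $J=\cI|_U$, you must embed $f_*J$ into an injective $K$ of $\QCoh_S^{C^r}$ rather than $\IndCoh_S^{C^r}$; then $R\ind_S(K)=\ind_S(K)\neq K$, and while one can still deduce (from the commutation and \autoref{inj qcoh Cartier is inj O_X}, \autoref{inj qalg is inj O_X}) that $f^\flat K$ is $\ind_R$-acyclic, this module is no longer ind-coherent. A resolution $0\to J\to A^0\to A^1\to\cdots$ by such $\ind_R$-acyclic objects only gives $R\ind_R(J)\cong\ind_R(A^\bullet)$, and without $\ind_R(A^n)=A^n$ you have no control on the cohomology of $\ind_R(A^\bullet)$ in positive degrees; acyclicity of the individual $A^n$ alone does not force $R^{>0}\ind_R(J)=0$. (Note also that \autoref{ind exact if first module is qalg}, which the paper uses to splice such exact sequences, requires ind-coherence of the leftmost term.) To salvage your approach you would need a genuinely new argument that restricted injectives are $\ind_U$-acyclic; the paper's pushforward-based proof is precisely designed to avoid having to prove any such statement.
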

\begin{proof}
	Since $X$ is Noetherian, its connected components are open, so we can cover $X$ by finitely many connected affine open subschemes $U_i$. Let $j_i \colon U_i \inj X$ denote the inclusion. By \autoref{pushforward of qalg is qalg}, the functors $(j_i)_*$ preserve ind-coherent modules. For all $i$, embed $\Mcal|_{U_i}$ in an injective $\Ical_i$ in $\IndCoh_{U_i}^{C^r}$. Then $\Mcal$ embeds in the ind-coherent module \[ \Ical \coloneqq \bigoplus_i(j_i)_*\Ical_i. \]
	Let us show that $\Ical$ is $\ind_X$-acyclic. 
	
	Since $(j_i)_* \circ \ind_{U_i} = \ind_X \circ (j_i)_*$ (their left adjoints commute) and both $\ind_{U_i}$ and $(j_i)_*$ preserve injectives (their left adjoints are exact), we get that \[ R\ind_X \circ R(j_i)_* = R(j_i)_* \circ R\ind_{U_i}.\] Given that $\Ical_i$ is injective in the category of ind-coherent modules, it is also injective in $\Mod(\Ocal_X)$ by \autoref{inj qalg is inj O_X}, so in particular it is acyclic for $(j_i)_* \colon \QCoh_{U_i}^{C^r} \ra \QCoh_X^{C^r}$. Indeed, an injective quasi-coherent Cartier module is also injective as a quasi-coherent module by \autoref{inj qcoh Cartier is inj O_X}, so their respective derived pushforwards agree.
	
	Combining the above discussion and \autoref{ind-coh is ind-acyclic affine case}, we obtain that \[ R\ind_X(\Ical) = \bigoplus_i R(j_i)_*R\ind_{U_i}(\Ical_i) = \bigoplus_i R(j_i)_*\Ical_i = \bigoplus_i (j_i)_*(\Ical_i) = \Ical. \]
	Repeating this for $\Ical/\Mcal$ and so on, we can find a resolution of $\Mcal$ by $\ind_X$-acyclic and ind-coherent Cartier modules. Hence, we can compute $R\ind_X$ using this resolution, which concludes the proof.
\end{proof}

\begin{cor}\label{D^b(Coh) = D^b_{coh}(QCoh) for Cartier modules}
	Let $X$ be a Noetherian $F$-finite scheme. Then for any $* \in \{+, b\}$, the natural functor \[ D^*(\IndCoh_X^{C^r}) \ra D^*_{\indcoh}(\QCoh_X^{C^r}) \] is an equivalence of categories. In particular, the natural functor \[ D^b(\Coh_X^{C^r}) \ra D^b_{\coh}(\QCoh_X^{C^r}) \] is also an equivalence of categories.
\end{cor}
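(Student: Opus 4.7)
The plan is to apply the abstract comparison criterion of \autoref{D_B(A) = D(B) if B objects are B-acyclic} with $\cA = \QCoh_X^{C^r}$ and $\cB = \IndCoh_X^{C^r}$, and then bootstrap from the ind-coherent equivalence to the coherent one by a dévissage argument.

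For the first equivalence, I need to check three things. (1) That $\IndCoh_X^{C^r}$ is a weak Serre subcategory of $\QCoh_X^{C^r}$: this is immediate from the stability under kernels, quotients and colimits recorded in \autoref{basic props of ind-coherent objects}, combined with closure under extensions from \autoref{ind-coh closed under extensions}. (2) That the inclusion $\iota \colon \IndCoh_X^{C^r} \to \QCoh_X^{C^r}$ admits a right adjoint: this is precisely the functor $\ind_X$, constructed right before \autoref{ind exact if first module is qalg} by sending a quasi-coherent Cartier module to its maximal ind-coherent Cartier submodule. (3) That $R^i\ind_X$ vanishes on $\IndCoh_X^{C^r}$ for $i > 0$: this is the content of \autoref{ind-coh is ind-acyclic}, which is the genuine heart of the work and rests ultimately on the injectivity preservation result \autoref{inj qalg is inj O_X}. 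Plugging into \autoref{D_B(A) = D(B) if B objects are B-acyclic} then yields the equivalence $D^*(\IndCoh_X^{C^r}) \cong D^*_{\indcoh}(\QCoh_X^{C^r})$ for $* \in \{+, b\}$, with quasi-inverse $R\ind_X$.

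For the second equivalence, observe that the first one restricts to a full subcategory on each side: since it is induced by the natural inclusion functor, and an object of either side lies in the coherent subcategory exactly when its cohomology sheaves are coherent Cartier modules, we obtain
\[ D^b_{\coh}(\IndCoh_X^{C^r}) \cong D^b_{\coh}(\QCoh_X^{C^r}). \]
Thus it remains only to identify $D^b(\Coh_X^{C^r})$ with $D^b_{\coh}(\IndCoh_X^{C^r})$. This is a standard Noetherian approximation: on a Noetherian scheme each coherent Cartier module is a Noetherian object of $\IndCoh_X^{C^r}$, and by definition every object of $\IndCoh_X^{C^r}$ is a filtered colimit of coherent ones. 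For essential surjectivity, given $M^\bullet \in D^b_{\coh}(\IndCoh_X^{C^r})$, I would proceed by induction on the number of nonzero cohomology sheaves, using the truncation triangles to reduce to the case of a single cohomology object, which lies in $\Coh_X^{C^r}$ by assumption. Fully faithfulness is then checked degree by degree using that $\Hom_{\IndCoh}(\cM, \cN) = \Hom_{\Coh}(\cM, \cN)$ for coherent $\cM, \cN$ and the analogous comparison of $\Ext$-groups via injective resolutions in $\IndCoh_X^{C^r}$.

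The main obstacle throughout is concentrated in the acyclicity statement \autoref{ind-coh is ind-acyclic} that has already been established; once that is in hand, both equivalences follow by formal arguments, and the only subtle point that remains is the dévissage for the coherent/ind-coherent comparison, which is routine for Noetherian abelian categories once one knows coherent Cartier modules are Noetherian objects and that every ind-coherent is their union.
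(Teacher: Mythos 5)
Your proposal matches the paper's argument: the first equivalence is exactly \autoref{D_B(A) = D(B) if B objects are B-acyclic} applied with $G = \ind_X$, whose higher derived functors vanish on ind-coherent objects by \autoref{ind-coh is ind-acyclic}. For the second equivalence the paper invokes \cite[Theorem 2.9.1]{Bockle_Pink_Cohomological_Theory_of_crystals_over_function_fields}, which is precisely the Noetherian d\'evissage $D^b(\Coh_X^{C^r}) \cong D^b_{\coh}(\IndCoh_X^{C^r})$ that you sketch, combined with the trivial restriction of the first equivalence to complexes with coherent cohomology.
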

\begin{proof}
	The first statement follows from \autoref{ind-coh is ind-acyclic} and \autoref{D_B(A) = D(B) if B objects are B-acyclic}. The second statement is then a consequence of \cite[Theorem 2.9.1]{Bockle_Pink_Cohomological_Theory_of_crystals_over_function_fields}.
\end{proof}

\subsection{Cartier crystals}
Fix a Noetherian $F$-finite $\Ff_p$-scheme $X$.
\begin{defn}
	Let $\Mcal$ be a quasi-coherent Cartier module on $X$.
	\begin{itemize}
		\item It is said to be \emph{nilpotent} if $\kappa_{\cM}^n = 0$ for some $n \geq 0$, where we do the abuse of notations \[ \kappa_{\cM}^n \coloneqq \kappa_{\cM} \circ F^r_*\kappa_{\cM} \circ \dots \circ F^{r(n - 1)}_*\kappa_{\cM}. \] A morphism of Cartier modules whose kernel and cokernel are nilpotent is called a \emph{nil-isomorphism}.
		\item The Cartier module $\cM$ is said to be \emph{locally nilpotent} if for any open affine $U \inc X$, $\Mcal|_U$ is a union of nilpotent Cartier modules. The full subcategory of locally nilpotent modules is called $\LNil$. A morphism of Cartier modules whose kernel and cokernel are locally nilpotent is called an \emph{lnil-isomorphism}.
		\item We set $\Nil = \LNil \cap \Coh_X^{C^r}$.
	\end{itemize}
\end{defn}
\begin{rem}\label{locally nilp iff union of nilp}
	By the same argument as in the proof of \autoref{basic props of ind-coherent objects}, a quasi-coherent Cartier module is locally nilpotent if and only if it is a union of nilpotent Cartier submodules.
\end{rem}

\begin{lem}\label{locally nilp is a Serre subcategory}
	The category $\LNil$ is a Serre subcategory of $\IndCoh_X^{C^r}$.
\end{lem}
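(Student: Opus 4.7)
The plan is to verify the two conditions characterizing a Serre subcategory: that $\LNil$ is closed under taking subobjects and quotients, and that it is closed under extensions, all within the ambient abelian category $\IndCoh_X^{C^r}$. Note that $\LNil$ does sit inside $\IndCoh_X^{C^r}$ — nilpotence of $\kappa_{\cM}$ on a generating system of submodules makes $\cM$ ind-coherent (cf.\ \autoref{nilp is ind-coh}) — and ind-coherence is itself closed under subobjects, quotients and colimits by \autoref{basic props of ind-coherent objects}, so one never leaves $\IndCoh_X^{C^r}$ when performing these operations. Since local nilpotence is by definition local on $X$, the arguments can be carried out after restricting to an affine open, and on an affine open \autoref{locally nilp iff union of nilp} lets me replace ``locally nilpotent'' by the concrete criterion ``union of nilpotent Cartier submodules''.

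Closure under subobjects and quotients is essentially formal with this criterion. If $\cM = \bigcup_{\alpha}\cN_\alpha$ with each $\cN_\alpha$ nilpotent and $\cM' \inc \cM$ is a Cartier submodule, then $\cM' = \bigcup_{\alpha}(\cM' \cap \cN_\alpha)$; each intersection inherits a Cartier structure by restriction (both $\cM'$ and $\cN_\alpha$ are Cartier submodules), and the identity $\kappa_{\cN_\alpha}^n = 0$ immediately restricts to the intersection. Dually, if $\pi \colon \cM \to \cM''$ is surjective, then $\cM'' = \bigcup_\alpha \pi(\cN_\alpha)$ and each $\pi(\cN_\alpha)$ is nilpotent of the same order as $\cN_\alpha$ by functoriality of $\kappa$.

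The main step is closure under extensions, and this is where the real work lies. Given a short exact sequence $0 \to \cM \to \cN \xrightarrow{\pi} \cP \to 0$ in $\IndCoh_X^{C^r}$ with $\cM, \cP \in \LNil$, I reduce to $X$ affine and then use ind-coherence of $\cN$ to write $\cN = \bigcup_i \cN_i$ with each $\cN_i$ a \emph{coherent} Cartier submodule. Setting $\cM_i \coloneqq \cM \cap \cN_i$ and $\cP_i \coloneqq \pi(\cN_i)$, I obtain short exact sequences
\[ 0 \to \cM_i \to \cN_i \to \cP_i \to 0 \]
in which the outer terms are coherent (as a submodule and a quotient of the coherent module $\cN_i$) and belong to $\LNil$ by the sub/quotient closure proven above. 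Since $\Nil = \LNil \cap \Coh_X^{C^r}$, they are honestly nilpotent, say $\kappa_{\cM_i}^a = 0$ and $\kappa_{\cP_i}^b = 0$. A direct two-step computation then shows $\kappa_{\cN_i}^{a+b} = 0$: for any local section $n$, the element $\kappa_{\cN_i}^b(F^{rb}_* n)$ maps to $0$ in $\cP_i$, hence lies in $\cM_i$, and a further application of $\kappa^a$ gives zero. Thus every $\cN_i$ is nilpotent, $\cN$ is a union of nilpotent Cartier submodules, and \autoref{locally nilp iff union of nilp} yields $\cN \in \LNil$.

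The step I expect to be the main obstacle is the extension closure, and specifically the reduction step where one covers $\cN$ by coherent Cartier submodules: this is precisely where ind-coherence is essential, and without it the naive ``union of nilpotents'' argument would fail — the intersection $\cM \cap \cN_i$ of a locally nilpotent with an arbitrary quasi-coherent piece need not be nilpotent, only locally nilpotent, and one could not bound the nilpotence order uniformly. Once the reduction to coherent $\cN_i$ is in place, the nilpotence estimate $a+b$ is formal.
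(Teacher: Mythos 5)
Your proof is correct and fleshes out precisely what the paper means by ``reduce to the coherent case, where the statement is immediate'': closure under subobjects and quotients follows directly from the union-of-nilpotents characterization, while closure under extensions is checked on a covering of the middle term $\cN$ by coherent Cartier submodules $\cN_i$, using the $a+b$ bound on nilpotence orders. One small point you slide over and should spell out: the definition $\Nil = \LNil \cap \Coh_X^{C^r}$ only says objects of $\Nil$ are coherent and locally nilpotent, and the passage to ``honestly nilpotent'' uses the (easy) fact that a coherent Cartier module is a Noetherian object, so a directed union of nilpotent Cartier submodules exhausting it must stabilize.
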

\begin{proof}
	The fact that $\LNil \inc \IndCoh_X^{C^r}$ follows from \autoref{nilp is ind-coh}. To show that it is a Serre subcategory, we therefore reduce to the coherent case, where the statement is immediate.
\end{proof}

\begin{defn}
	\begin{itemize}
		\item We define the category of \emph{$r$-Cartier quasi-crystals} (resp. \emph{$r$-ind-crystals}) to be $\QCrys_X^{C^r} \coloneqq \QCoh_X^{C^r}/\LNil$ (resp. $\IndCrys_X^{C^r} \coloneqq \IndCoh_X^{C^r}/\LNil$). We also define the category of \emph{$r$-Cartier crystals} to be $\Crys_X^{C^r} \coloneqq \Coh_X^{C^r}/\Nil$ (as before, we shall often omit the $r$ in discussions if no confusion is likely to arise).
		\item A morphism in either $\QCrys_X^{C^r}, \IndCrys_X^{C^r}$ or $\Crys_X^{C^r}$ will be denoted by dashed arrows $\cM \dra \cN$. 
		\item If $\cM$, $\cN \in \QCoh_X^{C^r}$ are objects which become isomorphic in $\QCrys_X^{C^r}$, we write $\cM \sim_C \cN$. We use the same notation for complexes in the derived category.
	\end{itemize}
\end{defn}

\begin{lem}\label{qcrys and incrys are Groth cats}
	Both $\IndCrys_X^{C^r}$ and $\QCrys_X^{C^r}$ are Grothendieck categories.
\end{lem}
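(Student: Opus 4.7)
The plan is to apply the standard result that if $\cA$ is a Grothendieck category and $\cB \inc \cA$ is a \emph{localizing} Serre subcategory --- that is, a Serre subcategory closed under filtered colimits --- then the Gabriel quotient $\cA/\cB$ is again a Grothendieck category. A version of this statement tailored to our setting is essentially what \cite[Theorem 3.4.5]{Bockle_Pink_Cohomological_Theory_of_crystals_over_function_fields} establishes in the $F$-module case, and the same argument applies verbatim here.

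First I would record that the two ambient categories $\QCoh_X^{C^r}$ and $\IndCoh_X^{C^r}$ are Grothendieck, by \autoref{Cartier modules is a Groth cat} and \autoref{indcoh Cartier is a Groth cat} respectively, and that $\LNil$ is a Serre subcategory of $\IndCoh_X^{C^r}$ by \autoref{locally nilp is a Serre subcategory}. Combining the latter with the closure of $\IndCoh_X^{C^r}$ under extensions inside $\QCoh_X^{C^r}$ (\autoref{ind-coh closed under extensions}) also shows that $\LNil$ is a Serre subcategory of $\QCoh_X^{C^r}$. So for both categories, only the closure under filtered colimits of $\LNil$ remains to be verified.

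This is the only substantive step, and I would argue it via \autoref{locally nilp iff union of nilp}: a quasi-coherent Cartier module lies in $\LNil$ iff, on each open affine, it is a union of nilpotent Cartier submodules. Given a filtered system $\{\Mcal_i\}_{i \in I}$ in $\LNil$ with colimit $\Mcal$, any local section $s \in \Mcal(U)$ over an affine open $U$ lifts, possibly after shrinking $U$, to a section $s_i \in \Mcal_i(U)$ for some $i \in I$. By hypothesis $s_i$ lies in a nilpotent Cartier submodule $\Ncal \inc \Mcal_i|_U$, and its image in $\Mcal|_U$ is again a nilpotent Cartier submodule containing $s$. Thus $\Mcal|_U$ is a union of nilpotent Cartier submodules, so $\Mcal \in \LNil$.

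The main (mild) obstacle I anticipate is the bookkeeping for the filtered colimit argument --- in particular making the lifting of a local section to some $\Mcal_i$ go through cleanly --- which is precisely where the characterization in \autoref{locally nilp iff union of nilp} is essential (nilpotence itself is not obviously preserved under extension by a coherent subobject, so working with unions of nilpotent submodules is the right notion). Once this step is in place, invoking the Gabriel quotient result simultaneously for the pairs $(\QCoh_X^{C^r}, \LNil)$ and $(\IndCoh_X^{C^r}, \LNil)$ yields that both $\QCrys_X^{C^r}$ and $\IndCrys_X^{C^r}$ are Grothendieck categories.
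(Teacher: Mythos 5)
Your proof is correct and follows the same route as the paper, which applies the Gabriel--Grothendieck quotient criterion (cited there as \cite[Proposition 2.4.9]{Bockle_Pink_Cohomological_Theory_of_crystals_over_function_fields}) to the ambient Grothendieck categories furnished by \autoref{Cartier modules is a Groth cat} and \autoref{indcoh Cartier is a Groth cat}. You additionally spell out that $\LNil$ is a localizing Serre subcategory of both $\QCoh_X^{C^r}$ and $\IndCoh_X^{C^r}$---via \autoref{locally nilp is a Serre subcategory}, \autoref{ind-coh closed under extensions}, and the filtered-colimit closure argument using \autoref{locally nilp iff union of nilp}---a verification the paper's one-line proof leaves implicit.
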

\begin{proof}
	This follows from combining \cite[Proposition 2.4.9]{Bockle_Pink_Cohomological_Theory_of_crystals_over_function_fields}, \autoref{indcoh Cartier is a Groth cat} and \autoref{Cartier modules is a Groth cat}. 
\end{proof}

\begin{lem}\label{Cartier crystals form a Serre subcategory}
	The natural functors $\Crys_X^{C^r} \ra \IndCrys_X^{C^r}$ and $\IndCrys_X^{C^r} \to \QCrys_X^{C^r}$ are fully faithful, and their respective essential images are Serre subcategories of their respective targets. In particular, so does their composition $\Crys_X^{C^r} \to \QCrys_X^{C^r}$.
\end{lem}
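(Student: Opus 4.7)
The plan is to reduce the statement to a general fact about Serre quotients, applied to the chain of inclusions $\Coh_X^{C^r} \inc \IndCoh_X^{C^r} \inc \QCoh_X^{C^r}$. First I would verify that each inclusion is itself a Serre inclusion, i.e.\ closed under subobjects, quotients and extensions: for $\IndCoh_X^{C^r} \inc \QCoh_X^{C^r}$ this combines \autoref{basic props of ind-coherent objects} and \autoref{ind-coh closed under extensions}, and for $\Coh_X^{C^r} \inc \IndCoh_X^{C^r}$ it follows from $X$ being Noetherian. Moreover the Serre subcategories defining the crystal quotients are compatible: by \autoref{locally nilp is a Serre subcategory} we have $\LNil \inc \IndCoh_X^{C^r}$, hence $\LNil \cap \IndCoh_X^{C^r} = \LNil$, and $\LNil \cap \Coh_X^{C^r} = \Nil$ by definition. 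It thus suffices to prove the general categorical statement: for Serre subcategories $\cB \inc \cA$ of an abelian category and a Serre subcategory $\cS \inc \cA$, the induced functor $\cB/(\cS \cap \cB) \to \cA/\cS$ is fully faithful with essential image a Serre subcategory.

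For full faithfulness I would invoke the Gabriel--Zisman description of morphisms: $\Hom_{\cA/\cS}(X, Y)$ is the filtered colimit of $\Hom_\cA(X_0, Y/Y_0)$ taken over subobjects $X_0 \inc X$ with $X/X_0 \in \cS$ and $Y_0 \inc Y$ with $Y_0 \in \cS$. When $X, Y$ both lie in $\cB$, closure of $\cB$ under subobjects and quotients forces $X_0$, $Y_0$ and $Y/Y_0$ to also lie in $\cB$, so this colimit coincides with the one computing $\Hom_{\cB/(\cS \cap \cB)}(X, Y)$. Faithfulness then reduces to the standard observation that a morphism $f$ in $\cA$ becomes zero in $\cA/\cS$ if and only if $\im(f) \in \cS$, combined with the fact that images are computed the same way in $\cA$ and in $\cB$.

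The essential image of $\cB/(\cS \cap \cB) \to \cA/\cS$ can be described explicitly as the full subcategory of $A \in \cA$ admitting a subobject $B_0 \inc A$ with $B_0 \in \cB$ and $A/B_0 \in \cS$; given any $\cA/\cS$-isomorphism from $A$ to some $B \in \cB$, extracting such a $B_0 \inc A$ uses a finiteness argument analogous to the one at the end of the proof of \autoref{ind-coh is ind-acyclic affine case}. Closure of this image under subobjects and quotients in $\cA/\cS$ is then immediate, since subobjects and quotients in $\cA/\cS$ lift to genuine subobjects and quotients in $\cA$, which remain in $\cB$ by Serre-ness. The main obstacle will be closure under extensions: given a short exact sequence $0 \to A' \to A \to A'' \to 0$ in $\cA/\cS$ with $A'$, $A''$ in the essential image, I would lift it to a short exact sequence in $\cA$, replace $A'$ by a $\cB$-subobject $B' \inc A'$ with $A'/B' \in \cS$, and extract from $A/B'$ a $\cB$-subobject $\widetilde{B}$ whose image in $A''$ becomes $A''$ itself in $\cA/\cS$. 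This extraction rests on a finiteness argument in the spirit of \autoref{locally ind-coh iff pseudo qalg} and the proof of \autoref{ind-coh closed under extensions}; the preimage of $\widetilde{B}$ in $A$ is then an extension in $\cA$ of $B'$ by $\widetilde{B}$, hence lies in $\cB$ by extension closedness, and has $\cS$-quotient in $A$, placing $A$ in the essential image.
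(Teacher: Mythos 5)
Your plan reproduces, in substance, the argument the paper delegates to B\"ockle--Pink (Prop.\ 3.4.2 and 3.3.5): full faithfulness via the Gabriel--Zisman description of Hom-sets in a Serre quotient, then an explicit description of the essential image and a verification of the Serre axioms. The full-faithfulness part is correct as stated and needs nothing beyond $\cB \inc \cA$ being Serre. The issue is that the ``general categorical statement'' you reduce to is \emph{not} true in the generality you give it, and the place it breaks is exactly the step you gloss as ``a finiteness argument''. Unwinding a roof $A \leftarrow P \to B$ realizing an isomorphism $A \cong B$ in $\cA/\cS$ with $B \in \cB$, one formally obtains only a subobject $\tilde A \inc A$ with $A/\tilde A \in \cS$ and an extension $0 \to S_1 \to \tilde A \to B_1 \to 0$ with $S_1 \in \cS$ and $B_1 \in \cB$; the layers come in the \emph{wrong} order for your characterization. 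For $\IndCrys_X^{C^r} \to \QCrys_X^{C^r}$ this is harmless because $\LNil \inc \IndCoh_X^{C^r}$, so $\tilde A \in \cB$ by extension-closedness. But for $\Crys_X^{C^r} \to \IndCrys_X^{C^r}$ one has $\LNil \not\inc \Coh_X^{C^r}$, and flipping the extension into the shape ``coherent subobject with locally nilpotent quotient'' genuinely uses that $\tilde A$, being a subobject of an ind-coherent Cartier module, is a filtered union of coherent Cartier submodules (\autoref{basic props of ind-coherent objects}), together with Noetherianity of the coherent quotient $B_1$, so that some coherent $\tilde A_\alpha$ already surjects onto $B_1$ and $\tilde A/\tilde A_\alpha$ is then a quotient of $S_1 \in \LNil$. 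The lemma you cite for this, \autoref{ind-coh is ind-acyclic affine case}, is about $\ind_X$-acyclicity and does not supply this step; the ingredients are rather those of \autoref{basic props of ind-coherent objects} and \autoref{locally ind-coh iff pseudo qalg}. Once you replace the purely abstract claim by one for locally $\cB$-Noetherian Grothendieck categories (with the exhaustion and Noetherianity hypotheses made explicit) and point to the correct lemmas, your argument goes through and agrees with the one the paper cites.
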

\begin{proof}
	The proof is identical to that of \cite[Proposition 3.4.2]{Bockle_Pink_Cohomological_Theory_of_crystals_over_function_fields} (see also the proof of \cite[Proposition 3.3.5, case (c)]{Bockle_Pink_Cohomological_Theory_of_crystals_over_function_fields}).
\end{proof}

\begin{cor}\label{qcrys qalg with crys cohom is crys}
	For $* \in \{-, b\}$, the natural triangulated functor \[ D^*(\Crys_X^{C^r}) \ra D^*_{\crys}(\IndCrys_X^{C^r})\] is an equivalence of categories, where the subscript crys in $D^*_{\crys}(\QCrys_X^{C^r})$ means that cohomology sheaves of the complex must lie in the essential image of $\Crys_X^{C^r} \ra \IndCrys_X^{C^r}$.
\end{cor}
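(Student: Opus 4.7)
The argument mirrors the last step of the proof of \autoref{D^b(Coh) = D^b_{coh}(QCoh) for Cartier modules}: the natural move is to apply \cite[Theorem 2.9.1]{Bockle_Pink_Cohomological_Theory_of_crystals_over_function_fields} directly to the embedding $\Crys_X^{C^r} \inj \IndCrys_X^{C^r}$, viewed as a fully faithful inclusion of a Serre subcategory into an ambient Grothendieck category.

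The hypotheses to verify are essentially two. First, the embedding must be fully faithful with Serre essential image; this is exactly \autoref{Cartier crystals form a Serre subcategory}. Second, the ambient abelian category $\IndCrys_X^{C^r}$ must be Grothendieck with the image of $\Crys_X^{C^r}$ providing a generating family of Noetherian objects. The Grothendieck property is \autoref{qcrys and incrys are Groth cats}. For the generation statement, I would argue that every ind-coherent Cartier module is a filtered colimit of coherent ones by definition, and this filtered-colimit description survives the passage to crystals because $\LNil$ is a Serre subcategory by \autoref{locally nilp is a Serre subcategory} and is closed under filtered colimits (using \autoref{locally nilp iff union of nilp} together with the fact that a filtered colimit of nilpotent Cartier modules is locally nilpotent). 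Hence the essential image of coherent crystals generates $\IndCrys_X^{C^r}$, and these generators are Noetherian objects since their preimages in $\IndCoh_X^{C^r}$ are coherent.

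Once these hypotheses are in place, the cited theorem yields the desired equivalence for both $* = -$ and $* = b$ simultaneously. The subtle point, which is black-boxed inside the theorem, is the comparison between the higher $\Ext$-groups of coherent crystals computed in $\Crys_X^{C^r}$ versus in $\IndCrys_X^{C^r}$; this is where the main difficulty of the statement lies. However, the analogous comparison at the module level is already in hand via $\ind_X$-acyclicity of ind-coherent modules (\autoref{ind-coh is ind-acyclic}) and \autoref{D^b(Coh) = D^b_{coh}(QCoh) for Cartier modules}, which is precisely the kind of input that B\"ockle--Pink's abstract machinery packages into the hypotheses of their Theorem 2.9.1. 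Thus no further work is required beyond verifying the two hypotheses above.
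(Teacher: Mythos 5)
Your approach matches the paper's: the paper's entire proof is a one-line citation of \cite[{\href{https://stacks.math.columbia.edu/tag/0FCL}{Tag 0FCL}}]{Stacks_Project}, which plays exactly the role you assign to B\"ockle--Pink's Theorem 2.9.1 --- a general criterion for $D^*(\cB)\to D^*_\cB(\cA)$ to be an equivalence when $\cB$ is a Serre subcategory of Noetherian generators in a Grothendieck category --- and your verification of the hypotheses (Serre subcategory via \autoref{Cartier crystals form a Serre subcategory}, Grothendieck via \autoref{qcrys and incrys are Groth cats}, generation and Noetherianness via ind-coherence and the fact that $\LNil$ is localizing) is precisely the content the paper leaves implicit. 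One small correction: your final sentence conflates B\"ockle--Pink's Theorem 2.9.1 with their Theorem 2.9.4 (restated in the paper as \autoref{D_B(A) = D(B) if B objects are B-acyclic}); the $\ind_X$-acyclicity from \autoref{ind-coh is ind-acyclic} is an input to the latter (used for the $D^+$ comparison with $\QCrys_X^{C^r}$ in \autoref{qcrys with crys cohom is crys}), not to Theorem 2.9.1, whose hypotheses are exactly the Noetherian-generation facts you had already checked, so this stray remark does not affect the validity of your argument.
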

\begin{proof}
	This follows from \cite[{\href{https://stacks.math.columbia.edu/tag/0FCL}{Tag 0FCL}}]{Stacks_Project}.
\end{proof}

\begin{cor}\label{qcrys with crys cohom is crys}
	For $* \in \{+, b\}$ the natural functor \[ D^*(\IndCrys_X^{C^r}) \ra D^*_{\indcrys}(\QCrys_X^{C^r}) \] is an equivalence of categories. Thus, the natural functor \[ D^b(\Crys_X^{C^r}) \ra D^b_{\crys}(\QCrys_X^{C^r}) \] is also an equivalence.
\end{cor}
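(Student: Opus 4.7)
The plan is to mimic the proof of \autoref{D^b(Coh) = D^b_{coh}(QCoh) for Cartier modules} and apply \autoref{D_B(A) = D(B) if B objects are B-acyclic} to the Serre inclusion $\iota \colon \IndCrys_X^{C^r} \hookrightarrow \QCrys_X^{C^r}$ of \autoref{Cartier crystals form a Serre subcategory}. For this I must construct a right adjoint $\overline{\ind}_X$ to $\iota$ and prove that its higher derived functors vanish on $\IndCrys_X^{C^r}$; given this, the case $* = +$ is immediate from \autoref{D_B(A) = D(B) if B objects are B-acyclic} and the case $* = b$ follows because $\iota$ is exact and fully faithful, by the same truncation argument as in \autoref{D^b(Coh) = D^b_{coh}(QCoh) for Cartier modules}.

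To produce $\overline{\ind}_X$, I consider the left exact composition
\[ G \coloneqq \pi_{\indcoh} \circ \ind_X \colon \QCoh_X^{C^r} \to \IndCoh_X^{C^r} \to \IndCrys_X^{C^r}, \]
where $\pi_{\indcoh}$ denotes the quotient by $\LNil$. If $\cM \in \LNil$, then $\cM \in \IndCoh_X^{C^r}$ by \autoref{nilp is ind-coh}, so \autoref{ind-coh is ind-acyclic} gives $R\ind_X(\cM) = \cM$, whence $RG(\cM) = \pi_{\indcoh}(\cM) = 0$. A standard hypercohomology spectral sequence extends this to $D^+_{\LNil}(\QCoh_X^{C^r})$, so \autoref{derived_functors_and_localization} produces factorizations $\overline{\ind}_X \colon \QCrys_X^{C^r} \to \IndCrys_X^{C^r}$ of $G$ and $R\overline{\ind}_X \colon D^+(\QCrys_X^{C^r}) \to D^+(\IndCrys_X^{C^r})$ of $RG$. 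The adjunction $\iota \dashv \overline{\ind}_X$ is then inherited from the upstairs adjunction between the inclusion $\IndCoh_X^{C^r} \hookrightarrow \QCoh_X^{C^r}$ and $\ind_X$: since both of these functors preserve $\LNil$ (for the inclusion this is obvious, and $\ind_X|_{\LNil} = \id$ by \autoref{nilp is ind-coh}), the unit and counit descend through the Serre quotient and give the desired adjunction.

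It remains to verify that $R^i\overline{\ind}_X|_{\IndCrys_X^{C^r}} = 0$ for all $i > 0$. Given $\cN' \in \IndCrys_X^{C^r}$, lift it to some $\cN \in \IndCoh_X^{C^r}$ using essential surjectivity of $\pi_{\indcoh}$ (see \cite[Theorem 2.6.2]{Bockle_Pink_Cohomological_Theory_of_crystals_over_function_fields}); then by construction $R\overline{\ind}_X(\iota \cN') \cong \pi_{\indcoh}(R\ind_X(\cN)) \cong \pi_{\indcoh}(\cN) \cong \cN'$, concentrated in degree zero, again by \autoref{ind-coh is ind-acyclic}. \autoref{D_B(A) = D(B) if B objects are B-acyclic} then delivers the equivalence $D^*(\IndCrys_X^{C^r}) \simeq D^*_{\indcrys}(\QCrys_X^{C^r})$ for $* \in \{+, b\}$. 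For the final sentence, combine the case $* = b$ with \autoref{qcrys qalg with crys cohom is crys}: the latter gives $D^b(\Crys_X^{C^r}) \simeq D^b_{\crys}(\IndCrys_X^{C^r})$, and restricting the former equivalence to complexes with coherent cohomology then yields $D^b(\Crys_X^{C^r}) \simeq D^b_{\crys}(\QCrys_X^{C^r})$.

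The main obstacle I expect is the careful verification that $\overline{\ind}_X$ is honestly right adjoint to $\iota$, since this requires unpacking the universal property of the Serre quotient in order to track how the unit and counit natural transformations descend; the acyclicity statement and the final deductions are essentially formal consequences of \autoref{ind-coh is ind-acyclic} and the machinery assembled earlier in this section.
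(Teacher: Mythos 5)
Your proof is correct and follows essentially the same route as the paper: both factor $\pi_{\indcoh}\circ\ind_X$ through the Serre quotient using \autoref{derived_functors_and_localization} (with the key acyclicity check reducing to $R\ind_X(\cM)\cong\cM$ for $\cM\in\LNil\inc\IndCoh_X^{C^r}$ via \autoref{ind-coh is ind-acyclic}), and then invoke \autoref{D_B(A) = D(B) if B objects are B-acyclic}. The only difference is that you spell out the descent of the adjunction $\iota\dashv\overline{\ind}_X$ to the quotient categories, which the paper leaves implicit; your verification of this point (both $\ind_X$ and the inclusion preserve $\LNil$, so unit and counit descend) is correct and is a worthwhile clarification rather than a change of approach.
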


\begin{proof}
	By \autoref{D_B(A) = D(B) if B objects are B-acyclic}, \autoref{derived_functors_and_localization} and essential surjectivity of $D^+(\IndCoh_X^{C^r}) \to D^+(\IndCrys_X^{C^r})$ (see \cite[Theorem 2.6.2]{Bockle_Pink_Cohomological_Theory_of_crystals_over_function_fields}), all we have to show is that if $\cM \in D^+_{\LNil}(\QCoh_X^{F^r})$, then $R\ind(\cM) \in D^+_{\LNil}(\IndCoh_X^{F^r})$. 
	
	Since $\LNil \inc \IndCoh_X^{C^r}$ (see \autoref{locally nilp is a Serre subcategory}), since follows from the fact that $R\ind(\cM) \cong \cM$ by \autoref{ind-coh is ind-acyclic}.
\end{proof}

\begin{cor}\label{S_nil is a system and eq cat for Cartier modules}
	Let $S$ denote the collection of morphisms $E_1 \to E_2$ in $D^b_{\coh}(\QCoh_X^{C^r})$ whose cone is in $D_{\nil}(\QCoh_X^{C^r})$. Then $S$ is a saturated multiplicative system, and the natural functor \[ S^{-1}D^b_{\coh}(\QCoh_X^{C^r}) \to D^b_{\crys}(\QCrys_X^{C^r}) \] is an equivalence of categories.
\end{cor}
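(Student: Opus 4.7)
The plan is to reduce this to a statement that is essentially formal, given the two equivalences of categories already established just above. By \autoref{D^b(Coh) = D^b_{coh}(QCoh) for Cartier modules}, the inclusion induces an equivalence $D^b(\Coh_X^{C^r}) \xrightarrow{\cong} D^b_{\coh}(\QCoh_X^{C^r})$, and by \autoref{qcrys with crys cohom is crys} the natural functor $D^b(\Crys_X^{C^r}) \xrightarrow{\cong} D^b_{\crys}(\QCrys_X^{C^r})$ is also an equivalence. Moreover, under the first equivalence, the subcategory of complexes in $D_{\nil}(\QCoh_X^{C^r}) \cap D^b_{\coh}(\QCoh_X^{C^r})$ corresponds exactly to $D^b_{\Nil}(\Coh_X^{C^r})$, since an object is both coherent and locally nilpotent if and only if it is in $\Nil$. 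Thus the multiplicative system $S$ translates to the system $S'$ of morphisms in $D^b(\Coh_X^{C^r})$ whose cone has cohomology in $\Nil$.

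First I would use \autoref{Cartier crystals form a Serre subcategory} (which gives that $\Nil$ is a Serre subcategory of $\Coh_X^{C^r}$, so $\Crys_X^{C^r} = \Coh_X^{C^r}/\Nil$ is a well-defined abelian category) to place us in the standard setting of Verdier localization by a Serre subcategory of an abelian category. Then it suffices to show that the natural functor
\[ (S')^{-1}D^b(\Coh_X^{C^r}) \longrightarrow D^b(\Crys_X^{C^r}) \]
is an equivalence and that $S'$ is a saturated multiplicative system.

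Both of these facts are completely formal and are exactly what is supplied by the general framework of Böckle--Pink: \cite[Theorem 2.6.2]{Bockle_Pink_Cohomological_Theory_of_crystals_over_function_fields} provides the essential surjectivity of the localization functor (every complex of crystals comes from a complex of Cartier modules), while \cite[Theorem 2.7.1]{Bockle_Pink_Cohomological_Theory_of_crystals_over_function_fields} provides the identification of morphism sets as a calculus of fractions with respect to $S'$, and in particular that $S'$ satisfies the Ore conditions and is saturated. Neither of these requires any property specific to Cartier modules beyond $\Nil$ being Serre in $\Coh_X^{C^r}$.

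There is no serious obstacle here: the entire content of the corollary is repackaging the two preceding equivalences of categories in the Verdier-localization language. The only mild point of care is to check that the multiplicative system $S$, as defined in the statement using cones in $D_{\nil}(\QCoh_X^{C^r})$, matches $S'$ after transporting along the equivalence $D^b(\Coh_X^{C^r}) \cong D^b_{\coh}(\QCoh_X^{C^r})$; this is handled by the observation above that $\coh \cap \nil = \Nil$, since the cone of a morphism between coherent complexes is itself in $D^b_{\coh}$.
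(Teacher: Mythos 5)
Your proposal is correct and takes essentially the same route as the paper, which simply defers to the proof of the $F$-module analogue in Böckle--Pink Theorem 5.3.1; unwinding that reference yields exactly the argument you give via the equivalences $D^b(\Coh_X^{C^r}) \cong D^b_{\coh}(\QCoh_X^{C^r})$, $D^b(\Crys_X^{C^r}) \cong D^b_{\crys}(\QCrys_X^{C^r})$, and the general localization statements in \cite[Theorems 2.6.2 and 2.7.1]{Bockle_Pink_Cohomological_Theory_of_crystals_over_function_fields}. One small mis-citation: \autoref{Cartier crystals form a Serre subcategory} is the statement that $\Crys \hookrightarrow \IndCrys \hookrightarrow \QCrys$ are Serre inclusions, not that $\Nil$ is Serre in $\Coh_X^{C^r}$; the latter is part of the basic setup (e.g.\ by restricting \autoref{locally nilp is a Serre subcategory} to coherent objects), so this does not affect the validity of your argument.
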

\begin{proof}
	The proof is now identical to that of the analogous statement for Frobenius modules (see the proof of \cite[Theorem 5.3.1]{Bockle_Pink_Cohomological_Theory_of_crystals_over_function_fields}).
\end{proof}

Recall that pushforward via open immersions preserve ind-coherent Cartier modules (\autoref{pushforward of qalg is qalg}). We have an even stronger property with Cartier crystals.

\begin{lemma}\label{pushforward preserves crystals}
	Let $j \colon U \inj X$ be an open immersion between Noetherian $F$-finite $\bF_p$-schemes. Then the following holds:
	\begin{enumerate}
		\item\label{push_crystals_non_derived} the functor $j_* \colon \Crys_U^{C^r} \to \IndCrys_X^{C^r}$ factors through $\Crys_X^{C^r}$; 
		\item\label{push_crystals_derived} the functor $Rj_*$ maps $D^b(\Crys_U^{C^r})$ to $D^b(\Crys_X^{C^r})$. In particular, for any $\cM \in \Coh_U^{C^r}$ and $i \geq 0$, there exists $\cM_i \in \Coh_X^{C^r}$ such that $R^ij_*\cM \sim_C \cM_i$.
		\item\label{push_crystals_any_map} The two points above hold for any separated morphism of finite type.
	\end{enumerate}
\end{lemma}
\begin{proof}
	Although this was already contained in \cite[Theorem 3.2.14]{Blickle_Bockle_Cartier_crystals}, we believe it makes sense to reprove this statement for the sake of the reader.
	
	Let us start with \autoref{push_crystals_non_derived}. By a standard argument, we reduce to showing the following: let $A$ be a Noetherian $F$-finite $\bF_p$-algebra, let $f \in A$, and let $A \to A_f$ be the localization morphism. Then for any $M \in \Coh_{A_f}^{C^r}$, $M|_A \sim_C N$ for some $N \in \Coh_A^{C^r}$. 
	
	Let $M' \inc M$ be a finitely generated sub-$A$-module of $M|_A$ such that $(M')_f = M$, and let $M''$ be the Cartier submodule generated by $M'$ (it is coherent by \autoref{pushforward of qalg is qalg}). It is enough to show that \[ \frac{M''}{f} \sim_C (M'')_f, \] where we use the notations from the proof of \autoref{direct sum of residue fields is ind-coh}. The result follows since for any $n \geq 2$, \[ \kappa_{M''}\left(\frac{M}{f^n}\right) = \kappa_{M''}\left(\frac{f^{rs - n}M}{f^{rs}}\right) \inc \frac{M}{f^r}, \] where $r \geq 1$ is the smallest integer such that $p^{rs} \geq n$ (in particular, $r < n$).
	
	Let us now show the first statement of \autoref{push_crystals_derived}, so let $\cM \in D^b(\Crys_U^{C^r})$. Since $Rj_*\cM$ is computed via taking $j_*$ of an associated \v{C}ech complex and we know by \autoref{push_crystals_non_derived} that the objects defining this \v{C}ech complex live in $\Crys_U^{C^s}$, the result again follows from \autoref{push_crystals_non_derived}. The statement after ``In particular'' is now immediate.

	Finally, \autoref{push_crystals_any_map} is now a consequence of the fact that any separated morphism of finite type is a composition of an open immersion and a proper morphism, see \cite[{\href{https://stacks.math.columbia.edu/tag/0F41}{Tag 0F41}}]{Stacks_Project}.
\end{proof}

\begin{defn}[{\cite[Section 4.3]{Blickle_Bockle_Cartier_crystals}}]
	For $\cM \in D^b(\QCrys_X^{C^r})$, we define \[ \Supp_{\crys}(\cM) \coloneqq \set{x \in X}{\cM_x \not\sim_C 0}. \]
\end{defn}

\begin{lemma}\label{support Cartier crystals}
	For any $\cM \in D^b(\Crys_X^{C^r})$, the subset $\Supp_{\crys}(\cM)$ is closed.
\end{lemma}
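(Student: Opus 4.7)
The plan is to reduce to a single Cartier crystal and then identify its crystal support with the honest $\cO_X$-support of a canonically associated coherent Cartier subsheaf. Since $\cM \in D^b(\Crys_X^{C^r})$ is bounded, one has
\[ \Supp_{\crys}(\cM) = \bigcup_i \Supp_{\crys}(\cH^i(\cM)), \]
a \emph{finite} union; hence it suffices to show that for a single Cartier crystal $\cN \in \Crys_X^{C^r}$, the set $\Supp_{\crys}(\cN)$ is closed in $X$.

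Pick a coherent Cartier module representative (still denoted $\cN$). By the Hartshorne--Speiser--Lyubeznik--Gabber type stabilization theorem for coherent Cartier modules on Noetherian $F$-finite schemes (see \cite{Blickle_Bockle_Cartier_modules_finiteness_results}), the descending chain of coherent $\cO_X$-subsheaves
\[ \cN \supseteq \kappa_{\cN}(F^r_*\cN) \supseteq \kappa_{\cN}^2(F^{2r}_*\cN) \supseteq \cdots \]
stabilizes globally (using Noetherianness of $X$). Denote the stable submodule by $\cN_{st}$. Then $\cN_{st}$ is coherent, $\kappa$ restricts to a surjection $F^r_*\cN_{st} \twoheadrightarrow \cN_{st}$, and the quotient $\cN/\cN_{st}$ is nilpotent; in particular $\cN \sim_C \cN_{st}$.

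Two observations finish the argument. First, iterates of a surjection remain surjective, so $\cN_{st}$ is nilpotent iff $\cN_{st} = 0$. Second, the formation of the stable image commutes with localization: writing $\cN_n \coloneqq \kappa_{\cN}^n(F^{rn}_*\cN)$, stalks commute with images and with $F^r_*$ on coherent sheaves, giving $(\cN_n)_x = (\cN_x)_n$; since the global stabilization index dominates each local one, $(\cN_{st})_x = (\cN_x)_{st}$. Combining,
\[ x \in \Supp_{\crys}(\cN) \iff \cN_x \not\sim_C 0 \iff (\cN_x)_{st} \neq 0 \iff (\cN_{st})_x \neq 0, \]
which yields $\Supp_{\crys}(\cN) = \Supp_{\cO_X}(\cN_{st})$. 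The right-hand side is closed since $\cN_{st}$ is coherent on a Noetherian scheme, and in particular it is constructible.

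The only nontrivial input is the stabilization of the chain $\kappa_{\cN}^n(F^{rn}_*\cN)$; this is standard in the theory of coherent Cartier modules, so no real obstacle stands in the way.
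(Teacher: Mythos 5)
Your reduction to a single coherent Cartier crystal via cohomology sheaves is fine in itself (stalks commute with cohomology, the union is finite), and your stabilization argument is a correct computation of the Zariski-stalk support: the Gabber-type stabilization theorem for coherent Cartier modules on Noetherian $F$-finite schemes does show that $\{x \in X : \cN_x \text{ is not locally nilpotent over } \cO_{X,x}\} = \Supp(\cN_{st})$, where $\cN_{st}$ is the stable image of $\kappa_\cN$, and this set is closed. Your check that formation of the stable image commutes with localization -- and that the global stabilization index bounds the local one -- is exactly the key point, and it goes through.

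However, there is a mismatch between what you prove and what the Lemma is meant to assert. The Remark immediately after \autoref{support Cartier crystals} says that the \emph{closure} of $\Supp_{\crys}(\cM)$ is Blickle--B\"ockle's crystalline support (which is your $\Supp(\cN_{st})$), and that $\Supp_{\crys}$ was deliberately defined so as to agree with the support of the associated complex of \'etale $\bF_q$-sheaves; \autoref{support F-crystals} makes the same identification on the $F$-crystal side. Supports of constructible \'etale sheaves are in general only constructible, not closed. Concretely, on $X = \bA^1_k$ consider the $F$-crystal $\cM = I_{\{0\}} = t\,\cO_X \subset \cO_X$: then $\Sol(\cM) = j_!\bF_q$ for the open immersion $j : \bA^1 \setminus \{0\} \hookrightarrow \bA^1$, whose support is the open set $\bA^1 \setminus \{0\}$. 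Under $\bD$ this corresponds to $\omega_X(\{0\})[1]$, and the Zariski stalk of $\omega_X(\{0\})$ at the origin has surjective Cartier operator, hence is \emph{not} locally nilpotent. So the condition ``$\cM_x \not\sim_C 0$'' entering $\Supp_{\crys}$ cannot be the literal Zariski-stalk condition you are using; it is the finer $i^!$-type fiber condition at geometric points forced by compatibility with the \'etale side, which depends on the whole complex and not just on cohomology sheaves. The paper's proof via \autoref{main thm duality for crystals}, \autoref{duality preserves supports} and \autoref{support F-crystals} is tailored to this notion. As it stands, your argument establishes the closedness of $\overline{\Supp_{\crys}(\cM)} = \Supp(\cN_{st})$, which is strictly weaker than constructibility of $\Supp_{\crys}(\cM)$ itself; to repair it you would need the duality/Riemann--Hilbert machinery after all.
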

\begin{proof}
	For this proof only, we allow ourselves to use results which we will see later. 
	
	This is a local statement, so we may assume that $X$ is semi-separated. By \autoref{main thm duality for crystals} and \autoref{duality preserves supports}, there exists $\cN \in D^b(\Crys_X^{F^r})$ such that \[ \Supp_{\crys}(\cM) = \Supp_{\crys}(\cN).\] Thus, we conclude the proof using \autoref{support F-crystals}.\autoref{supp_crys}.
\end{proof}

\begin{rem}
	We will see in \autoref{proper base change Cartier crystals} that the natural analogue of \autoref{support F-crystals}.\autoref{restricts_and_comes_back} also holds for Cartier crystals. In particular, for any $\cM \in D^b(\Crys_X^{C^r})$, we have that $\cM \sim_C 0$ if and only if $\cM_x \sim_C 0$ for all $x \in X$. The proof of this statement in \cite{Blickle_Bockle_Cartier_crystals} (see Theorem 4.3.2 in \emph{loc.cit}) is direct, while ours will use our duality and the analogous statement for Frobenius crystals (which we recall is an immediate consequence of the Riemann-Hilbert correspondence).
\end{rem}

\subsection{Unit Cartier modules}
Before studying unit Cartier modules, let us recall Grothendieck duality theory for finite morphisms. 

\begin{lemma}\label{affine Grothendieck duality}
	Let $f \colon X \to Y$ be a finite morphism of Noetherian schemes. 
	
	\begin{itemize}
		\item The functor $f_* \colon \QCoh_X \to \QCoh_Y$ has a right adjoint $f^{\flat}$, with the property that for all $\cN \in \QCoh_Y$, \[ f_*f^\flat\Ncal = \HHom(f_*\Ocal_X, \Ncal). \] In addition, for all $\Mcal \in \QCoh_X$ and $\Ncal \in \QCoh_Y$, we have a natural isomorphism 
		\begin{equation}\label{eq iso f^flat}
			f_*\HHom(\Mcal, f^\flat\Ncal) \cong \HHom(f_*\Mcal, \Ncal). 
		\end{equation}
		\item Assume that $X = \Spec R$ and $Y = \Spec S$. For any $R$-module $M$ and $S$-module $N$, the isomorphism \autoref{eq iso f^flat} is given by 
		\[  
		\begin{tikzcd}[row sep=tiny]
			{\Hom_R(M, \Hom_S(R, N))} \arrow[rr, leftrightarrow, "\cong"] &  & {\Hom_S(M, N)}           \\
			\theta \arrow[rr, maps to]                    &  & (m \mapsto \theta(m)(1)), \\
			(m \mapsto (s \mapsto \psi(sm)))              &  & \psi. \arrow[ll, maps to]         
		\end{tikzcd}
		\]
	\end{itemize}
\end{lemma}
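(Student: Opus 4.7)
The plan is to construct $f^\flat$ affine-locally and then reduce the two claimed identities to the standard hom-restriction adjunction for rings. Since $f$ is finite, hence affine, the pushforward functor realizes an equivalence between $\QCoh_X$ and the category of quasi-coherent $f_*\Ocal_X$-modules on $Y$; under this equivalence, $f_*$ becomes the restriction-of-scalars functor along $\Ocal_Y \to f_*\Ocal_X$. Thus a right adjoint to $f_*$ is the same as a right adjoint to restriction of scalars on quasi-coherent sheaves, and the natural candidate is
\[ f^\flat \Ncal \;:=\; \HHom_{\Ocal_Y}(f_*\Ocal_X, \Ncal), \]
with its left $f_*\Ocal_X$-action inherited from the first argument. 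Finiteness of $f$ guarantees that $f_*\Ocal_X$ is a finitely presented $\Ocal_Y$-module, which is what ensures that this $\HHom$-sheaf is quasi-coherent and compatible with restriction to open subsets of $Y$.

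In the affine case $X = \Spec R$, $Y = \Spec S$, this construction specializes to $f^\flat \widetilde{N} = \widetilde{\Hom_S(R, N)}$, where the $R$-module structure on $\Hom_S(R, N)$ is given by $(r\cdot \phi)(x) := \phi(rx)$. The desired adjunction then reduces to the classical hom-restriction isomorphism
\[ \Hom_R\bigl(M, \Hom_S(R, N)\bigr) \;\xrightarrow{\sim}\; \Hom_S(M, N), \qquad \theta \longmapsto \bigl(m \mapsto \theta(m)(1)\bigr), \]
whose inverse is $\psi \mapsto \bigl(m \mapsto (r \mapsto \psi(rm))\bigr)$. A direct calculation shows these two maps are mutually inverse and $R$-, resp.~$S$-linear; these are exactly the formulas displayed in the second bullet of the lemma, proving its affine case.

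To globalize, observe that both $f_*\HHom(\Mcal, f^\flat \Ncal)$ and $\HHom(f_*\Mcal, \Ncal)$ are quasi-coherent sheaves on $Y$ whose formation commutes with restriction to affine opens of $Y$; on any such affine open their sections coincide with the two sides of the affine isomorphism above, giving a canonical (and functorial in $\Mcal$, $\Ncal$) global isomorphism. The identity $f_*f^\flat \Ncal = \HHom(f_*\Ocal_X, \Ncal)$ is immediate from the definition, since $f_*$ simply forgets the $f_*\Ocal_X$-action. The only genuinely technical input in the whole argument is the quasi-coherence of $\HHom(f_*\Ocal_X, \Ncal)$ and its compatibility with localization on $Y$, which one can extract either from the explicit description of $f^\flat$ on principal opens $\Spec(S_s) \subset Y$ (using that localization commutes with $\Hom$ out of a finitely presented module) or from \cite[\href{https://stacks.math.columbia.edu/tag/01YS}{Tag 01YS}]{Stacks_Project}. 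Everything else is a formal consequence of the restriction-of-scalars adjunction, so the main obstacle is really just bookkeeping rather than anything substantive.
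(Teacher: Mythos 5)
The paper states this lemma without proof; it is a standard component of Grothendieck–Serre duality for finite (hence affine) morphisms, and your argument is precisely the one the paper implicitly invokes. You identify $f^\flat\cN$ with $\HHom_{\cO_Y}(f_*\cO_X,\cN)$ carrying the $f_*\cO_X$-module structure from the first argument, note that finiteness of $f$ makes $f_*\cO_X$ a coherent $\cO_Y$-module so this $\HHom$ is quasi-coherent and localizes correctly, and then reduce the full isomorphism to the ring-level coinduction adjunction $\Hom_R(M,\Hom_S(R,N))\cong\Hom_S(M,N)$ with the explicit bijection $\theta\mapsto(m\mapsto\theta(m)(1))$ and inverse $\psi\mapsto(m\mapsto(r\mapsto\psi(rm)))$. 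These are exactly the formulas in the lemma's second bullet; note only that the paper there labels $N$ an $R$-module and $M$ an $S$-module, which is the opposite of what the convention $X=\Spec R$, $Y=\Spec S$, $\cM\in\QCoh_X$, $\cN\in\QCoh_Y$ would dictate, so the roles of $R$ and $S$ in the paper's displayed formula should be read as interchanged relative to yours. Apart from this cosmetic relabeling, your proof is correct and complete, and it is the standard argument.
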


\begin{notation}
	Same assumptions as in \autoref{affine Grothendieck duality}. In order to agree with the literature, the derived functor of $f^{\flat}$ will be denoted $f^!$.
\end{notation}

For the remaining of this section, fix a Noetherian $F$-finite scheme $X$ over $\bF_p$. In particular, we can apply \autoref{affine Grothendieck duality} to the absolute Frobenius $F \colon X \to X$.

\begin{cor}[{\cite[Proposition 2.18]{Blickle_Bockle_Cartier_modules_finiteness_results}}]\label{eq def of Cartier mod}
	The datum of a quasi-coherent Cartier module on $X$ is equivalent to that of a quasi-coherent $\cO_X$-module $\cM$, together with a morphism $\kappa_{\cM}^{\flat} \colon \cM \to F^{r, \flat}\cM$.
	
	In particular, for any quasi-coherent Cartier module $\cM$, $F^{r, \flat}\cM$ is also naturally a Cartier module, and $\kappa_{\cM}^{\flat} \colon \cM \to F^{r, \flat}\cM$ is a morphism of Cartier modules.
\end{cor}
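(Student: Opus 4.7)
The plan is to view the whole statement as a direct unpacking of the adjunction $(F^r_*, F^{r,\flat})$ guaranteed by \autoref{affine Grothendieck duality}. Since this adjunction is a bijection $\Hom_{\cO_X}(F^r_*\cM, \cM) \cong \Hom_{\cO_X}(\cM, F^{r,\flat}\cM)$, the datum of a morphism $\kappa_{\cM}\colon F^r_*\cM \to \cM$ is literally the same as the datum of a morphism $\kappa_{\cM}^{\flat}\colon \cM \to F^{r,\flat}\cM$; this yields the bijection on objects, and naturality of the adjunction bijection shows that a morphism $\theta \colon \cM \to \cN$ of underlying $\cO_X$-modules commutes with the $\kappa$'s if and only if it commutes with the $\kappa^{\flat}$'s, so we get an equivalence of categories (in fact an isomorphism of categories).

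For the ``in particular'' claim, given a Cartier module $(\cM, \kappa_{\cM})$, I would define the structural map on $F^{r,\flat}\cM$ as follows. Applying the right adjoint $F^{r,\flat}$ to $\kappa_{\cM}^{\flat}\colon \cM \to F^{r,\flat}\cM$ produces a morphism $F^{r,\flat}\cM \to F^{r,\flat}F^{r,\flat}\cM$, which by the first part of the statement is exactly a Cartier structure on $F^{r,\flat}\cM$; denote the corresponding classical structural map by $\kappa_{F^{r,\flat}\cM}\colon F^r_*F^{r,\flat}\cM \to F^{r,\flat}\cM$. Equivalently, $\kappa_{F^{r,\flat}\cM}$ is obtained as the composition
\[
F^r_*F^{r,\flat}\cM \xrightarrow{\ \epsilon\ } \cM \xrightarrow{\kappa_{\cM}^{\flat}} F^{r,\flat}\cM,
\]
where $\epsilon$ is the counit of the adjunction, and one checks that this is the same map.

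The only remaining point is that $\kappa_{\cM}^{\flat}\colon \cM \to F^{r,\flat}\cM$ is itself a morphism of Cartier modules, i.e.\ that the square
\[
\begin{tikzcd}
F^r_*\cM \arrow[r, "\kappa_{\cM}"] \arrow[d, "F^r_*\kappa_{\cM}^{\flat}"'] & \cM \arrow[d, "\kappa_{\cM}^{\flat}"] \\
F^r_*F^{r,\flat}\cM \arrow[r, "\kappa_{F^{r,\flat}\cM}"'] & F^{r,\flat}\cM
\end{tikzcd}
\]
commutes. Using the expression $\kappa_{F^{r,\flat}\cM} = \kappa_{\cM}^{\flat} \circ \epsilon$, the bottom composition becomes $\kappa_{\cM}^{\flat} \circ \epsilon \circ F^r_*\kappa_{\cM}^{\flat}$. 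By the triangle identity for the adjunction, $\epsilon \circ F^r_*\kappa_{\cM}^{\flat} = \kappa_{\cM}$ (this is precisely the relation between $\kappa_{\cM}$ and its adjoint $\kappa_{\cM}^{\flat}$), so the square commutes. No step here is a real obstacle; the only mild bookkeeping is to keep track of the unit/counit triangle identities, which is what makes the two constructions of $\kappa_{F^{r,\flat}\cM}$ agree and makes $\kappa_{\cM}^{\flat}$ Cartier-linear.
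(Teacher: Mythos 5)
Your proof is correct and matches the paper's implicit reasoning: the paper states this corollary without proof, treating it as a direct consequence of the adjunction $F^r_* \dashv F^{r,\flat}$ from \autoref{affine Grothendieck duality}, which is exactly what you spell out. The only minor imprecision is the label ``triangle identity'' for the formula $\kappa_{\cM} = \epsilon_{\cM} \circ F^r_*\kappa_{\cM}^{\flat}$ --- this is really the defining characterization of the adjunction bijection (as your parenthetical already notes), not one of the two unit/counit triangle identities --- but the substance of the argument is sound.
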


Throughout this paper, we will use \autoref{eq def of Cartier mod} without further mention. The morphism $\cM \to F^{r, \flat}\cM$ will be called the \emph{adjoint structural morphism} of $\cM$. Unless stated otherwise, it will always be denoted $\kappa_{\cM}^{\flat}$.

\begin{rem}\label{explicit_adjoint_structural_morphism}
	Thanks to the explicit formula in \autoref{affine Grothendieck duality}, we have that if $M$ is a Cartier module on a Noetherian $F$-finite $\bF_p$-algebra $R$, $m \in M$ and $F^r_*\lambda \in F^r_*R$, then \[ \kappa_M^{\flat}(m)(F^r_*\lambda) = \kappa_M(F^r_*(\lambda m)). \]
\end{rem}

\begin{defn}
	A quasi-coherent Cartier module $\Mcal$ is said to be \emph{unit} if its adjoint structural morphism $\Mcal \ra F^{r, \flat}\Mcal$ is an isomorphism. The full subcategory of unit Cartier modules is denoted by $\QCoh_X^{C^r, \unit}$, and the full subcategory of ind-coherent unit Cartier modules is denoted by	 $\IndCoh_X^{C^r, \unit}$.
\end{defn}

We want to show that $\IndCoh_X^{C^r, \unit}$ is a Grothendieck category. We will do this by showing that it is equivalent to $\IndCrys_X^{C^r}$, via the \emph{unitalization functor}. This will require a few intermediate results.

\begin{defn}
	Define the \emph{unitalization functor} $(\cdot)^u \colon \QCoh_X^{C^r} \to \QCoh_X^{C^r, \unit}$ by sending a quasi-coherent Cartier module $\cM$ to \[ \cM^u \coloneqq \colim F^{rn, \flat}\cM, \] where the colimit is taken in the category of Cartier modules (see \autoref{eq def of Cartier mod}).
\end{defn}

\begin{lem}\label{unitalization is 0 iff locally nilpotent}
	A quasi-coherent Cartier module $\Mcal$ is locally nilpotent if and only if $\Mcal^u = 0$. Furthermore, in this case, also $R^i(\cdot)^u(\cM) = 0$ for all $i > 0$.
\end{lem}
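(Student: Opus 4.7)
The plan is to prove the two implications of the iff separately, and then reduce the higher-derived vanishing to the case $\kappa_{\cM} = 0$ via a filtration argument.

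For \emph{locally nilpotent $\Rightarrow \cM^u = 0$}: since $X$ is Noetherian and $F$-finite, $F^{rn}_*\cO_X$ is coherent, so $F^{rn,\flat} = \HHom(F^{rn}_*\cO_X, -)$, and hence $(\cdot)^u$, commute with filtered colimits. By \autoref{locally nilp iff union of nilp} I may assume $\cM$ is nilpotent with $\kappa_{\cM}^n = 0$, whence $(\kappa_{\cM}^n)^{\flat} = 0$. Applying $F^{rk,\flat}$ shows the $n$-fold transition $F^{rk,\flat}\cM \to F^{r(k+n),\flat}\cM$ vanishes for every $k$, forcing $\cM^u = 0$.

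For \emph{$\cM^u = 0 \Rightarrow $ locally nilpotent}, I localize to $X = \Spec R$ and fix $m \in M$; by \autoref{locally nilp iff union of nilp} it suffices to show the Cartier submodule $P := m \cdot R[F^r]$ is nilpotent. Vanishing of the image of $m$ at the $n=0$ level of $\colim F^{rn,\flat}M = 0$ yields some $n$ with $(\kappa_M^n)^{\flat}(m) = 0$, which via \autoref{explicit_adjoint_structural_morphism} translates to $\kappa_M^n(F^{rn}_*(\lambda m)) = 0$ for all $\lambda \in R$. Using the identity $\kappa^{j+k} = \kappa^{j+k-n} \circ F^{r(j+k-n)}_*\kappa^n$, a direct computation shows that for a general element $m \cdot \sum_j a_j F^{rj} \in P$, applying $\kappa_P^k$ gives $\sum_j \kappa_M^{j+k}(F^{r(j+k)}_*(a_j m))$, and every summand vanishes once $k \geq n$; hence $P$ is nilpotent.

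For the higher-derived vanishing, $R^i F^{rn,\flat} = \EExt^i(F^{rn}_*\cO_X, -)$ also commutes with filtered colimits by coherence of $F^{rn}_*\cO_X$, so $R^i(\cdot)^u = \colim_n R^i F^{rn,\flat}$ does too, reducing to $\cM$ nilpotent. The descending filtration $\cM \supseteq \mathrm{Im}(\kappa_{\cM}) \supseteq \cdots \supseteq \mathrm{Im}(\kappa_{\cM}^n) = 0$ by Cartier submodules has successive quotients annihilated by $\kappa$, so iterated use of the long exact sequence of $R^i(\cdot)^u$ reduces further to $\kappa_{\cM} = 0$. In that case $\kappa_{\cM}^{\flat} = 0$ makes every transition in $\colim_n R^i F^{rn,\flat}(\cM)$ vanish (the transition is $R^i F^{rn,\flat}(\kappa_{\cM}^{\flat})$ postcomposed with the edge map from Grothendieck's composition spectral sequence, legitimate since $F^{r,\flat}$ preserves injectives), so the colimit is zero. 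The main obstacle is the converse direction: \autoref{explicit_adjoint_structural_morphism} gives the right ring-theoretic reformulation of the colimit vanishing, but one must still propagate annihilation of $m$ alone to a uniform bound on $\kappa_P^k$ for every element of $P$.
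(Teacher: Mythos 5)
Your proof is correct and follows essentially the same route as the paper: reduce to the affine case, use \autoref{explicit_adjoint_structural_morphism} to characterize $\ker(\kappa_M^{\flat,n})$, identify $R^i(\cdot)^u$ with $\colim_n R^iF^{rn,\flat}$ via the $\delta$-functor and compactness arguments, and conclude by showing the transitions eventually vanish on nilpotent modules. You merely spell out two steps the paper leaves implicit---that each $m \in \ker(\kappa_M^{\flat,n})$ generates a nilpotent Cartier submodule (which the paper dismisses with ``by definition''), and the further reduction from a nilpotent $\cM$ to one with $\kappa_{\cM}=0$ via the filtration by $\mathrm{Im}(\kappa_{\cM}^j)$---whereas the paper argues directly that for nilpotent $\cM$ the $n$-fold transition in $\colim_n F^{rn,!}\cM$ factors through the vanishing $\kappa_{\cM}^{\flat,n}$.
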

\begin{proof}
	Let us begin with the part before ``Furthermore''. We immediately reduce to the affine case by \autoref{locally nilp iff union of nilp}. Let $X = \Spec R$, and let $M \in \QCoh_R^{C^r}$. By \autoref{explicit_adjoint_structural_morphism}, 
	\[ \ker(\kappa_M^{\flat, n}) = \set{m \in M}{\kappa_{M}^n(F^n_*(\lambda m)) = 0 \esp \forall \lambda \in R} \] so by definition, $M$ is locally nilpotent if and only if $M = \bigcup_n\ker(\kappa_M^{\flat, n})$.
	
	To see the final statement, note that for all $i \geq 0$, we have \[ R^i(\cdot)^u = \colim R^iF^{rn, \flat}. \] Indeed, these functors define a $\delta$-functor (filtered colimits are exact), agreeing with $(\cdot)^u$ for $i = 0$, and the $R^i(\cdot)^u$ vanish on injective objects. 
	
	In particular, this higher derived functor commute with filtered colimits (each $F^j_*\cO_X$ are compact objects), so it is enough to show that if $\cM$ is nilpotent, then each $R^i(\cdot)^u(\cM)$ are trivial. By our work above, \[ R(\cdot)^u(\cM) = \colim F^{rn, !}\cM, \] where the symbol $\colim$ should be understood as the pointwise colimit (or equivalently the homotopy colimit), and the maps $F^{rn, !}\cM \to F^{r(n + 1), !}\cM$ are given by applying $F^{rn, !}$ to the composition \[ \cM \to F^{r, \flat}\cM \to F^{r, !}\cM. \] Thus, the result is immediate.
\end{proof}

\begin{lem}\label{map to unitalization is a nil-iso}
	Let $\cM \in \QCoh_X^{C^r}$. For any $n \geq 1$, the composite of the adjoint structural morphism $\kappa_{\cM}^{\flat, n} \colon \cM \to F^{nr, \flat}\cM$ is a nil-isomorphism. Furthermore, so is the natural map $\cM \to \cM^u$.
\end{lem}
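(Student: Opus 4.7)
The plan is to compute the kernels and cokernels explicitly, using the formula from \autoref{explicit_adjoint_structural_morphism} that $\kappa_\cM^{\flat, n}(m)(F^{nr}_*\lambda) = \kappa_\cM^n(F^{nr}_*(\lambda m))$, together with the description from \autoref{eq def of Cartier mod} that the natural Cartier structure on $F^{nr, \flat}\cM$ is $F^{nr, \flat}(\kappa_\cM^\flat)$, so that in particular $\kappa_{F^{nr, \flat}\cM}^{\flat, n} = F^{nr, \flat}(\kappa_\cM^{\flat, n})$.

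For $\kappa_\cM^{\flat, n}\colon \cM \to F^{nr, \flat}\cM$: the kernel $K_n := \ker(\kappa_\cM^{\flat, n})$ carries the restricted Cartier structure, so for $m \in K_n$ the explicit formula gives $\kappa_{K_n}^{\flat, n}(m)(F^{nr}_*\lambda) = \kappa_\cM^n(F^{nr}_*(\lambda m)) = 0$. Hence $\kappa_{K_n}^{\flat, n} = 0$, making $K_n$ nilpotent of order at most $n$. For the cokernel $C_n := \coker(\kappa_\cM^{\flat, n})$, I would factor $\kappa_\cM^{\flat, n}$ through $\im(\kappa_\cM^{\flat, n}) \inc F^{nr, \flat}\cM$ and apply the left-exact $F^{nr, \flat}$: the resulting map $F^{nr, \flat}(\kappa_\cM^{\flat, n})$ factors through $F^{nr, \flat}(\im(\kappa_\cM^{\flat, n}))$, which is contained in the kernel of $F^{2nr, \flat}\cM \to F^{nr, \flat}C_n$. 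Since the induced map $\kappa_{C_n}^{\flat, n}$ is characterized by commutativity with the surjection $F^{nr, \flat}\cM \twoheadrightarrow C_n$, this forces $\kappa_{C_n}^{\flat, n} = 0$.

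For $\cM \to \cM^u$: this map factors as $\cM \xrightarrow{\kappa_\cM^{\flat, n}} F^{nr, \flat}\cM \xrightarrow{\iota_n} \cM^u$ at every stage of the colimit $\cM^u = \colim_n F^{nr, \flat}\cM$, so its kernel is $\bigcup_n K_n$, a union of nilpotent Cartier submodules, hence locally nilpotent. For the cokernel $C := \cM^u/\im(\cM \to \cM^u)$, naturality of $\kappa^\flat$ with respect to the Cartier morphism $\iota_n$ and the colimit relation $\iota_n \circ \kappa_\cM^{\flat, n} = \iota_0$ yield
\[ \kappa_{\cM^u}^{\flat, n} \circ \iota_n = F^{nr, \flat}(\iota_n) \circ F^{nr, \flat}(\kappa_\cM^{\flat, n}) = F^{nr, \flat}(\iota_0). \]
Hence $\kappa_{\cM^u}^{\flat, n}(\iota_n(\phi)) \in F^{nr, \flat}(\im(\iota_0))$ for any $\phi \in F^{nr, \flat}\cM$, which forces $\kappa_C^{\flat, n}$ to annihilate the class of $\iota_n(\phi)$ in $C$. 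Since every element of $\cM^u$ appears in the image of some $\iota_n$, $C$ is locally nilpotent.

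The main technical subtlety lies in the naturality calculation for $\cM \to \cM^u$ and in confirming that $\cM^u$ is genuinely a unit Cartier module, which tacitly requires $F^{r, \flat}$ to commute with the defining filtered colimit; this holds because $F^r_*\cO_X$ is coherent, hence finitely presented, by the Noetherian $F$-finite hypothesis on $X$. A final point of caution is that for the $\cM \to \cM^u$ case the kernel and cokernel are in general only locally nilpotent (not nilpotent with a uniform bound), since the depths $n$ needed in the argument depend on the element; this still gives an isomorphism in $\QCrys_X^{C^r}$, consistent with the intended use of the statement.
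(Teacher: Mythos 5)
Your proof is correct, and it follows the same underlying strategy as the paper's (use left exactness of $F^{nr,\flat}$ to pin down the adjoint structural morphism on the kernel and cokernel, then pass to the colimit for $\cM\to\cM^u$), but the execution differs in two small ways. For the first part, the paper works only with $n=1$, establishes that $\ker(\kappa_\cM^\flat)$ and $\coker(\kappa_\cM^\flat)$ have trivial adjoint structural morphism via a diagram chase, and then deduces the case of general $n$ by composing nil-isomorphisms; you instead attack general $n$ head-on with the explicit formula $\kappa_\cM^{\flat,n}(m)(F^{nr}_*\lambda) = \kappa_\cM^n(F^{nr}_*(\lambda m))$, which is equivalent and arguably more transparent for the kernel. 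For the cokernel of $\cM\to\cM^u$, the paper simply identifies it as the filtered colimit of the cokernels $C_n$ (exactness of filtered colimits), each of which is nilpotent; your naturality computation $\kappa_{\cM^u}^{\flat,n}\circ\iota_n = F^{nr,\flat}(\iota_0)$ reaches the same conclusion more laboriously but is correct. Your closing caution is a good catch: the kernel and cokernel of $\cM\to\cM^u$ are in general only \emph{locally} nilpotent, so the map is an lnil-isomorphism rather than a nil-isomorphism in the strict sense of the paper's definitions; the paper's own proof establishes exactly this, so ``nil-isomorphism'' in the statement should be read as ``lnil-isomorphism'' for the second assertion (this is harmless for the way the lemma is used, namely in proving \autoref{unit Cartier modules form a Grothendieck category}).
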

\begin{proof}
	We have an exact diagram 
	\[  
		\begin{tikzcd}
			0 \arrow[r] & {\ker(\kappa_{\cM}^{\flat})} \arrow[d] \arrow[r]    & \cM \arrow[d, "{\kappa_{\cM}^{\flat}}"] \arrow[r] & {F^{r, \flat}\cM} \arrow[d, "{F^{r, \flat}\kappa_{\cM}^{\flat}}"] \\
			0 \arrow[r] & {F^{m, \flat}\ker(\kappa_{\cM}^{\flat})} \arrow[r] & {F^{m, \flat}\cM} \arrow[r]                         & {F^{2r, \flat}\cM},                          
		\end{tikzcd}
	\] which automatically gives that the adjoint structural morphism of $\ker(\kappa_{\cM}^{\flat})$ is trivial. Similarly, that of $\coker(\kappa_{\cM}^{\flat})$ is trivial, so $\kappa_{\cM}^{\flat}$ is indeed a nil-isomorphism. We then deduce the result for each $\kappa_{\cM}^{\flat, n}$. 
	
	Finally, since the kernel (resp. cokernel) of the unitalization morphism is a colimit of the kernel (resp. cokernel) of each $\kappa_{\cM}^{\flat, n}$, we conclude.
\end{proof}

\begin{cor}\label{unit Cartier modules form a Grothendieck category}
	The unitalization functor induces a equivalences of categories \[ \QCrys_X^{C^r} \to \QCoh_X^{C^r, \unit}. \]
	
	In particular, $\QCoh_X^{C^r, \unit}$ is a Grothendieck category, the unitalization functors $\QCoh_X^{C^r} \to \QCoh_X^{C^r, \unit}$ is exact, and the inclusion $\QCoh_X^{C^r, \unit} \inc \QCoh_X^{C^r}$ preserves injectives.
	
	Finally, all the statements above still hold, when replacing $\QCoh_X^{C^r}$ (resp. $\QCrys_X^{C^r}$) by $\IndCoh_X^{C^r}$ (resp. $\IndCrys_X^{C^r}$).
\end{cor}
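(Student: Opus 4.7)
The plan is to construct the equivalence $\bar u \colon \QCrys_X^{C^r} \to \QCoh_X^{C^r, \unit}$ by factoring unitalization through the Serre quotient, and then deduce all the remaining statements from its existence together with standard adjunction arguments. First I would show that the unitalization functor $u \coloneqq (\cdot)^u$ factors uniquely through the Serre quotient $q \colon \QCoh_X^{C^r} \to \QCrys_X^{C^r}$. By the universal property of localization, it suffices to check that $u$ inverts every lnil-isomorphism $f \colon \cM \to \cN$. Splitting $f$ into the short exact sequences $0 \to \ker(f) \to \cM \to \im(f) \to 0$ and $0 \to \im(f) \to \cN \to \coker(f) \to 0$, the conclusion follows from the associated long exact sequences for $R^\bullet u$ combined with the vanishing $R^i u(L) = 0$ for all $i \geq 0$ and $L \in \LNil$ provided by \autoref{unitalization is 0 iff locally nilpotent}. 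This yields a functor $\bar u \colon \QCrys_X^{C^r} \to \QCoh_X^{C^r, \unit}$ with $u = \bar u \circ q$. A quasi-inverse is the composition $q \circ \iota$, where $\iota \colon \QCoh_X^{C^r, \unit} \inc \QCoh_X^{C^r}$ is the natural inclusion: for $\cN$ unit, all the transition maps $F^{rn, \flat}\cN \cong \cN$ are isomorphisms, so $\bar u(q(\iota(\cN))) = \cN^u = \cN$; conversely, $q(\iota(\bar u([\cM]))) = [\cM^u] = [\cM]$ in $\QCrys_X^{C^r}$ since $\cM \to \cM^u$ is a nil-isomorphism by \autoref{map to unitalization is a nil-iso}.

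The ``in particular'' statements then follow formally. The Grothendieck property transfers across $\bar u$ from $\QCrys_X^{C^r}$ (see \autoref{qcrys and incrys are Groth cats}), and the exactness of $u$ reads off the factorization $u = \bar u \circ q$, with $q$ exact and $\bar u$ an equivalence. To obtain that $\iota$ preserves injectives I would check that $u$ is left adjoint to $\iota$: for $\cN$ unit, a morphism $\cM \to \cN$ determines a compatible system of morphisms $F^{rn, \flat}\cM \to F^{rn, \flat}\cN \cong \cN$, whose colimit is the unique extension $\cM^u \to \cN$, and this correspondence is a natural bijection. Exactness of $u$ together with the adjunction $\Hom_{\QCoh_X^{C^r}}(-, \iota(I)) = \Hom_{\QCoh_X^{C^r, \unit}}(u(-), I)$ then shows that $\iota(I)$ is injective whenever $I$ is.

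For the ind-coherent variant, the only additional input needed is that $F^{r, \flat}$ preserves ind-coherence: since $X$ is Noetherian and $F$-finite, $F^r_*\cO_X$ is coherent, so $\sHom(F^r_*\cO_X, -)$ both preserves coherence and commutes with filtered colimits. Consequently $\cM^u \in \IndCoh_X^{C^r, \unit}$ whenever $\cM \in \IndCoh_X^{C^r}$, so $\bar u$ and $q \circ \iota$ restrict to quasi-inverse equivalences between $\IndCrys_X^{C^r}$ and $\IndCoh_X^{C^r, \unit}$, and the remaining consequences carry over verbatim. The main obstacle in this whole argument is the factorization step, which requires the vanishing of the \emph{higher} derived functors of $u$ on $\LNil$, not just of $u$ itself --- this is precisely why the ``furthermore'' clause of \autoref{unitalization is 0 iff locally nilpotent} was included.
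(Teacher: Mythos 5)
Your proposal is correct and follows essentially the same approach as the paper: factor the unitalization through the Serre quotient using the vanishing of $R^i(\cdot)^u$ on $\LNil$ from \autoref{unitalization is 0 iff locally nilpotent}, then deduce the remaining claims from the equivalence and the adjunction $(\cdot)^u \dashv \iota$. The cosmetic differences are that you replay the content of \autoref{derived_functors_and_localization} by hand via the two short exact sequences and the long exact sequence for $R^\bullet(\cdot)^u$ (the paper instead just cites that lemma), and that you prove the factored functor is an equivalence by exhibiting $q \circ \iota$ as a quasi-inverse rather than checking full faithfulness against essential surjectivity as the paper does. One small terminological point: \autoref{map to unitalization is a nil-iso} literally calls $\cM \to \cM^u$ a nil-isomorphism, but as your own invocation suggests you should read it as an lnil-isomorphism (the kernel and cokernel are filtered colimits of nilpotent submodules, hence locally nilpotent, not nilpotent in general), which is exactly what is needed for invertibility in $\QCrys_X^{C^r}$. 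Your extra observation that $F^{r,\flat}$ preserves ind-coherence to handle the $\IndCoh$ variant, which the paper leaves implicit in ``the proof in the ind-coherent case is identical,'' is a welcome clarification.
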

\begin{proof}
	By \autoref{unitalization is 0 iff locally nilpotent} and \autoref{derived_functors_and_localization}, the unitalization functor factors though a functor $\QCrys_X^{C^r} \to \QCoh_X^{C^r}$. The essential image of this functor is $\QCoh_X^{C^r, \unit}$, so we have to show that it is fully faithful. 
	
	Let $f, g \colon \cM \dra \cN$ be two morphisms in $\QCrys_X^{C^r}$, and suppose that $f^u = g^u$. Note that the diagram 
	\[ 
		\begin{tikzcd}
			\cM \arrow[r, "f", dashed] \arrow[d] & \cN \arrow[d] \\
			\cM^u \arrow[r, "f^u"]               & \cN^u        
		\end{tikzcd}
	\] commutes in $\QCrys_X^{C^r}$. Doing the same for $g$ and using that both vertical arrows are isomorphisms in $\QCrys_X^{C^r}$ by \autoref{map to unitalization is a nil-iso}, we conclude that $f = g$. The proof for fullness is similar.
	
	The category $\QCoh_X^{C^r, \unit}$ is then a Grothendieck category by \autoref{qcrys and incrys are Groth cats}. The unitalization functor is then exact. Indeed, by \autoref{unitalization is 0 iff locally nilpotent}, we can factor it as \[ \QCoh_X^{C^r} \to \QCrys_X^{C^r} \to \QCoh_X^{C^r, \unit}, \] and both these functors are exact (see \cite[{\href{https://stacks.math.columbia.edu/tag/02MS}{Tag 02MS}}]{Stacks_Project} for the first one). Finally, since unitalization is also left adjoint to the inclusion $\QCoh_X^{C^r, \unit} \ra \QCoh_X^{C^r}$, this latter functor preserves injectives.
	
	The proof in the ind-coherent case is identical.
\end{proof}

\begin{cor}\label{injective unit is injective O_X}
	An injective in either $\IndCoh_X^{C^r, \unit}$ or $\QCoh_X^{C^r, \unit}$ is also injective in $\Mod(\Ocal_X)$.
\end{cor}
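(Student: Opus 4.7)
The plan is to deduce this corollary as an immediate chain of applications of results already established, with the unitalization equivalence serving as the bridge.

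First, I would reduce to showing that an injective $\cI$ in $\IndCoh_X^{C^r, \unit}$ (resp. $\QCoh_X^{C^r, \unit}$) is injective as an object of the ambient category $\IndCoh_X^{C^r}$ (resp. $\QCoh_X^{C^r}$). Once this is done, the conclusion follows directly: in the ind-coherent case by \autoref{inj qalg is inj O_X}, and in the quasi-coherent case by combining \autoref{inj qcoh Cartier is inj O_X} (which gives injectivity in $\QCoh_X$) with \autoref{injective qcoh is injective O_X} (which upgrades injectivity in $\QCoh_X$ to injectivity in $\Mod(\Ocal_X)$ using that $X$ is Noetherian).

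For the reduction step, the key observation is exactly the last sentence of \autoref{unit Cartier modules form a Grothendieck category}: the inclusion functors $\QCoh_X^{C^r, \unit} \inc \QCoh_X^{C^r}$ and $\IndCoh_X^{C^r, \unit} \inc \IndCoh_X^{C^r}$ preserve injectives. The reason, already recorded there, is that these inclusions are right adjoint to the unitalization functors, which are exact (by \autoref{unitalization is 0 iff locally nilpotent} and \autoref{derived_functors_and_localization}, unitalization factors as $\QCoh_X^{C^r} \to \QCrys_X^{C^r} \to \QCoh_X^{C^r, \unit}$, both of which are exact). A right adjoint of an exact functor always preserves injectives, so $\cI$ remains injective when viewed in the larger category.

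The whole argument is then just a concatenation of three existing results, with no genuine obstacle — the substantive work was already done in \autoref{inj qalg is inj O_X} and in establishing the equivalence $\IndCrys_X^{C^r} \cong \IndCoh_X^{C^r, \unit}$. The only point to be slightly careful about is making explicit the use of $X$ being Noetherian (needed to invoke \autoref{injective qcoh is injective O_X} in the quasi-coherent case), but this is already a standing hypothesis in the section.
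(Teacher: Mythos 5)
Your proof is correct and follows exactly the same route as the paper: use the unitalization equivalence (\autoref{unit Cartier modules form a Grothendieck category}) to see that the inclusion into $\IndCoh_X^{C^r}$ (resp. $\QCoh_X^{C^r}$) preserves injectives, then invoke \autoref{inj qalg is inj O_X} (resp. \autoref{inj qcoh Cartier is inj O_X}). You merely unpack the ``in particular'' clause of \autoref{inj qcoh Cartier is inj O_X} by citing \autoref{injective qcoh is injective O_X} separately, which is the same content.
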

\begin{proof}
	Combine \autoref{unit Cartier modules form a Grothendieck category}, \autoref{inj qcoh Cartier is inj O_X} and \autoref{inj qalg is inj O_X}.
\end{proof}

\begin{lem}\label{inclusion from unit to O_X is exact is X regular}
	If $X$ is regular, then the inclusions $\IndCoh_X^{C^r, \unit} \to \Mod(\cO_X)$ and $\QCoh_X^{C^r, \unit} \to \Mod(\cO_X)$ are exact.
\end{lem}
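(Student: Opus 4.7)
The plan is to factor the inclusion as $\QCoh_X^{C^r,\unit} \hookrightarrow \QCoh_X^{C^r} \to \Mod(\cO_X)$. The second arrow is already exact, since kernels and cokernels of morphisms of Cartier modules are computed on the underlying $\cO_X$-modules. So the task reduces to showing that the kernel $\cK$ and cokernel $\cC$ (computed in $\QCoh_X^{C^r}$) of any morphism $f\colon \cM \to \cN$ between unit Cartier modules are again unit. The ind-coherent variant is handled by the same argument, since subobjects and quotients of ind-coherent Cartier modules are ind-coherent by \autoref{basic props of ind-coherent objects}.

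The central input is the exactness of $F^{r,\flat}$ on regular $F$-finite schemes. By Kunz's theorem (see \autoref{remark injectives F-modules are also injectives O_X for X regular}), $F$ is flat on a regular scheme; together with $F$-finiteness and the Noetherian hypothesis, this forces $F^{r}_*\cO_X$ to be locally free of finite rank. Hence $F^{r,\flat}(-) = \HHom_{\cO_X}(F^{r}_*\cO_X, -)$ (cf.\ \autoref{affine Grothendieck duality}) is an exact functor, not merely left exact.

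Applying $F^{r,\flat}$ to $0 \to \cK \to \cM \xrightarrow{f} \cN \to \cC \to 0$ then yields a commutative ladder
\[
\begin{tikzcd}[column sep=small]
0 \arrow[r] & \cK \arrow[r] \arrow[d, "{\kappa_{\cK}^{\flat}}"] & \cM \arrow[r, "f"] \arrow[d, "{\kappa_{\cM}^{\flat}}"] & \cN \arrow[r] \arrow[d, "{\kappa_{\cN}^{\flat}}"] & \cC \arrow[r] \arrow[d, "{\kappa_{\cC}^{\flat}}"] & 0 \\
0 \arrow[r] & F^{r,\flat}\cK \arrow[r] & F^{r,\flat}\cM \arrow[r] & F^{r,\flat}\cN \arrow[r] & F^{r,\flat}\cC \arrow[r] & 0
\end{tikzcd}
\]
with exact rows in which the two middle vertical maps are isomorphisms by the unit hypothesis on $\cM$ and $\cN$. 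By the universal properties of kernels and cokernels (applied to the two four-term exact subsequences at each end), $\kappa_{\cK}^{\flat}$ and $\kappa_{\cC}^{\flat}$ are also isomorphisms, whence $\cK$ and $\cC$ are unit Cartier modules.

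The main obstacle is the exactness of $F^{r,\flat}$, which rests entirely on the regularity hypothesis through Kunz. Without regularity, only left exactness of $F^{r,\flat}$ is available (since it is a right adjoint), which would still give that kernels of morphisms of unit Cartier modules remain unit, but not cokernels; hence regularity is genuinely used only at the cokernel end of the ladder.
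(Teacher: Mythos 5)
Your proof is correct and follows the same route as the paper's: use left exactness of the inclusion (it is a right adjoint), then reduce to the claim that kernels and cokernels of morphisms between unit Cartier modules are unit, and deduce this from exactness of $F^{r,\flat}$ via Kunz's theorem. Your elaboration via the ladder argument and the observation that flatness of $F$ together with $F$-finiteness makes $F^{r}_*\cO_X$ locally free (hence $F^{r,\flat} = \HHom(F^{r}_*\cO_X,-)$ exact) just spells out the details that the paper leaves implicit.
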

\begin{proof}
	Since they is left exact, we only have to show that the cokernel of a morphism of unit Cartier modules is again unit. This is a consequence from the fact that $F^\flat$ is exact by Kunz' theorem (see \cite[Theorem 2.1]{Kunz_Characterizations_of_regular_local_rings_of_char_p}).
\end{proof}
\begin{rem}
	In general, the inclusion $\IndCoh_X^{C^r, \unit} \ra \QCoh_X^{C^r}$ is not exact. Since it is a right adjoint, this inclusion preserves kernels. However, the cokernel in the former category is the unitalization of the cokernel is the latter (the usual cokernel needs not be unit if the scheme is not regular).
\end{rem}

\begin{lem}\label{F^flat acyclic implies iota-acyclic}
	Let $\iota$ denote the inclusion $\IndCoh_X^{C^r, \unit} \ra \IndCoh_X^{C^r}$, and let $\Ncal \in \IndCoh_X^{C^r, \unit}$. If $\Ncal$ is $F^{r, \flat}$-acyclic as a quasi-coherent module, then it is also $\iota$-acyclic.
	
	The same statement also holds if we replace all instances of $\IndCoh$ by $\QCoh$.
\end{lem}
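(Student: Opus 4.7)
The plan is to reduce the computation of $R^p\iota(\cN)$ to the derived functor of the ambient-valued unitalization $(\cdot)^u_b \coloneqq \iota \circ (\cdot)^u \colon \IndCoh_X^{C^r} \to \IndCoh_X^{C^r}$ via a Grothendieck spectral sequence, and then to evaluate $R(\cdot)^u_b$ explicitly using the presentation $\cM^u = \colim_n F^{nr,\flat}\cM$.

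The main auxiliary step I would establish is that for any injective $\cI$ in $\IndCoh_X^{C^r}$, the unitalization $\cI^u$ remains injective in $\IndCoh_X^{C^r,\unit}$. Since $X$ is $F$-finite, $F^r$ is a finite morphism, so $F^r_*$ is exact and its right adjoint $F^{r,\flat}$ preserves injectives on both $\Mod(\cO_X)$ and $\IndCoh_X^{C^r}$; iterating, each $F^{nr,\flat}\cI$ is injective in $\IndCoh_X^{C^r}$. The category $\IndCoh_X^{C^r}$ is a locally Noetherian Grothendieck category, coherent Cartier modules being Noetherian generators, so filtered colimits of injectives remain injective by a classical result (cf.~\cite[{\href{https://stacks.math.columbia.edu/tag/079H}{Tag 079H}}]{Stacks_Project}). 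Therefore $\cI^u = \colim_n F^{nr,\flat}\cI$ is injective in $\IndCoh_X^{C^r}$. Being unit, and using the fully faithfulness together with the left-exactness of $\iota$, $\cI^u$ is then injective in $\IndCoh_X^{C^r,\unit}$: any extension problem in the unit category can be transferred via $\iota$ to one in the ambient category, which is solved by injectivity of $\iota\cI^u$.

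With this in hand, $(\cdot)^u$ is exact (\autoref{unit Cartier modules form a Grothendieck category}) and sends injectives in the ambient category to injectives in the unit category, hence in particular to $\iota$-acyclic objects. The Grothendieck spectral sequence for $(\cdot)^u_b = \iota \circ (\cdot)^u$ thus degenerates to give $R^p\iota((\cdot)^u(\cM)) \cong R^p(\cdot)^u_b(\cM)$; applied to $\cM = \iota\cN$ (so $(\cdot)^u(\cM) = \cN$), we obtain $R^p\iota(\cN) \cong R^p(\cdot)^u_b(\iota\cN)$. Since filtered colimits are exact in Grothendieck categories and $F^{nr,\flat}$ commutes with filtered colimits (by coherence of $F^{nr}_*\cO_X$), one has $R^p(\cdot)^u_b(\iota\cN) = \colim_n R^pF^{nr,\flat}(\iota\cN)$. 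The unit structure provides an isomorphism $\iota\cN \cong F^{r,\flat}\iota\cN$; by induction on $n$ using the Grothendieck spectral sequence for the composition $F^{(n+1)r,\flat} = F^{nr,\flat} \circ F^{r,\flat}$, which applies because $F^{r,\flat}$ preserves injectives, the $F^{r,\flat}$-acyclicity of $\iota\cN$ propagates to yield $R^pF^{nr,\flat}(\iota\cN) = 0$ for all $p > 0$ and $n \geq 1$. Hence $R^p\iota(\cN) = 0$ for $p > 0$, as required. The $\QCoh$ statement follows by the identical argument using \autoref{inj qcoh Cartier is inj O_X} in place of \autoref{inj qalg is inj O_X}. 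The main obstacle is the auxiliary step: the injectivity of $\cI^u$ in the ambient category hinges on the local Noetherianity of $\IndCoh_X^{C^r}$, which ensures that filtered colimits of injectives remain injective; once this is in place, the rest of the argument is a formal assembly of Grothendieck spectral sequences together with the filtered-colimit definition of unitalization.
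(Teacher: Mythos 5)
Your argument is correct, but it follows a genuinely more roundabout route than the paper's, and it imports machinery that is not yet available at this point in the text. The paper's proof constructs an injective resolution $0 \to \Ncal \to \Ical^0 \to \Ical^1 \to \cdots$ in $\IndCoh_X^{C^r,\unit}$ that stays exact after forgetting to $\QCoh_X$: embed $\Ncal$ into an injective $\Ical^0$ of the unit category, apply $F^{r,\flat}$ to $0 \to \Ncal \to \Ical^0 \to \Ical^0/\Ncal \to 0$ (this remains exact by the $F^{r,\flat}$-acyclicity hypothesis on $\Ncal$ and \autoref{injective unit is injective O_X}), and run the snake lemma on the columns of adjoint structural morphisms to see that $\Ical^0/\Ncal$ is again unit and $F^{r,\flat}$-acyclic; iterate. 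This uses nothing beyond what has already been established. You instead identify $R\iota$ with the derived functor of $\iota\circ(\cdot)^u=\colim_n F^{nr,\flat}$ via a Grothendieck spectral sequence, and then kill each $R^pF^{nr,\flat}(\iota\Ncal)$ by an inductive spectral-sequence argument using the unit isomorphism $\Ncal\cong F^{r,\flat}\Ncal$. The computation $R^p\bigl(\colim_n F^{nr,\flat}\bigr)=\colim_n R^pF^{nr,\flat}$ is fine (the right-hand side is a $\delta$-functor vanishing on injectives), and the induction in the base case is correct. The cost lies in your auxiliary claim that $\Ical^u$ is injective in $\IndCoh_X^{C^r,\unit}$ whenever $\Ical$ is injective in $\IndCoh_X^{C^r}$, which rests on (i) the statement that $F^r_*\dashv F^{r,\flat}$ as functors on Cartier modules — true, but not established anywhere in the paper and not entirely automatic; one has to check that the Cartier compatibility conditions transport across the Grothendieck adjunction, which works by a computation along the lines of \autoref{explicit_adjoint_structural_morphism} — and (ii) the stability of injectivity under filtered colimits in the locally Noetherian category $\IndCoh_X^{C^r}$. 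Both are true, and the paper itself invokes exactly this pair of facts later, in the proof of \autoref{having a unit dc is equivalent to the naive notion}; but neither is available when \autoref{F^flat acyclic implies iota-acyclic} is proved, so your route is more expensive than it needs to be. Finally, a small slip: \cite[{\href{https://stacks.math.columbia.edu/tag/079H}{Tag 079H}}]{Stacks_Project} concerns the existence of enough injectives in a Grothendieck category, not the preservation of injectivity under filtered colimits in a locally Noetherian one; the latter is a separate classical fact and needs a different reference.
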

\begin{proof}
	We are going to construct an injective resolution of $\Ncal$ in $\IndCoh_X^{C^r, \unit}$, which will also be exact in $\IndCoh_X^{C^r}$ (this would conclude the proof by definition). 
	
	Let $\Ical$ be an injective in $\IndCoh_X^{C^r, \unit}$ containing $\Ncal$. We then have an exact sequence \[ 0 \ra \Ncal \ra \Ical \ra \Ical/\Ncal \ra 0 \] in $\QCoh_X$, so by assumption, the sequence \[ 0 \ra F^{r, \flat}\Ncal \ra F^{r, \flat}\Ical \ra F^{r, \flat}(\Ical/\Ncal) \ra 0 \] is also exact. Applying the snake lemma on \[ \begin{tikzcd}
		0 \arrow[r] & \Ncal \arrow[r] \arrow[d] & \Ical \arrow[r] \arrow[d] & \Ncal \arrow[r] \arrow[d] & 0 \\
		0 \arrow[r] & F^{r, \flat}\Ncal \arrow[r]     & F^{r, \flat}\Ical \arrow[r]     & F^{r, \flat}(\Ical/\Ncal) \arrow[r]   & 0
	\end{tikzcd} \] shows that $F^{r, \flat}(\Ical/\Ncal)$ is a unit Cartier module. Since $\Ical$ is also injective in $\QCoh_X$ by \autoref{injective unit is injective O_X}, we deduce that $\Ical/\Ncal$ is $F^{r, \flat}$-acyclic. We can then continue this process to obtain our sought injective resolution.
\end{proof}

\begin{lem}\label{morphisms of crystals are easy for Cartier modules}
	Let $f \colon (\Mcal, \phi) \dra (\Ncal, \psi)$ be a morphism in $\Crys_X^{C^r}$. Then there exists $n \geq 0$ and a commutative diagram \[ \begin{tikzcd}
		\Mcal \arrow[rr, dashed, "f"]                                       &  & \Ncal \\
		F^{rn}_*\Mcal \arrow[u, "\kappa_{\cM}^n"] \arrow[rru, "f'"'] &  &      
	\end{tikzcd} \] in $\Crys_X^{C^r}$, where $f' \colon F^{nr}_*\Mcal \ra \Ncal$ is a morphism in $\Coh_X^{C^r}$.
\end{lem}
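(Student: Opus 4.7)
The plan is to exploit the Serre-quotient description of $\Crys_X^{C^r} = \Coh_X^{C^r}/\Nil$: any morphism $f \colon \cM \dra \cN$ is represented by a roof $\cM \xleftarrow{s} \cM' \xrightarrow{g} \cN$ in $\Coh_X^{C^r}$, where $s$ is a \emph{nil-isomorphism}, i.e.\ both $\ker s$ and $\coker s$ are nilpotent Cartier modules. (Since $\Coh_X^{C^r}$ on a Noetherian $X$ is Noetherian, local nilpotence of coherent Cartier modules is genuine nilpotence, so there is no ambiguity.) Because $s$ becomes invertible in $\Crys_X^{C^r}$, the lemma reduces to producing, for some $n \geq 0$, a Cartier-module morphism $\gamma \colon F^{nr}_*\cM \to \cM'$ satisfying $s \circ \gamma = \kappa_{\cM}^n$; one then takes $f' \coloneqq g \circ \gamma$, since in $\Crys_X^{C^r}$ one has $f \circ \kappa_{\cM}^n = g \circ s^{-1} \circ s \circ \gamma = g \circ \gamma = f'$.

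To build $\gamma$, I first factor $s$ as $\cM' \xrightarrow{\pi} \cM'' \xrightarrow{\iota} \cM$ with $\cM'' \coloneqq \im(s)$, so $\pi$ is surjective with nilpotent kernel $\ker s$ and $\iota$ is injective with nilpotent cokernel $\coker s$. Choose $N, M \geq 0$ with $\kappa_{\coker s}^N = 0$ and $\kappa_{\ker s}^M = 0$. By functoriality of $\kappa$, the composite $F^{Nr}_*\cM \xrightarrow{\kappa_{\cM}^N} \cM \to \coker s$ equals $\kappa_{\coker s}^N \circ F^{Nr}_*(\text{proj}) = 0$, so $\kappa_{\cM}^N$ descends through $\iota$ to a Cartier morphism $\alpha \colon F^{Nr}_*\cM \to \cM''$ with $\iota \circ \alpha = \kappa_{\cM}^N$. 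Dually, the restriction of $\kappa_{\cM'}^M$ to the Cartier submodule $F^{Mr}_*\ker s \inc F^{Mr}_*\cM'$ coincides with $\kappa_{\ker s}^M = 0$, so by exactness of $F^{Mr}_*$ the morphism $\kappa_{\cM'}^M$ descends along $F^{Mr}_*\pi$ to a Cartier morphism $\beta \colon F^{Mr}_*\cM'' \to \cM'$ with $\beta \circ F^{Mr}_*\pi = \kappa_{\cM'}^M$. Set $n \coloneqq M + N$ and $\gamma \coloneqq \beta \circ F^{Mr}_*\alpha$.

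The identity $s \circ \gamma = \kappa_{\cM}^n$ is then a routine chase: post-composing $\beta \circ F^{Mr}_*\pi = \kappa_{\cM'}^M$ with $\pi$ and invoking surjectivity of $F^{Mr}_*\pi$ together with functoriality $\pi \circ \kappa_{\cM'}^M = \kappa_{\cM''}^M \circ F^{Mr}_*\pi$ yields $\pi \circ \beta = \kappa_{\cM''}^M$, and then one computes
\[ s \circ \gamma = \iota \circ \pi \circ \beta \circ F^{Mr}_*\alpha = \iota \circ \kappa_{\cM''}^M \circ F^{Mr}_*\alpha = \iota \circ \alpha \circ F^{Nr}_*\kappa_{\cM}^M = \kappa_{\cM}^N \circ F^{Nr}_*\kappa_{\cM}^M = \kappa_{\cM}^n. \]
The main obstacle in writing this up is purely bookkeeping: one must keep straight how $F^{Mr}_*$ interacts with kernels, images and cokernels of Cartier morphisms, and chain together several applications of the functoriality identity $\phi \circ \kappa_A^k = \kappa_B^k \circ F^{kr}_*\phi$; no new conceptual input beyond the roof-and-descent construction is required.
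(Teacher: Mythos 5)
Your proof is correct: the roof representation of a morphism in the Serre quotient $\Crys_X^{C^r}=\Coh_X^{C^r}/\Nil$, the factorization of the nil-isomorphism $s$ through its image $\cM''$, the two descent steps along powers of $\kappa$ (killing $\coker s$ to build $\alpha$, and killing $\ker s$ using exactness of $F^{Mr}_*$ to build $\beta$), and the closing chase $s\circ\gamma=\kappa_{\cM}^{n}$ via the functoriality identity $\phi\circ\kappa_A^k=\kappa_B^k\circ F^{kr}_*\phi$ all check out, and the identity $s\circ\gamma=\kappa_\cM^n$ is indeed an equality of genuine Cartier-module morphisms, not just in the quotient. The paper itself simply defers to B\"ockle--Pink, Proposition 3.4.6; your argument is precisely the Cartier-module transcription of that roof-and-untwist technique, so this is the same approach, spelled out in full rather than cited.
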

\begin{proof}
	The proof is identical to that of \cite[Proposition 3.4.6]{Bockle_Pink_Cohomological_Theory_of_crystals_over_function_fields}.
\end{proof}

\begin{cor}\label{morphism of crystal to unit is actual morphism}
	Let $\cM$, $\cN$ be two coherent Cartier modules, with $\cN$ unit, and let $f \colon \cM \dra \cN$ be a morphism of crystals. Then $f$ can be represented by a morphism of Cartier modules.
\end{cor}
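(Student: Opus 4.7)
The plan is to reduce to a genuine Cartier morphism via \autoref{morphisms of crystals are easy for Cartier modules}, then transport through the Grothendieck-duality adjunction of \autoref{affine Grothendieck duality}, exploiting that $\cN$ is unit.

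By \autoref{morphisms of crystals are easy for Cartier modules}, there exist $n \geq 0$ and a genuine Cartier morphism $f' \colon F^{nr}_*\cM \to \cN$ with $f \circ \kappa_\cM^n = f'$ in $\Crys_X^{C^r}$. By the adjunction $F^{nr}_* \dashv F^{nr, \flat}$, extended to Cartier modules via \autoref{eq def of Cartier mod}, $f'$ corresponds to a Cartier morphism $\widetilde{f}' \colon \cM \to F^{nr, \flat}\cN$. Since $\cN$ is unit, the iterated adjoint structural morphism $\kappa_\cN^{\flat, n} \colon \cN \to F^{nr, \flat}\cN$ is an isomorphism of Cartier modules, so we may define
\[
g \coloneqq (\kappa_\cN^{\flat, n})^{-1} \circ \widetilde{f}' \colon \cM \to \cN,
\]
which is a genuine morphism of Cartier modules.

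It remains to show $g$ represents $f$ in $\Crys_X^{C^r}$. The key claim is that $g \circ \kappa_\cM^n = f'$ as \emph{genuine} Cartier morphisms. To see this, note that $g$ being a Cartier morphism gives $g \circ \kappa_\cM^n = \kappa_\cN^n \circ F^{nr}_* g$; expressing $\kappa_\cN^n$ as $\varepsilon \circ F^{nr}_*\kappa_\cN^{\flat, n}$, where $\varepsilon$ is the counit of the adjunction, the right-hand side becomes $\varepsilon \circ F^{nr}_*(\kappa_\cN^{\flat, n} \circ g) = \varepsilon \circ F^{nr}_* \widetilde{f}' = f'$. Combining this with $f \circ \kappa_\cM^n = f'$ in crystals and invertibility of $\kappa_\cM^n$ in $\Crys_X^{C^r}$ forces $g = f$ there, as desired.

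The main (modest) obstacle is precisely this last adjunction chase; the rest is formal. An essentially equivalent alternative is to invoke the equivalence $\IndCrys_X^{C^r} \cong \IndCoh_X^{C^r, \unit}$ from \autoref{unit Cartier modules form a Grothendieck category}: $f$ then lifts to a Cartier morphism $\cM^u \to \cN^u = \cN$, and precomposing with the natural nil-isomorphism $\cM \to \cM^u$ of \autoref{map to unitalization is a nil-iso} produces the required genuine representative of $f$.
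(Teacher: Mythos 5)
Your proof is correct and follows essentially the same route as the paper's: both pass through \autoref{morphisms of crystals are easy for Cartier modules} to get a genuine Cartier morphism $f'\colon F^{nr}_*\cM \to \cN$, transport it across the adjunction $F^{nr}_* \dashv F^{nr,\flat}$ to a Cartier morphism $\cM \to F^{nr,\flat}\cN$, and use unitality of $\cN$ to postcompose with $(\kappa_\cN^{\flat,n})^{-1}$. The paper ends there with ``gives the result''; you additionally carry out the adjunction chase showing $g\circ\kappa_\cM^n = f'$ as genuine morphisms, which together with invertibility of $\kappa_\cM^n$ in $\Crys_X^{C^r}$ justifies that $g$ represents $f$ --- a step the paper treats as immediate.
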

\begin{proof}
	By \autoref{morphisms of crystals are easy for Cartier modules} and the adjunction $F^e_* \dashv F^{e,\flat}$, this morphism can be represented by a morphism $\cM \to F^{rn, \flat}\cN$. Composing with $(\kappa_{\cN}^{n, \flat})^{-1}$ gives the result.
\end{proof}

\section{Duality between Frobenius modules and Cartier modules}\label{section duality F-modules and Cartier modules}
Our main goal is to show that there is a duality between Frobenius modules and Cartier modules. The idea is to pair Frobenius modules and Cartier modules via the $\HHom$ functor of $\cO_X$-modules. Our duality will then be defined by pairing with a dualizing complex admitting the structure of a unit Cartier module.

Afterwards, we will show that our duality passes to crystals, and finally we will use our $\HHom$-pairings to prove a version of the local duality theorem for Frobenius modules and Cartier modules. \\

As in the previous sections, fix an integer $r \geq 1$ and set $q = p^r$.

\subsection{The Hom pairings}\label{subsection Hom pairings}
Fix an $\bF_p$-scheme $X$. \\ \\
\noindent\textit{\textbf{Pairing a Frobenius module and a Cartier module to obtain a Cartier module}}
\begin{constr}\label{construction Hom from F to Cartier}
	Let $\Mcal \in \Mod(\Ocal_X[F^r])$ and $\Ncal \in \Mod(\Ocal_X[C^r])$. Define the following Cartier structure on $\HHom_{\cO_X}(\Mcal, \Ncal)$: \[ \left(f \colon \Mcal|_U \ra \Ncal|_U \right) \mapsto \left(\kappa_{\cN} \circ F^r_*f \circ \tau_{\cM} \colon \Mcal|_U \ra \Ncal|_U \right) \] for all opens $U \inc X$. Due to the presence of $F^r_*f$ in the formula, this morphism is indeed $q^{-1}$-linear. We will often drop the symbol $\cO_X$ in $\HHom_{\cO_X}(-, -)$.
\end{constr}

\begin{prop}\label{Hom from F to Cartier}
	\begin{itemize}
		\item The pairing defined above defines a functor \[ \HHom(-, -) \colon \Mod(\Ocal_X[F^r])^{op} \times \Mod(\Ocal_X[C^r]) \ra \Mod(\Ocal_X[C^r]) \] 
		\item The above functor gives rise to \[ \Rcal\HHom(-, -) \colon D(\Ocal_X[F^r])^{op} \times D^+(\Ocal_X[C^r]) \ra D(\Ocal_X[C^r]). \]
		Furthermore, if we fix either variable, the induced functor is triangulated.
		\item The diagram \[ \begin{tikzcd}
			{D(\cO_X[F^r])^{op} \times D^+(\cO_X[C^r])} \arrow[rr, "\Rcal\HHom"] \arrow[d] &  & {D(\cO_X[C^r])} \arrow[d] \\
			D(\Ocal_X)^{op} \times D^+(\Ocal_X) \arrow[rr, "\Rcal\HHom"']                                &  & D(\Ocal_X)                           
		\end{tikzcd} \] commutes.
	\end{itemize}			
\end{prop}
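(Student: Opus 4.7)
I would address the three claims in order, each reducing to a standard check once the structures are unpacked.

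For the first claim, the construction assigns to $(\cM,\cN)$ the sheaf $\HHom(\cM,\cN)$ equipped with the Cartier structure $f \mapsto \kappa_{\cN} \circ F^r_* f \circ \tau_{\cM}$. To promote this to a bifunctor, I would take morphisms $\alpha \colon \cM' \to \cM$ in $\Mod(\Ocal_X[F^r])$ and $\beta \colon \cN \to \cN'$ in $\Mod(\Ocal_X[C^r])$, and verify by a direct diagram chase that the usual map $f \mapsto \beta \circ f \circ \alpha$ commutes with the Cartier structures defined above. This amounts to pasting the two squares expressing compatibility of $\alpha$ with the $F$-module structures and of $\beta$ with the Cartier structures, together with functoriality of $F^r_*$.

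For the second claim, I would derive in the second variable. By \autoref{Mod(O_X[C]) has enough injectives}, the category $\Mod(\Ocal_X[C^r])$ has enough injectives, and $\HHom(\cM,-)$ is left exact for each fixed $\cM$ (kernels are computed on underlying $\cO_X$-modules in both source and target). Taking a bounded-below injective resolution $\cN^\bullet \to \cI^\bullet$ in $\Mod(\Ocal_X[C^r])$, I would then set
\[ \Rcal\HHom(\cM^\bullet,\cN^\bullet) \coloneqq \Tot \HHom^\bullet(\cM^\bullet,\cI^\bullet), \]
using the product totalization (since $\cM^\bullet$ may be unbounded, this product, taken sheaf-wise, is what is required). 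Standard homotopy-invariance arguments — morphisms into a bounded-below complex of injectives are determined up to homotopy — then give both independence of the chosen resolution and triangulated functoriality in each variable.

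For the commuting diagram in the third claim, the crucial input is \autoref{inj qcoh Cartier is inj O_X}: any injective object of $\Mod(\Ocal_X[C^r])$ is also injective in $\Mod(\Ocal_X)$. Therefore an injective resolution $\cN^\bullet \to \cI^\bullet$ in $\Mod(\Ocal_X[C^r])$ is simultaneously an injective resolution in $\Mod(\Ocal_X)$, and the same complex $\Tot \HHom^\bullet(\cM^\bullet,\cI^\bullet)$ computes both derived functors. Since by construction the underlying $\cO_X$-module of $\HHom(\cM,\cI)$ is precisely the usual sheaf $\HHom(\cM,\cI)$, the forgetful square commutes on the nose at the level of representatives. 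The only mildly delicate point in the whole argument is correctly setting up the bifunctor when the first variable is unbounded, so that the product totalization is well-defined; once this is in place, every other step is formal.
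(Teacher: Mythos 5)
Your proof is correct and follows essentially the same route as the paper's (which simply remarks that the first two claims are immediate and the third follows from \autoref{inj qcoh Cartier is inj O_X}); you have just unpacked the standard checks — the diagram chase for bifunctoriality, derivation via a bounded-below injective resolution in the second variable with product totalization, and the observation that such a resolution is simultaneously an injective resolution in $\Mod(\Ocal_X)$ by \autoref{inj qcoh Cartier is inj O_X}, making the forgetful square commute at the level of representatives.
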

\begin{proof} 
	The first two statements are immediate, and the third one follows from \autoref{inj qcoh Cartier is inj O_X}.
\end{proof}

We can restrict this pairing to coherent Frobenius modules and quasi--coherent Cartier modules. Note that although the image of the functor below lies in $\IndCoh_X^{C^r}$, we will not need it (and in fact, this statement will not hold for the other pairing).

\begin{prop}\label{Hom from F to Cartier for qalg}
	Assume that $X$ is Noetherian and $F$-finite. Then the $\HHom$ pairing defined above induces a functor \[ \HHom(-, -) \colon (\Coh_X^{F^r})^{op} \times \IndCoh_X^{C^r} \ra \QCoh_X^{C^r}. \] The associated diagram \[ \begin{tikzcd}
		{D^-(\Coh_X^{F^r})^{op} \times D^+(\IndCoh_X^{C^r})} \arrow[rr, "\Rcal\HHom"] \arrow[d] &  & {D^+(\QCoh_X^{C^r})} \arrow[d] \\
		D(\Ocal_X)^{op} \times D^+(\Ocal_X) \arrow[rr, "\Rcal\HHom"']                                &  & D(\Ocal_X)                           
	\end{tikzcd} \] commutes, and the top arrow is triangulated if we fix either variable.
\end{prop}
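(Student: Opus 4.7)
The plan is to verify the statement first at the module level, and then bootstrap to the derived level using injective resolutions in $\IndCoh_X^{C^r}$ together with \autoref{inj qalg is inj O_X}.

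For the module-level claim, I note first that if $\cM$ is a coherent $F$-module and $\cP$ a coherent Cartier module on the Noetherian scheme $X$, then the $\cO_X$-module $\HHom(\cM, \cP)$ is coherent, so equipped with the Cartier structure of \autoref{construction Hom from F to Cartier} it is automatically a coherent (in particular ind-coherent) Cartier module. In general, write $\cN = \colim_i \cN_i$ as a filtered colimit (equivalently, union) of coherent Cartier submodules. Since $\cM$ is coherent and $X$ is Noetherian, the natural map $\colim_i \HHom(\cM, \cN_i) \to \HHom(\cM, \cN)$ is an isomorphism of $\cO_X$-modules, and a direct check shows that it intertwines the Cartier structures: the construction in \autoref{construction Hom from F to Cartier} is natural in the second argument, and the Cartier structure on $\cN$ is the colimit of those on the $\cN_i$ (using that $F^r_*$ commutes with filtered colimits for the absolute Frobenius on a Noetherian scheme). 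Hence $\HHom(\cM, \cN)$ is a filtered colimit of coherent Cartier modules, and so is ind-coherent.

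For the derived statement, I fix $\cN^{\bullet} \in D^+(\IndCoh_X^{C^r})$ and choose an injective resolution $\cN^{\bullet} \to \cI^{\bullet}$ in $\IndCoh_X^{C^r}$, available by \autoref{indcoh Cartier is a Groth cat}. By \autoref{inj qalg is inj O_X}, each $\cI^j$ is also injective as an $\cO_X$-module. Therefore, for any $\cM^{\bullet} \in D^-(\Coh_X^{F^r})$, the total Hom complex $\HHom^{\bullet}(\cM^{\bullet}, \cI^{\bullet})$ simultaneously computes $\Rcal\HHom$ in $\Mod(\cO_X[C^r])$ and, after forgetting, in $\Mod(\cO_X)$, yielding the commutative square. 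Boundedness follows from a standard degree count: if $\cM^i = 0$ for $i > a$ and $\cI^j = 0$ for $j < b$, then the degree-$n$ piece $\prod_{j - i = n} \HHom(\cM^i, \cI^j)$ vanishes for $n < b - a$ and is a finite direct sum of ind-coherent terms otherwise, hence ind-coherent by the module-level step. Triangulatedness in each variable is formal: in the first, each $\HHom(-, \cI^j)$ is exact since $\cI^j$ is $\cO_X$-injective; in the second, $\Rcal\HHom(\cM^{\bullet}, -)$ is a right-derived functor.

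The main delicacy in the argument is the verification that the Cartier structure of \autoref{construction Hom from F to Cartier} is compatible with filtered colimits in the second variable; once this is established, the derived statement reduces, via \autoref{inj qalg is inj O_X} and the boundedness count, to the unrestricted $\cO_X$-module version already contained in \autoref{Hom from F to Cartier}.
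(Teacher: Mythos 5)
Your proof is correct and follows essentially the same route as the paper's: the module-level claim is the compactness of coherent $F$-modules (which the paper states in one line and you unpack as commutation of $\HHom(\cM,-)$ with filtered colimits), and the derived-level claim rests on \autoref{inj qalg is inj O_X}, with the finiteness of the degree-$n$ pieces explaining the restriction to $D^-$ in the first variable exactly as the paper's parenthetical remark indicates.
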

\begin{proof}
	This is immediate from the definitions and \autoref{inj qalg is inj O_X} (note that we restrict to bounded above complexes of coherent Frobenius modules because infinite products of quasi-coherent modules are not quasi-coherent in general).
\end{proof}

\vspace{0.5 cm}
\noindent\textit{\textbf{Pairing a Cartier module and a unit Cartier module to obtain a Frobenius module}}

\vspace{0.1 cm}
\noindent Throughout, we assume that $X$ is Noetherian and $F$-finite.
\begin{constr}\label{construction Hom from Cartier to F}
	Let $\Mcal \in \QCoh_X^{C^r}$ and $\Ncal \in \QCoh_X^{C^r, \unit}$. Define the following Frobenius module structure on $\HHom(\Mcal, \Ncal)$: 
	\[ \left(f \colon \Mcal|_U \ra \Ncal|_U \right) \mapsto \left((\kappa_{\cN}^{\flat})^{-1} \circ F^{r, \flat}f \circ \kappa_{\cM}^{\flat} \colon \Mcal|_U \ra \Ncal|_U \right) \] for all opens $U \inc X$. Due to the presence of $F^{r, \flat}f$ in the formula, this morphism is indeed $q$-linear.
\end{constr}

\begin{rem}\label{explicit_pairing_Cartier_to_F}
	Let $R$ be a Noetherian $F$-finite $\bF_p$-algebra, $M \in \QCoh_R^{C^r}$, $N \in \QCoh_R^{C^r, \unit}$, $f \in H \coloneqq \Hom_R(M, N)$ and $m \in M$. Then $\tau_H(f)(m)$ is the unique element in $N$ such that for all $\lambda \in R$, \[ \kappa_N\left(F^r_*(\lambda\tau_H(f)(m))\right) = f(\kappa_M(F^r_*(\lambda m))). \]
\end{rem}

\begin{prop}\label{Hom from Cartier to F} 
	\begin{itemize}
		\item The pairing defined above gives rise to a functor \[ \HHom(-, -) \colon (\QCoh_X^{C^r})^{op} \times \QCoh_X^{C^r, \unit} \ra \Mod(\Ocal_X[F^r]), \] which in particular induces \[ \HHom(-, -) \colon (\Coh_X^{C^r})^{op} \times \IndCoh_X^{C^r, \unit} \ra \QCoh_X^{F^r}.
		\]
		\item The latter pairing induces
		\[ \Rcal\HHom(-, -) \colon D^-(\Coh_X^{C^r})^{op} \times D^+(\IndCoh_X^{C^r, \unit}) \ra D^+(\QCoh_X^{F^r}). \] This functor is triangulated when we fix the second variable.
		\item The diagram \[ \begin{tikzcd}
			{D^-(\Coh_X^{C^r})^{op} \times D^+(\IndCoh_X^{C^r, \unit})} \arrow[rr, "\Rcal\HHom"] \arrow[d] &  & D^+(\QCoh_X^{F^r}) \arrow[d] \\
			D(\Ocal_X)^{op} \times D^+(\Ocal_X) \arrow[rr, "\Rcal\HHom"']                                       &  & D(\Ocal_X)                  
		\end{tikzcd} \] commutes, where the arrow $D^+(\IndCoh_X^{C^r, \unit}) \ra D(\Ocal_X)$ is the right derived functor of the inclusion $\IndCoh_X^{C^r, \unit} \ra \Mod(\Ocal_X)$. 
	\end{itemize}			
\end{prop}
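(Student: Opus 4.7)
The plan parallels the proof of \autoref{Hom from F to Cartier}, the only additional subtlety being that the unit hypothesis on $\cN$ is precisely what makes $\kappa_{\cN}^{\flat}$ invertible so that the local formula of \autoref{construction Hom from Cartier to F} makes sense. For the first bullet, I would verify directly that the pairing is bifunctorial and that the indicated endomorphism of $\HHom(\cM, \cN)$ is $q$-linear (using that $F^{r, \flat}f$ is). For the ``in particular'' clause, since $X$ is locally Noetherian and $\cM$ is coherent as an $\cO_X$-module, $\HHom_{\cO_X}(\cM, \cN)$ is quasi-coherent whenever $\cN$ is (see \cite[{\href{https://stacks.math.columbia.edu/tag/01LC}{Tag 01LC}}]{Stacks_Project}), and the $F^r$-module structure supplied by \autoref{construction Hom from Cartier to F} preserves this quasi-coherence.

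For the second bullet, $\IndCoh_X^{C^r, \unit}$ is a Grothendieck category by \autoref{unit Cartier modules form a Grothendieck category}, and hence has enough injectives. Given $\cM^{\bullet} \in D^-(\Coh_X^{C^r})$ and $\cN \in D^+(\IndCoh_X^{C^r, \unit})$, one defines $\Rcal\HHom(\cM^{\bullet}, \cN)$ by choosing an injective resolution $\cI^{\bullet}$ of $\cN$ in $\IndCoh_X^{C^r, \unit}$ and forming the total complex $\HHom^{\bullet}(\cM^{\bullet}, \cI^{\bullet})$; the boundedness of $\cM^{\bullet}$ from above and of $\cI^{\bullet}$ from below guarantees that the total complex is well-defined and bounded below. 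Functoriality in the first variable, independence of the chosen resolution, and triangulatedness in the second variable are formal consequences of working with injective resolutions.

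The main potential obstacle is in the third bullet, because the inclusion $\IndCoh_X^{C^r, \unit} \inj \Mod(\cO_X)$ is not exact in general (see the remark following \autoref{inclusion from unit to O_X is exact is X regular}), which is why the statement uses the right-derived inclusion rather than a plain forgetful functor. This is circumvented cleanly by the key input \autoref{injective unit is injective O_X}: each term $\cI^n$ of the chosen resolution is injective as an $\cO_X$-module, so $\cI^{\bullet}$ remains a bounded-below complex of $\cO_X$-injectives, hence is K-injective and represents the right-derived inclusion of $\cN$ in $D^+(\cO_X)$. Consequently $\HHom_{\cO_X}^{\bullet}(\cM^{\bullet}, \cI^{\bullet})$ computes the lower $\Rcal\HHom$ in the diagram, while it is immediate from \autoref{construction Hom from Cartier to F} that the underlying $\cO_X$-module of the enriched $\HHom(\cM^n, \cI^m)$ coincides term-by-term with the bare $\HHom_{\cO_X}(\cM^n, \cI^m)$. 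Thus forgetting the $F^r$-module structure on the upper computation produces exactly the same complex, proving commutativity.
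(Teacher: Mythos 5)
Your argument is correct and matches the paper's own (very terse) proof, which declares the first two bullets immediate and identifies \autoref{injective unit is injective O_X} as the sole nontrivial input for the third; you have simply fleshed out why an injective resolution in $\IndCoh_X^{C^r, \unit}$ simultaneously computes $R\iota$ and serves as an $\cO_X$-injective resolution, which is precisely the content of that lemma.
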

\begin{proof}
	The two first points are immediate, and the third point follows from \autoref{injective unit is injective O_X}.
\end{proof}

Recall that we observed in \autoref{rem def Cartier module}.\autoref{tensor product of Cartier mod and unit F-mod} that given $\cM \in \QCoh_X^{C^r}$ and $\cL \in \QCoh_X^{F^r}$ unit, then we can give a natural Cartier module structure to the tensor product $\cM \otimes \cL$. We show that our pairing behaves the expected way with respect to this operation.

\begin{lemma}\label{behaviour Hom and tensor product}
	Let $\cM \in D(\QCoh_X^{C^r})$, $\cN \in D^+(\IndCoh_X^{C^r, \unit})$ and let $\cV$ be a unit Frobenius module, which is a locally free of finite rank as an $\cO_X$-module. Then we have an isomorphism of Frobenius modules \[ \Rcal\HHom(\cM \otimes \cV, \cN) \cong \Rcal\HHom(\cM, \cN) \otimes \cV^{\vee}, \] where the adjoint Frobenius module structure on $\cV^{\vee}$ is given by $\left((\tau_{\cV}^*)^\vee)\right)^{-1}$. 
\end{lemma}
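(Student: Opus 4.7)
The plan is to first prove the underived version: for a single quasi-coherent Cartier module $\cM$, a single unit Cartier module $\cN$ in $\IndCoh_X^{C^r, \unit}$, and $\cV$ as in the statement, there is a natural isomorphism of $F$-modules
\[ \HHom(\cM \otimes \cV, \cN) \cong \HHom(\cM, \cN) \otimes \cV^{\vee}. \]
The underlying $\cO_X$-module isomorphism is the standard tensor–hom adjunction, available because $\cV$ is locally free of finite rank. The content is checking that the two $F$-module structures match. I would argue locally on an affine open $U = \Spec R$ over which $\cV$ is free, with basis $v_1, \dots, v_n$ and dual basis $v_1^{*}, \dots, v_n^{*}$.

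Next, I would unwind the three structural morphisms that appear. On $\cM \otimes \cV$, the Cartier structure is the composition defined in \autoref{rem def Cartier module}.\autoref{tensor product of Cartier mod and unit F-mod}, which uses $\kappa_{\cM}$ on the first factor and $(\tau_\cV^{*})^{-1}$ on the second. The $F$-structure on $\HHom(\cM \otimes \cV, \cN)$ is then Construction \ref{construction Hom from Cartier to F} applied to this Cartier module and to the unit Cartier module $\cN$. On the other side, the $F$-structure on $\HHom(\cM, \cN) \otimes \cV^{\vee}$ combines the $F$-structure of \autoref{construction Hom from Cartier to F} with the prescribed $F$-structure $\left((\tau_\cV^{*})^\vee\right)^{-1}$ on $\cV^{\vee}$, glued via the tensor product rule from \autoref{operations_on_F_modules}. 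Expanding both under the basis $\{v_i\}$, the claimed equality reduces to the fact that the inverses of $\tau_{\cV}^{*}$ and of its transpose cancel against each other, and this is a direct matrix computation with the explicit formula from \autoref{explicit_pairing_Cartier_to_F}. This bookkeeping is the only genuine content of the lemma.

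For the derived statement, I would choose an injective resolution $\cN \to \cI^\bullet$ in $\IndCoh_X^{C^r, \unit}$. By \autoref{injective unit is injective O_X}, each $\cI^k$ is injective as an $\cO_X$-module, so by \autoref{Hom from Cartier to F} it computes both $\Rcal\HHom$ functors on the two sides (viewed in $D(\QCoh_X^{F^r})$ and in $D(\cO_X)$). Since $\cV$ is locally free of finite rank, the functor $- \otimes \cV$ on $\cO_X$-modules is exact and preserves injectives and flasqueness, so no additional resolution is needed to compute the left-hand side. Applying the underived isomorphism termwise to the complex $\cI^\bullet$ yields an isomorphism of complexes of $F$-modules
\[ \HHom(\cM \otimes \cV, \cI^\bullet) \cong \HHom(\cM, \cI^\bullet) \otimes \cV^{\vee}, \]
which, by the above, represents the derived isomorphism in the statement. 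The naturality in $\cM$ allows one to pass from a single object to an arbitrary complex in $D(\QCoh_X^{C^r})$ by standard resolution arguments.

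The main obstacle is the careful tracking of inverses in the $F$-module structures on $\cV$ and $\cV^{\vee}$ when composed with $F^{r, \flat}$ of a morphism $f$; everything else is either formal or follows from the previously established compatibility results.
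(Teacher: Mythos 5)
Your proposal is correct and follows essentially the same route as the paper: reduce the derived claim to showing that the natural tensor--hom morphism $\HHom(\cM\otimes\cV,\cN)\to\HHom(\cM,\cN)\otimes\cV^\vee$ is a map of $F$-modules at the level of sheaves, then localize to an affine over which $\cV$ is free and verify that the Cartier structure on $\cM\otimes\cV$ from \autoref{rem def Cartier module}.\autoref{tensor product of Cartier mod and unit F-mod} and the prescribed $F$-structure on $\cV^\vee$ match up through the matrix $(\tau_\cV^*)^{-1}$ and its transpose inverse. The paper leaves the derived passage implicit in ``it is enough to show''; your explicit remark about choosing an injective resolution $\cI^\bullet$ of $\cN$ in $\IndCoh_X^{C^r,\unit}$ and using \autoref{injective unit is injective O_X} and \autoref{Hom from Cartier to F} to compute both $\cR\HHom$'s via $\HHom^\bullet(-,\cI^\bullet)$ is the intended justification.
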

\begin{proof}
	It is enough to show that the natural morphism \[ \HHom(\cM \otimes \cV, \cN) \to \HHom(\cM, \cN) \otimes \cV^{\vee} \] is a morphism of Frobenius modules for any $\cM \in \QCoh_X^{C^r}$ and $\cN \in \IndCoh_X^{C^r}$. This can be done locally, so we may assume that $X = \Spec R$ and that $\cV \cong \cO_R^{\oplus n}$ (as $R$-modules). Let $A \in M_{n \times n}(R)$ be such that the induced Frobenius module structure on $R^{\oplus n}$ is given by $s \mapsto F^r_*(As^q)$, and let $\psi \colon R \to F^{r, *}R$ denote the natural isomorphism. Then by construction, the morphism 
	\begin{equation}\label{eq:hom_and_tensor_product} \begin{tikzcd}
		R^{\oplus n} \arrow[r, "\psi^{\oplus n}"] & {F^{r, *}R^{\oplus n}} \arrow[r, "\tau_{R^{\oplus n}}^*"] & R^{\oplus n}
	\end{tikzcd} \end{equation} 
	agrees with multiplication by $A$. In particular, $A$ is invertible. Furthermore, the induced Cartier structure on $M \otimes_R R^{\oplus n}$ (which we identify with $M^{\oplus n}$) is given by \[ \begin{tikzcd}[row sep = small]
	F^r_*M^{\oplus n} \arrow[rr] &  & M^{\oplus n}                                     \\
	F^r_*m \arrow[rr, maps to]   &  & \kappa_M^{\oplus n}\left(F^r_*(A^{-1})^Tm\right).
\end{tikzcd}\] Note that the Frobenius module structure on $\cV^{\vee}$ is identified (as in \autoref{eq:hom_and_tensor_product}) with the matrix $(A^T)^{-1} = (A^{-1})^T$, so the result follows from the explicit definition of the pairings.
\end{proof}
\begin{rem}
	The same result holds with the pairing between Frobenius modules and Cartier modules, but we will not need it.
\end{rem}

\subsection{The duality}\label{subsection duality}

Fix a Noetherian $F$-finite $\bF_p$-scheme $X$.

\begin{defn}\label{def property has a unit dualizing complex}
	A complex $\omega_X^\bullet \in D^+(\IndCoh_X^{C^r, \unit})$ is said to be a \emph{unit dualizing complex} if $R\iota(\omega_X^\bullet)$ is a dualizing complex (see \cite[{\href{https://stacks.math.columbia.edu/tag/0A87}{Tag 0A87}}]{Stacks_Project}), where $\iota$ denotes the inclusion $\IndCoh_X^{C^r, \unit} \ra \Mod(\Ocal_X)$.
\end{defn}
\begin{rem}
	\begin{itemize}
		\item We will see in \autoref{upper shriek functor} that the datum of a unit dualizing complex is (somewhat surprisingly) equivalent to that of a dualizing complex $\cN \in D_{\coh}(\QCoh_X)$, together with an isomorphism $\cN \to F^{r, !}\cN$ in the derived category. 
		\item Although we could have defined a unit dualizing complex to live in $D^+(\QCoh_X^{C^r, \unit})$ (and the definitions would be equivalent), we explicitly use ind-coherence in the proof of \autoref{having a unit dc is equivalent to the naive notion}. That is why we decided to force it to begin with, and hence why we defined the derived pairing functors as they are. 
		\item We will denote by $\iota$ any functor that forgets (at least) the ``unit'' part. For example $\iota$ could mean any of the functors $\IndCoh_X^{C^r} \to \Mod(\cO_X)$, $\IndCoh_X^{C^r, \unit} \to \IndCoh_X^{C^r}$ or $\IndCoh_X^{C^r, \unit} \to \QCoh_X^{C^r}$.
	\end{itemize}
\end{rem}

From now on, fix a unit dualizing complex $\omega_X^\bullet$ on $X$ (we assume there exists at least one). Given any $\iota$ (see the remark above), we make the abuse of notation $\omega_X^\bullet = R\iota(\omega_X^\bullet)$ (this does not cause any trouble, and simplifies notations).

\begin{lem}\label{first properties of RHom(- , omega)}
	For any $* \in \{\emptyset, +, -, b\}$, pairing with $\omega_X^\bullet$ restrict and corestrict to triangulated functors \[ \Rcal\HHom(-, \omega_X^\bullet) \colon D^*_{\coh}(\Ocal_X[F^r])^{op} \ra D^{*'}_{\coh}(\Ocal_X[C^r]) \] and \[ \Rcal\HHom(-, \omega_X^\bullet) \colon D^*_{\coh}(\QCoh_X^{C^r})^{op} \ra D^{*'}_{\coh}(\Ocal_X[F^r]), \] where we set $\emptyset' \coloneqq \emptyset$, $+' \coloneqq -$, $-' \coloneqq +$ and $b' \coloneqq b$. 
	
	Similarly, for $* \in \{-, b\}$, they restrict to triangulated functors \[ \Rcal\HHom(-, \omega_X^\bullet) \colon D^*(\Coh_X^{F^r})^{op} \ra D^{*'}_{\coh}(\QCoh_X^{C^r}), \] and \[ \Rcal\HHom(-, \omega_X^\bullet) \colon D^*(\Coh_X^{C^r})^{op} \ra D^{*'}_{\coh}(\QCoh_X^{F^r}). \]
\end{lem}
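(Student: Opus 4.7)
The plan is to reduce each assertion to the classical theory of dualizing complexes on $\cO_X$-modules, using the compatibility of our structured $\Rcal\HHom$ with the ordinary one. By the third item of \autoref{Hom from F to Cartier} (respectively \autoref{Hom from Cartier to F}), if $\iota$ denotes the appropriate forgetful functor to $D(\cO_X)$, then for any complex $\cM$ in the relevant source there is a natural isomorphism
\[
\iota\, \Rcal\HHom(\cM, \omega_X^\bullet) \;\cong\; \Rcal\HHom(\iota\cM,\, \iota\omega_X^\bullet)
\]
in $D(\cO_X)$. Since $\omega_X^\bullet$ is a unit dualizing complex, $\iota\omega_X^\bullet$ is by definition a dualizing complex on $X$ in the classical sense of \cite[{\href{https://stacks.math.columbia.edu/tag/0A87}{Tag 0A87}}]{Stacks_Project}.

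Classical dualizing complex theory (e.g.\ \cite[{\href{https://stacks.math.columbia.edu/tag/0A7U}{Tag 0A7U}}]{Stacks_Project}, essentially Hartshorne's swap of bounds for a complex of finite injective dimension) then tells us that $\Rcal\HHom(-, \iota\omega_X^\bullet)$ restricts and corestricts to triangulated functors $D^*_{\coh}(\cO_X)^{op} \to D^{*'}_{\coh}(\cO_X)$ for each $* \in \{\emptyset, +, -, b\}$, with $*'$ as in the statement. Since coherence is a condition on the underlying $\cO_X$-module and the forgetful functors $\Mod(\cO_X[F^r]) \to \Mod(\cO_X)$ and $\Mod(\cO_X[C^r]) \to \Mod(\cO_X)$ are exact and faithful, the cohomology sheaves of $\Rcal\HHom(\cM, \omega_X^\bullet)$ in our structured categories agree (after forgetting) with those computed in $D(\cO_X)$; in particular they are coherent. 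This yields the first two functors, while triangulatedness in the first variable is inherited from the pairings of \autoref{Hom from F to Cartier} and \autoref{Hom from Cartier to F}.

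For the last two functors, the reasoning is identical using the refined pairings \autoref{Hom from F to Cartier for qalg} and (the coherent version of) \autoref{Hom from Cartier to F}; the only extra observation is that a coherent Cartier module is automatically ind-coherent (its identity exhibits it as a trivial filtered colimit), and a coherent $F$-module is quasi-coherent, so the target naturally refines to $D^{*'}_{\coh}(\IndCoh_X^{C^r})$ or $D^{*'}_{\coh}(\QCoh_X^{F^r})$. The restriction to $* \in \{-, b\}$ here only reflects that those input pairings are only defined on $D^-$ of $\Coh_X^{F^r}$ or $\Coh_X^{C^r}$, where compactness is needed to land in $\IndCoh$ or $\QCoh$.

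There is no real obstacle: the content is entirely the forgetful compatibility of $\Rcal\HHom$, already established in \autoref{subsection Hom pairings}, together with the classical bound-swapping property of dualizing complexes. The $F$- or Cartier-module structures play no role in the coherence or boundedness arguments themselves; they simply ride along via the structured pairing.
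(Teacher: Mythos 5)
Your proposal is correct and follows essentially the same route as the paper: reduce to classical dualizing-complex theory for $\cO_X$-modules via the third items of \autoref{Hom from F to Cartier} and \autoref{Hom from Cartier to F}, note that coherence is detected on the underlying $\cO_X$-module by an exact faithful forgetful functor, and inherit triangulatedness from the weak Serre subcategory structure. One small point: the precise Stacks reference for the bound-exchanging anti-equivalence you invoke is Tag 0A89 (the one the paper cites), not Tag 0A7U, though the content you describe is the correct one.
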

\begin{rem}
	In the first and third $\Rcal\HHom$'s, we used the abuse of notations $\omega_X^\bullet = R\iota(\omega_X^\bullet)$.
\end{rem}
\begin{proof}
	Let us only show the statement before ``Similarly'' (the proof of the other statement is identical). 
	
	The only new statement is that the functors $\Rcal\HHom(-, \omega_X^\bullet)$ map to $D^*_{\coh}$ and remain triangulated. The fact that they are triangulated follow from the canonical triangulated structure we can put on $D^*_{\coh}$, since coherent modules from a weak Serre subcategory of $\Mod(\Ocal_X)$ (hence this is also true for Frobenius modules and Cartier modules). 
	
	These restrictions map to $D^*_{\coh}$ because of the last points of \autoref{Hom from F to Cartier} and \autoref{Hom from Cartier to F}, together with the analogous statement for $\Ocal_X$-modules (\cite[{\href{https://stacks.math.columbia.edu/tag/0A89}{Tag 0A89}}]{Stacks_Project}).
\end{proof}
\begin{defn}\label{definition duality}
	Define the duality functors $\Dd$ as the following compositions:
	\[ \begin{tikzcd}
		D^b(\Coh_X^{F^r})^{op} \arrow[rrr, "{\Rcal\HHom(-, \omega_X^\bullet)}"] & & & {D^b_{\coh}(\QCoh_X^{C^r})} \arrow[rrr, "\cong"] & & & D^b(\Coh_X^{C^r})
	\end{tikzcd} \] and \[ \begin{tikzcd}
	D^b(\Coh_X^{C^r})^{op} \arrow[rrr, "{\Rcal\HHom(-, \omega_X^\bullet)}"] & & & {D^b_{\coh}(\QCoh_X^{F^r})} \arrow[rrr, "\cong"] & & & {D^b(\Coh_X^{F^r})},
	\end{tikzcd} \] where the equivalence $D^b_{\coh}(\QCoh_X^{C^r}) \ra D^b(\Coh_X^{C^r})$ comes from \autoref{D^b(Coh) = D^b_{coh}(QCoh) for Cartier modules}, and the equivalence $D^b_{\coh}(\QCoh_X^{F^r}) \ra D^b(\Coh_X^{F^r})$ comes from \autoref{D^b(Coh) = D^b_{coh}(QCoh) for F-modules}.
\end{defn}

We want to show that $\Dd \circ \Dd = id$.

\begin{lem}\label{evaluation map is okay sheaf case}
	Let $\Mcal \in \Coh_X^{F^r}$ (resp. $\Coh_X^{C^r}$) and $\Ncal \in \QCoh_X^{C^r, \unit}$. Then the evaluation morphism \[ ev \colon \Mcal \ra \HHom(\HHom(\Mcal, \Ncal), \Ncal) \] is a morphism of Frobenius modules (resp. Cartier modules).
\end{lem}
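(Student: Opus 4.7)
The plan is to check both assertions by a direct local computation using the explicit formulas in \autoref{construction Hom from F to Cartier}, \autoref{construction Hom from Cartier to F} and \autoref{explicit_pairing_Cartier_to_F}. Since being a morphism of $F$-modules (resp.\ Cartier modules) is a local condition, we may assume that $X = \Spec R$ with $R$ Noetherian and $F$-finite, and write $\cM = M$, $\cN = N$.

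First the case where $\cM$ is an $F$-module. Set $H \coloneqq \Hom_R(M, N)$, a Cartier module, and $H' \coloneqq \Hom_R(H, N)$, an $F$-module. The claim is $\tau_{H'} \circ ev = F^r_* ev \circ \tau_M$. Writing $\tau_M(m) = F^r_* \tilde m$ for the underlying element $\tilde m \in M$, a direct unwinding of the definition of $\kappa_H$ gives $\kappa_H(F^r_*(\lambda f))(m) = \kappa_N\bigl(F^r_*(\lambda f(\tilde m))\bigr)$ for $f \in H$ and $\lambda \in R$. Feeding this into the defining property of $\tau_{H'}$ from \autoref{explicit_pairing_Cartier_to_F} applied to $g = ev(m)$ yields
\[ \kappa_N\bigl(F^r_*(\lambda \, \tau_{H'}(ev(m))(f))\bigr) = ev(m)\bigl(\kappa_H(F^r_*(\lambda f))\bigr) = \kappa_N\bigl(F^r_*(\lambda f(\tilde m))\bigr) \]
for every $\lambda \in R$. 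The uniqueness clause in \autoref{explicit_pairing_Cartier_to_F} then forces $\tau_{H'}(ev(m))(f) = f(\tilde m)$, which is precisely $F^r_*ev(\tau_M(m))$ evaluated at $f$.

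Now the case where $\cM$ is a Cartier module. We need $ev \circ \kappa_M = \kappa_{H'} \circ F^r_* ev$, where this time $H = \Hom_R(M, N)$ is an $F$-module and $H' = \Hom_R(H, N)$ is a Cartier module. Unfolding $\kappa_{H'}$ from \autoref{construction Hom from F to Cartier} at $g = ev(m)$ gives
\[ \kappa_{H'}(F^r_* ev(m))(f) = \kappa_N\bigl(F^r_* ev(m)(\tau_H(f))\bigr) = \kappa_N\bigl(F^r_*(\tau_H(f)(m))\bigr), \]
while \autoref{explicit_pairing_Cartier_to_F} applied with $\lambda = 1$ reads $\kappa_N\bigl(F^r_*(\tau_H(f)(m))\bigr) = f(\kappa_M(F^r_* m)) = ev(\kappa_M(F^r_* m))(f)$, which finishes the computation.

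The only real obstacle is notational bookkeeping around the identification of underlying sets of $F^r_*(-)$ and $(-)$ and the adjunction $F^r_* \dashv F^{r,\flat}$. Once \autoref{explicit_adjoint_structural_morphism} and \autoref{explicit_pairing_Cartier_to_F} are unwound, both assertions collapse onto the same defining identity for $\tau_H$, so no further input is needed.
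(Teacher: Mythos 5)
Your proposal is correct and mirrors the paper's proof: both reduce to an affine computation, set up $H = \Hom(M,N)$ and $H' = \Hom(H,N)$, and unwind the defining formulas from \autoref{construction Hom from F to Cartier} and \autoref{explicit_pairing_Cartier_to_F} to show the evaluation map commutes with the structural maps, using the uniqueness coming from $N$ being unit in the $F$-module case. The only cosmetic difference is that you keep the $F^r_*$ decorations explicit while the paper suppresses them.
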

\begin{proof}
	During this proof, we will skip the notations $F^r_*$ to make things clearer. This is a local computation, so we may assume that $X = \Spec R$. Let $M$ and $N$ correspond respectively to $\cM$ and $\cN$,  let $H \coloneqq \Hom(M, N)$ and $L \coloneqq \Hom(\Hom(M, N), N)$. Let $m \in M$ and $f \in H$.
	\begin{itemize}
		\item We start with the case $M \in \Coh_R^{F^r}$. We have to show that $\tau_L(ev(m))(f) = ev(\tau_M(m))(f)$. By \autoref{explicit_pairing_Cartier_to_F}, we have that for all $\lambda \in R$, 
		\begin{align*}
			 \kappa_N\left(\lambda\tau_L(ev(m))(f)\right) &= ev(m)\left(\kappa_H(\lambda f)\right) \\
			 &= \kappa_H(\lambda f)(m) \\
			 &= \kappa_N(\lambda f(\tau_M(m))). 
		 \end{align*}  
	 	Since $N$ is unit, we obtain by \autoref{explicit_adjoint_structural_morphism} that $\tau_L(ev(m))(f) = f(\tau_M(m))$,
		which is exactly what we wanted.
		\item Now, let us show the case $M \in \Coh_R^{C^r}$. We have to show that $\kappa_L(ev(m))(f) = ev(\kappa_M(m))(f)$. We have 
		\begin{align*}
			 \kappa_L(ev(m))(f) &= (\kappa_N \circ ev(m) \circ \tau_H)(f) \\
			 &= \kappa_N(\tau_H(f)(m)) \\
			 &= f(\kappa_M(m)) \\
			 &= ev(\kappa_M(m))(f), 
		\end{align*} where the third equality comes from \autoref{explicit_pairing_Cartier_to_F}. Hence, we are also done with this case.
	\end{itemize}
\end{proof}

\begin{theorem}\label{main thm duality}
	Let $X$ be a Noetherian, $F$-finite and semi-separated $\bF_p$-scheme with has a unit dualizing complex $\omega_X^\bullet$. Then the two functors $\Dd$ defined in \autoref{definition duality} are essential inverses to each other, and hence define equivalences of triangulated categories \[ D^b(\Coh_X^{C^r})^{op} \cong D^b(\Coh_X^{F^r}) \] and \[ D^b_{\coh}(\QCoh_X^{C^r})^{op} \cong D^b_{\coh}(\QCoh_X^{F^r}). \]
\end{theorem}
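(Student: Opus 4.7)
The plan is to show that for any $\cM \in D^b(\Coh_X^{F^r})$ (respectively $\cN \in D^b(\Coh_X^{C^r})$), the natural biduality map $\cM \to \Dd\Dd\cM$ is an isomorphism, with the heart of the argument being a reduction to classical Grothendieck biduality on the underlying complexes of $\cO_X$-modules.

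First, I would fix a single injective resolution $\cI^\bullet$ of $\omega_X^\bullet$ in $\IndCoh_X^{C^r, \unit}$ and observe that the same $\cI^\bullet$ simultaneously computes both $\Rcal\HHom(-, \omega_X^\bullet)$ pairings: it is injective in $\IndCoh_X^{C^r}$, because the inclusion $\IndCoh_X^{C^r, \unit} \hookrightarrow \IndCoh_X^{C^r}$ is the right adjoint of the exact unitalization functor (\autoref{unit Cartier modules form a Grothendieck category}), and it is injective as an $\cO_X$-module by \autoref{injective unit is injective O_X}. With this choice, $\Dd\cM$ and $\Dd\Dd\cM$ are represented respectively by the $\HHom$-complexes $\HHom^\bullet(\cM, \cI^\bullet)$ and $\HHom^\bullet(\HHom^\bullet(\cM, \cI^\bullet), \cI^\bullet)$, equipped with their canonical Cartier and $F$-module structures from \autoref{Hom from F to Cartier} and \autoref{Hom from Cartier to F}.

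Next I would construct the biduality morphism as the classical, underived, evaluation map of $\cO_X$-module complexes
\[ \ev_\cM \colon \cM \longrightarrow \HHom^\bullet(\HHom^\bullet(\cM, \cI^\bullet), \cI^\bullet). \]
By \autoref{evaluation map is okay sheaf case} applied in each bidegree, $\ev_\cM$ upgrades to a morphism of $F$-module complexes, and an entirely symmetric construction produces $\ev_\cN$ for $\cN \in D^b(\Coh_X^{C^r})$ as a morphism of Cartier-module complexes. The key step is then the reduction to classical Grothendieck duality: the third bullets of \autoref{Hom from F to Cartier} and \autoref{Hom from Cartier to F} say that the forgetful functor to $D(\cO_X)$ commutes with $\Rcal\HHom$, so the image of $\ev_\cM$ in $D(\cO_X)$ is exactly the classical biduality morphism $\cM \to R\HHom_{\cO_X}(R\HHom_{\cO_X}(\cM, \omega_X^\bullet), \omega_X^\bullet)$. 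Since $R\iota(\omega_X^\bullet)$ is a dualizing complex by \autoref{def property has a unit dualizing complex} and $\cM$ has coherent cohomology, \cite[{\href{https://stacks.math.columbia.edu/tag/0A89}{Tag 0A89}}]{Stacks_Project} identifies this classical map as a quasi-isomorphism.

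To conclude, I would note that a morphism in $D^b(\Coh_X^{F^r})$ or $D^b(\Coh_X^{C^r})$ is an isomorphism precisely when its underlying morphism in $D(\cO_X)$ is, since cohomology sheaves of complexes of $F$-modules (resp. Cartier modules) coincide with those of the underlying $\cO_X$-module complex equipped with the induced extra structure. Hence $\ev_\cM$ and $\ev_\cN$ are isomorphisms in the respective derived categories, which establishes $\Dd \circ \Dd \simeq \id$ in both directions. The quasi-coherent version is then immediate from the equivalences $D^b(\Coh_X^{F^r}) \simeq D^b_{\coh}(\QCoh_X^{F^r})$ and $D^b(\Coh_X^{C^r}) \simeq D^b_{\coh}(\QCoh_X^{C^r})$ of \autoref{D^b(Coh) = D^b_{coh}(QCoh) for F-modules} and \autoref{D^b(Coh) = D^b_{coh}(QCoh) for Cartier modules}. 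The main obstacle is the careful bookkeeping ensuring that a single injective resolution $\cI^\bullet$ computes both flavours of $\Rcal\HHom$ with $\omega_X^\bullet$ simultaneously, so that the doubled $\HHom$-complex genuinely represents $\Dd\Dd\cM$ and the degreewise application of \autoref{evaluation map is okay sheaf case} truly assembles into a morphism of $F$-module (resp. Cartier-module) complexes lifting the classical one.
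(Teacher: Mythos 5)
Your proof is correct and follows essentially the same route as the paper: fix one injective resolution $\cI^\bullet$ of $\omega_X^\bullet$ in $\IndCoh_X^{C^r, \unit}$ (injective in $\IndCoh_X^{C^r}$, in $\QCoh_X$, and as an $\cO_X$-module) so that both pairings are computed by $\HHom^\bullet(-,\cI^\bullet)$, upgrade the classical evaluation map to a morphism of $F$-module (resp.\ Cartier-module) complexes via \autoref{evaluation map is okay sheaf case}, and conclude it is a quasi-isomorphism by pushing to $D(\cO_X)$ and invoking classical Grothendieck biduality. The only step the paper treats more deliberately than your closing ``bookkeeping'' remark is the identification of the doubled complex $\HHom^\bullet(\HHom^\bullet(\cM,\cI^\bullet),\cI^\bullet)$, which a priori lives among complexes of quasi-coherent (resp.\ ind-coherent) modules, with $\Dd\Dd(\cM) \in D^b(\Coh_X^{A^r})$ through the equivalences of \autoref{D^b(Coh) = D^b_{coh}(QCoh) for F-modules}, \autoref{D^b(Coh) = D^b_{coh}(QCoh) for Cartier modules} and \autoref{D(QCoh) = D_qcoh(O_X[C])} --- the paper tracks this carefully with its auxiliary equivalences $i$ and $j$, and in the $F$-module direction also uses that $\Dd(\cM)$ is naturally a quasi-isomorphic subcomplex of $\HHom^\bullet(\cM,\cI^\bullet)$.
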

\begin{proof}
	The second statement follows from the first one, \autoref{D^b(Coh) = D^b_{coh}(QCoh) for F-modules} and \autoref{D^b(Coh) = D^b_{coh}(QCoh) for Cartier modules}, so we only need to show the first statement. 
	
	Fix an injective resolution $\Ical$ of $\omega_X^\bullet$ in $\IndCoh_X^{C^r, \unit}$, so that both pairing functors $\Rcal\HHom(-, \omega_X^\bullet)$ are given by $\HHom^{\bullet}(-, \Ical)$. We will find a natural isomorphism $1 \to \bD \circ \bD$. 
	
	Let $A \in \{F, C\}$, let $i \colon D^b(\Coh_X^{A^r}) \to D^b_{\coh}(\QCoh_X^{A^r})$ and $j \colon  D^b(\QCoh_X^{A^r}) \ra D^b_{\qcoh}(\Ocal_X[A^r])$ denote the canonical functors, and let $\Mcal \in D^b(\Coh_X^{A^r})$. By definition, we have a natural isomorphism \[ i\Dd(\Mcal) \ra \Rcal\HHom(\Mcal, \omega_X^\bullet) \] Applying $\Rcal\HHom(-, \omega_X^\bullet)$ gives a natural isomorphism \[ \Rcal\HHom(\Rcal\HHom(\Mcal, \omega_X^\bullet), \omega_X^\bullet) \cong \Rcal\HHom(i\Dd(\Mcal), \omega_X^\bullet). \] Note that we have a natural isomorphism $\Rcal\HHom(i\Dd(\Mcal), \omega_X^\bullet) \cong ji\Dd(\Dd(\Mcal))$. Indeed, \[ ji\Dd(\Dd(\Mcal)) \cong j\Rcal\HHom(\Dd(\Mcal), \omega_X^\bullet) = \HHom^{\bullet}(\Dd(\Mcal), \Ical) \cong \Rcal\HHom(i\Dd(\Mcal), \omega_X^{\bullet}) \] Recall that pairing with a dualizing complex induces an anti-equivalence on $D^b(\cO_X)$, and the natural isomorphism is the evaluation map, see \cite[Lemma V.1.2]{Hartshorne_Residues_and_Duality} (use \cite[Corollary V.2.3]{Hartshorne_Residues_and_Duality} to see that their definition of dualizing complex agrees with ours).
	
	Combining the above discussion, \autoref{evaluation map is okay sheaf case} and the last point of \autoref{Hom from Cartier to F}, we deduce that the natural evaluation map $\Mcal \ra \HHom^{\bullet}(\HHom^{\bullet}(\Mcal, \Ical), \Ical)$ is a quasi-isomorphism of complexes of Frobenius/Cartier modules. Indeed, although \autoref{evaluation map is okay sheaf case} only treats the sheaf case, the case of complexes follows from the fact that the evaluation map of complexes is a product of zero maps and evaluation maps in the sheaf case (see \cite[Lemma V.1.2]{Hartshorne_Residues_and_Duality}).
	
	Thus, we have a natural isomorphism \[ ji\Mcal \cong \Rcal\HHom(\Rcal\HHom(\Mcal, \omega_X^\bullet), \omega_X^\bullet), \] so we conclude the existence of a natural isomorphism \[ ji\Mcal \cong ji\Dd(\Dd(\Mcal)) \] in $D^b_{\coh}(\cO_X[A^r])$. Since both $i$ and $j$ are equivalences (see \autoref{D^b(Coh) = D^b_{coh}(QCoh) for Cartier modules}, \autoref{D(QCoh) = D_qcoh(O_X[C])} for Cartier modules, and \autoref{D^b(Coh) = D^b_{coh}(QCoh) for F-modules} for Frobenius modules), we deduce that we have a natural isomorphism $\Mcal \cong \Dd(\Dd(\Mcal))$. \\ 
%
\end{proof}

\begin{rem}
	The only moment where semi-separatedness was used in the proof was because we applied \autoref{D^b(Coh) = D^b_{coh}(QCoh) for F-modules}.
\end{rem}

\subsection{Passing to crystals}
In order to define the pairing at the level of crystals, we have to show that nil-isomorphisms of coherent Frobenius modules (resp. Cartier modules) are sent to nil-isomorphisms of Cartier modules (resp. Frobenius modules).

\begin{lem}\label{Hom preserves nilpotence}
	Let $X$ be any $\bF_p$-scheme (resp. a Noetherian $F$-finite $\bF_p$-scheme), let $\Mcal \in \Mod(\Ocal_X[F^r])$ (resp. $\cM \in \QCoh_X^{C^r}$) be nilpotent, and let $\Ncal \in \Mod(\Ocal_X[C^r])$ (resp. $\QCoh_X^{C^r, \unit})$. Then $\HHom(\Mcal, \Ncal)$ is nilpotent.
\end{lem}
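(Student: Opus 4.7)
The plan is to carry out a direct computation showing that the iterated structural morphism on $H \coloneqq \HHom(\Mcal, \Ncal)$ factors through the iterated structural morphism on $\Mcal$, which vanishes by nilpotence.

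\textbf{Case 1} ($\Mcal \in \Mod(\Ocal_X[F^r])$ nilpotent, $\Ncal \in \Mod(\Ocal_X[C^r])$). By \autoref{construction Hom from F to Cartier}, the Cartier structure on $H$ is $\kappa_H(f) = \kappa_{\cN} \circ F^r_*f \circ \tau_{\cM}$. First I would verify by induction on $n$ that
\[ \kappa_H^n(f) = \kappa_{\cN}^n \circ F^{rn}_* f \circ \tau_{\cM}^n, \]
using the fact that $F^r_*(g \circ h) = F^r_* g \circ F^r_* h$ and the chain of definitions of $\kappa^n$ and $\tau^n$. Since $\tau_{\cM}^n = 0$ for some $n$ by hypothesis, the formula immediately gives $\kappa_H^n = 0$, so $H$ is nilpotent as a Cartier module.

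\textbf{Case 2} ($\Mcal \in \QCoh_X^{C^r}$ nilpotent, $\Ncal \in \QCoh_X^{C^r, \unit}$). By \autoref{construction Hom from Cartier to F}, the $F$-structure on $H$ is $\tau_H(f) = (\kappa_{\cN}^{\flat})^{-1} \circ F^{r,\flat} f \circ \kappa_{\cM}^{\flat}$ (note that $(\kappa_{\cN}^{\flat})^{-1}$ exists precisely because $\Ncal$ is unit). The same kind of induction yields
\[ \tau_H^n(f) = (\kappa_{\cN}^{\flat,n})^{-1} \circ F^{rn,\flat} f \circ \kappa_{\cM}^{\flat,n}, \]
where $\kappa_{\cM}^{\flat,n} \colon \Mcal \to F^{rn,\flat}\Mcal$ denotes the $n$-fold iterate defined in the evident way. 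The key input is that under the $(F^{rn}_*, F^{rn,\flat})$-adjunction, $\kappa_{\cM}^{\flat,n}$ is the adjoint of $\kappa_{\cM}^n \colon F^{rn}_*\Mcal \to \Mcal$, so $\kappa_{\cM}^n = 0$ forces $\kappa_{\cM}^{\flat,n} = 0$. (Equivalently, one reads this off directly from \autoref{explicit_adjoint_structural_morphism}.) Hence $\tau_H^n = 0$, and $H$ is nilpotent as an $F$-module.

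The two cases are mirror images of each other and there is no genuine obstacle; the only point requiring minor care is the bookkeeping in Case 2, namely verifying that the $n$-fold iterate of the adjoint structural morphism is adjoint to the $n$-fold iterate of the structural morphism, so that nilpotence of $\kappa_{\cM}$ transfers to nilpotence of $\kappa_{\cM}^{\flat}$.
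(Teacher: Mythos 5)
Your proof is correct and simply spells out the direct computation that the paper dismisses with ``immediate from the definitions of the pairings'': the formulas $\kappa_H^n = \kappa_{\cN}^n \circ F^{rn}_*(-) \circ \tau_{\cM}^n$ and $\tau_H^n = (\kappa_{\cN}^{\flat,n})^{-1} \circ F^{rn,\flat}(-) \circ \kappa_{\cM}^{\flat,n}$ are exactly what makes it immediate, and your observation that $\kappa_{\cM}^{\flat,n}$ is adjoint to $\kappa_{\cM}^n$ under $F^{rn}_* \dashv F^{rn,\flat}$ is the correct bookkeeping for Case 2. Same approach as the paper, just with the details written out.
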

\begin{proof}
	This it immediate from the definitions of the pairings.
\end{proof}

\begin{defn}
	Let $f \colon \Mcal \ra \Mcal'$ be a morphism of complexes in the derived category of Frobenius modules (resp. Cartier modules). The morphism $f$ is said to be a \emph{nil-quasi-isomorphism} if $\cH^i(f)$ is a nil-isomorphism for all $i \in \bZ$. 
\end{defn}
\begin{lem}\label{nil-isomorphisms are preserved under Hom}
	Let $X$ be a Noetherian (resp. Noetherian and $F$-finite) $\Ff_p$-scheme, let $f \colon \Mcal \ra \Mcal'$ be a nil-quasi-isomorphism in $D^b(\QCoh_X^{F^r})$ (resp. $D^b(\QCoh_X^{C^r})$), and let $\Ncal \in D^+(\QCoh_X^{C^r})$ (resp. $D^+(\IndCoh_X^{C^r, \unit})$). Then the morphism \[ g \colon \Rcal\HHom(\Mcal', \Ncal) \ra \Rcal\HHom(\Mcal, \Ncal)\] is a nil-quasi-isomorphism, where $g \coloneqq \cR\HHom(f, \cN)$.
\end{lem}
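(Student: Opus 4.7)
The plan is to pass through the mapping cone of $f$. Complete $f$ to a distinguished triangle $\mathcal{M} \xrightarrow{f} \mathcal{M}' \to \mathcal{C} \to \mathcal{M}[1]$. The long exact sequence in cohomology, together with the assumption that every $\mathcal{H}^i(f)$ is a nil-isomorphism, exhibits each $\mathcal{H}^i(\mathcal{C})$ as an extension
\[
0 \to \coker \mathcal{H}^i(f) \to \mathcal{H}^i(\mathcal{C}) \to \ker \mathcal{H}^{i+1}(f) \to 0
\]
of two nilpotent objects. A direct check (for either $F$-modules or Cartier modules) shows that an extension of nilpotent modules of indices $a$ and $c$ is nilpotent of index at most $a+c$. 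Since $\mathcal{C}$ is bounded, there is therefore a uniform integer $n \geq 1$ such that $\tau_{\mathcal{H}^i(\mathcal{C})}^n = 0$ (respectively $\kappa_{\mathcal{H}^i(\mathcal{C})}^n = 0$) for every $i$.

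Applying $\Rcal\HHom(-, \mathcal{N})$ to the above triangle, the resulting long exact sequence reduces the claim to showing that every cohomology sheaf of $\Rcal\HHom(\mathcal{C}, \mathcal{N})$ is nilpotent. Using the canonical truncation triangles $\tau_{<k}\mathcal{C} \to \mathcal{C} \to \tau_{\geq k}\mathcal{C}$ and inducting on the number of nonzero cohomology sheaves of $\mathcal{C}$, one reduces to the case $\mathcal{C} = \mathcal{P}[-i]$ for a single nilpotent module $\mathcal{P}$ of index at most $n$. Fixing an injective resolution $\mathcal{N} \to \mathcal{I}^\bullet$ in $\QCoh_X^{C^r}$ (respectively in $\IndCoh_X^{C^r, \unit}$) computes $\Rcal\HHom(\mathcal{P}, \mathcal{N})$ as the termwise complex $\HHom^\bullet(\mathcal{P}, \mathcal{I}^\bullet)$.

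By \autoref{Hom preserves nilpotence}, each term $\HHom(\mathcal{P}, \mathcal{I}^k)$ is nilpotent; moreover, inspection of the explicit formulas in \autoref{construction Hom from F to Cartier} and \autoref{construction Hom from Cartier to F} shows the nilpotency index is bounded by $n$ uniformly in $k$ (for the first pairing, $\kappa_H^n(\varphi) = \kappa_{\mathcal{N}}^n \circ F^{rn}_*\varphi \circ \tau_{\mathcal{P}}^n = 0$, and an analogous identity using the adjoint structural morphism handles the second). Taking cohomology then preserves this bound, so each $\mathcal{H}^j\Rcal\HHom(\mathcal{P}, \mathcal{N})$ is nilpotent. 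The only real subtlety is bookkeeping: one must run the argument in parallel for the two pairings while tracking the unit hypothesis on $\mathcal{N}$ in the Cartier case, but the uniformity of nilpotency bounds makes both situations go through identically.
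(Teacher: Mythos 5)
Your proof is correct and follows the same strategy as the paper's: reduce via the cone of $f$ and the lemma on $\HHom$ preserving nilpotence. The paper's version is terser — it simply asserts that $\cR\HHom(C(f),\cN)$ has nilpotent cohomologies and leaves the truncation dévissage and the injective-resolution computation implicit — whereas you spell these steps out (and note, correctly, that cohomology of a termwise-nilpotent complex is nilpotent, so the uniform index bound you carry along is actually not needed beyond bookkeeping).
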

\begin{proof}
	Let $C(f)$ denote the cone of $f$, and consider the exact triangle \[ \Mcal \xrightarrow{f} \Mcal' \ra C(f) \xrightarrow{+1} \] Note that by assumption, $C(f)$ consists of nilpotent objects. 
	
	Applying $\Rcal\HHom(-, \Ncal)$ gives an exact triangle \[ \Rcal\HHom(C(f), \Ncal) \ra \Rcal\HHom(\Mcal', \Ncal) \xrightarrow{g} \Rcal\HHom(\Mcal, \Ncal) \xrightarrow{+1} \] By \autoref{Hom preserves nilpotence} and the fact that $C(f) \in D^b$, also $\Rcal\HHom(C(f), \Ncal)$ consists of nilpotent objects. Using the long exact sequence in cohomology, we deduce that $g$ is a nil-quasi-isomorphism.  
\end{proof}

\begin{lem}\label{nil-quasi-isos are preserved under both equiv D(Coh) = Dcoh(QCoh)}
	Both equivalences of triangulated categories $D^b(\Coh_X^{F^r}) \cong D^b_{\coh}(\QCoh_X^{F^r})$ and $D^b(\Coh_X^{C^r}) \cong D^b_{\coh}(\IndCoh_X^{C^r})$ preserve nil-quasi-isomorphisms.
\end{lem}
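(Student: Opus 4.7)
The plan is to unwind what these equivalences actually do. By \autoref{D^b(Coh) = D^b_{coh}(QCoh) for F-modules} and \autoref{D^b(Coh) = D^b_{coh}(QCoh) for Cartier modules}, both equivalences in question are the natural triangulated functors induced by the exact, fully faithful inclusions $\Coh_X^{F^r} \inj \QCoh_X^{F^r}$ and $\Coh_X^{C^r} \inj \IndCoh_X^{C^r}$. Since exact functors between abelian categories commute with taking cohomology of complexes, the first step is to note that each of these equivalences preserves cohomology sheaves of any bounded complex, together with the morphisms they induce on cohomology, under the canonical identification.

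Next I would observe that nilpotence of a coherent $F^r$-module (resp. Cartier module) is an intrinsic property of the object: it only says that an iterate of the structural morphism $\tau_{\cM}^n$ (resp. $\kappa_{\cM}^n$) vanishes, which is independent of whether we regard the object as coherent, quasi-coherent, or ind-coherent. Consequently, if $f \colon \cM \to \cM'$ is a morphism in $D^b(\Coh_X^{F^r})$ whose cohomology morphisms $\cH^i(f)$ have nilpotent kernel and cokernel, then its image $\tilde f$ under the equivalence has, by the previous paragraph, the same cohomology morphisms up to canonical identification; their kernels and cokernels are therefore still nilpotent. The Cartier case is identical. Since we are dealing with equivalences of triangulated categories, the statement for the quasi-inverse follows formally from the forward direction. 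I do not expect any genuine obstacle here: the content is the bookkeeping observation that the equivalences are compatible with cohomology and that nilpotence is detected on the underlying coherent cohomology objects alone.
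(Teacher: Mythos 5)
Your proposal is correct and takes essentially the same route as the paper: the equivalences are induced by the exact inclusions, exact functors commute with cohomology, nilpotence is an intrinsic property of an $A$-module, so the forward functors take a nil-quasi-isomorphism to a nil-quasi-isomorphism \emph{and conversely}, whence the quasi-inverses do as well. One small wording caveat: in the last sentence you should say the quasi-inverse case follows because the forward functor both \emph{preserves and reflects} nil-quasi-isomorphisms (which your argument does establish, since the identification of cohomology objects is canonical in both directions) --- mere preservation by an equivalence is not formally inherited by its inverse.
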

\begin{proof}
	This follows from the fact that both functors $D^b(\Coh_X^{F^r}) \ra D^b_{\coh}(\QCoh_X^{F^r})$ and $D^b(\Coh_X^{C^r}) \ra D^b_{\coh}(\IndCoh_X^{C^r})$ are equivalences, and have the property that the image of a map is a nil-quasi-isomorphism if and only if the map itself is a nil-quasi-isomorphism.
\end{proof}

We can finally give our duality theorem between Frobenius crystals and Cartier crystals.
\begin{theorem}\label{main thm duality for crystals}
	Let $X$ be a Noetherian, $F$-finite and semi-separated $\bF_p$-scheme with a unit dualizing complex. Then the functors $\Dd$ induce equivalences of triangulated categories \[ D^b(\Crys_X^{C^r})^{op} \cong D^b(\Crys_X^{F^r}) \] and \[ D^b_{\crys}(\QCrys_X^{C^r})^{op} \cong D^b_{\crys}(\QCrys_X^{F^r}), \] where $D^b_{\crys}(\QCrys)$ means the full sucategory of complexes whose cohomology sheaves are in the essential image of $\Crys \ra \QCrys$.
\end{theorem}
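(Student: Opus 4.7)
The plan is to deduce this from \autoref{main thm duality} by descending both duality functors along the localization functors $D^b(\Coh_X^{F^r}) \to D^b(\Crys_X^{F^r})$ and $D^b(\Coh_X^{C^r}) \to D^b(\Crys_X^{C^r})$ (and their quasi-coherent counterparts). The key technical ingredient is already in hand: by \autoref{nil-isomorphisms are preserved under Hom}, the pairings $\Rcal\HHom(-,\omega_X^\bullet)$ in both directions send nil-quasi-isomorphisms to nil-quasi-isomorphisms, and by \autoref{nil-quasi-isos are preserved under both equiv D(Coh) = Dcoh(QCoh)} the equivalences $D^b(\Coh_X^{?}) \cong D^b_{\coh}(\QCoh_X^{?})$ (for $? \in \{F^r, C^r\}$) also respect nil-quasi-isomorphisms. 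Consequently each functor $\Dd$ of \autoref{definition duality} sends the class $S$ of maps with nilpotent cones to itself.

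First, I would invoke \autoref{S_nil is a system and eq cat for Cartier modules} (and the analogous statement for $F$-crystals recalled in its proof, coming from \cite[Theorem 5.3.1]{Bockle_Pink_Cohomological_Theory_of_crystals_over_function_fields}) to identify
\[ D^b(\Crys_X^{C^r}) \cong S_C^{-1}D^b(\Coh_X^{C^r}), \qquad D^b(\Crys_X^{F^r}) \cong S_F^{-1}D^b(\Coh_X^{F^r}), \]
where $S_C$ and $S_F$ denote the respective saturated multiplicative systems of nil-quasi-isomorphisms. Since $\Dd \colon D^b(\Coh_X^{F^r})^{op} \to D^b(\Coh_X^{C^r})$ carries $S_F$ into $S_C$ (and symmetrically), the universal property of the Verdier quotient produces induced triangulated functors
\[ \overline{\Dd} \colon D^b(\Crys_X^{F^r})^{op} \longrightarrow D^b(\Crys_X^{C^r}), \qquad \overline{\Dd} \colon D^b(\Crys_X^{C^r})^{op} \longrightarrow D^b(\Crys_X^{F^r}). \]

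Next, the fact that $\Dd \circ \Dd \cong \id$ at the coherent level, together with the naturality of the biduality isomorphism established in the proof of \autoref{main thm duality}, yields $\overline{\Dd} \circ \overline{\Dd} \cong \id$ on each side. Hence both induced functors are essentially inverse equivalences, giving the first displayed equivalence.

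For the quasi-coherent statement, I would use \autoref{qcrys with crys cohom is crys} (and \autoref{D^b(Coh) = D^b_{coh}(QCoh) for F-modules}) to identify $D^b_{\crys}(\QCrys_X^{C^r}) \cong D^b(\Crys_X^{C^r})$ and $D^b_{\crys}(\QCrys_X^{F^r}) \cong D^b(\Crys_X^{F^r})$, and then transport the equivalence just obtained. I expect the only subtlety is to verify that the induced functor on the Verdier quotient is the one obtained directly from $\Rcal\HHom(-,\omega_X^\bullet)$ on quasi-coherent complexes; this is a formal compatibility between the localization and the equivalence $D^b(\Coh) \cong D^b_{\coh}(\QCoh)$, which follows from the fact that both constructions send a coherent complex to the same object up to canonical isomorphism. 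The main obstacle is really just bookkeeping of these compatibilities; the substantive content has already been done in \autoref{Hom preserves nilpotence}, \autoref{nil-isomorphisms are preserved under Hom}, and \autoref{main thm duality}.
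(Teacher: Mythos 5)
Your proposal is correct and follows essentially the same route as the paper's own proof: descend $\Dd$ from \autoref{main thm duality} through the localization using \autoref{nil-isomorphisms are preserved under Hom} and \autoref{nil-quasi-isos are preserved under both equiv D(Coh) = Dcoh(QCoh)}, identify the Verdier quotient with $D^b(\Crys)$, and deduce the quasi-coherent statement via \autoref{qcrys with crys cohom is crys} and \autoref{D^b(Coh) = D^b_{coh}(QCoh) for F-modules}. The only cosmetic difference is that you reach the identification $D^b(\Crys_X^{C^r}) \cong S^{-1}_{\nil}D^b(\Coh_X^{C^r})$ via \autoref{S_nil is a system and eq cat for Cartier modules} (which is stated at the level of $D^b_{\coh}(\QCoh_X^{C^r})$ and $D^b_{\crys}(\QCrys_X^{C^r})$, so it requires threading through the equivalence $D^b(\Coh)\cong D^b_{\coh}(\QCoh)$), whereas the paper invokes \cite[Theorem 2.6.2]{Bockle_Pink_Cohomological_Theory_of_crystals_over_function_fields} directly at the level of $D^b(\Coh_X^{C^r})$; both are equivalent and the substance of the argument is identical.
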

\begin{proof}
	By definition of the functors $\Dd$, \autoref{main thm duality}, \autoref{nil-isomorphisms are preserved under Hom} and \autoref{nil-quasi-isos are preserved under both equiv D(Coh) = Dcoh(QCoh)}, the functors $\Dd$ induce an equivalence of categories \[ S^{-1}_{\nil}D^b(\Coh_X^{C^r})^{op} \cong S^{-1}_{\nil}D^b(\Coh_X^{F^r}), \] where $S_{\nil}$ is the collection of morphisms whose cone has nilpotent cohomology sheaves (these localizations exist by \cite[Theorem 2.6.2]{Bockle_Pink_Cohomological_Theory_of_crystals_over_function_fields}). By \emph{loc. cit.}, we obtain the first statement. The second statement then follows from \autoref{qcrys with crys cohom is crys} and \autoref{D^b(Coh) = D^b_{coh}(QCoh) for F-modules}.
\end{proof}

\subsection{Local duality}\label{section local duality}
In this section, we use the pairings we defined earlier to obtain an analogue of local duality for both Frobenius modules and Cartier modules.

\begin{defn}\label{def_local_cohomology}
	Let $R$ be a local $\bF_p$-algebra, and let $I = (a_1, \dots, a_m)$ be a finitely generated ideal of $R$. We denote by $C^{\bullet}$ the usual complex \[ R \to \bigoplus_i R_{a_i} \to \bigoplus_{i < j}R_{a_ia_j} \to \dots \to R_{a_1 \dots a_m}, \] and endow it with its natural Frobenius module structure. For $A \in \{F, C\}$, we define \[ \RGamma_I \colon D(\QCoh_R^{A^r}) \to D(\QCoh_R^{A^r}) \] by the formula \[ M \mapsto M \otimes C^{\bullet}. \]
\end{defn}
\begin{rem}\label{remark def local cohomology}
	\begin{enumerate}
		\item Since $C^{\bullet}$ consists of flat Frobenius modules, we do not need to take derived tensor products.
		\item This definition makes sense for Cartier modules too, since $C^{\bullet}$ consists of unit Frobenius modules (recall \autoref{rem def Cartier module}.\autoref{tensor product of Cartier mod and unit F-mod}). In particular, it preserves (complexes of) unit Cartier modules.
		\item The definition of $\RGamma_I$ only depends on $V(I) \subseteq \Spec R$, since it is an adequate right adjoint (the precise statement and proof are the same as in \cite[{\href{https://stacks.math.columbia.edu/tag/0A6R}{Tag 0A6R}}]{Stacks_Project}).
		\item If $R$ is Noetherian, then this local cohomology functor agrees with the usual definition when seen as $\cO_X$-modules (see \cite[{\href{https://stacks.math.columbia.edu/tag/0955}{Tag 0955}}]{Stacks_Project}, or \cite[Exercise III.2.3.(e)]{Hartshorne_Algebraic_Geometry}).
		\item\label{rem:local cohomology for Cartier modules} In fact, this functor agrees with taking the derived functor of sections supported on a closed subscheme for Cartier modules, since injective Cartier modules are also injective as $\cO_X$-modules. Furthermore, as a corollary of \autoref{inj qalg is inj O_X}, one deduces that local cohomology groups preserve ind-coherence for Cartier modules.
	\end{enumerate}
\end{rem}

From now on, fix a Noetherian $F$-finite local $\bF_p$-algebra $(R, \fm, k)$, together with a unit dualizing complex $\omega_R^{\bullet}$ (it exists by \autoref{finite type over a unit dc has a unit dc}). Up to a shift, we may assume that it is normalized (i.e. that $R\iota(\omega_R^{\bullet})$ is normalized, where $\iota \colon \IndCoh_R^{C^r, \unit} \to \Mod(R)$ is the forgetful functor).

\begin{lemma}\label{local cohomology of normalized unit dc is inj hull of the residue field}
	The complex of Cartier modules $E \coloneqq R\iota(\RGamma_{\fm}(\omega_R^{\bullet}))$ is only supported in degree $0$ and as an $R$-module, it is an injective hull of $k$.
\end{lemma}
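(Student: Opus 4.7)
The plan is to reduce this lemma to classical local duality (\cite[Theorem V.6.2 and Corollary V.6.3]{Hartshorne_Residues_and_Duality}). The first task is to identify $R\iota(\RGamma_\fm(\omega_R^\bullet))$ with the usual derived local cohomology of the underlying normalized dualizing complex $R\iota(\omega_R^\bullet)$, viewed as an object of $D^+(R)$. Since the functor $\RGamma_\fm$ is defined by tensoring against the (bounded, flat) Čech complex $C^\bullet$, this tensor product is underived and visibly commutes with forgetting the Cartier structure; the fact that $\iota$ is exact on injective resolutions here follows from \autoref{injective unit is injective O_X}. Combined with \autoref{remark def local cohomology}(4), this gives a canonical identification
\[ R\iota(\RGamma_\fm(\omega_R^\bullet)) \cong \RGamma_\fm(R\iota(\omega_R^\bullet)) \quad \text{in } D(R), \]
where the right-hand side now denotes the classical derived functor of $\Gamma_\fm$.

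With this reduction in hand, I would invoke Grothendieck's local duality theorem. Applied to the normalized dualizing complex $R\iota(\omega_R^\bullet)$ and the object $M = R\iota(\omega_R^\bullet) \in D^b_{\coh}(R)$, it yields
\[ \RGamma_\fm(R\iota(\omega_R^\bullet)) \cong \Hom_R\bigl(\RHom_R(R\iota(\omega_R^\bullet), R\iota(\omega_R^\bullet)),\, E(k)\bigr). \]
Since $R\iota(\omega_R^\bullet)$ is a dualizing complex, the double-Hom simplifies to $R$, so the right-hand side is $\Hom_R(R, E(k)) = E(k)$ concentrated in degree zero. This yields simultaneously both conclusions: that $E$ is supported in a single degree, and that its underlying $R$-module is an injective hull of $k$.

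I do not expect a substantial obstacle here; the lemma is a direct translation of classical local duality across the forgetful functor $\iota$. The only point requiring care is the compatibility of $\RGamma_\fm$ as defined inside the category of unit Cartier modules with ordinary local cohomology on $R$-modules, which is precisely what the Čech-complex definition (\autoref{def_local_cohomology}) was tailored to make transparent. Everything else is a standard consequence of the normalization of $\omega_R^\bullet$ and the characterization $\RHom_R(\omega_R^\bullet, \omega_R^\bullet) \cong R$ of dualizing complexes.
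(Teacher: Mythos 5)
Your overall plan matches the paper's: reduce to showing that $R\iota$ commutes with $-\otimes_R C^\bullet$ and then cite the classical fact that $\RGamma_\fm$ of a normalized dualizing complex is $E(k)$ concentrated in degree zero. But there is a gap in the reduction step, which is exactly where the real work lies.

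You write that the identity $R\iota(\omega_R^\bullet \otimes C^\bullet) \cong R\iota(\omega_R^\bullet) \otimes C^\bullet$ ``follows from \autoref{injective unit is injective O_X}'' because ``$\iota$ is exact on injective resolutions.'' This only gives you the \emph{underived} commutation $\iota(I \otimes C^\bullet) = \iota(I) \otimes C^\bullet$ for an injective resolution $I$ of $\omega_R^\bullet$ in $\QCoh_R^{C^r,\unit}$. For the identification $\iota(I \otimes C^\bullet) \cong R\iota(\omega_R^\bullet \otimes C^\bullet)$ you also need the complex $I \otimes C^\bullet$ to be a resolution by $\iota$-\emph{acyclic} objects, and its terms are localizations $I_a$, which are \emph{not} injective in $\QCoh_R^{C^r,\unit}$. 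Since $\iota$ is not exact in general (its cokernels must be re-unitalized; see the remark after \autoref{inclusion from unit to O_X is exact is X regular}), $\iota$-acyclicity of $I_a$ is a genuine claim. The paper supplies it as follows: $I$ is injective in $\Mod(R)$ by \autoref{injective unit is injective O_X}, hence so is each $I_a$ (a filtered colimit of injectives is injective, $R$ being Noetherian), hence $I_a$ is $F^{r,\flat}$-acyclic, and therefore $\iota$-acyclic by \autoref{F^flat acyclic implies iota-acyclic}. That last lemma is the missing ingredient in your argument; invoking \autoref{injective unit is injective O_X} alone is not sufficient.

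As a smaller point, once the reduction is done, the detour through local duality and $\RHom_R(\omega_R^\bullet,\omega_R^\bullet)\cong R$ is circuitous and risks circularity (the form of local duality you quote already presupposes the degree-zero identification of $\RGamma_\fm(\omega_R^\bullet)$ with $E(k)$). The paper instead cites \cite[{\href{https://stacks.math.columbia.edu/tag/0A82}{Tag 0A82}}]{Stacks_Project} directly, which is the structural statement you actually want and avoids any such issue.
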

\begin{proof}
	Let us show that for any $M \in D^+(\QCoh_R^{C^r, \unit})$, $R\iota(M \otimes C^{\bullet}) \cong R\iota(M) \otimes C^{\bullet}$. This will conclude by \cite[{\href{https://stacks.math.columbia.edu/tag/0A82}{Tag 0A82}}]{Stacks_Project}.
	
	It is enough to show that if $I \in \QCoh_R^{C^r, \unit}$ is injective, then for all $a \in R$, $I_a$ is $\iota$-acyclic. Note that $I$ is injective in $\Mod(R)$ by \autoref{injective unit is injective O_X}, hence so is $I_a$ (a filtered colimit of injectives is injective in a Noetherian category). Thus, we conclude by \autoref{F^flat acyclic implies iota-acyclic}.
\end{proof}

Let $\bighat{R}$ denote the $\fm$-adic completion of $R$.

\begin{lemma}\label{completion commutes with Frobenius pushforward}
	The commutative square 
	\[ \begin{tikzcd}
		R \arrow[r, "F"] \arrow[d] & R \arrow[d] \\
		\bighat{R} \arrow[r, "F"'] & \bighat{R} 
	\end{tikzcd} \] is a pushout square of rings. In particular, for all $M \in \Mod(R)$, $F_*M \otimes_R \bighat{R} \cong F_*(M \otimes_R \bighat{R})$.
\end{lemma}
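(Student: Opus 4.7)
The plan is to establish the pushout claim directly, and then derive the ``in particular'' as a formal consequence of it.

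\textbf{Pushout step.} By the universal property of pushouts, the square is a pushout of rings iff the canonical $\bighat{R}$-module map
\[
	\bighat{R} \otimes_R F_*R \;\longrightarrow\; F_*\bighat{R}
\]
(induced by the commutative square) is an isomorphism. Since $R$ is $F$-finite, $F_*R$ is finitely generated as an $R$-module; combined with Noetherianness of $R$, this gives $\bighat{R} \otimes_R F_*R \cong \widehat{F_*R}$, the $\fm$-adic completion of $F_*R$ as an $R$-module. We are thus reduced to identifying $\widehat{F_*R}$ with $F_*\bighat{R}$. Under the bijection $F_*R \cong R$ on underlying sets, the $R$-submodule $\fm^n \cdot F_*R$ corresponds to $(\fm^n)^{[p]} = (\fm^{[p]})^n$. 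Writing $\fm = (a_1, \dots, a_d)$, a pigeonhole argument on monomials yields $\fm^{dp} \subseteq \fm^{[p]}$, hence
\[
	\fm^{dpn} \;\subseteq\; (\fm^{[p]})^n \;=\; (\fm^n)^{[p]} \;\subseteq\; \fm^n,
\]
so the two filtrations are cofinal. Completing thus recovers $\bighat{R}$ on underlying sets, and keeping track of the $R$-action shows that the resulting completion is exactly $F_*\bighat{R}$.

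\textbf{``In particular'' step.} Using the bimodule identification $F_*N \cong F_*R \otimes_R N$ for $R$-modules $N$ (where $F_*R$ carries the usual right action and the Frobenius-twisted left action), and its analogue for $\bighat{R}$-modules, we compute:
\begin{align*}
	F_*M \otimes_R \bighat{R}
	&\cong (F_*R \otimes_R M) \otimes_R \bighat{R} \\
	&\cong F_*R \otimes_R (M \otimes_R \bighat{R}) \\
	&\cong (F_*R \otimes_R \bighat{R}) \otimes_{\bighat{R}} (M \otimes_R \bighat{R}) \\
	&\cong F_*\bighat{R} \otimes_{\bighat{R}} (M \otimes_R \bighat{R}) \\
	&\cong F_*(M \otimes_R \bighat{R}),
\end{align*}
where the fourth isomorphism is precisely the pushout identification established above.

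\textbf{Main obstacle.} There is no serious difficulty: the only arithmetic content is the topological sandwich $\fm^{dpn} \subseteq (\fm^n)^{[p]} \subseteq \fm^n$, which shows that the $R$-module topology on $F_*R$ coincides with the $\fm$-adic one on $R$. Everything else is a formal manipulation of base change and associativity.
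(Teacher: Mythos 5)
Your proof is correct, but it takes a noticeably more self-contained route than the paper. For the pushout claim, the paper simply cites a Stacks Project tag; you instead reduce it to the identity $\bighat{R}\otimes_R F_*R\cong\widehat{F_*R}$ (valid because $F$-finiteness makes $F_*R$ a finite $R$-module over a Noetherian local ring) and then show $\widehat{F_*R}\cong F_*\bighat{R}$ by the cofinality of the $\fm$-adic and $\fm^{[p]}$-adic filtrations. For the ``in particular'' clause, the paper invokes flat base change along $R\to\bighat{R}$ for the affine morphism $\Spec R\xrightarrow{F}\Spec R$; you unpack that same content via the bimodule identity $F_*N\cong F_*R\otimes_R N$ and associativity of tensor products. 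Both arguments are equivalent in substance; yours is longer but makes every ingredient explicit. One tiny stylistic point: the sharp exponent for the pigeonhole sandwich is $\fm^{d(p-1)+1}\subseteq\fm^{[p]}$, so $\fm^{(d(p-1)+1)n}\subseteq(\fm^{[p]})^n$; your $\fm^{dpn}$ is a weaker but still sufficient bound.
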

\begin{proof}
	The first statement follows from \cite[{\href{https://stacks.math.columbia.edu/tag/0394}{Tag 0349}}]{Stacks_Project}, and the second statement is then a consequence of flat base change.
\end{proof}

Thanks to \autoref{completion commutes with Frobenius pushforward}, we can take pullback of (unit) Cartier modules along the morphism $R \to \bighat{R}$. We can now give the local duality theorem for Frobenius modules and Cartier modules.

\begin{prop}\label{local duality Frobenius version}
	Let $(A, B) \in \{(F, C), (C, F)\}$, and let $M \in D_{coh}(\QCoh_R^{A^r})$. Then there is a natural isomorphism \[ \RHom(M, \omega_R^{\bullet}) \otimes_R \bighat{R} \cong \RHom(\RGamma_{\fm}(M), E) \] in $D(\QCoh_R^{B^r})$. In particular, for all $i \in \bZ$, we have \[ \Ext^{-i}(M, \omega_R^{\bullet}) \sim_B 0 \iff H^i_{\fm}(M) \sim_A 0. \]
\end{prop}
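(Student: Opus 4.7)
The strategy is to produce a canonical morphism from the left-hand side to the right-hand side in $D(\QCoh_R^{B^r})$, prove it is an isomorphism by reducing to classical Grothendieck local duality, and then derive the nil-equivalence from the explicit form of the pairings together with a Matlis-style argument.

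First I would construct the morphism. Writing $\RGamma_{\fm}(-) = - \otimes_R C^{\bullet}$ with $C^{\bullet}$ a bounded complex of flat unit $F$-modules, tensor-hom adjunction together with \autoref{behaviour Hom and tensor product} yields a natural identification
\[ \RHom(\RGamma_{\fm}(M), E) \;\cong\; \RHom(M, \RHom(C^{\bullet}, E)) \]
compatible with $B$-module structures. On the other hand, the evaluation pairing $\omega_R^{\bullet} \otimes_R^L C^{\bullet} \to \RGamma_{\fm}(\omega_R^{\bullet}) = E$ (which lands in degree $0$ by \autoref{local cohomology of normalized unit dc is inj hull of the residue field}) produces a canonical $B$-module morphism $\omega_R^{\bullet} \to \RHom(C^{\bullet}, E)$. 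Its target is derived $\fm$-adically complete, so this morphism factors through the derived completion of the source, which in our coherent setting equals $\omega_R^{\bullet} \otimes_R \widehat{R}$. Post-composing with $\RHom(M, -)$ defines the sought natural morphism $\alpha$ in $D(\QCoh_R^{B^r})$.

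To prove $\alpha$ is an isomorphism, forget the $B$-structure: the underlying morphism of complexes of $R$-modules is precisely the isomorphism appearing in classical Grothendieck local duality for $M \in D_{coh}$ (Hartshorne's \emph{Residues and Duality}, Chapter V, or the \texttt{Stacks Project} section on duality for local rings). Hence $\alpha$ is an isomorphism in $D(\QCoh_R^{B^r})$.

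For the ``In particular'' statement, taking $\cH^{-i}$ and using that $E$ is injective in $\Mod(R)$ (\autoref{local cohomology of normalized unit dc is inj hull of the residue field}) gives a natural isomorphism of $B$-modules
\[ \Ext^{-i}(M, \omega_R^{\bullet}) \otimes_R \widehat{R} \;\cong\; \Hom(H^i_{\fm}(M), E). \]
By \autoref{completion commutes with Frobenius pushforward} and faithful flatness of $R \to \widehat{R}$, $\Ext^{-i}(M, \omega_R^{\bullet})$ is $B$-nilpotent iff its completion is. The remaining equivalence $\Hom(H^i_{\fm}(M), E) \sim_B 0 \iff H^i_{\fm}(M) \sim_A 0$ is handled as follows. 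The forward direction follows directly from the explicit formulas in \autoref{construction Hom from F to Cartier} and \autoref{explicit_pairing_Cartier_to_F}, which show that iterated $B$-structure on $\Hom(N, E)$ factors through $\tau_N^n$ or $\kappa_N^n$. For the converse, apply the forward direction with the roles of $A$ and $B$ swapped to see that $\Hom(\Hom(H^i_{\fm}(M), E), E)$ is $A$-nilpotent, and then use that the canonical evaluation map $H^i_{\fm}(M) \hookrightarrow \Hom(\Hom(H^i_{\fm}(M), E), E)$ is a morphism of $A$-modules (analogue of \autoref{evaluation map is okay sheaf case}) which is moreover injective because $E$ is a faithful cogenerator for $\fm$-power torsion $R$-modules; subobjects of $A$-nilpotent modules are $A$-nilpotent, so $H^i_{\fm}(M) \sim_A 0$.

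The main obstacle will be verifying rigorously that $\alpha$ lives in $D(\QCoh_R^{B^r})$, i.e. tracking the $F$-module and Cartier module structures through all the derived tensor-hom identifications and the factorization through completion. Once $\alpha$ is constructed at this level, showing it is an isomorphism reduces to classical local duality, and the nil-equivalence follows by the outlined Matlis-style argument.
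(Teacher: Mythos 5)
Your overall strategy matches the paper's: construct a morphism using the complex $C^{\bullet}$, reduce the isomorphism to classical local duality by forgetting the $F$/Cartier structure, and then derive the nil-equivalence from nilpotence preservation together with a Matlis-type argument. The construction of $\alpha$ differs in details — you route through $\RHom(C^{\bullet},E)$ and derived $\fm$-adic completion, whereas the paper just tensors $\Hom^{\bullet}(M,I)$ with $C^{\bullet}$ and observes that $E$ is already an $\bighat{R}$-module — but this is a cosmetic difference and both are workable.

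There is, however, a genuine gap in your treatment of one direction of the nil-equivalence. You assert that the forward implication $H^i_{\fm}(M)\sim_A 0 \Rightarrow \Hom(H^i_{\fm}(M),E)\sim_B 0$ ``follows directly'' from the explicit formulas for the pairing, because the iterated $B$-structure on $\Hom(N,E)$ factors through $\tau_N^n$ or $\kappa_N^n$. That argument requires $H^i_{\fm}(M)$ to be nilpotent of some finite index $n$; but $H^i_{\fm}(M)$ is not coherent, and $\sim_A 0$ only gives local nilpotence, i.e.\ a (possibly unbounded) filtration by nilpotent submodules. The lemma ``Hom preserves nilpotence'' (\autoref{Hom preserves nilpotence}) is stated — and is only obviously true — for nilpotent inputs, and an inverse limit of nilpotent $B$-modules with unbounded nilpotence indices need not be (locally) nilpotent. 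Indeed, the fact that $H^i_{\fm}(M)$ is nilpotent of finite index whenever $\sim_A 0$ is precisely what the paper notes \emph{as a consequence} of this very proposition, so invoking it here is circular.

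The fix (which is what the paper's own phrasing amounts to) is to run the nilpotence-preservation argument entirely at the level of the coherent $B$-module $\Ext^{-i}(M,\omega_R^{\bullet})$, where local nilpotence automatically has finite index, and then transfer back to $H^i_{\fm}(M)$ via the Matlis biduality $H^i_{\fm}(M)\cong \Hom(\Hom(H^i_{\fm}(M),E),E)\cong \Hom(\Ext^{-i}(M,\omega_R^{\bullet}),E)$ (valid since $H^i_{\fm}(M)$ is Artinian, hence Matlis reflexive). Your converse direction already does essentially this — using that $\Hom(H^i_{\fm}(M),E)\cong\Ext^{-i}(M,\omega_R^{\bullet})\otimes\bighat{R}$ has finite nilpotence index and that the evaluation map injects $H^i_{\fm}(M)$ into a nilpotent module — so you need only apply the same pattern symmetrically rather than appealing to a ``direct'' computation.
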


\begin{proof}
	First, note that $E$ is a unit Cartier module by \autoref{local cohomology of normalized unit dc is inj hull of the residue field}, so $\RHom(\RGamma_{\fm}(M), E)$ makes also sense when $M$ is a Cartier module. Let $I$ be an injective resolution of $\omega_R^{\bullet}$. Define $\RHom(M, \omega_R^{\bullet}) \to \RHom(\RGamma_{\fm}, E)$ by \[ \Hom^{\bullet}(M, I) \to \Hom^{\bullet}(M \otimes C^{\bullet}, I \otimes C^{\bullet}) \expl{\cong}{\autoref{local cohomology of normalized unit dc is inj hull of the residue field}} \Hom^{\bullet}(M \otimes C^{\bullet}, E) \cong \RHom(\RGamma_{\fm}(M), E), \] where the last isomorphism comes from the fact that $E$ is injective as an $R$-module, hence acyclic for the $\Hom$-pairings by \autoref{Hom from F to Cartier for qalg} and \autoref{Hom from Cartier to F}.
	
	Since $E$ is an $\bighat{R}$-module, so is the right-hand side. Therefore, we have an induced morphism \[ \RHom(M, \omega_R^{\bullet}) \otimes_R \bighat{R} \to \RHom(\RGamma_{\fm}(M), E). \] The fact that this is a quasi-isomorphism can be checked after passing to $D(R)$, and there is follows from the usual local duality theorem, see \cite[{\href{https://stacks.math.columbia.edu/tag/0A84}{Tag 0A84}} and {\href{https://stacks.math.columbia.edu/tag/0A06}{Tag 0A06}}]{Stacks_Project} (the fact that their morphism agrees with ours is a straightforward computation, see the proof of \cite[{\href{https://stacks.math.columbia.edu/tag/0A6Y}{Tag 0A6Y}}]{Stacks_Project}). \\
	
	Now, let $i \in \bZ$. We have an isomorphism of $B$-modules \[ \Ext^{-i}(M, \omega_R^{\bullet}) \otimes_R \bighat{R} \cong \Hom(H^i_{\fm}(M), E), \] so we are left to show the two following statements: for all $N \in \Coh_R^{A^r}$,
	
	\[ \begin{cases*}
		N \sim_A 0 \iff N \otimes_R \bighat{R} \sim_A 0 \\
		N \sim_A 0 \iff \Hom(N, E) \sim_B 0
	\end{cases*} \]

	\noindent The first statement follows immediately from faithful flatness of $R \to \bighat{R}$ and \autoref{completion commutes with Frobenius pushforward}. To show the second statement, note that the evaluation morphism \[ N \to \Hom(\Hom(N, E), E) \] (see \autoref{evaluation map is okay sheaf case}) is, by Matlis duality, an isomorphism after taking completion. Hence, we are done by the first statement and the fact that taking $\Hom(-, E)$ preserves nilpotence (see \autoref{Hom preserves nilpotence}).
\end{proof}

\begin{rem}
	We use the notations from \autoref{local duality Frobenius version}. Suppose that $H^i_{\fm}(M) \sim_A 0$ (and hence that $\Ext^{-i}(M, \omega_R^{\bullet}) \sim_B 0$). Since $\Ext^{-i}(M, \omega_R^{\bullet})$ is coherent, then it has a finite index of nilpotence. In particular, it follows from the proof of \autoref{local duality Frobenius version} that also $H^i_{\fm}(M)$ is nilpotent (and not only locally nilpotent). For Frobenius modules, finiteness results of this kind were already observed for example in \cite{Blickle_Bockle_Cartier_modules_finiteness_results}. For Cartier modules, this can be seen much more directly, thanks to the following lemma.
\end{rem}

\begin{lemma}
	Let $M \in \Coh_R^{C^r}$. Then for all $i \geq 0$, $H^i_{\fm}(M)$ is nil-isomorphic to a coherent Cartier module.
\end{lemma}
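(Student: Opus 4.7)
The plan is to compute $\RGamma_{\fm}(M)$ via the \v{C}ech complex and exploit the fact that $\Crys_R^{C^r}$ sits as a Serre subcategory of $\IndCrys_R^{C^r}$ (\autoref{Cartier crystals form a Serre subcategory}).

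First, write $\fm = (a_1, \dots, a_n)$ and introduce the \v{C}ech complex $K^{\bullet} \coloneqq M \otimes_R C^{\bullet}$, whose $k$-th term is $\bigoplus_{|S|=k} M_{a_S}$ with $a_S = \prod_{i \in S} a_i$. By \autoref{def_local_cohomology} together with \autoref{remark def local cohomology}, this is a bounded complex of ind-coherent Cartier modules whose cohomology sheaves compute $H^i_{\fm}(M)$.

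Next, for each non-empty $S$, rewrite $M_{a_S}$ as $(j_S)_*(M|_{U_S})$, where $j_S \colon U_S = \Spec R_{a_S} \inj \Spec R$ is the open immersion and $M|_{U_S}$ is a coherent Cartier module on $U_S$. Since localizations of Noetherian $F$-finite $\bF_p$-algebras are again Noetherian and $F$-finite, \autoref{pushforward preserves crystals} applies and yields that $M_{a_S}$ is isomorphic in $\IndCrys_R^{C^r}$ to (the image of) a coherent Cartier crystal on $\Spec R$. The degree-zero term $K^0 = M$ is of course already coherent.

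Finally, view $K^{\bullet}$ as a complex in $\IndCrys_R^{C^r}$; all its terms lie in the essential image of the Serre subcategory $\Crys_R^{C^r} \inc \IndCrys_R^{C^r}$, which by \autoref{Cartier crystals form a Serre subcategory} is closed under kernels and cokernels. Therefore the cohomology sheaves of $K^{\bullet}$ computed in $\IndCrys_R^{C^r}$ again lie in $\Crys_R^{C^r}$, and exactness of the quotient functor $\IndCoh_R^{C^r} \to \IndCrys_R^{C^r}$ identifies these cohomology sheaves with the images of $H^i_{\fm}(M) = \cH^i(K^{\bullet})$. This is precisely the statement that $H^i_{\fm}(M)$ is nil-isomorphic to a coherent Cartier module. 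The main point of care is checking that \autoref{pushforward preserves crystals} indeed applies termwise, which reduces to the persistence of $F$-finiteness under localization; everything else is a formal consequence of the Serre-subcategory structure of $\Crys_R^{C^r}$ inside $\IndCrys_R^{C^r}$.
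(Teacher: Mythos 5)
Your argument is correct, and it reaches the paper's conclusion $H^i_{\fm}(M)\sim_C N$ with $N\in\Coh_R^{C^r}$ by a genuinely different route. The paper's own proof does not touch the \v{C}ech complex at all: it invokes \autoref{remark def local cohomology}.\autoref{rem:local cohomology for Cartier modules} to write $H^i_{\fm}(M)\cong\colim_n\Ext^i(R/\fm^n,M)$ as Cartier modules, notes that each surjection $R/\fm^n\to R/\fm$ is a nil-isomorphism of $F^r$-modules (the kernel $\fm/\fm^n$ is killed by a power of the Frobenius structure), and concludes via the nilpotence-preservation of the $\HHom$-pairing (\autoref{Hom preserves nilpotence}) that $H^i_{\fm}(M)\sim_C\Ext^i(R/\fm,M)$. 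That version has the advantage of being essentially one line once the displayed colimit is in place, and of producing an explicit coherent representative, namely $\Ext^i(R/\fm,M)$. Your version instead feeds the \v{C}ech complex $M\otimes_R C^{\bullet}$ through the exact localization $\IndCoh_R^{C^r}\to\IndCrys_R^{C^r}$, uses \autoref{pushforward preserves crystals} to place each term $M_{a_S}=(j_S)_*(M|_{U_S})$ in the essential image of $\Crys_R^{C^r}$, and then invokes \autoref{Cartier crystals form a Serre subcategory} to keep the cohomology there. This is more structural and shows clearly where the finiteness really comes from (Kashiwara/pushforward for open immersions plus the Serre-subcategory property), but it leans on \autoref{pushforward preserves crystals}, a more substantial input than anything the paper's argument uses. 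One small caveat that applies equally to both proofs: what each directly establishes is an isomorphism in $\IndCrys_R^{C^r}$, i.e. a zigzag of lnil-isomorphisms; promoting this to a single nil-isomorphism in the strict sense (nilpotent, not merely locally nilpotent, kernel and cokernel) requires an extra saturation step on the coherent side, which the paper leaves implicit just as you do.
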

\begin{proof}
	By \autoref{remark def local cohomology}.\autoref{rem:local cohomology for Cartier modules}, we have \[ H^i_{\fm}(M) \cong \colim \Ext^i(R/\fm^n, M) \] as Cartier modules. However, the morphisms $R/\fm^n \to R/\fm$ are nil-isomorphisms of Frobenius crystals, so $H^i_{\fm}(M) \sim_C \Ext^i(R/\fm, M)$.
\end{proof}

\section{Some applications of the duality}

\subsection{Compatibilities and the upper shriek functor}\label{upper shriek functor}

Given a finite type morphism $f \colon X \to Y$ between Noetherian schemes, there exists the upper-shriek functor $f^! \colon D^+_{\qcoh}(\cO_Y) \to D^+_{\qcoh}(\cO_X)$ (see \cite[{\href{https://stacks.math.columbia.edu/tag/0AU5}{Tag 0AU5}}]{Stacks_Project}, where it is named $f_{new}^!$). For example, for $f$ finite, we have $f^! = Rf^{\flat}$ by construction. \\

As an application of our duality, we will construct a similar functor $f^! \colon D^b(\Crys_Y^{C^r}) \to D^b(\Crys_X^{C^r})$.

We point out that we restrict our attention to Cartier crystals (and not Cartier modules) for simplicity. We leave the interested reader to work out the details in the case of Cartier modules (when the morphism $f$ has finite Tor-dimension). 

\begin{prop}\label{having a unit dc is equivalent to the naive notion}
	Let $X$ be a Noetherian $F$-finite $\bF_p$-scheme, and let $\omega_X^\bullet \in D^b_{\coh}(\Ocal_X)$ be a dualizing complex on $X$, together with an isomorphism \[ \omega_X^\bullet \to F^{r, !}\omega_X^\bullet \] in $D(\cO_X)$. Then there exists a unit dualizing complex $\Mcal$ such that $R\iota\Mcal \cong \omega_X^\bullet$ (where $\iota \colon \IndCoh_X^{C^r, \unit} \to \Mod(\Ocal_X)$ is the inclusion), and such that the diagram \[ \begin{tikzcd}
		R\iota\Mcal \arrow[rr] \arrow[d] &  & F^{r, !}R\iota\Mcal \arrow[d] \\
		\omega_X^\bullet \arrow[rr]                 &  & F^{r, !}\omega_X^\bullet                
	\end{tikzcd} \] commutes, where the top arrow is induced by the Cartier structure on $\Mcal$.
\end{prop}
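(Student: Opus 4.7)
The plan is to lift the derived-category datum $(\omega_X^\bullet, \phi)$ to a genuine complex of unit Cartier modules at the chain level and then transfer back through the equivalences of Section 3. First I would fix a bounded-below injective resolution $I^\bullet$ of $\omega_X^\bullet$ in $\QCoh_X$; by \autoref{injective qcoh is injective O_X} its terms are injective in $\Mod(\cO_X)$ as well. Because $X$ is Noetherian and $F$-finite, $F^r$ is finite, and hence $F^{r,\flat}$ is right adjoint to the exact functor $F^r_*$ and preserves injectives. Therefore $F^{r,\flat}I^\bullet$ is again a complex of injectives and computes $F^{r,!}\omega_X^\bullet$. Using injectivity I would lift the given isomorphism $\phi$ to a genuine chain map $\tilde\phi : I^\bullet \to F^{r,\flat}I^\bullet$, which is automatically a quasi-isomorphism. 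Interpreting $\tilde\phi$ as an adjoint structural morphism (see \autoref{eq def of Cartier mod}) turns $I^\bullet$ into a complex in $\QCoh_X^{C^r}$, but the Cartier structure is not yet unit on the nose.

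To rectify this I would form the unitalization
\[
\cM := \colim_{n \geq 0} F^{rn,\flat} I^\bullet,
\]
with transition maps $F^{rn,\flat}\tilde\phi$, taken term-wise in $\QCoh_X^{C^r}$. Because $F^r_*\cO_X$ is coherent, $F^{r,\flat}=\HHom(F^r_*\cO_X,-)$ commutes with filtered colimits, so by cofinality each $\cM^k$ acquires a unit structure and $\cM$ is a complex in $\QCoh_X^{C^r,\unit}$ whose adjoint structural morphism is the canonical ``shift'' isomorphism of the colimit. Since each $F^{rn,\flat}\tilde\phi$ is a quasi-isomorphism of bounded-below complexes of injectives, and filtered colimits are exact in the Grothendieck category $\Mod(\cO_X)$ (AB5), the natural map $I^\bullet \to \cM$ is a quasi-isomorphism, hence $R\iota\cM \cong \omega_X^\bullet$ in $D^+(\cO_X)$.

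It remains to show $\cM \in D^+(\IndCoh_X^{C^r,\unit})$; this is the main categorical obstacle, since the individual terms of $\cM$ are injective quasi-coherent sheaves which need not be ind-coherent as Cartier modules. I would argue through crystals: from $R\iota\cM \cong \omega_X^\bullet \in D^b_{\coh}(\cO_X)$ it follows that $\cH^i_{\QCoh_X^{C^r}}(\cM)$ has coherent underlying $\cO_X$-module, and is therefore a coherent (hence ind-coherent) Cartier module. Passing to $\QCrys_X^{C^r}$ through the equivalence $\QCrys_X^{C^r} \cong \QCoh_X^{C^r,\unit}$ of \autoref{unit Cartier modules form a Grothendieck category}, we see that each cohomology object of $\cM$ lies in the essential image of $\Crys_X^{C^r} \to \QCrys_X^{C^r}$. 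Applying \autoref{qcrys with crys cohom is crys} and its ind-crystal analogue then identifies $\cM$ with an object of $D^+(\IndCrys_X^{C^r}) \cong D^+(\IndCoh_X^{C^r,\unit})$, proving that $\cM$ is a unit dualizing complex. Finally, for the commutativity of the diagram, the unit structure of $\cM$ restricts (under $R\iota$ and the identification $R\iota\cM \cong I^\bullet$) to $\tilde\phi$, which by construction represents the given isomorphism $\phi$.
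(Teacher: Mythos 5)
Your argument is correct in outline, and after a couple of compatibility checks that you leave implicit it proves the statement, but the order of operations is genuinely different from the paper's. Both proofs begin the same way: take a complex of injective quasi-coherent sheaves representing $\omega_X^\bullet$ and lift $\phi$ to a chain-level adjoint structural morphism, giving a Cartier module structure on each term. The paper then applies $R\ind_X$ (the coalescence into $\IndCoh_X^{C^r}$) \emph{first}, and only afterwards termwise unitalizes, so that $\Mcal^u$ is an honest complex of injective objects of $\IndCoh_X^{C^r,\unit}$; the identifications $R\iota\Mcal^u = \Mcal^u \cong \omega_X^\bullet$ and the commutativity of the required square then hold on the nose. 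You reverse the two steps: you unitalize inside $\QCoh_X^{C^r}$ first, producing a complex in $\QCoh_X^{C^r,\unit}$ (not $\IndCoh_X^{C^r,\unit}$), and then argue abstractly through the equivalence $\QCoh_X^{C^r,\unit}\cong\QCrys_X^{C^r}$ and \autoref{qcrys with crys cohom is crys} that $\cM$ lies in the essential image of $D^+(\IndCoh_X^{C^r,\unit})$. This is legitimate, and it ultimately relies on the same deep input as the paper, since \autoref{qcrys with crys cohom is crys} is itself a repackaging of the $\ind_X$-acyclicity of ind-coherent Cartier modules (\autoref{ind-coh is ind-acyclic}), which is also what makes $R\ind_X$ behave well. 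The one thing you should be explicit about: because your $\cM$ is not presented as an honest complex in $\IndCoh_X^{C^r,\unit}$, you still owe the reader the check that $R\iota_{\IndCoh^{\unit}}$ agrees with $R\iota_{\QCoh^{\unit}}$ postcomposed with the derived inclusion $D^+(\IndCoh_X^{C^r,\unit}) \to D^+(\QCoh_X^{C^r,\unit})$, so that the isomorphism $R\iota\cM \cong \omega_X^\bullet$ and the compatibility of Cartier structures transport to the object of $D^+(\IndCoh_X^{C^r,\unit})$ you found; this follows from \autoref{injective unit is injective O_X} and \autoref{F^flat acyclic implies iota-acyclic}, but it is not automatic and should be said. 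The paper's ordering sidesteps this bookkeeping at the cost of invoking $R\ind_X$; yours is slightly shorter conceptually but passes through more abstract identifications.
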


\begin{rem}
	\begin{itemize}
		\item In such a situation, we say that $\omega_X^\bullet \cong F^{r, !}\omega_X^\bullet$ \emph{comes from the unit dualizing complex} $\Mcal$.
	\end{itemize}
\end{rem}

\begin{proof}
	By assumption, there exists a dualizing complex $\omega_X^\bullet \in D^b_{\coh}(\Ocal_X)$ admitting an isomorphism \[ \omega_X^\bullet \to RF^{r, \flat}\omega_X^\bullet  = F^{r, !}\omega_X^\bullet. \] 
	Let $\Ncal \coloneqq RQ(\omega_X^\bullet)$, where $Q \colon \Mod(\Ocal_X) \to \QCoh_X$ is the right adjoint of the inclusion (it exists by \cite[{\href{https://stacks.math.columbia.edu/tag/077P}{Tag 077P}}]{Stacks_Project}). By \cite[{\href{https://stacks.math.columbia.edu/tag/09T4}{Tag 09T4}}]{Stacks_Project}, $RQ$ is an inverse of the equivalence $D^+(\QCoh_X) \to D^+_{\qcoh}(\Ocal_X)$, so we obtain a commutative square \[ \begin{tikzcd}
		\omega_X^\bullet \arrow[rr, "\cong"]          &  & F^{r, !}\omega_X^\bullet          \\
		\Ncal \arrow[u, "\cong"] \arrow[rr, "\cong"'] &  & F^{r, !}\Ncal \arrow[u, "\cong"']
	\end{tikzcd} \] in $D(\Ocal_X)$. Since $Q$ preserves injectives (its left adjoint is exact), we see that by construction of right derived functors, $\Ncal$ is a complex of injective quasi-coherent modules, so we have a homotopy equivalence $\Ncal \to F^{r, \flat}\Ncal$ of complexes of quasi-coherent sheaves on $X$. In particular, each sheaf $\Ncal^j$ is a quasi-coherent Cartier module.
	
	Consider $\Mcal \coloneqq R\ind_X(\Ncal)$. Since $\Mcal$ has coherent cohomology sheaves, we know from the proof of \autoref{D^b(Coh) = D^b_{coh}(QCoh) for Cartier modules} that $\Mcal$ is quasi-isomorphic to $\Ncal$ in $D(\QCoh_X^{C^r})$, and in particular it has coherent cohomology sheaves. Since $\ind_X$ preserves injectives, $\Mcal$ is a complex of injectives in $D(\IndCoh_X^{C^r})$. By the above and \autoref{injective unit is injective O_X}, the Cartier structure on $\Mcal$ induces an isomorphism \[ \Mcal \to F^{r, \flat}\Mcal = F^{r, !}\Mcal \] in $D^+(\IndCoh_X^{C^r})$. 
	
	Note that $\Mcal^u$ consists on injective objects in $\IndCoh_X^{C^r, \unit}$. Indeed, it is enough to show that they are injective in $\IndCoh_X^{C^r}$. This follows from the facts that $F^{r, \flat}$ preserves injectives, and that a filtered colimit of injectives in $\IndCoh_X^{C^r}$ is injective (this category is Noetherian by \autoref{unit Cartier modules form a Grothendieck category}).
	
	Thus, $R\iota\Mcal^u = \Mcal^u$ so by the above discussion, we would be done if we knew that the natural map
	\[ \Mcal \to \Mcal^u \]
	was a quasi-isomorphism. 
	
	This morphism is by construction a morphism of complexes, given by the colimit of the morphisms \[ (F^{nr})^\flat\Mcal \to (F^{(n + 1)r})^\flat\Mcal. \] Hence, it is enough that all these maps are quasi-isomorphisms (taking filtered colimits is exact). For $e = 0$ this holds by construction. For $e \geq 1$, this is a consequence of the fact that $F^{rn, !}$ preserves isomorphisms in the derived category, and $F^{nr, !}\Mcal = F^{nr, \flat}\Mcal$ ($\Mcal$ is a complex of injective objects).
\end{proof}

In the proof of \autoref{having a unit dc is equivalent to the naive notion}, it may look like there could be many different choices of unit dualizing complexes inducing the same isomorphism $\omega_X^\bullet \cong F^{r, !}\omega_X^\bullet$. In fact, this is not the case.

\begin{prop}\label{naturality of induced unit dc}
	Let $X$, $\omega_X^\bullet$ be as in \autoref{having a unit dc is equivalent to the naive notion}, and assume $\omega_X^\bullet$ comes from two unit dualizing complexes $\Mcal_1$ and $\Mcal_2$. If $X$ is semi-separated, then there is a unique isomorphism $\Mcal_1 \cong \Mcal_2$ in $D^b_{\coh}(\IndCoh_X^{C^r, \unit})$ such that the diagram \[ \begin{tikzcd}
		R\iota(\cM_1) \arrow[rr, "\cong"] \arrow[rd, "\cong"'] &                    & R\iota(\cM_2) \arrow[ld, "\cong"] \\
		& \omega_X^{\bullet} &                                  
	\end{tikzcd} \] commutes.
\end{prop}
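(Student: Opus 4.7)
The plan is to apply the duality of \autoref{main thm duality for crystals} with $\cM_2$ as the unit dualizing complex, and to show that both $\cM_1$ and $\cM_2$ are then sent to the unit $F$-crystal $\cO_X$ by canonical isomorphisms determined by $\alpha_1$ and $\alpha_2$. Denote this duality by $\Dd$, so that $\Dd(\cM_i)$ is represented by $\Rcal\HHom(\cM_i,\cM_2)$ equipped with the $F$-structure from \autoref{construction Hom from Cartier to F}. Since $\omega_X^\bullet$ is dualizing, evaluation at the identity gives a canonical iso $\cO_X \xrightarrow{\sim} \Rcal\HHom_{\cO_X}(\omega_X^\bullet,\omega_X^\bullet)$, which combined with $\alpha_1$ and $\alpha_2$ produces canonical $\cO_X$-linear identifications $\cO_X \cong \Rcal\HHom(\cM_i,\cM_2)$, sending $1$ to $\id_{\cM_2}$ for $i=2$ and to $\alpha_2^{-1}\circ\alpha_1$ for $i=1$.

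The heart of the argument is to check that under these identifications the $F$-structure on $\Rcal\HHom(\cM_i,\cM_2)$ matches the tautological unit one on $\cO_X$. For $i=2$ this is automatic: \autoref{construction Hom from Cartier to F} sends $\id_{\cM_2}$ to $(\kappa_{\cM_2}^\flat)^{-1}\circ F^{r,\flat}(\id_{\cM_2})\circ\kappa_{\cM_2}^\flat = \id_{\cM_2}$, so $1$ is Frobenius-fixed. For $i=1$, the same formula sends $\alpha_2^{-1}\circ\alpha_1$ to $(\kappa_{\cM_2}^\flat)^{-1}\circ F^{r,\flat}(\alpha_2^{-1}\circ\alpha_1)\circ\kappa_{\cM_1}^\flat$; a direct chase combining the compatibility squares of $\alpha_1$ and $\alpha_2$ (from \autoref{having a unit dc is equivalent to the naive notion}) shows this equals $\alpha_2^{-1}\circ\alpha_1$. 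Since a $p^r$-linear endomorphism of $\cO_X$ fixing $1$ must be the unit structure, both $\Dd(\cM_i)$ are canonically isomorphic to $\cO_X$ as unit $F$-modules. Composing these and applying $\Dd^{-1}$ yields an isomorphism $\cM_1\cong\cM_2$ in $D^b(\Crys_X^{C^r})\cong D^b_{\coh}(\IndCoh_X^{C^r,\unit})$, and by construction its image under $R\iota$ is $\alpha_2^{-1}\circ\alpha_1$.

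Uniqueness follows by another application of $\Dd$: any morphism $\cM_1\to\cM_2$ in $D^b(\Crys_X^{C^r})$ corresponds to an element of $\Hom_{D^b(\Crys_X^{F^r})}(\cO_X,\cO_X)$, which by \autoref{main thm Bhatt Lurie} equals $H^0(X_{\et},\Ff_q)$. The forgetful map to $\Hom_{D(\cO_X)}(\cO_X,\cO_X)=H^0(X,\cO_X)$ is the inclusion of Artin--Schreier constants, hence injective, so at most one morphism can project to the prescribed $\alpha_2^{-1}\circ\alpha_1$. I expect the main obstacle to be the $F$-structure computation for $\cM_1$: the compatibility hypothesis is an equality in $D(\cO_X)$, and to turn it into the required equality between explicit elements of $\Rcal\HHom_{\cO_X}(\omega_X^\bullet,\omega_X^\bullet)$ one has to work at the level of honest morphisms between injective resolutions, which is legitimate by \autoref{injective unit is injective O_X} (an injective in $\IndCoh_X^{C^r,\unit}$ is already injective as an $\cO_X$-module).
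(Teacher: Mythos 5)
Your plan follows the same strategic route as the paper: dualize, reduce to a statement about $\cO_X$ with an $F$-module structure, conclude. There are, however, two points of genuine divergence worth flagging.

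First, you pin the base of the duality to be $\cM_2$, and consequently have to verify that both $\Dd(\cM_i)$ carry the \emph{canonical} (Artin--Schreier) $F$-structure on $\cO_X$, via the "direct chase" you anticipate. The paper takes an arbitrary unit dualizing complex $\Omega$ — explicitly noting the Cartier structure on $\Omega$ need not match $\theta$ — and only needs to observe that both $\Dd(R\iota\cM_j)$ are isomorphic to $(\cO_X,\theta')$ for the \emph{same} $\theta'$, without ever caring whether $\theta'$ is the standard Frobenius. Since $\cO_X$ admits many non-isomorphic unit $F^r$-structures, the identity of the structure really does have to be tracked, and the paper's formulation is cheaper: once both dualize to the same sheaf-level $F$-module, no further computation is needed. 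Your additional claim that $\theta'$ is canonical is true and is a correct (if more laborious) route, and you correctly identified the technical cost: passing from a commuting square in $D(\cO_X)$ to honest representatives requires \autoref{injective unit is injective O_X}.

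Second, your uniqueness argument is more elaborate than necessary and has a gap. You invoke a "forgetful map" from $\Hom_{D^b(\Crys_X^{F^r})}(\cO_X,\cO_X)$ to $\Hom_{D(\cO_X)}(\cO_X,\cO_X)$, but morphisms of crystals are equivalence classes up to nilpotence, so this map is not an a priori well-defined construction in the setup of this paper (one would have to pass through unit $F$-modules, which the paper does not develop). The paper's uniqueness argument is much shorter: since $(R\iota\cN)^u\cong\cN$, unitalization provides a retraction of $R\iota$ on $D^b_{\coh}(\IndCoh_X^{C^r,\unit})$, so $R\iota$ is faithful and the lift of $\alpha_2^{-1}\circ\alpha_1$ is unique. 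Your Artin--Schreier route can be made to work, but the unitalization identity is the more direct tool and is already at hand.
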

\begin{proof}
	For the existence, it is enough to show that $R\iota\Mcal_1 \cong R\iota\Mcal_2$ in $D(\QCoh_X^{C^r})$, since $(R\iota\Ncal)^u \cong \Ncal$ for all $\Ncal \in D^+(\IndCoh_X^{C^r, \unit})$. 
	
	Let $\Omega$ be any unit dualizing complex such that $R\iota(\Omega) \cong \omega_X^\bullet$ (we do not mind matching the Cartier structure on $\Omega$ and the isomorphism $\omega_X^\bullet \cong F^{r, !}\omega_X^\bullet$), and let $\Dd$ be the duality functor induced by $\Omega$. In addition, let $\theta \colon F^r_*\omega_X^\bullet \to \omega_X^\bullet$ be the morphism corresponding to $\omega_X^\bullet \cong F^{r, !}\omega_X^\bullet$, and define $\theta' \coloneqq \Rcal\HHom(\theta, \omega_X^\bullet) \colon \Ocal_X \to F^r_*\Ocal_X$.
	
	Let $j = 1, 2$. Since the diagram \[ \begin{tikzcd}
		F^r_*R\iota\Mcal_j \arrow[rr] \arrow[d, "\cong"'] &  & R\iota\Mcal_j \arrow[d, "\cong"] \\
		F^r_*\omega_X^\bullet \arrow[rr, "\theta"']                 &  & \omega_X^\bullet                
	\end{tikzcd} \] commutes, we obtain that 
	\[ \begin{tikzcd}
		\Dd(R\iota\Mcal_j) \arrow[rr] \arrow[d, "\cong"'] &  & F^r_*\Dd(R\iota\Mcal_j) \arrow[d, "\cong"] \\
		\Ocal_X \arrow[rr, "\theta'"']                 &  & F^r_*\Ocal_X              
	\end{tikzcd} \] also commutes. Given that $\Ocal_X$ is a sheaf, we must have an isomorphism\[ \Dd(R\iota\Mcal_j) \cong (\Ocal_X, \theta') \] as Frobenius modules. In particular $\Dd(R\iota\Mcal_1) \cong \Dd(R\iota\Mcal_2)$. Applying $\Dd$ again concludes the existence of an isomorphism (the fact that the diagram in the statement commutes is automatic by construction). The uniqueness is immediate, since $R\iota(\cN)^u \cong \cN$ for all $\cN \in D^+(\IndCoh_X^{C^r, \unit})$.
\end{proof}

Let $X$ be a Noetherian, $F$-finite and semi-separated $\bF_p$-scheme. Because of \autoref{naturality of induced unit dc}, we will now see a unit dualizing complex and an element $\omega_X^{\bullet} \in D^b_{\coh}(\cO_X)$ with an isomorphism $\omega_X^{\bullet} \cong F^!\omega_X^{\bullet}$ as the same objects.

\begin{cor}\label{induced unit dualizing complex by a morphism}
	Let $f \colon X \to Y$ be a finite type morphism between Noetherian, $F$-finite and semi-separated $\bF_p$-schemes. Assume that $Y$ has a unit dualizing complex $\omega_Y^{\bullet}$. Then $X$ has a unique induced unit dualizing complex $\omega_X^{\bullet}$, which agrees with $f^!\omega_Y^{\bullet}$ when seen as a complex of quasi-coherent modules, and which matches the Cartier structures.
\end{cor}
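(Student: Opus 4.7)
The plan is to apply \autoref{having a unit dc is equivalent to the naive notion} to the complex $f^!\omega_Y^\bullet$, and then invoke \autoref{naturality of induced unit dc} for uniqueness. Since $f$ is of finite type between Noetherian schemes and $\omega_Y^\bullet$ is a dualizing complex, $f^!\omega_Y^\bullet \in D^b_{\coh}(\cO_X)$ is itself a dualizing complex by standard Grothendieck duality. What remains is to produce a distinguished isomorphism to $F_X^{r,!}f^!\omega_Y^\bullet$ matching the Cartier structure of $\omega_Y^\bullet$.

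For this, I would exploit the fact that the absolute Frobenius commutes with every morphism, so that $f \circ F_X^r = F_Y^r \circ f$. At the level of upper-shriek functors, functoriality of $(-)^!$ under composition yields a canonical natural isomorphism $F_X^{r,!} \circ f^! \cong f^! \circ F_Y^{r,!}$. Applying $f^!$ to the isomorphism $\omega_Y^\bullet \cong F_Y^{r,!}\omega_Y^\bullet$ induced by the Cartier structure on $\omega_Y^\bullet$ and composing with the base change gives the desired isomorphism
\[ f^!\omega_Y^\bullet \;\xrightarrow{\cong}\; f^! F_Y^{r,!}\omega_Y^\bullet \;\xleftarrow{\cong}\; F_X^{r,!} f^!\omega_Y^\bullet. \]

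With this data, \autoref{having a unit dc is equivalent to the naive notion} produces a unit dualizing complex $\omega_X^\bullet \in D^+(\IndCoh_X^{C^r,\unit})$ such that $R\iota(\omega_X^\bullet) \cong f^!\omega_Y^\bullet$ in $D(\QCoh_X)$ and whose Cartier structure, read off at the level of quasi-coherent sheaves, recovers the isomorphism constructed above; in particular it matches the Cartier structure on $\omega_Y^\bullet$ in the sense that it is obtained from it by applying $f^!$. Uniqueness up to unique isomorphism in $D^b_{\coh}(\IndCoh_X^{C^r,\unit})$ is then immediate from \autoref{naturality of induced unit dc}, since any two candidates have the same underlying quasi-coherent complex $f^!\omega_Y^\bullet$ and by construction induce the same isomorphism to its $F_X^{r,!}$.

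I do not expect any serious obstacle here: the only mildly delicate step is the naturality of the base change $F_X^{r,!} \circ f^! \cong f^! \circ F_Y^{r,!}$, which is a general consequence of the functoriality of $(-)^!$ applied to the equality $f \circ F_X^r = F_Y^r \circ f$. Everything else is a direct appeal to \autoref{having a unit dc is equivalent to the naive notion} and \autoref{naturality of induced unit dc}.
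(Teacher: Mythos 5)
Your proof follows essentially the same route as the paper's: set $\omega_X^\bullet := f^!\omega_Y^\bullet$, transport the isomorphism $\omega_Y^\bullet \cong F^{r,!}\omega_Y^\bullet$ along $f^!$, and invoke \autoref{having a unit dc is equivalent to the naive notion} and \autoref{naturality of induced unit dc}. The only difference is that you explicitly spell out the identification $F_X^{r,!}\circ f^! \cong f^!\circ F_Y^{r,!}$ coming from $f\circ F_X^r = F_Y^r\circ f$ and functoriality of $(-)^!$, which the paper leaves implicit in the phrase ``applying $f^!$ to this isomorphism''; that is a correct and welcome clarification, not a deviation.
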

\begin{proof}
	Consider the induced functor $f^! \colon D^+_{\coh}(\cO_Y) \to D^+_{\coh}(\cO_X)$ constructed in \cite[{\href{https://stacks.math.columbia.edu/tag/0AU5}{Tag 0AU5}}]{Stacks_Project} (they name it $f^!_{new}$), and let $\omega_X^{\bullet} \coloneqq f^!\omega_Y^{\bullet}$. Since $\omega_Y^{\bullet}$ is a unit dualizing complex, there exists an isomorphism $\omega_Y^{\bullet} \cong F^{r, !}\omega_Y^{\bullet}$. Applying $f^!$ to this isomorphism, there is an isomorphism $\omega_X^{\bullet} \cong F^{r, !}\omega_X^{\bullet}$. We then conclude by \autoref{having a unit dc is equivalent to the naive notion} and \autoref{naturality of induced unit dc}.
\end{proof}

Thanks to \autoref{induced unit dualizing complex by a morphism}, we can now construct upper shriek functors for Cartier crystals.

\begin{defn}\label{definition upper shriek functor}
	Let $f \colon X \to Y$ be a finite type morphism between Noetherian, $F$-finite and semi-separated $\bF_p$-schemes, and assume that $Y$ has a unit dualizing complex $\omega_Y^{\bullet}$. Let $\omega_X^{\bullet}$ be the induced unit dualizing complex on $X$ (see \autoref{induced unit dualizing complex by a morphism}). In addition, let $\bD_X$ and $\bD_Y$ denote, respectively, the induced duality functors from $\omega_X^{\bullet}$ and $\omega_Y^{\bullet}$. 
	
	We define $f^! \colon D^b(\Crys_Y^{C^r}) \to D^b(\Crys_X^{C^r})$ by the formula \[ f^! \coloneqq \bD_X \circ f^* \circ \bD_Y \] (note that $f^* \colon \Crys_Y^{F^r} \to \Crys_X^{C^r}$ is exact by \autoref{main thm Bhatt Lurie}, and the analogous fact for étale $\bF_q$-sheaves).
\end{defn}

\begin{lemma}\label{counit of adjuntion for upper shriek}
	Same notations as in \autoref{definition upper shriek functor}. If $f$ is proper, there exists a unique morphism $Rf_*\omega_X^{\bullet} \to \omega_Y^{\bullet}$ in $D^b(\Coh_Y^{C^r})$, which corresponds to the usual counit of the adjunction $Rf_* \dashv f^!$ at the level of $\cO_X$-modules.
\end{lemma}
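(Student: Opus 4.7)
The plan is to promote the classical Grothendieck duality counit $\eta_0 \colon Rf_* f^!\omega_Y^{\bullet} \to \omega_Y^{\bullet}$ in $D^b_{\coh}(\cO_Y)$ to a morphism of complexes of Cartier modules, and then verify that no other such lift exists. Existence will follow from \autoref{induced unit dualizing complex by a morphism}: since the isomorphism $\omega_X^{\bullet} \cong F^{r,!}\omega_X^{\bullet}$ encoding the Cartier structure on $\omega_X^{\bullet}$ is precisely $f^!$ applied to the corresponding isomorphism on $\omega_Y^{\bullet}$, naturality of the Grothendieck duality adjunction — combined with $F_Y \circ f = f \circ F_X$ — forces $\eta_0$ to already be compatible with the Cartier structures.

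Concretely, I would fix an injective resolution $J^{\bullet}$ of $\omega_Y^{\bullet}$ in $\IndCoh_Y^{C^r, \unit}$ (which by \autoref{injective unit is injective O_X} is simultaneously $\cO_Y$-injective), and an analogous $I^{\bullet}$ on $X$. Thanks to \autoref{inj qalg is inj O_X} and the fact that $\Mod(\cO_X[C^r])$-injectives are flasque, the complex $f_* I^{\bullet}$ represents $Rf_*\omega_X^{\bullet}$ both in $D^b_{\coh}(\cO_Y)$ and in $D^b(\Coh_Y^{C^r})$, following the template of \autoref{all derived pushforwards are the same for F-modules}. Then $\eta_0$ is representable by an actual chain map $f_* I^{\bullet} \to J^{\bullet}$, and the compatibility with Cartier structures follows from a naturality square involving the adjunction counit of $Rf_* \dashv f^!$ and the identity $F^{r,!}\circ f^! \cong f^! \circ F^{r,!}$.

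The main obstacle will be uniqueness in $D^b(\Coh_Y^{C^r})$, since the forgetful functor to $D^b_{\coh}(\cO_Y)$ need not be faithful: two lifts could a priori be homotopic as complexes of $\cO_Y$-modules without being homotopic as complexes of Cartier modules. My approach is to route the argument through the duality $\Dd_Y$ of \autoref{main thm duality}: since $\Dd_Y(\omega_Y^{\bullet}) \cong (\cO_Y, \tau_{\cO_Y}^{\can})$ and $\Dd_X(\omega_X^{\bullet}) \cong (\cO_X, \tau_{\cO_X}^{\can})$ (compute using \autoref{evaluation map is okay sheaf case}), anticipating the compatibility of $\Dd$ with proper pushforwards — which is the other half of the main theorem being established in this paper — yields a natural identification
\[ \Hom_{D^b(\Coh_Y^{C^r})}(Rf_*\omega_X^{\bullet}, \omega_Y^{\bullet}) \cong \Hom_{D^b(\Coh_Y^{F^r})}(\cO_Y, Rf_*\cO_X). \]
An element of the Cartier-side $\Hom$ lifting $\eta_0$ must correspond via duality to an $F$-module morphism whose underlying $\cO_Y$-linear image in $\Hom_{D^b_{\coh}(\cO_Y)}(\cO_Y, Rf_*\cO_X)$ is the tautological one (dual to $\eta_0$); this constraint, together with the $F$-module compatibility, pins down the element uniquely and proves the lemma.
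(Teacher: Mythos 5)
Your uniqueness argument is circular. The natural identification
\[
\Hom_{D^b(\Coh_Y^{C^r})}(Rf_*\omega_X^{\bullet}, \omega_Y^{\bullet}) \cong \Hom_{D^b(\Coh_Y^{F^r})}(\cO_Y, Rf_*\cO_X)
\]
that you invoke rests on the compatibility $\bD_Y \circ Rf_* \cong Rf_* \circ \bD_X$ at the crystal level — that is, on \autoref{compatibility between duality and pushforwards} — and the paper proves that result only after and by means of the present lemma (its proof explicitly composes with the Cartier-module trace map you are trying to construct). You flag that you are "anticipating" this compatibility, but that is precisely what cannot be done: it is downstream of, not upstream of, the statement at hand.

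The paper's trick avoids this by working only with the $\cO_X$-module level duality $D_Y$, which is available independently. Applying $D_Y$ to the diagram expressing the $F^r_*$-naturality of $\theta$ produces a commutative square whose source is $\cO_Y$ (concentrated in degree $0$) and whose target $D_Y(Rf_*\omega_X^\bullet) \cong Rf_* D_X(\omega_X^\bullet)$ is supported in degrees $\geq 0$. The morphism therefore factors through $\cH^0$, which is an actual sheaf, and the commutativity of the square forces $\theta''\colon \cO_Y \to \cH^0(D_Y(Rf_*\omega_X^\bullet))$ to be $F$-linear. One then observes that $\cH^0(D_Y(Rf_*\omega_X^\bullet)) = \cH^0(\bD_Y(Rf_*\omega_X^\bullet))$ as $F$-modules (both dualities agree under the forgetful functor), obtains a lift in $D^b_{\coh}(\IndCoh_Y^{F^r})$, and applies $\bD_Y$. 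Uniqueness is automatic because on degree-$0$ sheaves the forgetful functor $\Coh_Y^{F^r} \to \Coh_Y$ is faithful. Your existence sketch — representing $\theta$ by a chain map between $\cO_Y$-injective resolutions of unit Cartier modules and invoking naturality — is plausible in spirit but leaves the crucial step unjustified: commutativity of the naturality square in $D^b(\cO_Y)$ does not by itself produce a morphism in $D^b(\Coh_Y^{C^r})$, as you yourself observe when you note that the forgetful functor need not be faithful on derived categories. The degree-counting argument the paper uses is exactly what resolves both issues at once.
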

\begin{proof}
	Let $\theta \colon Rf_*\omega_X^{\bullet} \to \omega_Y^{\bullet}$ be the natural trace morphism in $D^b(\cO_Y)$ coming from the adjunction $Rf_* \dashv f^!$. By naturality, the diagram \[ \begin{tikzcd}
		F^r_*Rf_*\omega_X^{\bullet} \arrow[d] \arrow[r, "F^r_*\theta"] & F^r_*\omega_Y^{\bullet} \arrow[d] \\
		Rf_*\omega_X^{\bullet} \arrow[r, "\theta"']                    & \omega_Y^{\bullet}               
	\end{tikzcd} \] commutes in $D^b(\cO_X)$. Let $D_X$ (resp. $D_Y$) denote the duality functor with respect to $\omega_X^{\bullet}$ (resp. $\omega_Y^{\bullet}$) at the level of coherent $\cO_X$-modules.

 	Applying $D_Y$ and letting $\theta' \coloneqq D_Y(\theta)$, the diagram 
	\[ \begin{tikzcd}
		F^r_*D_Y(Rf_*\omega_X^{\bullet})      &  & F^r_*\cO_Y \arrow[ll, "F^r_*\theta'"'] \\
		D_Y(Rf_*\omega_X^{\bullet}) \arrow[u] &  & \cO_Y \arrow[u] \arrow[ll, "\theta'"] 
	\end{tikzcd} \] also commutes. By \cite[{\href{https://stacks.math.columbia.edu/tag/0AUE}{Tag 0AUE}}]{Stacks_Project}, we have $Rf_* \circ D_Y = D_X \circ Rf_*$, so the complexes on the left are supported in degree $\geq 0$. Since $\cO_Y$ is supported in degree $0$, $\theta'$ factors as \[ \cO_Y \xrightarrow{\theta''} \cH^0(D_Y(Rf_*\omega_X^{\bullet})) \to D_Y(Rf_*\omega_X^{\bullet}). \] By the commutativity above, $\theta''$ is a morphism of Frobenius modules.

	Let $\bD_Y$ denote the duality with respect to the unit dualizing complex $\omega_Y^{\bullet}$. As Frobenius modules, $\cH^0(D_Y(Rf_*\omega_X^{\bullet})) = \cH^0(\bD_Y(Rf_*\omega_X^{\bullet}))$, so we obtain a morphism \[ \cO_Y \to \bD_Y(Rf_*\omega_X^{\bullet}) \] in $D^b(\Coh_X^{C^r})$. Applying $\bD_Y$ gives the statement.
\end{proof}

As a consequence, we deduce a compatibility between proper pushforwards and our duality functor.

\begin{cor}\label{compatibility between duality and pushforwards}
	Same notations as in \autoref{definition upper shriek functor}. Let $(A, B) \in \{(F, C), (C, F)\}$. If $f$ is proper, then there is a natural isomorphism \[ Rf_* \circ \bD_X \cong \bD_Y \circ Rf_* \] as functors $D^b(\Coh_X^{A^r})^{op} \to D^b(\Coh_X^{B^r})$.
	
	The same result also holds as functors $ D^b(\Crys_X^{A^r})^{op} \to D^b(\Crys_X^{B^r})$.
\end{cor}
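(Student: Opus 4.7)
The plan is to build the natural isomorphism at the level of the $\Rcal\HHom$ pairings and then descend through the equivalences entering the definition of $\bD$. Fix $(A,B)\in\{(F,C),(C,F)\}$ and let $\cM\in D^b(\Coh_X^{A^r})$. Using an injective resolution of $\omega_X^\bullet$ in $\IndCoh_X^{A^{op}, \unit}$ (i.e.\ unit $B$-modules), the pairings of \autoref{Hom from F to Cartier for qalg} and \autoref{Hom from Cartier to F} let us form the composite
\[
 \alpha_\cM\colon Rf_*\,\Rcal\HHom\bigl(\cM,\omega_X^\bullet\bigr)\;\longrightarrow\;\Rcal\HHom\bigl(Rf_*\cM,\,Rf_*\omega_X^\bullet\bigr)\;\longrightarrow\;\Rcal\HHom\bigl(Rf_*\cM,\omega_Y^\bullet\bigr),
\]
where the first arrow is the canonical map coming from functoriality of $\Rcal\HHom$ (applied in the category $\IndCoh^{B^r}$), and the second arrow uses the trace morphism $Rf_*\omega_X^\bullet\to\omega_Y^\bullet$ of \autoref{counit of adjuntion for upper shriek}. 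Since the trace is a morphism of complexes of $B$-modules by that lemma, and since $Rf_*$ of a complex of $A$-modules is again a complex of $A$-modules (\autoref{all derived pushforwards are the same for F-modules}, together with its Cartier counterpart via \autoref{pushforward of qalg is qalg}), $\alpha_\cM$ is a well-defined morphism in $D^b_{\coh}(\IndCoh_Y^{B^r})$.

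Next, I would check that $\alpha_\cM$ is a quasi-isomorphism. Thanks to the commutative squares in \autoref{Hom from F to Cartier for qalg}, \autoref{Hom from Cartier to F} and \autoref{all derived pushforwards are the same for F-modules}, this can be verified after applying the forgetful functor to $D(\cO_Y)$. But there the composite $\alpha_\cM$ is the classical Grothendieck duality isomorphism
\[
 Rf_*\,R\HHom_{\cO_X}(\cM,f^!\omega_Y^\bullet)\;\xrightarrow{\;\sim\;}\;R\HHom_{\cO_Y}(Rf_*\cM,\omega_Y^\bullet),
\]
(\cite[{\href{https://stacks.math.columbia.edu/tag/0AU3}{Tag 0AU3}} / {\href{https://stacks.math.columbia.edu/tag/0B6S}{Tag 0B6S}}]{Stacks_Project}), so $\alpha_\cM$ is indeed an isomorphism. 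Naturality in $\cM$ is automatic from the construction, so $\alpha_\bullet$ is a natural isomorphism of triangulated functors $D^b(\Coh_X^{A^r})^{op}\to D^b_{\coh}(\IndCoh_Y^{B^r})$.

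To pass from $\Rcal\HHom$ to $\bD$ one has to translate this isomorphism through the equivalences $D^b_{\coh}(\IndCoh^{B^r})\cong D^b(\Coh^{B^r})$ of \autoref{D^b(Coh) = D^b_{coh}(QCoh) for Cartier modules} (and the analogous \autoref{D^b(Coh) = D^b_{coh}(QCoh) for F-modules} in the $F$-case). Because those equivalences are compatible with $Rf_*$ (by \autoref{all derived pushforwards are the same for F-modules} and its Cartier analog), the isomorphism $\alpha_\bullet$ descends to a natural isomorphism $Rf_*\circ\bD_X\cong\bD_Y\circ Rf_*$ of functors $D^b(\Coh_X^{A^r})^{op}\to D^b(\Coh_Y^{B^r})$, proving the first claim.

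Finally, for the crystal statement, observe that $Rf_*$ preserves $\LNil$ (apply it to nilpotent objects: their images remain nilpotent by functoriality, so their locally nilpotent filtered colimits do as well), and that $\bD_X$, $\bD_Y$ send nil-quasi-isomorphisms to nil-quasi-isomorphisms by \autoref{nil-isomorphisms are preserved under Hom}. Consequently both composites descend through the localizations defining $D^b(\Crys_X^{A^r})$ and $D^b(\Crys_Y^{B^r})$ (\autoref{main thm duality for crystals}), and the natural isomorphism obtained above passes to crystals by the universal property of the localization. The main delicate point is checking that $\alpha_\cM$ is a map of $B$-modules (not just of $\cO_Y$-modules); this is the content of the compatibility of the trace with the Cartier/$F$-structure provided by \autoref{counit of adjuntion for upper shriek}, and is the only non-formal ingredient in the argument.
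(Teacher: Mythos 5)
Your proposal follows essentially the same route as the paper: form the comparison map from the $\Rcal\HHom$ pairing, compose with the trace of \autoref{counit of adjuntion for upper shriek}, and check it is an isomorphism after forgetting to $D(\cO_Y)$ by classical Grothendieck duality (\cite[{\href{https://stacks.math.columbia.edu/tag/0AUE}{Tag 0AUE}}]{Stacks_Project}), then pass to crystals by localization.

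The one thing you underplay is the construction of the first arrow
$Rf_*\,\Rcal\HHom(\cM,\omega_X^\bullet)\to\Rcal\HHom(Rf_*\cM,Rf_*\omega_X^\bullet)$. You call it ``the canonical map coming from functoriality of $\Rcal\HHom$,'' but for this map to exist in $D^b_{\coh}(\IndCoh_Y^{B^r})$ you must be able to compute the $\Rcal\HHom$ with a resolution of $\omega_X^\bullet$ that is simultaneously (i) by injectives of $\IndCoh_X^{B^r,\unit}$ so it computes the derived pairing, and (ii) by $f_*$-acyclic objects of $\IndCoh_X^{B^r}$ so that $f_*$ of that resolution computes $Rf_*$. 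The paper isolates exactly this as the key technical claim: for $\cI\in\QCoh_X^{C^r,\unit}$ injective and any $\cN$, the sheaf $\HHom(\cN,\cI)$ is $f_*$-acyclic in $\IndCoh_X^{B^r}$, which is reduced (via \autoref{inj qalg is inj O_X}, \autoref{injective unit is injective O_X} and \autoref{all derived pushforwards are the same for F-modules}) to the $\cO_X$-module level and verified there. You identify the trace being a morphism of $B$-modules as ``the only non-formal ingredient,'' but this acyclicity is a second genuine ingredient that your outline does not establish; the lemmas you cite (\autoref{Hom from F to Cartier for qalg}, \autoref{Hom from Cartier to F}) give compatibility of the pairings with the forgetful functor, not $f_*$-acyclicity. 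Once that claim is supplied, the rest of your argument goes through as the paper's does.
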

Note that since both $X$ and $Y$ are semi-separated, the morphism $f$ is semi-separated too. Hence, by \autoref{all derived pushforwards are the same for F-modules} and \autoref{rem_ok_if_semisep}, there is no ambiguity in what we mean by $Rf_*$.
\begin{proof}
	Let $\cM \in D^b(\Coh_Y^{A^r})$. Note that it is enough to show the result for $A = F$ (indeed, for $\cM \in D^b(\Coh_Y^{C^r})$, write $\cM = \bD(\bD(\cM))$ and apply the result for $\bD(\cM) \in D^b(\Coh_Y^{F^r})$), so let us assume that we are in this case. We want to show that \[ Rf_*\cR\HHom(\cM, \omega_X^{\bullet}) \cong \cR\HHom(Rf_*\cM, \omega_Y^{\bullet}) \] 
	First, we claim that for any injective object $\cI \in \IndCoh_X^{C^r, \unit}$ and any $\cN \in \IndCoh_X^{F^r}$, the sheaf $\HHom(\cN, \cI)$ is acyclic for $f_*$ in $\IndCoh_X^{C^r}$. To see this, recall that by \autoref{inj qalg is inj O_X} and \autoref{all derived pushforwards are the same for F-modules}, it suffices to show the claim for the functor $Rf_*$ taken in $\Mod(\cO_X)$. Then, it is a consequence of \autoref{injective unit is injective O_X} and the same argument as in \cite[Corollary 2.3.(b)]{Milne_Etale_Cohomology}).
	
	Thanks to this claim, we have a natural morphism \[ Rf_*\cR\HHom(\cM, \omega_X^{\bullet}) \to \cR\HHom(Rf_*\cM, Rf_*\omega_X^{\bullet}). \] 
	
	Now, consider the composition \[ Rf_*\cR\HHom(\cM, \omega_X^{\bullet}) \to \cR\HHom(Rf_*\cM, Rf_*\omega_X^{\bullet}) \expl{\to}{\autoref{counit of adjuntion for upper shriek}} \cR\HHom(Rf_*\cM, \omega_Y^{\bullet}). \]
	To show that this is an isomorphism, we only have to show this at the level of quasi-coherent sheaves. Hence, this is a consequence of \cite[{\href{https://stacks.math.columbia.edu/tag/0AUE}{Tag 0AUE}}]{Stacks_Project}.
\end{proof}

\begin{prop}\label{f^! does not depend on the unit dc}
	Same notations as in \autoref{definition upper shriek functor}. Then the following holds:
	\begin{itemize}
		\item if $f$ is an open immersion, then $f^! = (\cdot)|_X$;
		\item if $f$ is proper, then $f^!$ is the right adjoint to $Rf_*$;
		\item if $f$ is separated, then $f^!$ does not depend on the chosen unit dualizing complex.
	\end{itemize}
\end{prop}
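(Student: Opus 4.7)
I plan to handle the three bullets in order, with (iii) reduced to (i) and (ii) via Nagata compactification.

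For (i), the point is that when $f\colon X \inj Y$ is an open immersion, the induced unit dualizing complex satisfies $\omega_X^{\bullet} = \omega_Y^{\bullet}|_X$ (since at the level of $\cO_Y$-modules $f^! = f^*$ for open immersions, and \autoref{naturality of induced unit dc} identifies it with the induced unit dualizing complex). The pullback $f^*$ on both $F$-crystals and Cartier crystals is just restriction $(\cdot)|_X$, and restriction commutes with $\cR\HHom$. Thus for $\cM \in D^b(\Crys_Y^{C^r})$,
\[ f^!\cM = \cR\HHom\bigl(\cR\HHom(\cM, \omega_Y^{\bullet})|_X,\; \omega_X^{\bullet}\bigr) \cong \cR\HHom\bigl(\cR\HHom(\cM|_X, \omega_X^{\bullet}),\; \omega_X^{\bullet}\bigr) \cong \cM|_X \]
by \autoref{main thm duality for crystals} (involutivity of $\bD_X$). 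So $f^! = (\cdot)|_X$ in this case.

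For (ii), assume $f$ is proper. The adjunction $f^* \dashv Rf_*$ holds on $D^b_c(X_{\et},\bF_q)$ (standard for proper morphisms), and since the Riemann--Hilbert equivalence of \autoref{main thm Bhatt Lurie} intertwines $f^*$ and proper $Rf_*$, we get the same adjunction on $D^b(\Crys^{F^r})$. Combining this with \autoref{compatibility between duality and pushforwards}, which gives $Rf_* \circ \bD_X \cong \bD_Y \circ Rf_*$, and with double duality, we compute for $\cM \in D^b(\Crys_X^{C^r})$, $\cN \in D^b(\Crys_Y^{C^r})$:
\[ \Hom(Rf_*\cM, \cN) \cong \Hom(\bD_Y\cN, \bD_Y Rf_*\cM) \cong \Hom(\bD_Y\cN, Rf_*\bD_X\cM) \cong \Hom(f^*\bD_Y\cN, \bD_X\cM) \cong \Hom(\cM, \bD_X f^*\bD_Y\cN) = \Hom(\cM, f^!\cN). \]
Hence $f^!$ is right adjoint to $Rf_*$, so is canonically determined (independent of the choice of $\omega_Y^{\bullet}$).

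For (iii), a direct computation from the definition shows that for composable finite-type morphisms $X \xrightarrow{f} Y \xrightarrow{g} Z$ (with a compatible choice of unit dualizing complexes induced via \autoref{induced unit dualizing complex by a morphism}), $(gf)^! = f^! \circ g^!$, using $\bD_Y \circ \bD_Y = \id$ to cancel the middle dualities. By Nagata compactification (\cite[{\href{https://stacks.math.columbia.edu/tag/0F41}{Tag 0F41}}]{Stacks_Project}), any separated finite-type morphism $f\colon X \to Y$ factors as $f = \bar f \circ j$ with $j$ an open immersion and $\bar f$ proper; endowing the compactification $\bar X$ with the unit dualizing complex induced by $\bar f$ from $\omega_Y^{\bullet}$, and then $X$ with the one induced by $j$, yields the same $\omega_X^{\bullet}$ as the one induced directly via $f$ (by functoriality of $f^!$ for $\cO$-modules and the uniqueness in \autoref{naturality of induced unit dc}). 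Therefore $f^! = j^! \circ \bar f^!$, and the right-hand side is independent of the choice of $\omega_Y^{\bullet}$ by (i) and (ii).

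The main subtlety I expect is (i): verifying that $\cR\HHom$ commutes with restriction to an open subset \emph{as a morphism of $F$-modules / Cartier modules}, not only as $\cO_X$-modules. This can be checked on injective resolutions using that the unit structure on $\omega_Y^{\bullet}$ restricts to the unit structure on $\omega_X^{\bullet}$, and that injectives in $\IndCoh^{C^r,\unit}$ remain so after open restriction (a variant of \autoref{pushforward of qalg is qalg} for the left adjoint direction, combined with \autoref{injective unit is injective O_X}). The remaining verifications in (ii) and (iii) are essentially formal once the duality compatibilities of Section~\ref{subsection duality} are in hand.
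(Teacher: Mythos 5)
Your proposal is correct and follows essentially the same route as the paper: reduce (i) to the fact that the duality commutes with open restriction (verified via \autoref{injective unit is injective O_X} and the needed acyclicity of restricted injectives), obtain (ii) from the adjunction chain using \autoref{compatibility between duality and pushforwards} and involutivity, and deduce (iii) from (i) and (ii) via Nagata compactification. You spell out the composition identity $f^! = j^! \circ \bar f^!$ that the paper leaves implicit in its one-line appeal to \cite[{\href{https://stacks.math.columbia.edu/tag/0F41}{Tag 0F41}}]{Stacks_Project}, but this is a matter of detail rather than a genuinely different argument.
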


\begin{proof}
	\begin{itemize}		
		\item By \autoref{naturality of induced unit dc}, this is equivalent to showing that the duality functor commutes with restriction to open subsets. Hence, we have to show that the restriction of an injective (unit) Cartier module is $\HHom(\cM, -)$-acyclic. 
		
		Let $U \inc X$ be an open immersion, let $\iota \colon \IndCoh_U^{C^r, \unit} \to \QCoh_U$ denote the inclusion, and let $\cI \in \IndCoh_X^{C^r, \unit}$ be an injective object. By \autoref{injective unit is injective O_X} and the fact that restriction of an injective $\cO_X$-module is again injective, we know that $\cI|_U$ is both $\iota$-acyclic (see \autoref{F^flat acyclic implies iota-acyclic}) and $\HHom(\cM, -)$-acyclic. Hence, we win.
		
		\item Given $\cM \in D^b(\Crys_X^{C^r})$ and $\cN \in D^b(\Crys_Y^{C^r})$, we have 
		\begin{align*} 
			\Hom(\cM, f^!\cN) & \cong \Hom(f^*\bD_Y(\cN), \bD_X(\cM)) \\
			& \cong \Hom(\bD_Y(\cN), Rf_*\bD_X(\cM) \\
			& \expl{\cong}{\autoref{compatibility between duality and pushforwards}} \Hom(\bD_Y(\cN), \bD_X(Rf_*\cM)) \\
			& \cong \Hom(Rf_*\cM, \cN). 
		\end{align*}
		
		\item This is an immediate consequence of the previous points and \cite[{\href{https://stacks.math.columbia.edu/tag/0F41}{Tag 0F41}}]{Stacks_Project}.
	\end{itemize}
\end{proof}

\begin{rem}
	One can show that the separatedness hypothesis in \autoref{f^! does not depend on the unit dc} is not necessary, by mimicking the proof of \cite[{\href{https://stacks.math.columbia.edu/tag/0BV3}{Tag 0BV3}}]{Stacks_Project}.
\end{rem}

\begin{lemma}\label{duality preserves supports}
	Let $X$ be a Noetherian $F$-finite $\bF_p$-scheme with a unit dualizing complex $\omega_X^{\bullet}$, and associated duality functor $\bD$. Then for all $\cM \in D^b(\Crys_X^{C^r})$, \[ \Supp_{\crys}(\bD(\cM)) = \Supp_{\crys}(\cM). \]
\end{lemma}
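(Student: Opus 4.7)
The statement we want is that for each point $x \in X$, $\cM_x \sim_C 0$ if and only if $\bD(\cM)_x \sim_F 0$. Following the hint preceding the lemma, the plan is to upgrade the compatibility of $\bD$ with open immersions (established in the proof of \autoref{f^! does not depend on the unit dc}) to a compatibility with localization at a point, and then apply the main duality theorem on the local scheme $\Spec \cO_{X,x}$.

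Concretely, for $x \in X$, write $j_x \colon \Spec \cO_{X,x} \to X$ for the canonical morphism. The first step is to verify that $j_x^* \omega_X^\bullet$ is a unit dualizing complex on the (Noetherian, $F$-finite) scheme $\Spec \cO_{X,x}$: as a complex of $\cO$-modules it remains a dualizing complex since localization preserves this property, and the unit Cartier structure is compatible with flat base change between $F$-finite schemes by \autoref{completion commutes with Frobenius pushforward}-type reasoning (or directly by the colimit formula for the unitalization). This defines a duality functor $\bD_x$ on $D^b(\Crys_{\Spec \cO_{X,x}}^{C^r})$.

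The second step is to establish a natural isomorphism $(\bD \cM)_x \cong \bD_x(\cM_x)$. For this one repeats the argument used in the first bullet of \autoref{f^! does not depend on the unit dc}: by \autoref{injective unit is injective O_X} any injective $\cI \in \IndCoh_X^{C^r,\unit}$ is already injective as an $\cO_X$-module, so $j_x^* \cI$ is injective as an $\cO_{X,x}$-module and, being also a unit object, is $\HHom(\cN, -)$-acyclic for every coherent Cartier module $\cN$. Since $\HHom$ commutes with localization when the source is coherent, one obtains
\[
(\bD \cM)_x = \bigl(\cR\HHom(\cM, \omega_X^\bullet)\bigr)_x \cong \cR\HHom(\cM_x, j_x^* \omega_X^\bullet) = \bD_x(\cM_x),
\]
naturally and functorially in $\cM$, under the identifications from \autoref{D^b(Coh) = D^b_{coh}(QCoh) for F-modules} and \autoref{D^b(Coh) = D^b_{coh}(QCoh) for Cartier modules}.

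Granting these compatibilities, the conclusion is immediate: applying \autoref{main thm duality for crystals} on $\Spec \cO_{X,x}$ gives $\cM_x \sim_C 0 \iff \bD_x(\cM_x) \sim_F 0$, and by the isomorphism above this is equivalent to $\bD(\cM)_x \sim_F 0$. Taking the union over all $x$ yields $\Supp_{\crys}(\bD(\cM)) = \Supp_{\crys}(\cM)$. The main obstacle is the verification of the second step at a stalk rather than at an open: one has to be careful that all acyclicity properties (injectivity of restrictions, compact generation allowing $\HHom$ to commute with filtered colimits in the second variable) persist under the non-open flat morphism $j_x$, but this is purely technical and the same bookkeeping as in the open immersion case goes through.
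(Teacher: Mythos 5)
Your proposal is correct and follows the same route the paper intends: the paper's proof is precisely the one-line remark preceding the lemma, namely that the argument showing $\bD$ commutes with open immersions (injective unit Cartier modules are injective $\cO_X$-modules, their Zariski localizations remain injective, and coherence lets $\HHom$ pass through localization) applies verbatim to stalks, yielding $(\bD\cM)_x \cong \bD_x(\cM_x)$ with $\bD_x$ an anti-equivalence on $\Spec\cO_{X,x}$. You have merely written out the details the paper leaves implicit.
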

\begin{proof}
	Since duality commutes with restrictions to opens (see the proof of \autoref{f^! does not depend on the unit dc}), the statement is local, and hence we may assume that $X$ is semi-separated. The same argument as in \emph{loc. cit.} shows that the duality functor commutes with stalks, so we are done by \autoref{main thm duality for crystals}.
\end{proof}

\begin{defn}
	Let $f \colon X \to Y$ a separated morphism between Noetherian $\bF_q$-schemes. Define $f_! \colon \Crys_X^{F^r} \to \Crys_Y^{F^r}$ as the unique functor such that the diagram \[ \begin{tikzcd}
		\Crys_X^{F^r} \arrow[rr, "f_!"] \arrow[d, "\Sol"'] &  & \Crys_Y^{F^r} \arrow[d, "\Sol"] \\
		{\Sh_c(X_{\et}, \bF_q)} \arrow[rr, "f_!"']           &  & {\Sh_c(X_{\et}, \bF_q)},          
	\end{tikzcd} \] commutes (on étale $\bF_q$-sheaves, we define $f_!$ as $R^0f_!$, where $Rf_!$ is defined as in \cite[{\href{https://stacks.math.columbia.edu/tag/0F7H}{Tag 0F7H}}]{Stacks_Project}).
\end{defn}

To understand the following result, it makes sense to have in mind the statement of \autoref{pushforward preserves crystals}.\autoref{push_crystals_derived}.

\begin{prop}\label{duality exchanges pushforward and compactly supported pushforward}
	Same notations as in \autoref{definition upper shriek functor}. Then there is a natural isomorphism \[ Rf_* \cong \bD_Y \circ Rf_! \circ \bD_X \] of functors $D^b(\Crys_X^{C^r})^{op} \to D^b(\Crys_Y^{F^r})$. 
\end{prop}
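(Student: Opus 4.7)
The strategy is to factor $f$ via Nagata compactification and handle the two resulting cases separately.

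By \cite[{\href{https://stacks.math.columbia.edu/tag/0F41}{Tag 0F41}}]{Stacks_Project}, we may write $f = \bar{f} \circ j$ with $j \colon X \inj Z$ an open immersion and $\bar{f} \colon Z \to Y$ proper. Both $Rf_*$ and $Rf_!$ respect compositions, the latter because this holds on étale $\bF_q$-sheaves and transports to $F$-crystals via \autoref{main thm Bhatt Lurie} (note that $j_!$ is exact for open immersions, so no derived functor is needed there). It therefore suffices to establish the statement for proper morphisms and for open immersions separately. When $f$ is proper, $Rf_! = Rf_*$ on étale $\bF_q$-sheaves, and hence on $F$-crystals, so the desired isomorphism becomes $\bD_Y \circ Rf_* \cong Rf_* \circ \bD_X$, which is the content of \autoref{compatibility between duality and pushforwards}.

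For the open immersion case $j \colon U \inj X$, the central point is the compatibility
$$j^* \circ \bD_X \cong \bD_U \circ j^*$$
as functors $D^b(\Crys_X^{C^r})^{op} \to D^b(\Crys_U^{F^r})$. By \autoref{induced unit dualizing complex by a morphism} the unit dualizing complex $\omega_U^{\bullet}$ agrees with $j^*\omega_X^{\bullet}$, and the first bullet in the proof of \autoref{f^! does not depend on the unit dc} shows that restricting an injective object of $\IndCoh_X^{C^r, \unit}$ to $U$ remains acyclic for $\HHom(\cM, -)$. Combined with the standard sheaf-theoretic identity $j^*\HHom(\cM, \cN) = \HHom(j^*\cM, j^*\cN)$, this yields the claim.

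With this compatibility in hand, we combine the adjunction $j^* \dashv Rj_*$ on Cartier crystals (where $j^*$ is exact since it is a restriction) with the adjunction $j_! \dashv j^*$ on $F$-crystals, obtained from \autoref{main thm Bhatt Lurie} and the analogous exact adjunction on étale $\bF_q$-sheaves. For $\cM \in D^b(\Crys_U^{C^r})$ and $\cN \in D^b(\Crys_X^{F^r})$, a chain of natural bijections
\begin{align*}
\Hom(\bD_X Rj_*\cM, \cN)
& \cong \Hom(\bD_X^{-1}\cN, Rj_*\cM) \\
& \cong \Hom(j^*\bD_X^{-1}\cN, \cM) \\
& \cong \Hom(\bD_U^{-1}j^*\cN, \cM) \\
& \cong \Hom(\bD_U\cM, j^*\cN) \\
& \cong \Hom(j_!\bD_U\cM, \cN)
\end{align*}
together with Yoneda's lemma gives $\bD_X \circ Rj_* \cong j_! \circ \bD_U$, concluding the argument. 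The main obstacle is ensuring the naturality of the compatibility $j^* \circ \bD_X \cong \bD_U \circ j^*$ and the coherence of the various adjunctions with the Cartier and Frobenius structures; once these technicalities are dispatched, the proof is a formal manipulation of adjoint functors.
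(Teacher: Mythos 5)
Your proof is correct and follows essentially the same approach as the paper's: reduce via Nagata compactification to the proper case (handled by \autoref{compatibility between duality and pushforwards}) and the open immersion case, and prove the latter by combining the compatibility $j^*\circ\bD_X\cong\bD_U\circ j^*$ (established in the proof of \autoref{f^! does not depend on the unit dc}) with the adjunctions $j_!\dashv j^*$ and $j^*\dashv Rj_*$. The paper merely compresses your explicit $\Hom$-chain into the remark that the two functors have isomorphic right adjoints.
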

\begin{proof}
	By \cite[{\href{https://stacks.math.columbia.edu/tag/0F41}{Tag 0F41}}]{Stacks_Project} and \autoref{compatibility between duality and pushforwards}, it is enough to consider the case of an open immersion $j \colon U \inj T$. 
	
	By the proof of \autoref{f^! does not depend on the unit dc}, we know that \[ \Dd_U \circ j^* \circ \Dd_T \cong j^*. \] as functors $D^b(\Crys_T^{C^r}) \to D^b(\Crys_U^{C^r})$. Hence, the functor $\Dd_U \circ j_! \circ \Dd_T$ is a right adjoint of $j^*$, so we conclude by \autoref{pushforward preserves crystals}.\autoref{push_crystals_derived} and uniqueness of right adjoints.
\end{proof}

Summarizing, we obtained the following.
\begin{thm}\label{thm_upper_shriek_functor}
	Let $f \colon X \to Y$ be a separated morphism of finite type between Noetherian, $F$-finite and semi-separated $\bF_q$-schemes. Assume that $Y$ has a unit dualizing complex $\omega_Y^{\bullet}$. Then there is a unique unit dualizing complex $\omega_X^{\bullet}$ on $X$ and a functor $f^! \colon D^b(\Crys_Y^{C^r}) \to D^b(\Crys_X^{C^r})$ with the following properties:
	\begin{enumerate}
		\item as complexes of quasi-coherent sheaves, $\omega_X^{\bullet} \cong f^!\omega_Y^{\bullet}$, and the square \[ \begin{tikzcd}
			\omega_X^{\bullet} \arrow[r] \arrow[d] & F^!{\omega_X^{\bullet}} \arrow[d] \\
			f^!\omega_Y^{\bullet} \arrow[r]        & F^!f^!\omega_Y^{\bullet}         
		\end{tikzcd} \] commutes;
		\item we have $Rf_! \circ \bD_X \cong \bD_Y \circ Rf_*$ as functors $D^b(\Crys_X^{C^r})^{op} \to D^b(\Crys_Y^{F^r})$;
		\item we have $f^* \circ \bD_X \cong \bD_Y \circ f^!$ as functors $D^b(\Crys_X^{C^r})^{op} \to D^b(\Crys_Y^{F^r})$;
		\item the functor $f^!$ does not depend on the chosen unit dualizing complex;
		\item if $f$ is an open immersion, $f^! = (\cdot)|_X$;
		\item if $f$ is proper, then $f^!$ is right adjoint to $Rf_*$. In particular, if $f$ is finite, then $f^! = Rf^{\flat}$.
	\end{enumerate}
\end{thm}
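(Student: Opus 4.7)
The plan is to observe that this theorem is largely a summary of the preceding propositions in the subsection, so the proof consists of gluing them together carefully rather than doing new work.

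First, I would invoke \autoref{induced unit dualizing complex by a morphism} to produce the unit dualizing complex $\omega_X^{\bullet}$ on $X$. By construction, its underlying complex of quasi-coherent sheaves agrees with $f^!\omega_Y^{\bullet}$, and the compatibility square involving $F^!$ commutes by \autoref{having a unit dc is equivalent to the naive notion}; uniqueness follows from \autoref{naturality of induced unit dc}. I would then \emph{define} $f^!$ by the formula of \autoref{definition upper shriek functor}, namely $f^! \coloneqq \bD_X \circ f^* \circ \bD_Y$, using the fact that $f^*$ is exact on crystals by \autoref{main thm Bhatt Lurie} together with the analogous fact for étale $\bF_q$-sheaves.

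With this in hand, the isomorphism $f^* \circ \bD_X \cong \bD_Y \circ f^!$ is tautological: it is just the identity $f^* \circ \bD_X \cong \bD_Y \circ \bD_X \circ f^* \circ \bD_Y$, which follows by applying $\bD_Y$ to the definition and using the involutivity $\bD_Y \circ \bD_Y \cong \id$ provided by \autoref{main thm duality for crystals}. The dual compatibility $Rf_! \circ \bD_X \cong \bD_Y \circ Rf_*$ is exactly \autoref{duality exchanges pushforward and compactly supported pushforward}.

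The three remaining bullets — independence of the chosen unit dualizing complex, the identification $f^! = (\cdot)|_X$ for $f$ an open immersion, and right adjointness to $Rf_*$ for $f$ proper — are precisely the content of \autoref{f^! does not depend on the unit dc}. The only point not explicitly covered is the assertion that $f^! = Rf^{\flat}$ for $f$ finite. Here I would argue as follows: for $f$ finite, $Rf_*$ is exact by affineness, so its right adjoint $f^!$ on the underlying quasi-coherent level is $Rf^{\flat} = f^{\flat}$ by \autoref{affine Grothendieck duality}; the induced Cartier crystal structure must coincide with the one from \autoref{eq def of Cartier mod}, since both are characterized by the same adjunction property against proper pushforward of Cartier crystals.

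The main (mild) obstacle here is purely bookkeeping: one has to check that the various natural isomorphisms (coming from \autoref{compatibility between duality and pushforwards}, \autoref{duality exchanges pushforward and compactly supported pushforward}, and the adjunctions in \autoref{f^! does not depend on the unit dc}) are mutually compatible, so that the packaged functor $f^!$ actually satisfies all listed properties simultaneously and not merely one at a time. Since each compatibility is either a formal consequence of duality being an anti-equivalence or a direct invocation of a previously established isomorphism at the level of $\cO_X$-modules (e.g.\ \cite[\href{https://stacks.math.columbia.edu/tag/0AUE}{Tag 0AUE}]{Stacks_Project}), no further structural input is needed.
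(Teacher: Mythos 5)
Your proof matches the paper's intent: the theorem is prefaced by ``Summarizing, we obtained the following'' and is indeed meant as a package of \autoref{induced unit dualizing complex by a morphism}, \autoref{naturality of induced unit dc}, \autoref{definition upper shriek functor}, \autoref{compatibility between duality and pushforwards}, \autoref{f^! does not depend on the unit dc}, and \autoref{duality exchanges pushforward and compactly supported pushforward}, assembled exactly as you do. One small inaccuracy: for $f$ finite you write $Rf^{\flat}=f^{\flat}$, but $f^{\flat}=\HHom(f_*\cO_X,-)$ need not be exact, so its derived functor is generally nontrivial; the statement to be proved is $f^! = Rf^{\flat}$, which follows directly from uniqueness of right adjoints to $Rf_*$ without asserting $Rf^{\flat}=f^{\flat}$.
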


We finish this subsection by showing that many schemes have a unit dualizing complex.

\begin{lem}\label{nice regular ring have a unit dualizing complex}
	Let $S$ be a Noetherian $F$-finite $\bF_p$-algebra with the following property: there exists $z_1, \dots, z_n \in S$ such that $F_*S$ is freely generated by the elements $z_1^{i_1}\dots z_n^{i_n}$ with $0 \leq i_j < p$ for all $j$. Then $F^{r, !}S \cong S$. In particular, $S$ has a unit dualizing complex.
\end{lem}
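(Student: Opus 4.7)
The plan is to construct an explicit isomorphism $F^{r,!}S \cong S$ of $S$-modules; the ``in particular'' then follows from \autoref{having a unit dc is equivalent to the naive notion} once we observe that $S$ itself is a dualizing complex. Since the hypothesis implies $F^r_*S$ is free of finite rank over $S$, the functor $F^{r,!}$ coincides with the underived $\Hom_S(F^r_*S, -)$, so we work purely at the module level.

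I first show, by induction on $r$ using the base-$p$ expansion of indices, that $F^r_*S$ is free over $S$ with basis $\{F^r_*(z_1^{i_1} \cdots z_n^{i_n})\}_{0 \leq i_j < p^r}$; equivalently, every $\mu \in S$ has a unique decomposition $\mu = \sum_I a_I^{p^r} z^I$ with $0 \leq i_j < p^r$ and $a_I \in S$. I then define the ``top coefficient'' projection $\phi \in \Hom_S(F^r_*S, S)$ by $\phi(F^r_*z^I) = 1$ if $I = (p^r - 1, \ldots, p^r - 1)$ and $0$ otherwise; its $S$-linearity is automatic from the uniqueness of the decomposition. The key claim is that $\phi$ generates $\Hom_S(F^r_*S, S)$ as a left $F^r_*S$-module under the action $(F^r_*\mu \cdot \psi)(F^r_*s) \coloneqq \psi(F^r_*(\mu s))$. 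Explicitly, given $\psi$, the element
\[ \mu \coloneqq \sum_{0 \leq k_j < p^r} \psi(F^r_*z^{(p^r - 1) - K})^{p^r} z^K \]
satisfies $F^r_*\mu \cdot \phi = \psi$, as a direct computation shows: for $0 \leq k_l, j_l < p^r$ the only way to have $k_l + j_l \equiv p^r - 1 \pmod{p^r}$ is $k_l + j_l = p^r - 1$, so a unique multi-index contributes to $\phi(F^r_*(\mu z^J))$.

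Unravelling the adjunction defining $F^{r,\flat}$, the induced $S$-action on $F^{r,\flat}S = \Hom_S(F^r_*S, S)$ satisfies $(\lambda \cdot \psi)(F^r_*s) = \psi(F^r_*(\lambda s))$; in other words, it factors through the abstract ring isomorphism $S \xrightarrow{\sim} F^r_*S$, $\lambda \mapsto F^r_*\lambda$. Combined with the free-of-rank-one presentation from the previous step, this yields an $S$-linear isomorphism $F^{r,\flat}S \cong F^r_*S \cong S$.

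Finally, by Kunz's theorem $S$ is regular, and the standard formula $\rank F_*S_\fp = p^{\dim S_\fp} [k(\fp):k(\fp)^p]$ (valid for regular $F$-finite local rings) gives $\dim S_\fp \leq n$ for all primes $\fp$, so $\dim S \leq n$. Hence $S$ is itself a dualizing complex, and combined with the isomorphism $S \cong F^{r,!}S$ just constructed, \autoref{having a unit dc is equivalent to the naive notion} produces the desired unit dualizing complex. The main subtle point throughout is distinguishing the two natural $S$-actions on $\Hom_S(F^r_*S, S)$ and identifying which one corresponds to the $S$-module structure on $F^{r,\flat}S$.
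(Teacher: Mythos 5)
Your proof is correct and takes essentially the same approach as the paper: it defines the same "top coefficient" Cartier structure $\kappa_S = \phi$ and verifies that the adjoint map sends a basis of $F^r_*S$ to the dual basis, the key point being the no-carry observation that for $0 \leq i_l, j_l < p^r$ one has $i_l + j_l \equiv p^r - 1 \pmod{p^r}$ only if $i_l + j_l = p^r - 1$. The only differences are cosmetic: the paper reduces to $r = 1$ via $F^{r,!} = (F^!)^r$ while you handle $r$ directly via the base-$p$ expansion, and you are slightly more explicit that $S$ has finite Krull dimension (which the paper leaves implicit in "$F$-finite") so that "Gorenstein" does give a dualizing complex.
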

\begin{proof}
	We may assume that $r = 1$ for the first part of the statement. Let $\kappa_S \colon F_*S \ra S$ be the Cartier structure defined by sending $z_1^{i_1}\dots z_n^{i_n}$ to $1$ if $i_s = p - 1$ for all $s$ and $0$ otherwise. We want to show that the induced morphism $\kappa_S^{\flat} \colon S \ra F^\flat S$ is an isomorphism, or equivalently that the map $F_*\kappa_S^{\flat} \colon F_*S \ra \Hom_S(F_*S, S)$ is an isomorphism. By definition, this morphism sends $z_1^{i_1}\dots z_n^{i_n}$ to the function which sends $z_1^{j_1}\dots z_n^{j_n}$ to $1$ if $j_s = p - 1 - i_s$ for all $s$ and $0$ otherwise. Thus, the basis of $F_*S$ is sent to the dual basis of \[\Bcal = \{ z_1^{p - 1 - i_1}\dots z_n^{p - 1 - i_n} \}_{1 \leq i_s \leq p - 1 \esp \forall s},\] so it is an isomorphism.
	
	To show the part after ``In particular'', note that $S$ is regular by Kunz' theorem (see \cite[{\href{https://stacks.math.columbia.edu/tag/0EC0}{Tag 0EC0}}]{Stacks_Project}), and hence Gorenstein (i.e. $S$ is a dualizing complex). Thus, $(S, \kappa_S)$ is in fact a unit dualizing complex.
\end{proof}

\begin{cor}\label{finite type over a unit dc has a unit dc}
	Any semi-separated scheme of finite type over the spectrum of a Noetherian $F$-finite $\bF_p$-algebra has a unit dualizing complex.
\end{cor}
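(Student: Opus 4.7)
The plan is to reduce the existence of a unit dualizing complex on $X$ to the case of a regular ring admitting a $p$-basis, where \autoref{nice regular ring have a unit dualizing complex} applies directly. This will combine two ingredients already available in the paper: Gabber's presentation result and the functoriality established in \autoref{induced unit dualizing complex by a morphism}.

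More concretely, suppose $X$ is of finite type over $\Spec A$, where $A$ is a Noetherian $F$-finite $\bF_p$-algebra. First I would invoke Gabber's result \cite[Remark 13.6]{Gabber_notes_on_some_t_structures} (already used in the proof of \autoref{ind-coh is ind-acyclic affine case}) to produce a Noetherian $F$-finite $\bF_p$-algebra $S$ together with a surjection $S \surj A$, where moreover $S$ can be chosen to admit a $p$-basis $z_1, \dots, z_n$, i.e. such that $F_*S$ is freely generated over $S$ by the monomials $z_1^{i_1} \cdots z_n^{i_n}$ with $0 \leq i_j < p$. Then \autoref{nice regular ring have a unit dualizing complex} directly provides a unit dualizing complex on $\Spec S$.

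Next, the composition $X \to \Spec A \to \Spec S$ is of finite type (closed immersion followed by a finite type morphism), and both source and target are Noetherian and $F$-finite. Therefore \autoref{induced unit dualizing complex by a morphism} applies, producing a unit dualizing complex $\omega_X^{\bullet}$ on $X$ as the pullback (in the appropriate sense) of the one on $\Spec S$.

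The only subtle point — and the main thing to verify — is that Gabber's construction really yields a ring admitting a $p$-basis of the specific shape required by \autoref{nice regular ring have a unit dualizing complex}, and not merely one with $F_*S$ free. This is indeed what Gabber's construction provides (the ring $S$ is built so that its Frobenius has a monomial basis in explicit parameters), so the hypothesis of \autoref{nice regular ring have a unit dualizing complex} is satisfied. Beyond this citation, the argument is essentially a one-line combination of the two lemmas already proven, and requires no new calculation.
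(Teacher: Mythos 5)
Your proof is correct and is essentially the same as the paper's: both reduce via Gabber's presentation result to \autoref{nice regular ring have a unit dualizing complex} and then transport the unit dualizing complex along a finite-type morphism using \autoref{induced unit dualizing complex by a morphism}. The paper first pulls back to $\Spec R$ and then embeds $\Spec R$ into $\Spec S$, while you compose $X \to \Spec A \to \Spec S$ in one step, but this is an immaterial difference (and note a small slip: the composition is a finite-type morphism \emph{followed by} a closed immersion, not the other way around).
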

\begin{proof}
	Let $f \colon X \to \Spec R$ be a finite type morphism, where $R$ is Noetherian and $F$-finite. By \autoref{induced unit dualizing complex by a morphism}, it is enough to show that $\Spec R$ has a unit dualizing complex.
	
	By \cite[Remark 13.6]{Gabber_notes_on_some_t_structures}, $\Spec R$ admits a closed immersion to some $\Spec S$, where $S$ satisfies the hypotheses of \autoref{nice regular ring have a unit dualizing complex}. Thus, we are done by \emph{loc.cit}.
\end{proof}

\subsection{Equivalence between Cartier crystals and perverse $\Ff_p$-sheaves}\label{section eq cat}
Our main goal here is to show that there is an equivalence of categories between the category of Cartier crystals and the category of perverse $\Ff_q$-sheaves, with respect to the middle perversity.

From now on, fix a Noetherian, $F$-finite, semi-separated $\bF_p$-scheme $X$ with a unit dualizing complex $\omega_X^\bullet$. Furthemore, let $(A, B) \in \{(F, C), (C, F)\}$. Throughout, we will implicitly use that $X$ has finite dimension (see \cite[Proposition 1.1]{Kunz_Noetherian_Rings_of_char_p}).

\begin{defn}
	Define the \emph{perverse t-structure} associated to the middle perversity on $D^b(\Coh_X^{A^r})$ by the full subcategories \[ ^pD^{\leq 0}_{\coh} \coloneqq \bigset{\Mcal \in D^b(\Coh_X^{A^r})}{\cH^j(\Mcal_x) = 0 \esp \forall j > -\dim\overline{\{x\}} \esp \forall x \in X} \] and \[ ^pD^{\geq 0}_{\coh} \coloneqq \bigset{\Mcal \in D^b(\Coh_X^{A^r})}{H^j_{\mfr_{X, x}}(\Mcal_x) = 0 \esp \forall j < -\dim\overline{\{x\}} \esp \forall x \in X}, \] where $H^j_{\mfr_{X, x}}$ denotes usual local cohomology in the local ring $\Ocal_{X, x}$ (taken in the category of $\Ocal_{X, x}$-modules), or equivalently the functor defined in \autoref{def_local_cohomology}. 
	
	The heart $^pD_{\coh}^{\leq 0} \cap {}^pD_{\coh}^{\geq 0}$ of this t-structure is denoted $\Perv(\Coh_X^{A^r})$.
\end{defn}

\begin{lem}\label{usual perverse t-struct is a t-struct for F/Cartier modules}
	The above data defines a t-structure on $D^b(\Coh_X^{A^r})$. Furthermore, up to shifting $\omega_X^\bullet$ on each connected component, the functor $\Dd$ exchanges this perverse t-structure and the canonical t-structure on $D^b(\Coh_X^{B^r})$.
\end{lem}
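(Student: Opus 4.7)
The plan is to reduce everything to local duality (\autoref{local duality Frobenius version}) applied at every stalk, combined with compatibility of $\bD$ with localization. Once I establish the second statement—that $\bD$ sends the perverse t-structure data to the canonical one on $D^b(\Coh_X^{B^r})$—the first follows for free, since the image of a t-structure under an anti-equivalence is again a t-structure.

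First I would normalize $\omega_X^\bullet$. Using catenarity of $F$-finite Noetherian $\bF_p$-schemes (cf.\ \cite[Proposition~1.1]{Kunz_Noetherian_Rings_of_char_p}), on each connected component a single integer shift can be chosen so that, at every point $x$, the stalk $\omega_{X,x}^\bullet$ equals $\omega_{\cO_{X,x}}^{\bullet,\mathrm{norm}}[\dim\overline{\{x\}}]$, the shift of a normalized dualizing complex on the local ring; this is exactly the freedom invoked in the statement. Next I would show that $\bD$ commutes with stalks: by the argument in the proof of \autoref{f^! does not depend on the unit dc} (injectives in $\IndCoh_X^{C^r,\unit}$ remain $\HHom(\cM,-)$-acyclic after restriction to opens), $\bD$ commutes with restriction to open subsets, and passing to the filtered colimit over open neighborhoods of $x$ yields a natural isomorphism
\[
\bD(\cM)_x\;\cong\;\Rcal\HHom_{\cO_{X,x}}\bigl(\cM_x,\,\omega_{X,x}^\bullet\bigr).
\]
Plugging this into local duality—using not only the nil-equivalence stated in \autoref{local duality Frobenius version} but the finer vanishing equivalence read off from its proof by combining the underlying isomorphism $\RHom(M,\omega^{\bullet,\mathrm{norm}})\otimes_R\widehat{R}\cong\RHom(\RGamma_\fm M,E)$ with Matlis duality for artinian modules and faithful flatness of completion—I would obtain, for every $j\in\bZ$ and every $x\in X$,
\[
\cH^j(\bD(\cM))_x = 0 \iff H^{\mathsf{p}(x)-j}_{\fm_{X,x}}(\cM_x)=0.
\]
This immediately translates $\bD(\cM)\in D^{\leq 0}$ (canonical) into $H^k_{\fm_{X,x}}(\cM_x)=0$ for all $k<\mathsf{p}(x)$, i.e.\ $\cM\in{}^{\mathsf p}D^{\geq 0}$; symmetrically, $\bD(\cM)\in D^{\geq 0}$ becomes the "costalk" condition $H^k_{\fm_{X,x}}(\cM_x)=0$ for all $k>\mathsf{p}(x)$.

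It remains to see that this last costalk condition is equivalent to the stalk condition $\cH^j(\cM_x)=0$ for $j>\mathsf{p}(x)$ defining ${}^{\mathsf p}D^{\leq 0}$. This is a purely commutative-algebraic statement that I would prove via the hypercohomology spectral sequence
\[
E_2^{p,q}=H^p_{\fm_{X,x}}\!\bigl(\cH^q(\cM_x)\bigr)\;\Longrightarrow\;H^{p+q}_{\fm_{X,x}}(\cM_x).
\]
The direction stalks$\Rightarrow$costalks is immediate from Grothendieck vanishing combined with the catenarity estimate $\dim\supp_{\cO_{X,x}}\cH^q(\cM_x)\le -q-\dim\overline{\{x\}}$, which comes from the global bound $\dim\supp\cH^q(\cM)\le -q$. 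The converse I would prove by descending induction on the top nonzero cohomology sheaf: at a generic point $\eta$ of a maximal-dimensional component of $\supp\cH^{j_0}(\cM)$, the module $\cH^{j_0}(\cM_\eta)$ is $\fm_\eta$-torsion, so $E_2^{0,j_0}=\Gamma_{\fm_\eta}(\cH^{j_0}(\cM_\eta))\neq 0$; since all $E_2^{*,q}$ with $q>j_0$ vanish, this class cannot be cancelled in the abutment, contradicting the costalk hypothesis if $j_0>\mathsf{p}(\eta)$. The main obstacle I anticipate is precisely this last equivalence—controlling outgoing spectral-sequence differentials at the chosen generic point—while the normalization of $\omega_X^\bullet$ is a book-keeping issue once catenarity of $F$-finite schemes is invoked.
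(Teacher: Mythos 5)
Your first half tracks the paper closely: normalizing $\omega_X^\bullet$ using catenarity so that its dimension function equals $\mathsf{p}$, showing $\bD$ commutes with stalks (via restriction to opens and a filtered colimit), and then reading off from local duality that $\cH^j(\bD(\cM))_x = 0 \iff H^{\mathsf{p}(x)-j}_{\fm_{X,x}}(\cM_x) = 0$. This gives the exchange $D^{\leq 0}\leftrightarrow{}^{\mathsf p}D^{\geq 0}$ exactly as in the paper. Where you diverge is the other exchange $D^{\geq 0}\leftrightarrow{}^{\mathsf p}D^{\leq 0}$: you route it through the costalk criterion $H^k_{\fm_{X,x}}(\cM_x)=0$ for $k>\mathsf{p}(x)$ and then try to prove a purely commutative-algebraic equivalence ``costalk $\Leftrightarrow$ stalk'', whereas the paper instead cites Stacks Tag \texttt{0A7U} and runs the hypercohomology spectral sequence for $\cR\HHom(-,\omega_X^\bullet)$.

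The gap is in the ``costalk $\Rightarrow$ stalk'' direction, and your own caveat is on the mark. At a generic point $\eta$ of a top-dimensional component of $\supp\cH^{j_0}(\cM)$ you get $E_2^{0,j_0}\neq 0$, and incoming differentials $E_r^{-r,j_0+r-1}\to E_r^{0,j_0}$ vanish because $\cH^q=0$ for $q>j_0$ --- so the class is a permanent non-boundary. But it is not a permanent cycle: the outgoing differentials $d_r\colon E_r^{0,j_0}\to E_r^{r,j_0-r+1}$ land in $H^r_{\fm_\eta}\bigl(\cH^{j_0-r+1}(\cM_\eta)\bigr)$, which involves the \emph{lower} cohomology sheaves of $\cM$, on which you have no control, and these groups are generically nonzero. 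Nothing in the local-cohomology spectral sequence forces them to vanish, so the survival of $E_2^{0,j_0}$ to $E_\infty$ is not justified. The paper's route does not hit this because it works with the $\sExt(-,\omega_X^\bullet)$ spectral sequence and the key vanishing $\Ext^p(N,\omega^{\bullet,\mathrm{norm}})=0$ for all $p>0$ and all finite $N$ (this is precisely what Tag \texttt{0A7U} supplies once $\omega_X^\bullet$ is normalized), so the columns $p>0$ of that spectral sequence are identically zero and outgoing differentials never cause trouble. Concretely, the clean version of the argument picks the \emph{minimal} index $q_0<0$ with $\cH^{q_0}(\cM)\neq 0$ rather than the maximal one: at a generic point $\eta$ of $\supp\cH^{q_0}(\cM)$, both incoming differentials (no cohomology below $q_0$) and outgoing differentials (no $\Ext^p$ in degree $p>0$) vanish, $E_\infty^{0,-q_0}=E_2^{0,-q_0}\neq 0$, giving $\Ext^{-q_0}(\cM_\eta,\omega^{\bullet,\mathrm{norm}}_\eta)\neq 0$ and hence $\cH^{-q_0+\mathsf{p}(\eta)}(\bD(\cM))_\eta\neq 0$ in degree $>\mathsf{p}(\eta)$, the desired contradiction. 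Switching to the $\sExt$-spectral sequence and to the bottom cohomology sheaf is the missing idea; with it your overall architecture goes through.
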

\begin{proof}
	Let $\delta$ be the dimension function associated to $\omega_X^\bullet$ (see \cite[{\href{https://stacks.math.columbia.edu/tag/0AWF}{Tag 0AWF}}]{Stacks_Project}). We first show that we may shift $\omega_X^\bullet$ on each connected component, so that $\delta(x) = -\dim\overline{\{x\}}$ for all $x \in X$. 
	
	Since two dimension functions on a connected Noetherian scheme only differ by a shift, it is enough to show that $x \mapsto -\dim\overline{\{x\}}$ is a dimension function. Unwinding the definitions, this is exactly saying that $X$ is catenary, which holds by the existence of a dualizing complex, see \cite[{\href{https://stacks.math.columbia.edu/tag/0AWF}{Tag 0AWF}}]{Stacks_Project}. 
	
	Thus we may assume $\omega_X^\bullet$ is, on each connected component, shifted so that its dimension function is exactly $x \mapsto -\dim\overline{\{x\}}$. Consider the duality $\Dd$ associated to this new unit dualizing complex. We will show that under the equivalence $\Dd$, the canonical t-structure $(D^{\leq 0}, D^{\geq 0})$ corresponds to $({}^pD^{\geq 0}_{\coh}, {}^pD^{\leq 0}_{\coh})$ (this will in particular prove that the perverse t-structure is actually a t-structure).
	Since the definition of the perverse t-structure does not take into account the fact that we work with $A$-modules, it is a statement which is about usual quasi-coherent modules. The functor $\cR\HHom(-, \omega_X^{\bullet})$ exchanges $D^{\leq 0}$ and ${}^pD^{\geq 0}_{\coh}$ by local duality (see \cite[{\href{https://stacks.math.columbia.edu/tag/0A84}{Tag 0A84}}]{Stacks_Project}), and it exchanges $D^{\geq 0}$ and ${}^pD^{\leq 0}_{\coh}$ by \cite[{\href{https://stacks.math.columbia.edu/tag/0A7U}{Tag 0A7U}}]{Stacks_Project} and a standard hypercohomology spectral sequence argument.
\end{proof}

From now on, we assume that $\omega_X^\bullet$ is locally shifted so that it respects the statement of \autoref{usual perverse t-struct is a t-struct for F/Cartier modules}. 

\begin{defn}
	Define the \emph{perverse t-structure} on $D^b(\Crys_X^{A^r})$ associated to $\mathsf{p}$ by \[ {}^pD_{\crys}^{\leq 0} \coloneqq \bigset{\Mcal \in D^b(\Crys_X^{A^r})}{\exists \esp \Ncal \in D^b(\Coh_X^{A^r}) \mbox{ s.t. } \Ncal \sim_A \Mcal \mbox{ and } \Ncal \in {}^pD^{\leq 0}}, \] and define ${}^pD_{\crys}^{\geq 0}$ similarly. Its heart is denoted $\Perv(\Crys_X^{A^r})$.
\end{defn}

\begin{lem}\label{usual perverse t-struct is a t-struct for F/Cartier crystals}
	Assume that $X$ is defined over $\bF_q$. The above data defines a t-structure on $D^b(\Crys_X^{A^r})$, and the duality functor $\Dd$ exchanges this perverse t-structure and the canonical t-structure on $D^b(\Crys_X^{B^r})$. Furthermore, we also have \[ {}^pD_{\crys}^{\leq 0} = \bigset{\Mcal \in D^b(\Crys_X^{A^r})}{\cH^j(\cM_x) \sim_A 0 \esp \forall j > -\dim\overline{\{x\}} \esp \forall x \in X} \] and \[ {}^pD_{\crys}^{\geq 0} = \bigset{\Mcal \in D^b(\Crys_X^{A^r})}{H^j_{\fm_{X, x}}(\cM_x) \sim_A 0 \esp \forall j < -\dim\overline{\{x\}} \esp \forall x \in X}. \]
\end{lem}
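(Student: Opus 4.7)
The plan is to transfer the module-level \autoref{usual perverse t-struct is a t-struct for F/Cartier modules} to crystals using the duality \autoref{main thm duality for crystals}, and then unpack the alternative stalkwise descriptions. The key intermediate target is to show that $\Dd$ exchanges the perverse pre-t-structure on $D^b(\Crys_X^{A^r})$ with the canonical t-structure on $D^b(\Crys_X^{B^r})$. This automatically yields that the perverse pre-t-structure is a genuine t-structure, being the pullback along an equivalence. For the forward direction, a representative $\cN \in {}^\mathsf{p}D^{\leq 0}(\Coh_X^{A^r})$ of $\cM$ dualizes to $\Dd(\cN) \in D^{\geq 0}(\Coh_X^{B^r})$ by \autoref{usual perverse t-struct is a t-struct for F/Cartier modules}, with $\Dd(\cN) \sim_B \Dd(\cM)$ by \autoref{nil-isomorphisms are preserved under Hom}. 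Conversely, $\Dd(\cM) \in D_{\crys}^{\geq 0}$ implies $\cH^j(\Dd(\cN)) \sim_B 0$ for $j < 0$ for any representative $\cN$ of $\cM$; canonical truncation gives $\tau_{\geq 0}(\Dd(\cN)) \sim_B \Dd(\cN)$ in $D^{\geq 0}$, and dualizing yields a representative of $\cM$ in ${}^\mathsf{p}D^{\leq 0}$. The $\geq 0$ case is symmetric.

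The forward directions of both stalkwise characterizations follow by passing to a coherent representative, using that stalks and local cohomology (\autoref{def_local_cohomology}) both respect $\sim_A$. For the reverse direction of the $\geq 0$ case, assume $H^j_{\fm_x}(\cM_x) \sim_A 0$ for $j < \mathsf{p}(x)$. Local duality \autoref{local duality Frobenius version} on $\cO_{X,x}$, applied after the normalization shift $\omega^\bullet_{\text{norm},x} \cong \omega^\bullet_{X,x}[\mathsf{p}(x)]$, converts this into $\cH^j(\Dd(\cM))_x \sim_B 0$ for $j > 0$. A standard Noetherian argument --- the ascending chain $\ker(\kappa^n)$ (or $\ker(\tau^n)$) of coherent sub-sheaves stabilizes, so stalkwise nilpotence of a coherent module implies global nilpotence --- upgrades this to $\cH^j(\Dd(\cM)) \sim_B 0$ globally, giving $\Dd(\cM) \in D_{\crys}^{\leq 0}$, and the t-structure exchange concludes.

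The main obstacle is the reverse direction of the $\leq 0$ characterization. Assuming $\cH^j(\cM_x) \sim_A 0$ for $j > \mathsf{p}(x)$, I again target $\cH^j(\Dd(\cM))_x \sim_B 0$ for $j < 0$ stalkwise. For any coherent representative $\cN$, the hyperext spectral sequence
\begin{equation*}
E_2^{p,q} = \Ext^p(\cH^{-q}(\cN)_x, \omega^\bullet_{X,x}) \Rightarrow \cH^{p+q}(\Dd(\cN)_x)
\end{equation*}
has $E_2^{p,q} \sim_B 0$ when $-q > \mathsf{p}(x)$ by \autoref{Hom preserves nilpotence}. For $-q \leq \mathsf{p}(x)$, I would replace $\cH^{-q}(\cN)$ by its maximal sub-$A$-module supported on $Z_q \coloneqq \overline{\Supp_{\crys}(\cH^{-q}(\cN))}$, namely the subsheaf of sections locally killed by some power of the ideal $I_{Z_q}$. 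The technical linchpin is that this subsheaf is in fact an $A$-submodule with nilpotent quotient: for Cartier modules this is immediate from $\cO_X$-linearity of $\kappa$ combined with $a^{p^r} \in I^n$ for $a \in I^n$; for $F$-modules the $\cO_X$-linearity $\tau(ah) = a\tau(h)$ only gives closure under $\tau$ after passing to $\bigcup_n \cH^{-q}(\cN)[I^n]$, using the Noetherian fact that $(I^n)^{[p^r]}$ contains a power of $I$. The hypothesis combined with constructibility of $\Supp_{\crys}$ (\autoref{support Cartier crystals}, \autoref{support F-crystals}) forces $\dim Z_q \leq q$, so the local dimension at the stalk is $\leq q + \mathsf{p}(x)$, and local duality on $\cO_{X,x}$ forces $\Ext^p(\cdot, \omega^\bullet_{X,x}) = 0$ for $p < -q$. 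This yields $E_2^{p,q} \sim_B 0$ for $p + q < 0$, and the Noetherian upgrade from before globalizes the stalkwise vanishing.
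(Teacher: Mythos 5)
Your overall strategy coincides with the paper's. You transfer the coherent-level \autoref{usual perverse t-struct is a t-struct for F/Cartier modules} through the localization $D^b(\Coh) \to D^b(\Crys)$ to show that $\Dd$ exchanges the perverse data with the canonical t-structure on crystals (hence the former is a t-structure, being the preimage under an equivalence); the containments ${}^\mathsf{p}D_{\crys}^{\leq 0} \inc H^{\leq 0}$ and ${}^\mathsf{p}D_{\crys}^{\geq 0} \inc H^{\geq 0}$ are immediate; the reverse containment for $H^{\geq 0}$ follows from local duality (\autoref{local duality Frobenius version}); and the reverse containment for $H^{\leq 0}$ uses constructibility of crystalline support together with the dimension bound on $\Ext$ against the dualizing complex. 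This is exactly the skeleton of the paper's argument.

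Where you diverge is in the mechanics of the $H^{\leq 0}$ direction. The paper replaces each $\cH^j(\cM)$, as a crystal, by the pushforward of a crystal from the closure $Z$ of its crystalline support, invoking the support lemma \autoref{support F-crystals} (which transits through the Riemann--Hilbert correspondence on the $F$-side) and duality. You instead build the replacement directly as the $I_Z$-power-torsion submodule $\Gamma_Z(\cH^{-b}(\cN))$ of a chosen coherent representative $\cN$, verify by hand via Frobenius powers of ideals that it is an $A$-submodule, and feed it into the hyper-$\Ext$ spectral sequence together with the dimension bound on its scheme-theoretic support. This stays entirely within coherent algebra and makes the finiteness explicit, which is a genuine alternative route; the paper's route is shorter given the machinery it has already built and outsources the finiteness to RH.

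There is one genuine gap in your write-up: you assert that the quotient $\cH^{-b}(\cN)/\Gamma_Z(\cH^{-b}(\cN))$ is nilpotent, but the justification given (Frobenius powers of $I_Z$ on the Cartier side, resp.\ the inclusion of an ordinary power of $I_Z$ in $(I_Z^n)^{[p^r]}$ on the $F$-side) only shows that $\Gamma_Z$ is preserved by the structure map, not that the quotient is nilpotent. The claim is true, but requires an extra step. One way: the quotient $Q$ has no $I_Z$-power torsion, so its (finitely many, since $Q$ is coherent) associated primes all lie outside $Z \supseteq \Supp_{\crys}(\cH^{-b}(\cN))$; at such a prime $\fp$ one has $\Gamma_Z(\cH^{-b}(\cN))_\fp = 0$, so $Q_\fp = \cH^{-b}(\cN)_\fp$, which is nilpotent since $\fp \notin \Supp_{\crys}$. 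Taking $N$ larger than all these finitely many nilpotency indices, the coherent submodule $\im(\kappa_Q^N) \inc Q$ (resp.\ the image of $\tau_Q^N$ inside $F^{rN}_*Q$, whose $\cO_X$-associated primes are again those of $Q$ because Frobenius is a homeomorphism) vanishes at every associated prime of $Q$, hence is zero. With this repair your argument is complete and correct.
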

\begin{proof}
	Let $(D_{\crys}^{\leq 0}, D_{\crys}^{\geq 0})$ denote the canonical t-structure on $D^b(\Crys_X^{B^r})$, and let $(D^{\leq 0}, D^{\geq 0})$ denote the canonical t-structure on $D^b(\Coh_X^{B^r})$. 
	
	Using the fact that $D^b(\Coh_X^{A^r}) \ra D^b(\Crys_X^{A^r})$ is essentially surjective (see \cite[Theorem 2.6.2]{Bockle_Pink_Cohomological_Theory_of_crystals_over_function_fields}), we see that \[ D_{\crys}^{\leq 0} = \bigset{\Mcal \in D^b(\Coh_X^{A^r})}{\exists \esp \Ncal \in D^b(\Coh_X^{A^r}) \mbox{ s.t. } \Ncal \sim_A \cM \mbox{ and } \Ncal \in D^{\leq 0}}, \] and similarly for $D_{\crys}^{\geq 0}$. Since the diagram \[ \begin{tikzcd}
		D^b(\Crys_X^{A^r}) \arrow[rr, "\Dd"]         &  & D^b(\Crys_X^{B^r})         \\
		D^b(\Coh_X^{A^r}) \arrow[rr, "\Dd"] \arrow[u] &  & D^b(\Coh_X^{B^r}) \arrow[u]
	\end{tikzcd} \] commutes, we deduce that the image of $(D_{\crys}^{\leq 0}, D_{\crys}^{\geq 0})$ under $\Dd$ is exactly $({}^pD_{\crys}^{\leq 0}, \: {}^pD_{\crys}^{\geq 0})$. Since $\Dd$ is an equivalence of categories, the statement before ``Furthermore'' is proven. 

	To obtain the statement after ``Furthermore'', let $H^{\leq 0}$ (resp. $H^{\geq 0}$) denote the right-hand side of the first equality (resp. second equality) that we want to show. The inclusions ${}^pD_{\crys}^{\leq 0} \inc H^{\leq 0}$ and ${}^pD_{\crys}^{\geq 0} \inc H^{\geq 0}$ are immediate. Since $\bD(H^{\geq 0}) \inc D_{\crys}^{\leq 0}$ by local duality (see \autoref{local duality Frobenius version}) and \autoref{duality preserves supports}, we deduce that ${}^pD_{\crys}^{\geq 0} = H^{\geq 0}$.
	
	Now, let $\cM \in H^{\leq 0}$. By a hypercohomology spectral sequence argument, it is enough to show that \[ \cH^i\left(\bD\left(\cH^j(\cM)[-j]\right)\right) = 0 \: \: \forall i < 0, \: \forall j \in \bZ. \]
	Let $Z \coloneqq \Supp_{\crys}(\cM)$ (see \autoref{support F-crystals} and \autoref{support Cartier crystals}), and let $i_Z \colon Z \to X$ denote the corresponding closed immersion. There exists $\cN \in D^b(\Crys_Z^{A^r})$ such that $i_{Z, *}\cN \sim_A \cM$. Indeed, for Frobenius crystals it follows from \autoref{support F-crystals}.\autoref{restricts_and_comes_back}, and for Cartier crystals it follows from \emph{loc. cit.} and duality. Since $\dim(Z) \leq -i$, we are done thanks to \cite[{\href{https://stacks.math.columbia.edu/tag/0A7U}{Tag 0A7U}}]{Stacks_Project}.
\end{proof}
 
\begin{defn}[{\cite{Gabber_notes_on_some_t_structures}}]\label{def:perverse F_q-sheaves}
	Define the \emph{perverse t-structure} on $D^b_c(X_{\et}, \bF_q)$ associated to $\mathsf{p}$ to be given by \[ {}^pD^{\leq 0} \coloneqq \bigset{\Fcal \in D^b_c(X_{\et}, \Ff_q)}{\cH^j(\Fcal)_{\overline{x}} = 0 \esp \forall j > -\dim\overline{\{x\}} \esp \forall x \in X} \] and \[ {}^pD^{\geq 0} \coloneqq \bigset{\Fcal \in D^b_c(X_{\et}, \Ff_q)}{\cH^j(i_{\overline{\{x\}}}^!\Fcal)_{\overline{x}} = 0 \esp \forall j < -\dim\overline{\{x\}} \esp \forall x \in X}. \]
\end{defn}

If $X$ is defined over $\bF_q$, then by \cite[Theorem 2.9.1]{Bockle_Pink_Cohomological_Theory_of_crystals_over_function_fields} and \autoref{main thm Bhatt Lurie}, the functor $\Sol \colon D^b(\Crys_X^{F^r}) \to D^b_c(X_{\et}, \bF_q)$ is an equivalence of categories. We want to show that it is perverse t-exact.

\begin{prop}\label{RSol sends perverse to perverse}
	Assume that $X$ is defined over $\Ff_q$. The above data defines a t-structure on $D^b_c(X_{\et}, \Ff_q)$. Furthermore, the equivalence of categories $\Sol \colon D^b(\Crys_X^{F^r}) \to D^b_c(X_{\et}, \Ff_q)$ is perverse $t$-exact, and hence induces an equivalence of categories \[ \Perv(\Crys_X^{F^r}) \cong \Perv_c(X, \Ff_q). \]
\end{prop}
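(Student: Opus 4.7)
The plan is to extend the triangulated equivalence $\Sol \colon D^b(\Crys_X^{F^r}) \to D^b_c(X_{\et}, \Ff_q)$ (from \autoref{main thm Bhatt Lurie} combined with \cite[Theorem 2.9.1]{Bockle_Pink_Cohomological_Theory_of_crystals_over_function_fields}) so that it identifies the perverse t-structures on each side. The argument splits into two parts: first, verifying that \autoref{def:perverse F_q-sheaves} really does define a t-structure on $D^b_c(X_{\et}, \Ff_q)$; then, checking that $\Sol$ is perverse $t$-exact. For the first part, one can either invoke Gabber's framework \cite{Gabber_notes_on_some_t_structures}, which establishes the middle-perversity t-structure for constructible $\Ff_q$-sheaves on schemes equipped with a dimension function (a dimension function being available here thanks to the unit dualizing complex, as in \autoref{usual perverse t-struct is a t-struct for F/Cartier modules}), or alternatively transport the perverse t-structure on $D^b(\Crys_X^{F^r})$ (already known to be a t-structure by \autoref{usual perverse t-struct is a t-struct for F/Cartier crystals}) through the equivalence $\Sol$ and then identify the resulting t-structure with \autoref{def:perverse F_q-sheaves}.

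The key reduction is that a t-structure is uniquely determined by its $\leq 0$ part: once we have the equality $\Sol\bigl({}^{\mathsf{p}}D_{\crys}^{\leq 0}\bigr) = {}^{\mathsf{p}}D_q^{\leq 0}$ (as subcategories, via the equivalence $\Sol$), $t$-exactness of $\Sol$ and agreement of the $\geq 0$ parts follows automatically. This reduces the proof to a single comparison of $\leq 0$ subcategories.

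To verify this equality, I will use the second description of ${}^{\mathsf{p}}D_{\crys}^{\leq 0}$ given by \autoref{usual perverse t-struct is a t-struct for F/Cartier crystals}, namely in terms of vanishing of stalks of cohomology crystals in degrees above $\mathsf{p}(x)$. Since $\Sol$ is an exact equivalence between the abelian categories $\Crys_X^{F^r}$ and $\Sh^c_{\et}(X, \Ff_q)$ by \autoref{main thm Bhatt Lurie}, it commutes with passage to cohomology sheaves in the derived category. Combining this with the stalk criterion $\cN_x \sim_F 0 \iff \Sol(\cN)_{\overline{x}} = 0$ from \autoref{support F-crystals}, the condition $\cH^j(\cM)_x \sim_F 0$ for all $j > \mathsf{p}(x)$ and all $x \in X$ translates precisely into $\cH^j(\Sol(\cM))_{\overline{x}} = 0$ under the same quantifiers, which is the definition of ${}^{\mathsf{p}}D_q^{\leq 0}$; the desired equality of $\leq 0$ parts follows in both directions.

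The main obstacle will be the initial verification that the étale-side data in \autoref{def:perverse F_q-sheaves} is a t-structure. Appealing to Gabber's notes is the cleanest route. If one insists on a self-contained argument, one must instead directly match the $\geq 0$ part of the transported t-structure with ${}^{\mathsf{p}}D_q^{\geq 0}$, which reduces—via faithful flatness of $\cO_{X,x} \to \cO_{X,\overline{x}}$ (compatible with Frobenius pushforward as in \autoref{completion commutes with Frobenius pushforward})—to a compatibility between $F$-module local cohomology $H^j_{\fm_{X,x}}(\cM_x)$ (the description used in \autoref{usual perverse t-struct is a t-struct for F/Cartier crystals}) and the étale functor $i_{\overline{x}}^! \iota^{-1}$ applied to $\Sol(\cM)$; this comparison can be made by matching the \v{C}ech complex $C^{\bullet}$ of \autoref{def_local_cohomology} with the open/closed distinguished triangle $i_{\overline{x},*}i_{\overline{x}}^! \to \id \to Rj_*j^*$ on $\Spec(\cO_{X,\overline{x}})$, using that $\Sol$ commutes with pullbacks along open immersions (\autoref{main thm Bhatt Lurie}).
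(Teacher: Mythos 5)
Your proof is correct and follows essentially the same route as the paper: invoke Gabber's result for the existence of the perverse t-structure on $D^b_c(X_{\et},\Ff_q)$, reduce (via the fact that one aisle determines a t-structure) to the single equality $\Sol({}^{\mathsf{p}}D^{\leq 0}_{\crys}) = {}^{\mathsf{p}}D^{\leq 0}_q$, and verify it using the stalk description of ${}^{\mathsf{p}}D^{\leq 0}_{\crys}$ from \autoref{usual perverse t-struct is a t-struct for F/Cartier crystals} together with the criterion $\cM_x \sim_F 0 \iff \Sol(\cM)_{\overline{x}} = 0$ from \autoref{support F-crystals}.
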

\begin{proof}
	This defines a t-structure by \cite[Theorem 10.3]{Gabber_notes_on_some_t_structures} (it is only shown for $r = 1$, but the same proof shows it for any $r \geq 1$).
	
	Recall (\cite[1.3.4]{Beilinson_Bernstein_Deligne_Faisceaux_Pervers}) that one of the subcategories defining a t-structure completely determines the other one. Since $\Sol$ is an equivalence of categories, we then only have to show that \[ \Sol({}^pD^{\leq 0}_{\crys}) = {}^pD^{\leq 0}. \]
	
	\begin{itemize}
		\item We have $\Sol({}^pD^{\leq 0}_{\crys}) \inc {}^pD^{\leq 0}$ by \autoref{support F-crystals}.\autoref{restricts_and_comes_back}.
		\item We have $\Sol({}^pD^{\leq 0}_{\crys}) \cni {}^pD^{\leq 0}$ by \autoref{support F-crystals} and the part after ``Furthermore'' in \autoref{usual perverse t-struct is a t-struct for F/Cartier crystals}.
	\end{itemize} 
\end{proof}

\begin{theorem}\label{eq cat Cartier crystals and perverse Fp sheaves}
	Let $X$ be a Noetherian F-finite $\bF_q$-scheme with a unit dualizing complex $\omega_X^\bullet$. Then up to shifting $\omega_X^\bullet$ on each connected component, the composition $\Sol \circ \: \Dd$ induces an equivalence of categories \[ (\Crys_X^{C^r})^{op} \cong \Perv_c(X, \Ff_q), \] where $\Perv_c(X, \Ff_q)$ denotes the heart of the perverse t-structure on $D^b_c(X_{\et}, \Ff_q)$ associated to the middle perversity.
	
	Furthermore, this equivalence preserves derived proper pushforwards.
\end{theorem}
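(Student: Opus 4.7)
The plan is to combine the equivalences already established in the earlier sections and check that the t-structures match up. First, by \autoref{main thm duality for crystals}, the functor $\bD$ induced by $\omega_X^\bullet$ is an anti-equivalence of triangulated categories
\[ \bD\colon D^b(\Crys_X^{C^r})^{op} \xrightarrow{\cong} D^b(\Crys_X^{F^r}). \]
Second, combining \autoref{D^b(Coh) = D^b_{coh}(QCoh) for F-modules} (or rather its avatar for crystals, \autoref{qcrys with crys cohom is crys}) with \autoref{main thm Bhatt Lurie}, the solution functor gives an equivalence of triangulated categories
\[ \Sol\colon D^b(\Crys_X^{F^r}) \xrightarrow{\cong} D^b_c(X_{\et}, \Ff_q). \]
Composing these yields an anti-equivalence $\Sol \circ \bD \colon D^b(\Crys_X^{C^r})^{op} \xrightarrow{\cong} D^b_c(X_{\et}, \Ff_q)$, so the only remaining task is to identify the hearts and to prove the pushforward compatibility.

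For the first point, apply \autoref{usual perverse t-struct is a t-struct for F/Cartier crystals}: after shifting $\omega_X^\bullet$ on each connected component so that its associated dimension function is $\mathsf{p}$, the duality $\bD$ exchanges the canonical $t$-structure on $D^b(\Crys_X^{C^r})$ with the perverse $t$-structure on $D^b(\Crys_X^{F^r})$. Then \autoref{RSol sends perverse to perverse} says that $\Sol$ is perverse $t$-exact, sending $\Perv(\Crys_X^{F^r})$ onto $\Perv_c(X, \Ff_q)$. Therefore the composite $\Sol \circ \bD$ sends the heart of the canonical $t$-structure to the heart of the perverse $t$-structure, giving the desired equivalence
\[ (\Crys_X^{C^r})^{op} \cong \Perv_c(X, \Ff_q). \]

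For the pushforward compatibility, let $f\colon X \to Y$ be a proper morphism of Noetherian $F$-finite $\Ff_q$-schemes with unit dualizing complexes chosen as above; by \autoref{induced unit dualizing complex by a morphism}, we may arrange that $\omega_X^\bullet = f^!\omega_Y^\bullet$ in a manner compatible with the Cartier structure. Then \autoref{compatibility between duality and pushforwards} provides a natural isomorphism $Rf_*\circ \bD_X \cong \bD_Y \circ Rf_*$ on the Cartier/$F$-module side, while the last sentence of \autoref{main thm Bhatt Lurie} gives $\Sol\circ Rf_* \cong Rf_* \circ \Sol$ for $F$-crystals. Stacking the two natural isomorphisms produces the required compatibility $(\Sol\circ \bD_Y)\circ Rf_* \cong Rf_* \circ (\Sol\circ \bD_X)$.

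The only real subtlety is the bookkeeping of shifts of the dualizing complex across connected components, needed to turn the dimension function $\delta$ associated with $\omega_X^\bullet$ into the middle perversity $\mathsf{p}$; this is handled exactly as in the proof of \autoref{usual perverse t-struct is a t-struct for F/Cartier modules}, using catenarity of $X$ (which follows from the existence of a dualizing complex), and poses no additional difficulty once one agrees to normalize $\omega_X^\bullet$ componentwise. Everything else is a direct assembly of theorems already proved in the paper.
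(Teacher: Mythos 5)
Your proposal is correct and takes essentially the same approach as the paper: it assembles the same intermediate results (\autoref{usual perverse t-struct is a t-struct for F/Cartier crystals} and \autoref{RSol sends perverse to perverse} for the heart identification, and \autoref{compatibility between duality and pushforwards}/\autoref{thm_upper_shriek_functor} together with \autoref{main thm Bhatt Lurie} for compatibility with proper pushforwards), merely unwinding the underlying equivalences a bit more explicitly than the paper's one-line citation.
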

\begin{proof}
	The first statement follows from \autoref{RSol sends perverse to perverse} and \autoref{usual perverse t-struct is a t-struct for F/Cartier crystals}. The second one follows from \autoref{compatibility between duality and pushforwards} and \autoref{main thm Bhatt Lurie} (see also \autoref{all derived pushforwards are the same for F-modules}).
\end{proof}

\begin{cor}\label{universal_homeo_induce_equivalence_of_Cartier_crystals}
	Let $f \colon X \to Y$ be a finite morphism between Noetherian $F$-finite $\bF_q$-schemes, which is a universal homeomorphism. Then the functor $f_* \colon \IndCrys_X^{C^r} \to \IndCrys_Y^{C^r}$ is an equivalence of categories., with inverse given by $f^{\flat}$.
	
	In particular, if $Y$ is semi-separated, has a unit dualizing complex $\omega_Y^{\bullet}$ and if we set $\omega_X^{\bullet} \coloneqq f^!\omega_Y^{\bullet}$, then the natural morphism $f_*\omega_X^{\bullet} \to \omega_Y^{\bullet}$ is an isomorphism in $D^b(\Crys_Y^{C^r})$.
\end{cor}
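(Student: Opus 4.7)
The plan is to reduce the statement to the corresponding fact for étale $\bF_q$-sheaves via the duality of \autoref{eq cat Cartier crystals and perverse Fp sheaves}. First, since the claim is local on $Y$ in the Zariski topology, I may assume $Y$ is affine; by \autoref{finite type over a unit dc has a unit dc}, both $Y$ and $X$ then admit unit dualizing complexes, so the duality theorem is available.

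Second, a finite universal homeomorphism $f \colon X \to Y$ induces an equivalence of small étale sites $X_{\et} \cong Y_{\et}$ (a classical result of Grothendieck, \cite{SGA5}-style reference, or Stacks Project). In particular, $f_* \colon \Sh(X_{\et}, \bF_q) \to \Sh(Y_{\et}, \bF_q)$ is an equivalence preserving constructibility. Because $f$ is a homeomorphism of underlying topological spaces, the middle-perversity function satisfies $\mathsf{p}_X = \mathsf{p}_Y \circ f$, while stalks and the functors $i^!_{\overline{x}}$ at geometric points are computed identically at corresponding points. Thus $f_*$ is perverse $t$-exact in the sense of \autoref{def:perverse F_q-sheaves} and restricts to an equivalence $\Perv_c(X, \bF_q) \cong \Perv_c(Y, \bF_q)$. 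Via \autoref{eq cat Cartier crystals and perverse Fp sheaves}, which intertwines $Rf_*$ on both sides for proper morphisms, this transfers to an equivalence $f_* \colon \Crys_X^{C^r} \to \Crys_Y^{C^r}$. I then extend to ind-crystals: since $\LNil$ is closed under filtered colimits (\autoref{basic props of ind-coherent objects} together with \autoref{locally nilp is a Serre subcategory}), every ind-crystal is a filtered colimit of crystals, while $f_*$ and its right adjoint $f^{\flat}$ both commute with filtered colimits of ind-coherent Cartier modules (as $f$ is affine and $f_*\cO_X$ is coherent). Consequently $f_*$ on ind-crystals is the ind-extension of the equivalence on crystals, hence itself an equivalence.

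For the ``in particular'' statement, \autoref{thm_upper_shriek_functor} gives that $f^!$ is right adjoint to $Rf_*$. Since $Rf_*$ is an equivalence on the bounded derived category of (ind-)crystals by the previous paragraph, the counit $Rf_*f^!\omega_Y^{\bullet} \to \omega_Y^{\bullet}$ is an isomorphism in $D^b(\Crys_Y^{C^r})$; at the level of complexes of ind-coherent Cartier modules, this counit is represented by the natural trace morphism $f_*\omega_X^{\bullet} \to \omega_Y^{\bullet}$, and being an isomorphism after passing to crystals is precisely the assertion that it is a nil-isomorphism.

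The most delicate step in this outline is verifying that $f_*$ preserves the perverse $t$-structure on étale $\bF_q$-sheaves. This reduces to observing that the middle-perversity function, the stalk functors, and the local-cohomology functors $i^!_{\overline{x}}$ are all invariant under the equivalence of étale sites induced by a universal homeomorphism; once these invariances are unwound from the equivalence of sites, everything else is essentially formal.
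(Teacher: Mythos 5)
Your proof is correct and follows the same basic route as the paper: localize on $Y$ to secure a unit dualizing complex, invoke the fact that $f_*$ is an equivalence on étale $\bF_q$-sheaves for a finite universal homeomorphism, and transfer through the duality of \autoref{eq cat Cartier crystals and perverse Fp sheaves}. The only stylistic difference is that you verify perverse $t$-exactness of $f_*$ on the étale side by hand, whereas the paper (implicitly) bypasses this by noting that $R f_*$ is already $t$-exact for the \emph{canonical} $t$-structure on $D^b(\Crys_Y^{C^r})$ because $f$ is affine; either way one obtains the equivalence of hearts and the rest of your argument (the ind-extension and the identification of the counit with the trace map via \autoref{counit of adjuntion for upper shriek}) matches the paper's.
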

\begin{proof}
	Note that under these assumptions, $f$ is automatically finite by \cite[{\href{https://stacks.math.columbia.edu/tag/04DF}{Tag 04DF}}]{Stacks_Project}. Hence, the part after ``In particular'' follows from the first statement since $X$ will automatically be semi-separated.
	
	To prove the first statement, we have to show that the unit and counit of the adjunction $f_* \dashv f^{\flat}$ are isomorphisms. This is a local statement on $Y$ so by \autoref{finite type over a unit dc has a unit dc}, we may assume that $Y$ is semi-separated and has a unit dualizing complex $\omega_Y^{\bullet}$. Set $\omega_X^{\bullet} \coloneqq f^!\omega_Y^{\bullet}$.
	
	Since $f$ is a universal homeomorphism, the natural map $\bF_{q, Y} \to f_*\bF_{q, X}$ is an isomorphism. By \cite[{\href{https://stacks.math.columbia.edu/tag/03SI}{Tag 03SI}}]{Stacks_Project}, $f_* : \Sh_c(X_{\et}, \bF_q) \to \Sh_c(Y_{\et}, \bF_q)$ is an equivalence of categories. Since it preserves supports, both $f_*$ and its inverse preserve ${}^pD^{\leq 0}$. Thus, $f_*$ also induces an equivalence on perverse $\bF_q$-sheaves, so by \autoref{eq cat Cartier crystals and perverse Fp sheaves}, we deduce the statement for Cartier crystals. We formally deduce the case of ind-crystals.
\end{proof}

We end this subsection by computing explicitly the composition $\Sol 
\circ \: \bD$. Recall that if $f \colon U \to X$ is an étale morphism, then the natural transformation $\theta \colon f^*F_* \to F_*f^*$ is an isomorphism on quasi-coherent sheaves (see \cite[XIV=XV §1 $n^{\circ}2$, Prop. 2(c)]{SGA5})). In particular, if $\cM \in \QCoh_X^{C^r}$, then $f^*\cM$ naturally acquires the structure of a Cartier module. 

Recall also that there exists a natural isomorphism $\psi \colon f^*F^{r, \flat} \cong F^{r, \flat}f^*$. Locally, if $f$ corresponds to a ring morphism $A \to B$, then for any $A$-module $M$, this natural transformation is given by \[ \begin{tikzcd}
	{\Hom_A(F^r_*A, M) \otimes_A B} \arrow[r]  & {\Hom_B(F^r_*A \otimes_A B, M \otimes_A B)} \arrow[r, "{- \circ \theta_A^{-1}}"] &  {\Hom_B(F^r_*B, M \otimes_A B).}
\end{tikzcd}  \]
	
\begin{lemma}\label{pullback Cartier module by etale maps}
	Let $f \colon U \to X$ be an étale morphism.
	\begin{enumerate}
		\item For any $\cM \in \QCoh_X^{C^r}$, the diagram \[ \begin{tikzcd}
			& f^*\cM \arrow[rd, "\kappa_{f^*\cM}^{\flat}"] \arrow[ld, "f^*\kappa_{\cM}^{\flat}"'] &                      \\
			{f^*F^{r, \flat}\cM} \arrow[rr, "\psi_{\cM}"] &                                                                                     & {F^{r, \flat}f^*\cM}
		\end{tikzcd} \] commutes. In particular, the functor $f^*$ preserves unit Cartier modules.
	\item\label{itm:pullback_commutes_with_Hom} For any $\cM \in \Coh_X^{F^r}$ (resp. $\Coh_X^{C^r}$) and $\cN \in \QCoh_X^{C^r}$ (resp. $\QCoh_X^{C^r, \unit}$), the natural isomorphism \[ f^*\HHom(\cM, \cN) \to \HHom(f^*\cM, f^*\cN) \] is an isomorphism in $\QCoh_X^{C^r}$ (resp. in $\QCoh_X^{F^r}$). In particular, the functors $\bD$ and $(\cdot)_{\et}$ commute. 
	\end{enumerate}
\end{lemma}

\begin{proof}
	\begin{enumerate}
		\item This is a local computation, so we may assume that $f$ is a ring morphism $\phi \colon A \to B$. Let $M$ correspond to $\cM$, and let $m \otimes \lambda \in M \otimes_A B$. We want to show that two morphisms $F^r_*B \to M \otimes_A B$ agree, so let $F^r_*b \in F^r_*B$. 
		
		Since $F^r_*A \otimes_A B \to F^r_*B$ is an isomorphism, we can write $b = \sum_i \phi(a_i)b_i^q$ for some $a_i \in A$ and $b_i \in B$. Then, by definition, \[ \psi_M\left(f^*\kappa_M^{\flat}(m \otimes \lambda)\right)(F^r_*b) = \sum_i \kappa_M(F^r_*(a_im)) \otimes b_i\lambda. \] 
		On the other hand, we have \[ \kappa^{\flat}_{M \otimes_A B}(m \otimes \lambda)(F^r_*b) = \kappa_{M \otimes_A B}(F^r_*(m \otimes b\lambda)) = f^*\kappa_M(\theta_M^{-1}(F^r_*(m \otimes b\lambda))). \] Since $\sum_i \phi(a_i)b_i^q = b$, we deduce that \[ \theta_M\left(\sum_i F^r_*(a_im) \otimes \lambda b_i\right) = F^r_*(m \otimes b\lambda),\] so we are done.

		\item This follows from the definitions and the previous point.
	\end{enumerate}
\end{proof}

\begin{notation}
	Let $A \in \{F, C\}$, and let $\cM$, $\cN \in \QCoh_X^{A^r}$. We denote by $\Hom_{A^r}(\cM, \cN)$ the group of morphisms $\cM \to \cN$ of $A$-modules. The associated $\Hom$-sheaf is denoted $\HHom_{A^r}(\cM, \cN)$. We use the same notation in the étale site.
\end{notation}

\begin{prop}
	For all $\cM \in D^b(\Coh_X^{C^r})$, there is a natural isomorphism \[ \Sol(\bD(\cM)) \cong \cR\HHom_{C^r}(\cM_{\et}, \omega_{X, \et}^{\bullet}). \]
\end{prop}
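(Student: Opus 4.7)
The plan is to pick an injective resolution $\cI^{\bullet} \to \omega_X^{\bullet}$ in $\IndCoh_X^{C^r,\unit}$. Since injective unit Cartier modules are injective as $\cO_X$-modules (\autoref{injective unit is injective O_X}), for a bounded-above representative of $\cM$ the totalization $\HHom^{\bullet}(\cM, \cI^{\bullet})$, equipped termwise with the $F$-structure from \autoref{construction Hom from Cartier to F}, is a model of $\bD(\cM)$ inside $D^b_{\coh}(\QCoh_X^{F^r})$.

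The heart of the argument is the following underived identity: for any $\cP \in \QCoh_X^{C^r}$ and any $\cQ \in \QCoh_X^{C^r,\unit}$, there is a canonical isomorphism of \'etale sheaves
\[
\HHom_{C^r}(\cP_{\et}, \cQ_{\et}) \;\cong\; \Sol\bigl(\HHom(\cP, \cQ)\bigr).
\]
I would prove this sectionwise: by \autoref{pullback Cartier module by etale maps}, for any \'etale $f\colon U \to X$, $f^*$ commutes with $\HHom$, preserves unit Cartier modules, and is compatible with the adjoint structural morphism. A morphism $\phi\colon f^*\cP \to f^*\cQ$ of $\cO_U$-modules is then Cartier-linear iff $\phi \circ \kappa = \kappa \circ F^r_*\phi$; passing to adjoints and using that $f^*\cQ$ is unit (so $\kappa^{\flat}_{f^*\cQ}$ is invertible), this is equivalent to $\phi = (\kappa^{\flat}_{f^*\cQ})^{-1} \circ F^{r,\flat}\phi \circ \kappa^{\flat}_{f^*\cP} = \tau_{\HHom}(\phi)$, i.e.\ a $\tau$-fixed point, i.e.\ a section of $\Sol(\HHom(\cP, \cQ))$ over $U$.

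Applying this identity termwise to $\cI^{\bullet}$ produces a canonical isomorphism of complexes
\[
\HHom^{\bullet}_{C^r}(\cM_{\et}, \cI^{\bullet}_{\et}) \;\cong\; \Sol\bigl(\HHom^{\bullet}(\cM, \cI^{\bullet})\bigr) \;=\; \Sol(\bD(\cM)),
\]
where the last equality is valid because $\Sol$ is exact on $D^b(\Crys_X^{F^r})$, being part of the equivalence of \autoref{main thm Bhatt Lurie}. The main obstacle I anticipate is to identify the LHS with $\cR\HHom_{C^r}(\cM_{\et}, \omega_{X,\et}^{\bullet})$. That $\cI^{\bullet}_{\et} \to \omega_{X,\et}^{\bullet}$ is a resolution is automatic from exactness of \'etale pullback; what requires work is showing that each $\cI^j_{\et}$ is acyclic for $\HHom_{C^r}(\cM_{\et}, -)$ on the \'etale site. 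Via the underived identity this reduces to vanishing of $R^{>0}\Sol$ on the $F$-module $\HHom(\cM, \cI^j)$, which should follow from the \'etale Artin--Schreier exact sequence $0 \to \Sol(\cN) \to \cN_{\et} \xrightarrow{\tau-1} \cN_{\et} \to 0$ via surjectivity of $\tau - 1$ on the \'etale site. Naturality in $\cM$ follows from the construction.
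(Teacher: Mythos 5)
Your central computation---that for $\cP$ a \emph{coherent} Cartier module and $\cQ$ a unit Cartier module, a Cartier-linear morphism $\cP_{\et}\to\cQ_{\et}$ is precisely a fixed point of $\tau_{\HHom(\cP,\cQ)}$, so that $\HHom_{C^r}(\cP_{\et},\cQ_{\et})=\Sol(\HHom(\cP,\cQ))$---is correct (note the coherence hypothesis, needed for $\HHom$ to commute with \'etale extension), and is exactly the observation the paper exploits in its final step. The gap lies in the passage from this sheaf-level identity to the derived statement. You apply $\Sol$ termwise to $\HHom^{\bullet}(\cM,\cI^{\bullet})$ and $\HHom_{C^r}$ termwise to $\cI^{\bullet}_{\et}$, and you correctly anticipate that this requires the $F$-modules $\HHom(\cM,\cI^j)$ to be $\Sol$-acyclic (equivalently, $\cI^j_{\et}$ to be $\HHom_{C^r}(\cM_{\et},-)$-acyclic). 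But that step is unproved, and your suggested reduction to Artin--Schreier is insufficient: $\HHom(\cM,\cI^j)$, with $\cI^j$ an injective hull built from modules $\cJ(x)$ of Matlis type, is a large quasi-coherent $F$-module that is typically neither coherent nor ind-coherent, and \'etale surjectivity of $\tau-1$ on such objects is not covered by the classical Artin--Schreier sequence for $\cO_X$ or by the known surjectivity for coherent/algebraic Frobenius modules. Note also that invoking ``\,$\Sol$ is exact on $D^b(\Crys_X^{F^r})$'' does not justify $\Sol(\HHom^{\bullet}(\cM,\cI^{\bullet}))=\Sol(\bD(\cM))$: the individual terms $\HHom(\cM,\cI^j)$ are not ind-coherent $F$-modules, only the total complex has coherent crystalline cohomology, so exactness of the crystalline $\Sol$ says nothing about termwise application.

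The paper's proof avoids acyclicity entirely. It identifies $\Sol(\bD(\cM))$ with $\cR\HHom_{F^r}(\cO_{X_{\et}},\HHom^{\bullet}(\cM_{\et},\cI_{\et}))$ and then represents this as the fiber of the degree-wise map $\alpha=\tau-1$ via \cite[Construction 3.1.7]{Bhatt_Lurie_RH_corr_pos_char}, which holds for arbitrary \'etale $F$-modules (no acyclicity needed). Likewise $\cR\HHom_{C^r}(\cM_{\et},\omega_{X,\et}^{\bullet})$ is represented as a fiber via the two-term free Cartier resolution of $\cM$ from \cite[Lemma 3.1]{Ma_Category_of_F_modules_has_finite_global_dimension}. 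The two fiber data are then matched by the local computation you carried out at the sheaf level. So your underived lemma is the right ingredient, but you would need either to supply a genuine proof that $\HHom(\cM,\cI^j)$ is $\Sol$-acyclic (which would itself require an argument along the lines of the fiber comparison), or to adopt the cone-comparison strategy.
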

\begin{proof}
	Let $\cM \in D^b(\Coh_X^{C^r})$. Note that $\Sol \colon \QCoh_X^{F^r} \to \Sh(X_{\et}, \bF_p)$ is given by $\cN \mapsto \HHom_{F^r}(\cO_{X_{\et}}, \cN_{\et})$, where $\cO_X$ is given its canonical Frobenius module structure. We then see from \autoref{pullback Cartier module by etale maps}.\autoref{itm:pullback_commutes_with_Hom} that \[ \Sol(\bD(\cM)) = \cR\HHom_{F^r}(\cO_{X_{\et}}, \cR\HHom(\cM_{\et}, \omega_{X, \et}^{\bullet})). \] Thus, we have to show that \[ \cR\HHom_{F^r}(\cO_{X_{\et}}, \cR\HHom^{\bullet}(\cM_{\et}, \omega_{X, \et}^{\bullet})) \cong \cR\HHom^{\bullet}_{C^r}(\cM_{\et}, \omega_{X, \et}^{\bullet}). \]
	Let $\cI \in D^+(\IndCoh_X^{C^r, \unit})$ be an injective resolution of $\omega_X^{\bullet}$, and let $\cH^{i, j} \coloneqq \HHom(\cM^i, \cI^j)$. By \cite[Construction 3.1.7]{Bhatt_Lurie_RH_corr_pos_char}, there is an exact triangle \[ \cR\HHom_{F^r}(\cO_{X_{\et}}, \HHom^{\bullet}(\cM_{\et}, \cI_{\et})) \to \HHom^{\bullet}(\cM_{\et}, \cI_{\et}) \xrightarrow{\alpha} F^r_*\HHom^{\bullet}(\cM_{\et}, \cI_{\et}) \xrightarrow{+1} \] where $\alpha$ sends a morphism $\phi \colon \cM^i \to \cI^j$ (defined on some étale open) to $\tau_{\cH^{i, j}}(\phi) - F^r_*\phi$.
	
	On the other hand, by applying $\HHom^{\bullet}_{C^r}(-, \cI)$ to the exact sequence in \cite[Lemma 3.1]{Ma_Category_of_F_modules_has_finite_global_dimension} (their regularity hypothesis is not used), we obtain an exact triangle \[ \HHom_{C^r}^{\bullet}(\cM_{\et}, \cI_{\et}) \to \HHom^{\bullet}(\cM_{\et}, \cI_{\et}) \xrightarrow{\beta} \HHom^{\bullet}(F^r_*\cM_{\et}, \cI_{\et}) \xrightarrow{+1} \] where given $\phi \colon \cM^i \to \cI^j$, $\beta(\phi) = \phi \circ \kappa_{\cM^i} - \kappa_{\cI^j} \circ F^r_*f$.
	
	Let $\gamma$ denote the isomorphism \[ \HHom^{\bullet}(F^r_*\cM_{\et}, \cI_{\et}) \cong F^r_*\HHom^{\bullet}(\cM_{\et}, F^{r, \flat}\cI_{\et}). \] Since $\cI$ is unit, we are left to show that \[ \gamma \circ \beta = F^r_*(\kappa_{\cI}^{\flat} \circ -) \circ \alpha \] as morphisms of complexes $\HHom^{\bullet}(\cM_{\et}, \cI_{\et}) \to F^r_*\HHom^{\bullet}(\cM_{\et}, F^{r, \flat}\cI_{\et})$. By definition of $\HHom^{\bullet}$, we may assume that both $\cM$ and $\cI$ are sheaves. This is a local computation, so we only need to show the equality when evaluated at some affine étale open $U = \Spec R$. 
	
	Let $M$ correspond to $\cM|_U$, and $I$ correspond to $\cI|_U$. For all $\lambda \in R$, $\phi \in \Hom(M, I)$ and $m \in M$, we have \[ \gamma(\beta(\phi))(m)(F^r_*\lambda) = \phi(\kappa_M(F^r_*(\lambda m))) - \kappa_I(F^r_*(\lambda \phi(m))). \] On the other hand, by \autoref{construction Hom from Cartier to F}, we have \[ (F^r_*(\kappa_{\cI}^{\flat} \circ -) \circ \alpha)(\phi) = F^r_*\left(F^{r, \flat}\phi \circ \kappa_M^{\flat} - \kappa_I^{\flat} \circ \phi\right). \] Thus, we are done by \autoref{explicit_adjoint_structural_morphism}.
\end{proof}

\begin{rem}\label{Schedlmeier's construction and ours agree}
	As a result, our duality between Cartier crystals and perverse $\bF_p$-sheaves is a generalization of the equivalence from \cite{Schedlmeier_Cartier_crystals_and_perverse_sheaves}.
\end{rem}

\subsection{Finiteness and exactness results}\label{section_finiteness_and_exactness_results}
As a consequence of our results, we obtain new proofs of already well-known results (at least to the experts).

\begin{cor}\cite[Theorem 4.6]{Blickle_Bockle_Cartier_crystals}
	Let $X$ be a Noetherian, $F$-finite $\bF_q$-scheme. Then any object in $\Crys_X^{C^r}$ has finite length.
\end{cor}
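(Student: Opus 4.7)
The plan is to apply the main anti-equivalence \autoref{eq cat Cartier crystals and perverse Fp sheaves}, namely $(\Crys_X^{C^r})^{op} \cong \Perv_c(X, \bF_q)$. Since the property of having finite length is self-dual (a composition series $0 = M_0 \subset \cdots \subset M_n = M$ in an abelian category $\cA$ reads, after reversing arrows, as a composition series in $\cA^{op}$ with the same simple quotients), the statement is equivalent to showing that every object of $\Perv_c(X, \bF_q)$ has finite length.

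First, I would verify that the abelian category $\Sh^c_\et(X, \bF_q)$ is both Noetherian and Artinian. Noetherianness is recorded in the introduction (Stacks Project \texttt{0F0N}), and Artinianness follows by dévissage along a finite stratification with locally constant cohomology: on each stratum one reduces to the fact that any finite $\bF_q$-representation of a pro-finite group factors through a finite quotient, hence has only finitely many submodules.

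To upgrade from constructible to perverse sheaves, I would use the recollement for the perverse $t$-structure from \cite{Gabber_notes_on_some_t_structures} (already invoked in \autoref{RSol sends perverse to perverse}), proceeding by induction on $\dim X$. The base case $\dim X = 0$ is immediate since perverse $\bF_q$-sheaves on a point are finite-dimensional $\bF_q$-vector spaces. For the inductive step, pick a dense open $j \colon U \hookrightarrow X$ with complement $i \colon Z \hookrightarrow X$ of strictly smaller dimension, with $U$ chosen so that every perverse sheaf on $U$ is a shifted constructible local system. Any $\cF \in \Perv_c(X, \bF_q)$ fits, via perverse truncation of the attaching triangle $i_*i^!\cF \to \cF \to Rj_*j^*\cF \xrightarrow{+1}$, into a short exact sequence in $\Perv_c$ whose outer terms lie in $\Perv_c(Z, \bF_q)$ (finite length by induction) and in the essential image of $j_{!*}$ applied to shifted local systems on $U$ (finite length, since these correspond to finite $\bF_q$-representations of $\pi_1^{\et}$).

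The main obstacle is the technical verification that arbitrary sub-perverse-sheaves of $\cF$ respect the filtration coming from the recollement, so that an ascending or descending chain in $\Perv_c(X, \bF_q)$ induces corresponding chains on the two outer terms and therefore stabilizes. This is a classical BBD-style dévissage, and the $\bF_q$-coefficient version is covered by the general perverse $t$-structure machinery of Gabber cited above.
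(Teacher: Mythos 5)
Your core strategy is the same as the paper's: transport the question across the anti\-/equivalence $(\Crys_X^{C^r})^{op} \cong \Perv_c(X,\bF_q)$ and invoke finite length of perverse $\bF_q$-sheaves, which is \cite[Corollary 12.4]{Gabber_notes_on_some_t_structures} (the paper cites this directly rather than reproving it as you sketch). Your observation that finite length is self\-/dual under passing to the opposite category is correct and is exactly what makes this work.

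However, there is a genuine gap: the equivalence \autoref{eq cat Cartier crystals and perverse Fp sheaves} is only stated for schemes that \emph{admit a unit dualizing complex}, whereas the corollary you are proving is for an arbitrary Noetherian $F$-finite $\bF_q$-scheme. You apply the equivalence without verifying this hypothesis. The paper handles this with a preliminary reduction: since finite length is local (a chain of subcrystals stabilizes if and only if its restriction to each open of a finite affine cover stabilizes, and subcrystals glue), one may assume $X$ is affine, and then \autoref{finite type over a unit dc has a unit dc} guarantees a unit dualizing complex. You should insert this reduction, or otherwise justify that $X$ has a unit dualizing complex, before invoking the anti\-/equivalence. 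The remainder of your dévissage sketch for $\Perv_c(X,\bF_q)$ is an acceptable outline, but since you defer to Gabber in the end anyway, citing his Corollary 12.4 directly would be cleaner.
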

\begin{proof}
	This statement is local, so we may assume that $X$ is affine. In particular, we may assume that it has a unit dualizing complex by \autoref{finite type over a unit dc has a unit dc}. Now, the statement is immediate from \autoref{eq cat Cartier crystals and perverse Fp sheaves} and the analogous statement for perverse sheaves, see \cite[Corollary 12.4]{Gabber_notes_on_some_t_structures}.
\end{proof}

\begin{cor}\label{Hom between finite perverse sheaves is finite}
	Let $X$ be a Noetherian, $F$-finite, semi-separated $\bF_q$-scheme that admits a unit dualizing complex. Then for all $\cF_1, \cF_2 \in \Perv_c(X_{\et}, \bF_q)$, the $\Hom$-set in the category of perverse $\bF_p$-sheaves \[ \Hom_{\bF_q}(\cF_1, \cF_2) \] is a finite-dimensional $\bF_q$-vector space.
\end{cor}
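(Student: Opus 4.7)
The plan is to translate the problem, via \autoref{eq cat Cartier crystals and perverse Fp sheaves}, into a question about Cartier crystals, use the finite-length property to reduce to endomorphisms of simple objects, and finally reduce to the case of a field via Kashiwara's equivalence combined with localization at a generic point.

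First, by \autoref{eq cat Cartier crystals and perverse Fp sheaves} there is an anti-equivalence $\Perv_c(X, \bF_q) \cong (\Crys_X^{C^r})^{op}$, so it suffices to show that $\Hom_{\Crys_X^{C^r}}(\cM, \cN)$ is finite-dimensional over $\bF_q$ for all Cartier crystals $\cM, \cN$. Since Cartier crystals on $X$ have finite length (see \autoref{section_finiteness_and_exactness_results}), a standard dévissage using the long exact sequences of $\Hom$ in each variable reduces this to showing that $\End(\cS)$ is finite-dimensional over $\bF_q$ for every simple Cartier crystal $\cS$, as $\Hom$ between non-isomorphic simples vanishes by Schur's lemma.

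By Schur, $\End(\cS)$ is a division $\bF_q$-algebra. The closure of the support $\overline{\Supp_{\crys}(\cS)} \subseteq X$ is irreducible: this is the standard fact that simple perverse sheaves have irreducible support, transported through \autoref{eq cat Cartier crystals and perverse Fp sheaves}. Endow it with its reduced closed subscheme structure $Z$, and let $\eta \in Z$ be its generic point. Kashiwara's equivalence (also recalled in \autoref{section_finiteness_and_exactness_results}) writes $\cS \cong i_{Z,*}\cS'$ for a simple Cartier crystal $\cS' \in \Crys_Z^{C^r}$ with $\Supp_{\crys}(\cS') = Z$, hence in particular $\cS'_\eta \not\sim_C 0$; full faithfulness of $i_{Z,*}$ gives $\End(\cS) = \End(\cS')$. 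Because every nonzero endomorphism of $\cS'$ is an isomorphism (Schur) and localization preserves isomorphisms, the induced localization map $\End(\cS') \hookrightarrow \End_{\Crys_{\Spec k(\eta)}^{C^r}}(\cS'_\eta)$ is an injective homomorphism of $\bF_q$-algebras.

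Finally, the residue field $k(\eta)$ is $F$-finite (as a localization of the $F$-finite $X$) and therefore admits a $p$-basis, so \autoref{nice regular ring have a unit dualizing complex} endows $\Spec k(\eta)$ with a unit dualizing complex. Because $\Spec k(\eta)$ is zero-dimensional, the middle-perversity t-structure there coincides with the canonical one, so \autoref{eq cat Cartier crystals and perverse Fp sheaves} identifies $\Crys_{\Spec k(\eta)}^{C^r}$ anti-equivalently with the category of constructible étale $\bF_q$-sheaves on $\Spec k(\eta)$, i.e.\ finite continuous $\bF_q$-linear representations of $\Gal(k(\eta)^{\sep}/k(\eta))$. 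Endomorphism rings of such representations are manifestly finite-dimensional over $\bF_q$, and combined with the injection above this concludes. The main obstacle is the irreducibility of the support of a simple Cartier crystal, which I take from the perverse-sheaf formalism; once this is accepted, the remainder is a careful application of Kashiwara's equivalence, generic localization, and the existence of a unit dualizing complex on $F$-finite fields.
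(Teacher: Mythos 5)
Your proof is correct, but it takes a genuinely different route from the paper's. The paper's proof of this corollary is a one-liner: it cites Blickle--B\"ockle (\emph{Cartier modules: finiteness results}, Theorem~4.17), which already establishes that $\Hom$-groups between Cartier crystals are finite-dimensional $\bF_q$-vector spaces, and then transports this through the anti-equivalence of \autoref{eq cat Cartier crystals and perverse Fp sheaves}. You instead re-derive that finiteness from scratch inside the paper's own machinery: anti-equivalence, finite length, d\'evissage to simples via Schur, Kashiwara's equivalence $\cS \cong i_{Z,*}\cS'$ to pass to an integral closed subscheme, and the observation that the restriction $\End(\cS') \hookrightarrow \End(\cS'_\eta)$ (injective by Schur, since a nonzero endomorphism of $\cS'$ is an isomorphism and localizing preserves isomorphisms) lands in the endomorphism ring of a constructible $\bF_q$-sheaf on the spectrum of an $F$-finite field, which is manifestly finite. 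This has the virtue of being self-contained modulo the results already proved in the paper, at the cost of being considerably longer. Two small refinements: (i) you do not actually need irreducibility of $\overline{\Supp_{\crys}(\cS)}$ --- taking $\eta$ to be a generic point of \emph{any} irreducible component of $\overline{\Supp_{\crys}(\cS)}$ (with $Z$ reduced) already gives $\cO_{Z,\eta} = k(\eta)$ a field and $\cS'_\eta \not\sim_C 0$, which sidesteps the appeal to the classification of simple perverse $\bF_p$-sheaves that you wave away as ``standard'' (it is true, via Gabber's notes, but it is a nontrivial input in this torsion setting); (ii) $\Supp_{\crys}(\cS')$ is a constructible \emph{dense} subset of $Z$ rather than literally all of $Z$, but this still forces it to contain $\eta$, which is all you use.
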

\begin{proof}
	This follows from the analogous result for Cartier crystals, see \cite[Theorem 4.17]{Blickle_Bockle_Cartier_modules_finiteness_results}.
\end{proof}

\begin{rem}\label{ex:RHom_does_not_preserve_constructible_sheaves}
	This fact is rather interesting, because it is not true in general that given two $\bF_q$-sheaves $\cF_1$ and $\cF_2$, the groups $\Ext^i(\cF_1, \cF_2)$ are finite (even in the proper case). For example, let $k = \overline{\bF}_q$, let $X = \bP^1_k$, let $x \in X$ be any closed point, and let $U \coloneqq X \setminus \{x\}$ with corresponding open immersion $j \colon U \inj X$. Then \[ \Rcal\HHom(j_!\bF_{q, U}, \bF_{q, X}) \cong Rj_*\bF_{q, U}. \] so \[ \Ext^1(j_!\bF_{q, U}, \bF_{q, X}) \cong H^1(U_{\et}, \bF_{q, U}) = H^1(\bA^1_{k, \et}, \bF_q). \] The Artin-Schreier sequence gives us that this group is exactly the cokernel of $F - 1 \colon k[t] \to k[t]$. This is easily seen to be infinite-dimensional.
\end{rem}

We also have many interesting exactness properties, both at the level of perverse sheaves and of Cartier crystals. After the first draft of this paper was published, some of the following results were reproved in \cite{Bhatt_and_co_Applications_of_perv_sheaves_in_commutative_algebra}.
\begin{cor}\label{Artin_vanishing_and_co}
	Let $f \colon X \to Y$ be a finite type morphism between Noetherian, $F$-finite, semi-separated $\bF_q$-schemes, and assume that $Y$ has a unit dualizing complex.
	\begin{enumerate}
		\item\label{one} the functor $f^! \colon D^b(\Crys_Y^{C^r}) \to D^b(\Crys_X^{C^r})$ is perverse t-exact;
		\item\label{two} if $f$ is étale, then $Rf_* \colon D^b(\Crys_X^{C^r}) \to D^b(\Crys_Y^{C^r})$ is perverse t-exact;
		\item\label{three} if $f$ is separated, then $Rf_! \colon D^b_c(X_{\et}, \bF_q) \to D^b_c(Y_{\et}, \bF_q)$ is right perverse t-exact;
		\item\label{four} if $f$ is affine, then $Rf_! \colon D^b_c(X_{\et}, \bF_q) \to D^b_c(Y_{\et}, \bF_q)$ is perverse t-exact.
	\end{enumerate}
\end{cor}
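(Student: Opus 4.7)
The plan is to deduce all four statements from the contravariant equivalence of \autoref{eq cat Cartier crystals and perverse Fp sheaves}
\[ \Sol \circ \bD_X \colon D^b(\Crys_X^{C^r})^{op} \cong D^b_c(X_{\et}, \bF_q), \]
by tracking how it intertwines $t$-structures and the functors $f^!$, $Rf_*$. Combining \autoref{usual perverse t-struct is a t-struct for F/Cartier crystals} (the duality $\bD$ swaps the canonical and perverse $t$-structures) with \autoref{RSol sends perverse to perverse} (the functor $\Sol$ is perverse $t$-exact, and it is also canonically $t$-exact since on hearts it is the equivalence $\IndCrys_X^{F^r} \cong \Sh(X_{\et}, \bF_q)$ of \autoref{main thm Bhatt Lurie}), one sees that this contravariant equivalence exchanges the \emph{canonical} $t$-structure on Cartier crystals with the \emph{perverse} $t$-structure on $\bF_q$-sheaves, and vice versa. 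Moreover, by \autoref{thm_upper_shriek_functor} and \autoref{duality exchanges pushforward and compactly supported pushforward}, $f^!$ on Cartier crystals corresponds to $f^*$ on $\bF_q$-sheaves, and $Rf_*$ on Cartier crystals corresponds to $Rf_!$ on $\bF_q$-sheaves. Because the equivalence is contravariant, ``$T$ right (resp.\ left) canonically $t$-exact on Cartier'' translates to ``$T'$ left (resp.\ right) perversely $t$-exact on $\bF_q$-sheaves''.

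With this dictionary, each claim reduces to a routine statement. For (1), $f^!$ being perversely $t$-exact is equivalent to $f^*$ being canonically $t$-exact on $\bF_q$-sheaves, which is immediate since pullback of étale sheaves is exact. For (3), $Rf_!$ being right perversely $t$-exact on $\bF_q$-sheaves translates to $Rf_*$ being left canonically $t$-exact on Cartier crystals, which is automatic since $Rf_*$ is right-derived. For (4), perverse $t$-exactness of $Rf_!$ additionally requires $Rf_*$ to be right canonically $t$-exact on Cartier crystals, i.e.\ $R^if_* = 0$ for $i > 0$; invoking \autoref{inj qalg is inj O_X} one knows that injective ind-coherent Cartier modules are $\cO_X$-injective, so $Rf_*$ on Cartier crystals can be computed by the same resolutions as on $\QCoh_X$, and we are reduced to the classical vanishing of higher direct images for affine morphisms on quasi-coherent sheaves.

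The most subtle case is (2). By the dictionary, we need $Rf_!$ on étale $\bF_q$-sheaves to be canonically $t$-exact when $f$ is étale. Since étale morphisms of Noetherian schemes are quasi-finite (flat, locally of finite type, unramified), their fibers are $0$-dimensional, so proper base change gives $R^if_! = 0$ for $i > 0$, i.e.\ $Rf_! = f_!$. Exactness of $f_!$ then follows by decomposing $f$ (locally on $X$ if necessary) as $f = \bar f \circ j$ with $j$ an open immersion and $\bar f$ finite, via Zariski's main theorem, and noting that $j_!$ is exact and $\bar f_*$ is exact on étale $\bF_q$-sheaves for $\bar f$ finite.

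The main obstacle is setting up the dictionary correctly: one must carefully verify that the contravariant equivalence really does exchange the canonical $t$-structure on one side with the perverse on the other, and that the functor correspondence $f^! \leftrightarrow f^*$, $Rf_* \leftrightarrow Rf_!$ is accurate. Once this bookkeeping of ``canonical versus perverse'' and ``left versus right'' across the contravariant equivalence is in place, each of (1)--(4) becomes essentially immediate.
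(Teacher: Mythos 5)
Your proposal follows essentially the same route as the paper: set up the dictionary via \autoref{usual perverse t-struct is a t-struct for F/Cartier crystals} (the duality $\Dd$ swaps canonical/perverse) and \autoref{RSol sends perverse to perverse} ($\Sol$ preserves both t-structures), use the functor correspondences $f^! \leftrightarrow f^*$ and $Rf_* \leftrightarrow Rf_!$ from \autoref{thm_upper_shriek_functor} and \autoref{duality exchanges pushforward and compactly supported pushforward}, and reduce each case to a standard exactness fact. The argument is correct; the only stylistic difference is in part~(2), where you invoke proper base change and Zariski's main theorem to show $Rf_! = f_!$ is exact, whereas the paper simply uses directly that extension by zero $f_!$ along an étale morphism is exact — a more elementary observation that avoids compactifications entirely.
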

\begin{proof}
	Throughout, we will be using \autoref{usual perverse t-struct is a t-struct for F/Cartier crystals} and \autoref{RSol sends perverse to perverse} without further mention.
	\begin{enumerate}
		\item This follows from the definition of $f^!$ and exactness of $f^* \colon \Crys_Y^{F^r} \to \Crys_X^{F^r}$ (this is exact by \autoref{main thm Bhatt Lurie}).
		\item This follows from the fact that $f_! \colon \Sh_c(X_{\et}, \bF_q) \to \Sh_c(Y_{\et}, \bF_q)$ is exact.
		\item This follows from the fact that $Rf_* \colon D^b(\Crys_X^{C^r}) \to D^b(\Crys_Y^{C^r})$ is left t-exact and \autoref{duality exchanges pushforward and compactly supported pushforward}.
		\item This follows from the fact that $f_* \colon \Crys_X^{C^r} \to \Crys_Y^{C^r}$ is exact and and \autoref{duality exchanges pushforward and compactly supported pushforward}.
	\end{enumerate}
\end{proof}

We also obtain a version of the proper base change theorem for Cartier crystals.

\begin{cor}\label{proper base change Cartier crystals}
	Let 
	\[ \begin{tikzcd}
		X' \arrow[r, "v"] \arrow[d, "g"'] & X \arrow[d, "f"] \\
		Y' \arrow[r, "u"']                & Y               
	\end{tikzcd} 
	\] be a Cartesian square of Noetherian, $F$-finite and semi-separated $\bF_q$-schemes, with both $f$ and $u$ separated and of finite type. Assume that $Y$ has a unit dualizing complex. Then there is an isomorphism \[ u^!Rf_* \cong Rg_*v^! \] as functors $D^b(\Crys_X^{C^r}) \to D^b(\Crys_{Y'}^{C^r})$. 
	In particular, for any inclusion of a closed subscheme $i \colon Z \inj X$, then \[ i^!i_*\cM \sim_C \cM \] for all $\cM \in \Crys_Z^{C^r}$.
\end{cor}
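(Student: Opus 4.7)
The plan is to transfer the statement through the duality to \'etale $\bF_q$-sheaves, where it is the classical proper base change theorem. Since $Y$ admits a unit dualizing complex, so do $X$, $Y'$ and $X'$ by $\autoref{induced unit dualizing complex by a morphism}$, so the duality functors $\bD$ and the upper shriek functors on Cartier crystals are defined on all four schemes; note also that $g$ and $v$ are separated of finite type, as base changes of $f$ and $u$ respectively.

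First I would unwind the definitions. By $\autoref{definition upper shriek functor}$, $u^! = \bD_{Y'} \circ u^* \circ \bD_Y$ and $v^! = \bD_{X'} \circ v^* \circ \bD_X$. Combining this with $\bD \circ \bD \cong \id$ and the compatibility $\bD_Y \circ Rf_* \cong Rf_! \circ \bD_X$ (and the analogous one for $g$) from $\autoref{duality exchanges pushforward and compactly supported pushforward}$, one obtains natural isomorphisms
\[ u^! \circ Rf_* \cong \bD_{Y'} \circ u^* \circ Rf_! \circ \bD_X \quad \text{and} \quad Rg_* \circ v^! \cong \bD_{Y'} \circ Rg_! \circ v^* \circ \bD_X. \]
Since $\bD_X$ and $\bD_{Y'}$ are equivalences, it suffices to produce a natural isomorphism $u^* \circ Rf_! \cong Rg_! \circ v^*$ as functors $D^b(\Crys_X^{F^r}) \to D^b(\Crys_{Y'}^{F^r})$. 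By the Riemann--Hilbert equivalence of $\autoref{main thm Bhatt Lurie}$, which intertwines $u^*$ with $u^*$ and $Rf_!$ with $Rf_!$ by construction, this reduces in turn to the classical proper base change theorem for constructible \'etale $\bF_q$-sheaves.

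For the \emph{in particular} statement, I would apply the base change just proven to the Cartesian square obtained by taking $Y = X$, $Y' = X' = Z$, $f = u = i$ and $g = v = \id$ (which is indeed Cartesian, as $Z \times_X Z \cong Z$ for a closed immersion). The resulting isomorphism $i^! \circ i_* \cong \id$ on $D^b(\Crys_Z^{C^r})$, evaluated at $\cM \in \Crys_Z^{C^r}$, yields $i^! i_* \cM \sim_C \cM$.

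The main obstacle is largely notational bookkeeping: essentially all of the substantive work was already done when establishing the anti-equivalence with perverse sheaves and verifying its compatibilities with derived pushforwards and upper shrieks in $\autoref{thm_upper_shriek_functor}$ and $\autoref{duality exchanges pushforward and compactly supported pushforward}$, so what remains is to verify that the various natural transformations compose coherently and yield the claimed isomorphism.
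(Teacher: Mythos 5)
Your proof is correct and follows essentially the same route as the paper's: transfer the claimed isomorphism through the duality $\bD$ and the Riemann--Hilbert correspondence $\Sol$ to constructible \'etale $\bF_q$-sheaves, where it becomes the classical proper base change theorem, and specialize to the Cartesian square with $f=u=i$, $g=v=\mathrm{id}$ for the \emph{in particular} statement. You merely spell out the chain of natural isomorphisms (via $\bD\circ\bD\cong\mathrm{id}$ and the exchange $\bD\circ Rf_*\cong Rf_!\circ\bD$) more explicitly than the paper, which just cites \autoref{main thm Bhatt Lurie}, \autoref{eq cat Cartier crystals and perverse Fp sheaves}, the \'etale proper base change theorem, and the definition of $i^!$.
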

\begin{proof}
	The first statement follows from the proper base change theorem for étale sheaves (\cite[{\href{https://stacks.math.columbia.edu/tag/0F7L}{Tag 0F7L}}]{Stacks_Project}), \autoref{main thm Bhatt Lurie}, \autoref{eq cat Cartier crystals and perverse Fp sheaves} and the definition of $i^!$.
	
	The second statement then follows by applying the above result to the Cartesian square \[ \begin{tikzcd}
		Z \arrow[r, "id"] \arrow[d, "id"'] & Z \arrow[d, "i"] \\
		Z \arrow[r, "i"']                & X.               
	\end{tikzcd} 
	\] 
\end{proof}
\begin{rem}
	\begin{itemize}
		\item The semi--separatedness condition in the last statement of \autoref{proper base change Cartier crystals} is not required, since it is a local statement (and the construction of $i^!$ does not require semi--separability). Similarly, the semi--separatedness conditions is not required in statements \autoref{two}, \autoref{three} and \autoref{four} of  \autoref{Artin_vanishing_and_co}.
		\item In the same spirit, we automatically obtain a new proof of the existence of the exact triangle \[ i_*i^! \to id \to Rj_*j^* \xrightarrow{+1} \] ($i$ is the inclusion of a closed subscheme, and $j$ the open immersion corresponding to the complement) for ind-coherent Cartier crystals on any Noetherian $F$-finite $\bF_p$-scheme. This was already shown to exist in \cite[Theorem 4.1.1]{Blickle_Bockle_Cartier_crystals} for arbitrary Cartier quasi-crystals. In particular, we have a new proof of Kashiwara's equivalence for Cartier crystals (see \cite[Theorem 4.1.2]{Blickle_Bockle_Cartier_crystals}).
	\end{itemize}
\end{rem}

\section{Generic vanishing for perverse $\overline{\Ff}_p$-sheaves over abelian varieties}\label{section generic vanishing on abelian varieties}

As the title suggests, our final goal is to show a generic vanishing statement for perverse $\overline{\Ff}_p$-sheaves on abelian varieties (see the introduction). Until now, we worked with $\bF_{p^r}$-sheaves, $r$-Frobenius modules and $r$-Cartier modules for some fixed $r \geq 1$. Our first goal is to see how our functors behave when we vary $r$.

Thanks to this, we will be able to pass to $\overline{\bF}_p$-sheaves, which is the right setup four our generic vanishing result.

\subsection{Behaviour of our functors under base change}\label{subsection functors behave well uner base change}
Fix $r$, $s \geq 1$.

\begin{defn}\label{def base change}
	Let $X$ be an $\Ff_p$-scheme, and let $A \in \{F, C\}$. Iterating the action $s$ times defines a functor $D(\cO_X[A^{r}]) \to D(\cO_X[A^{rs}])$. The image of $\cM \in D(\cO_X[A^{r}])$ under this functor is denoted $\cM_s$. Similarly, for $\Fcal \in D(X_{\et}, \Ff_{p^r})$, we define \[ \cF_s \coloneqq \Fcal \otimes_{\Ff_{p^r}} \Ff_{p^{rs}} \in D(X_{\et}, \Ff_{p^{rs}}). \]
\end{defn}

\noindent These functors are what we mean by \emph{base change}.

\begin{lem}\label{Sol preserves base change}
	Let $X$ be a Noetherian scheme over $\Ff_{p^{rs}}$ and let $\Mcal \in D(\IndCrys_X^{F^r})$. There is a natural isomorphism
	\[ \Sol(\Mcal)_s \cong \Sol(\Mcal_s). \] 
\end{lem}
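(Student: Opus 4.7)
The plan is to work at the level of abelian categories (where $\Sol$ is an equivalence by \autoref{main thm Bhatt Lurie}), construct a natural morphism $\alpha_{\cM} \colon \Sol(\cM)_s \to \Sol(\cM_s)$, verify it is an isomorphism on geometric stalks, and then extend to the derived category using exactness of the two base change operations.

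For the construction, note that for $\cM \in \IndCrys_X^{F^r}$ the structural morphism of $\cM_s$ is by definition $\tau_{\cM_s, \et} = (\tau_{\cM, \et})^s$, so $\Sol(\cM) = \ker(\tau_{\cM, \et} - 1) \subseteq \ker(\tau_{\cM, \et}^s - 1) = \Sol(\cM_s)$ is a tautological inclusion of $\bF_{p^r}$-subsheaves of $\cM_\et$. As the target is an $\bF_{p^{rs}}$-sheaf, extending scalars yields the desired $\bF_{p^{rs}}$-linear map $\alpha_{\cM}$. To verify that $\alpha_{\cM}$ is an isomorphism, I pass to a geometric stalk $\bar x \to X$. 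Since $\cO_{X, \bar x}$ is strict henselian over $\bF_{p^{rs}}$, its étale site is trivial, so \autoref{main thm Bhatt Lurie} restricts to an equivalence between $\IndCrys$ on $\Spec \cO_{X, \bar x}$ and discrete $\bF_{p^r}$-modules. Setting $V \coloneqq \Sol(\cM)_{\bar x}$, we obtain $\cM_{\bar x} \sim_F V \otimes_{\bF_{p^r}} \cO_{X, \bar x}$ as $F^r$-crystals, with structural morphism $v \otimes a \mapsto v \otimes a^{p^r}$. Since $\Sol$ factors through crystals, the computation can be made on this explicit representative: $\tau^s$ acts by $v \otimes a \mapsto v \otimes a^{p^{rs}}$, and its fixed points form $V \otimes_{\bF_{p^r}} \bF_{p^{rs}}$, using that the roots of $x^{p^{rs}} - x$ in the strict henselization $\cO_{X, \bar x}$ are exactly $\bF_{p^{rs}}$ by Hensel's lemma. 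Tracing the construction shows that this identification is precisely the stalk of $\alpha_{\cM}$.

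Finally, to extend to the derived category, observe that all functors in the statement are exact: $\Sol$ is an equivalence of abelian categories by \autoref{main thm Bhatt Lurie}; $- \otimes_{\bF_{p^r}} \bF_{p^{rs}}$ is exact on sheaves since $\bF_{p^{rs}}/\bF_{p^r}$ is a finite separable field extension; and $(\cdot)_s$ on $F^r$-crystals is the identity on the underlying $\cO_X$-module, only reinterpreting the structural morphism as $\tau^s$. Thus all four functors extend termwise to the derived category and the natural isomorphism $\alpha$ of abelian-level objects extends to a quasi-isomorphism of complexes. The main technical point is the stalk-level trivialization $\cM_{\bar x} \sim_F V \otimes_{\bF_{p^r}} \cO_{X, \bar x}$; this should follow from the explicit form of the quasi-inverse of $\Sol$ in \cite{Bockle_Pink_Cohomological_Theory_of_crystals_over_function_fields} (or \cite{Bhatt_Lurie_RH_corr_pos_char}), which sends an étale $\bF_{p^r}$-sheaf to its natural twist by $\cO_X$ with canonical Frobenius, and at a strict henselization is transparent because the étale site is trivial.
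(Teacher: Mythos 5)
Your construction of the natural map $\alpha_{\cM}\colon \Sol(\cM)_s\to\Sol(\cM_s)$ is the same as the paper's, and the reduction to geometric stalks and to the abelian category $\Crys_X^{F^r}$ is also the same. The gap is in the stalk computation.

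Your key step is the assertion that because $\cO_{X,\bar x}$ is strictly Henselian, its \'etale site is ``trivial'', so that constructible \'etale $\bF_{p^r}$-sheaves on $\Spec\cO_{X,\bar x}$ are just discrete $\bF_{p^r}$-modules and hence $\cM_{\bar x}\sim_F V\otimes_{\bF_{p^r}}\cO_{X,\bar x}$ with $V=\Sol(\cM)_{\bar x}$. This is false as soon as $\cO_{X,\bar x}$ has positive Krull dimension: a strictly Henselian local ring has no nontrivial \emph{finite} \'etale covers, but its \'etale site is far from trivial (the punctured spectrum has plenty of \'etale covers), and there are constructible $\bF_{p^r}$-sheaves with vanishing stalk at the closed point which are nevertheless nonzero, e.g.\ $j_!\cL$ for $j$ the open complement of the closed point. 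Concretely, take $X=\Spec k[t]$ for $k$ an algebraically closed field, $r=1$, and $\cM=\cO_X$ with structural morphism $\phi(a)=ta^p$ (so that the adjoint $\tau_{\cM}^*$ is multiplication by $t$). Then $\phi$ is injective, so $\cM_{\bar 0}$ has no nonzero nilpotent $F$-submodule and is \emph{not} locally nilpotent; in particular $\cM_{\bar 0}\not\sim_F 0$. Yet $\Sol(\cM)_{\bar 0}=\{a\in\cO_{X,\bar 0}: ta^p=a\}=0$ (any such $a$ must lie in $\bigcap_n(t^n)=0$). Hence $V\otimes_{\bF_p}\cO_{X,\bar 0}=0$ while $\cM_{\bar 0}\not\sim_F 0$, and the claimed nil-isomorphism fails. (The lemma is of course still true in this example, since both sides vanish, but your reasoning for it does not hold.)

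The trivialization $\cM_{\bar x}\sim_F V\otimes\cO$ is only available after one has genuinely reduced to a \emph{field}. This is precisely the extra work the paper does: it uses exactness of $\Sol$ and the short exact sequence $0\to\mfr M\to M\to M/\mfr M\to 0$, together with the observation that $\mfr M$ has no nonzero fixed point under $\tau_M^{l}$ for any $l$ (because any such fixed point lies in $\bigcap_n\mfr^{p^{rl n}}M=0$), to replace $M$ by $M/\mfr M$ over the separably closed residue field. Your argument has no analogue of this step. You also do not address the fact that the residue field of $\cO_{X,\bar x}$ is separably closed but typically not perfect; the paper passes to $M^{1/p^\infty}$ to reduce to an algebraically closed field before invoking the Lang/Mumford structure theorem. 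Both of these steps would need to be supplied to make your approach correct.
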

\begin{proof}
	Since a fixed point of $\tau_{\cM}$ is also a fixed point of $\tau_{\cM}^s$, we have a natural morphism of complexes of $\Ff_{p^r}$-sheaves \[ \Sol(\Mcal) \ra \Sol(\Mcal_s). \] We then deduce a natural morphism of $\Ff_{p^{rs}}$-sheaves \[ \Sol(\Mcal) \otimes_{\Ff_{p^r}} \Ff_{p^{rs}} \ra \Sol(\Mcal_s). \] Our goal is to show this is an isomorphism. Since $\Sol$ is exact and preserves colimits, we may assume that $\cM \in \Crys_X^{F^r}$.
	
	It is enough to verify the result at geometric stalks, so we may assume that we work over a strictly Henselian ring $(R, \mfr)$. Let $M$ be the finitely generated Frobenius module corresponding to $\Mcal$. 
	
	We now show that we can reduce to the case of a separably closed field. We have an exact sequence \[ 0 \ra \mfr M \ra M \ra M/\mfr M \ra 0 \] of $R$-modules, and it is also an exact sequence of Frobenius modules (and hence Frobenius crystals). By exactness of $\Sol$, we see that to be able to reduce to the separably closed field case, it is enough to show that the module $\mfr M$ has no non-zero fixed point. 
	
	Let $x \in \mfr M$ be a fixed point, and write $x = rm$ with $r \in \mfr$ and $m \in M$. Then for all $l \geq 1$, \[ x = \tau_M^l(x) = \tau_M^l(rm) = r^{p^{rl}}\tau_M^{rl}(m) \in \mfr^{p^{rl}} M. \] Thus, \[ x \in \bigcap_{n \geq 0} \mfr^n M = 0. \] 
	
	Hence we reduce to the case of a separably closed field $k$. By \cite[Proposition 3.2.9]{Bhatt_Lurie_RH_corr_pos_char}, we have \[ \Sol(M) \cong \Sol(M^{1/p^\infty}) \: \: \mbox{ and } \: \: \Sol(M_s) \cong \Sol\left((M_s)^{1/p^\infty}\right) \cong \Sol\left((M^{1/p^\infty})_s\right) \] (see \cite[Notation 3.2.3]{Bhatt_Lurie_RH_corr_pos_char} for the functor $(\cdot)^{1/p^{\infty}}$). Since $M^{1/p^\infty}$ is a finitely generated Frobenius module over $k^{1/p^\infty}$, we reduce to the case of an algebraically closed field $k$. It then follows from \cite[Corollary p. 143]{Mumford_Abelian_Varieties}.
\end{proof}

\begin{lem}\label{Hom behaves well under base change}
	Let $X$ an $\Ff_p$-scheme (resp. Noetherian and $F$-finite), $\Mcal \in D(\Ocal_X[F^r])$ (resp. $D(\QCoh_X^{C^r})$) and $\Ncal \in D(\Ocal_X[C^r])$ (resp. $D(\QCoh_X^{C^r, \unit})$ or $D(\IndCoh_X^{C^r, \unit})$). Then we have a natural isomorphism \[ \Rcal\HHom(\Mcal, \Ncal)_s \cong \Rcal\HHom(\Mcal_s, \Ncal_s). \] 
\end{lem}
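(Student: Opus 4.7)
The plan is to observe that both sides of the claimed isomorphism have the same underlying complex of $\cO_X$-modules, so that the natural map is essentially the identity after forgetting the extra structure, and then to verify that this identity respects the appropriate $F^{rs}$- or $C^{rs}$-structures.

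By the third bullet of each of \autoref{Hom from F to Cartier}, \autoref{Hom from F to Cartier for qalg}, and \autoref{Hom from Cartier to F}, the pairing $\Rcal\HHom$ commutes with the forgetful functor to $D(\cO_X)$. Since the base-change functor $(\cdot)_s$ also acts as the identity on underlying $\cO_X$-modules, both $\Rcal\HHom(\cM, \cN)_s$ and $\Rcal\HHom(\cM_s, \cN_s)$ are canonically identified in $D(\cO_X)$, which I take as the candidate natural transformation.

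The content of the proof is then to check that the two structural morphisms agree. At the sheaf level this is a direct unwinding of the explicit formulas in \autoref{construction Hom from F to Cartier} and \autoref{construction Hom from Cartier to F}: iterating the $r$-Cartier (resp.\ $F^r$-module) structure on $\HHom(\cM, \cN)$ a total of $s$ times produces, by functoriality of $F^r_*$ together with the identities $\tau_{\cM_s} = \tau_\cM^s$ and $\kappa_{\cN_s} = \kappa_\cN^s$, exactly the $rs$-Cartier (resp.\ $F^{rs}$-module) structure induced on $\HHom(\cM_s, \cN_s)$.

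To lift the sheaf-level statement to the derived level, it suffices to fix an injective resolution of $\cN$ in one of the relevant categories and to observe, using \autoref{inj qcoh Cartier is inj O_X}, \autoref{inj qalg is inj O_X}, and \autoref{injective unit is injective O_X}, that such a resolution consists of objects which are injective as $\cO_X$-modules. By the compatibility with forgetting to $D(\cO_X)$ noted above, such resolutions compute both derived pairings simultaneously, so the sheaf-level isomorphism lifts directly to the derived category. The only potential subtlety is ensuring that the chosen injective resolution of $\cN$ (as a $C^r$- or unit $C^r$-module, say) also computes $\Rcal\HHom$ when $\cN_s$ is viewed as a $C^{rs}$-object; but this is immediate from the fact that the resolution is already injective as an $\cO_X$-module, which reduces both derived pairings to the same underlying sheaf-level Hom complex.
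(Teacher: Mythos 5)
Your proposal follows the same strategy as the paper's proof: establish the isomorphism at the level of sheaves by unwinding the explicit formulas in \autoref{construction Hom from F to Cartier} and \autoref{construction Hom from Cartier to F}, then lift to the derived level by resolving $\Ncal$ by injectives that are also injective as $\Ocal_X$-modules. The sheaf-level step and the first ($F$-module) case are fine: in that case the forgetful functor $\Mod(\Ocal_X[C^{rs}]) \to \Mod(\Ocal_X)$ is exact and conservative, so $\Ocal_X$-injectivity of $\Ical_s^j$ does immediately give $\HHom(\Mcal_s, -)$-acyclicity.

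However, in the unit Cartier case the phrase ``this is immediate from the fact that the resolution is already injective as an $\Ocal_X$-module'' glosses over a genuine step. The forgetful functor $\iota \colon \IndCoh_X^{C^{rs}, \unit} \to \QCoh_X$ (resp.\ $\QCoh_X^{C^{rs}, \unit} \to \QCoh_X$) is \emph{not} exact (it is only the right adjoint of the unitalization functor, and is exact only when $X$ is regular, cf.\ \autoref{inclusion from unit to O_X is exact is X regular}). As a result, a resolution of $\Ncal_s$ by $\Ical_s$ that is exact in the unit category need not remain exact in $\QCoh_X$, and being $\Ocal_X$-injective does not by itself reduce the $rs$-level derived pairing to a sheaf-level $\HHom$-complex. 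What is needed is the input that $\Ical_s^j$ is $\iota$-acyclic in the $rs$-unit category; this follows because $\Ical_s^j$, being injective in $\QCoh_X$, is $F^{rs,\flat}$-acyclic, and then \autoref{F^flat acyclic implies iota-acyclic} applies. The paper's proof explicitly invokes this lemma precisely here, and your argument should cite it as well rather than treating the reduction as tautological.
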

\begin{proof}
	Note that by the explicit definition of the pairings, we immediately have \[ \HHom(\Mcal, \Ncal)_s \cong \HHom(\Mcal_s, \Ncal_s) \] when $\cM$ and $\cN$ are sheaves.
	
	In the case where $\Mcal$ and $\Ncal$ are general complexes, this would then follow from the fact that if $\Ical$ is injective in $\Mod(\Ocal_X[C^r])$ (resp. $\IndCoh_X^{C^r, \unit}$ or $\QCoh_X^{C^r, \unit}$), then $\Ical_s$ is $\HHom(\Mcal, -)$-acyclic. 
	
	When $\Mcal \in D(\cO_X[F^r])$, this follows from \autoref{Hom from F to Cartier}. When $\Mcal \in D(\QCoh_X^{C^r})$, we would be done (see \autoref{Hom from Cartier to F}), if we knew that $\Ical_s$ was $\iota$-acyclic, where $\iota$ denotes either $\IndCoh_X^{C^{rs}, \unit} \to \QCoh_X$ or $\QCoh_X^{C^{rs}, \unit} \to \QCoh_X$. This holds because of \autoref{injective unit is injective O_X} and \autoref{F^flat acyclic implies iota-acyclic}.
\end{proof}
\begin{cor}\label{duality behaves well under base change}
	Let $X$ be a Noetherian, $F$-finite $\bF_p$-scheme with a unit dualizing complex, and induced duality functor $\bD$. Then for all $\Mcal \in D^b(\Coh_X^{F^r}) \cup D^b(\Coh_X^{C^r})$, we have \[ \Dd(\Mcal)_s \cong \Dd(\Mcal_s) \]
\end{cor}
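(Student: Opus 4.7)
The plan is to reduce everything to the compatibility of $\cR\HHom$ with base change, which is \autoref{Hom behaves well under base change}. The key observation is that the base-change operation $(-)_s$ sends a unit $r$-dualizing complex to a unit $rs$-dualizing complex, so after choosing dualizing data compatibly on the two sides, the statement becomes a direct consequence of the Hom compatibility.

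First, I would verify that $(\omega_X^\bullet)_s \in D^+(\IndCoh_X^{C^{rs}, \unit})$ is a unit dualizing complex for the $rs$-Cartier structure. The underlying complex of $\cO_X$-modules is unchanged by $(-)_s$, so the property of being a dualizing complex (in the sense of \cite[{\href{https://stacks.math.columbia.edu/tag/0A87}{Tag 0A87}}]{Stacks_Project}) is preserved. For the unit property, the adjoint structural morphism of $\omega_{X, s}^{\bullet}$ is, by construction, the $s$-fold iterate $(\kappa_{\omega}^{\flat})^s \colon \omega_X^\bullet \to F^{rs, \flat}\omega_X^\bullet$, which is an isomorphism because each $\kappa_{\omega}^{\flat}$ is.

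Next, by \autoref{naturality of induced unit dc} the duality $\bD$ on the $rs$-side is (up to canonical isomorphism) independent of the choice of unit dualizing complex underlying the data $\omega_X^\bullet \cong F^{rs, !}\omega_X^\bullet$; so I may compute $\bD$ on $D^b(\Coh_X^{F^{rs}})$ and $D^b(\Coh_X^{C^{rs}})$ by pairing with $\omega_{X, s}^{\bullet}$. With these compatible choices and the identifications \autoref{D^b(Coh) = D^b_{coh}(QCoh) for F-modules} and \autoref{D^b(Coh) = D^b_{coh}(QCoh) for Cartier modules}, the duality is simply $\cM \mapsto \cR\HHom(\cM, \omega_X^\bullet)$, and \autoref{Hom behaves well under base change} yields a natural isomorphism
\[ \bD(\cM)_s \cong \cR\HHom(\cM, \omega_X^\bullet)_s \cong \cR\HHom(\cM_s, \omega_{X, s}^\bullet) \cong \bD(\cM_s). \]

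The main (very minor) obstacle is bookkeeping: one must check that the base change $(-)_s$ commutes with the equivalences $D^b(\Coh_X^{A^r}) \cong D^b_{\coh}(\QCoh_X^{A^r})$ (and the analogue for $\IndCoh$) used implicitly in the definition of $\bD$, but this is immediate because these equivalences are the identity on underlying $\cO_X$-modules and $(-)_s$ only modifies the $F$- or Cartier-module structure. No nontrivial new input is required beyond \autoref{Hom behaves well under base change} and the observation about unit dualizing complexes above.
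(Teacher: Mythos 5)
Your proof is correct and follows essentially the same route as the paper: reduce to \autoref{Hom behaves well under base change}, then observe that the remaining equivalences $D^b(\Coh) \cong D^b_{\coh}(\QCoh)$ (or $\IndCoh$) commute with $(-)_s$ because they are trivial on underlying $\cO_X$-modules. Your extra remark that $(\omega_X^\bullet)_s$ is again a unit dualizing complex (for the $rs$-Cartier structure) is a useful clarification of what $\bD$ means on the $rs$-side that the paper leaves implicit; the appeal to \autoref{naturality of induced unit dc} is harmless but not needed once one simply computes $\bD$ on the $rs$-side with the explicit complex $(\omega_X^\bullet)_s$.
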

\begin{proof}
	By definition of $\Dd$ and by \autoref{Hom behaves well under base change}, it is enough to show that the inverses of the functors $G \colon D^b(\Coh) \to D^b_{\coh}(\QCoh)$ preserve base change. This is immediate from the fact that $G$ does.
\end{proof}

\subsection{Passing to $\overline{\bF}_p$-sheaves}
Fix a Noetherian, $F$-finite and semi-separated scheme $X$ over $\overline{\bF}_p$, with a unit dualizing complex. Thanks to the results in \autoref{subsection functors behave well uner base change}, we deduce from \autoref{eq cat Cartier crystals and perverse Fp sheaves} that there is an equivalence between the two following categories: 
	\begin{equation}\label{equation:eq cat after colimit}
	 	\left(\colim_r \Perv_c(X, \bF_{p^r})\right)^{op} \cong \colim_r \Crys_X^{C^r}, 
	\end{equation} where the transition functors are the base change functors defined in \autoref{def base change}. 

\begin{defn}
	We define \[ \Crys_X^{C^\infty} \coloneqq \colim_r \Crys_X^{C^r}. \]
	Concretely, its objects are \[ \bigsqcup_r \Crys_X^{C^r}, \] and a morphism between $\cM \in \Crys_X^{C^r}$ and $\cN \in \Crys_X^{C^s}$ is an element of \[ \colim_t \Hom_{\Crys_X^{C^{rst}}}(\cM_{st}, \cN_{rt}), \] (i.e. a morphism between some common base changes of $\cM$ and $\cN$, up to base change). Similarly, we define $\Crys_X^{F^\infty}$.
\end{defn}

Let us understand the left-hand side of \autoref{equation:eq cat after colimit}. 

\begin{defn}\label{def:perverse_F_p-bar_sheaves}
	Define the \emph{perverse t-structure} on $D^b_c(X_{\et}, \overline{\bF}_p)$ associated to $\mathsf{p}$ by the same formulas as in \autoref{def:perverse F_q-sheaves}. Its heart is denoted $\Perv_c(X_{\et}, \overline{\bF}_p)$.
\end{defn}

\begin{prop}\label{perverse Fp-bar sheaves are base change of finite perverse sheaves}
	The above defines a t-structure on $D^b_c(X_{\et}, \overline{\bF}_p)$, and we have \[ \Perv_c(X_{\et}, \overline{\bF}_p) \cong \colim_r \Perv_c(X_{\et}, \bF_{p^r}).\] In particular, there is an equivalence of categories \[ \Perv_c(X_{\et}, \overline{\bF}_p)^{op} \cong \Crys_X^{C^\infty}. \]
\end{prop}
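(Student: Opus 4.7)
The plan is to prove the statement in three steps: identifying $D^b_c(X_{\et}, \overline{\bF}_p)$ as a filtered colimit of the $D^b_c(X_{\et}, \bF_{p^r})$, showing the transition functors are perverse $t$-exact, and finally transporting the equivalence \autoref{equation:eq cat after colimit} through this identification.

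For the first step, one shows that the natural base-change functor
\[ \colim_r D^b_c(X_{\et}, \bF_{p^r}) \;\longrightarrow\; D^b_c(X_{\et}, \overline{\bF}_p) \]
is an equivalence of triangulated categories. Essential surjectivity reduces, via the triangulated structure and the fact that every constructible $\overline{\bF}_p$-sheaf has a finite filtration by sheaves that are locally constant with finite stalks on strata, to descending a single such locally constant sheaf; a finite $\overline{\bF}_p$-representation of a stratum's fundamental group is defined over some $\bF_{p^r}$ by Galois descent along $\overline{\bF}_p = \colim_r \bF_{p^r}$. Fully faithfulness follows because $\Hom$- and $\Ext$-groups between constructible $\bF_{p^r}$-sheaves are finite (in particular, by \autoref{Hom between finite perverse sheaves is finite} for $\Hom$, and by an analogous argument for higher $\Ext$), so they commute with the filtered colimit defining $\overline{\bF}_p$.

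For the second step, one verifies that the base change functor $(\cdot)_s \colon D^b_c(X_{\et}, \bF_{p^r}) \to D^b_c(X_{\et}, \bF_{p^{rs}})$ is perverse $t$-exact. Since $\bF_{p^r} \to \bF_{p^{rs}}$ is faithfully flat and finite, tensoring with $\bF_{p^{rs}}$ is exact and commutes with geometric stalks, giving $\cH^j(\cF_s)_{\overline{x}} \cong \cH^j(\cF_{\overline{x}}) \otimes_{\bF_{p^r}} \bF_{p^{rs}}$; faithful flatness then shows that the support condition in \autoref{def:perverse_F_p-bar_sheaves} is both preserved and reflected by $(\cdot)_s$. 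For the co-support condition one needs $i^!_{\overline{x}}$ to commute with $(\cdot)_s$, which follows from the adjunction $i_{\overline{x}, !} \dashv i^!_{\overline{x}}$ and the projection formula: for any $\cG$ over $\cO_{X,\overline{x}}$, the natural morphism $i^!_{\overline{x}}(\cG)_s \to i^!_{\overline{x}}(\cG_s)$ is an isomorphism because flat base change of the coefficient ring preserves this adjunction.

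Given these t-exactness properties, the perverse $t$-structures at each finite level assemble into a $t$-structure on the colimit: the truncation triangles of any $\cF \in D^b_c(X_{\et}, \overline{\bF}_p)$ are constructed at some finite level $\bF_{p^r}$ (using essential surjectivity from step one) and then base-changed, and the resulting truncation satisfies the conditions of \autoref{def:perverse_F_p-bar_sheaves} by the stalk computation. Hence the above data defines a $t$-structure on $D^b_c(X_{\et}, \overline{\bF}_p)$, and its heart is $\colim_r \Perv_c(X_{\et}, \bF_{p^r})$. Combining with \autoref{equation:eq cat after colimit}, which is obtained by passing \autoref{eq cat Cartier crystals and perverse Fp sheaves} to the colimit via \autoref{Sol preserves base change} and \autoref{duality behaves well under base change}, yields the desired equivalence $\Perv_c(X_{\et}, \overline{\bF}_p)^{op} \cong \Crys_X^{C^\infty}$. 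The main obstacle is the second step, specifically checking that $i^!_{\overline{x}}$ is compatible with base change of coefficients, where care is needed because $i^!$ is defined via a right adjoint and one must verify the adjunction is preserved under the finite flat coefficient extension.
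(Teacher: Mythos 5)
Your overall strategy (identify $D^b_c(X_{\et}, \overline{\bF}_p)$ as a colimit, check perverse $t$-exactness of base change, transport \autoref{equation:eq cat after colimit}) is the same as the paper's, and step two is fine, but step one contains a genuine error. You justify full faithfulness of $\colim_r D^b_c(X_{\et}, \bF_{p^r}) \to D^b_c(X_{\et}, \overline{\bF}_p)$ by claiming that $\Ext$-groups between constructible $\bF_{p^r}$-sheaves are finite ``by an argument analogous to \autoref{Hom between finite perverse sheaves is finite}.'' This is false: the paper itself points out in \autoref{ex:RHom_does_not_preserve_constructible_sheaves} that $\Ext^1(j_!\bF_{q,U}, \bF_{q,X})$ on $\bP^1_{\overline{\bF}_q}$ is infinite-dimensional, and \autoref{Hom between finite perverse sheaves is finite} is a special feature of \emph{perverse} sheaves, not of all constructible ones. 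Since finiteness fails, you cannot conclude that $\Ext$-groups commute with the filtered colimit $\overline{\bF}_p = \colim_r \bF_{p^r}$ by that route.

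The paper sidesteps this cleanly. It first establishes the equivalence at the \emph{abelian} level, $\colim_r \Sh_c(X_{\et}, \bF_{p^r}) \cong \Sh_c(X_{\et}, \overline{\bF}_p)$: full faithfulness is checked on the compact generators $j_!\bF_{p^r}$ of $\Sh_c(X_{\et}, \bF_{p^r})$ (using \cite[{\href{https://stacks.math.columbia.edu/tag/095N}{Tag 095N}}]{Stacks_Project} and flatness of $\bF_{p^r}\to\overline{\bF}_p$), which gives $\Hom_{\overline{\bF}_p}(\cF\otimes\overline{\bF}_p,\cG\otimes\overline{\bF}_p)\cong\Hom_{\bF_{p^r}}(\cF,\cG)\otimes_{\bF_{p^r}}\overline{\bF}_p$ without any finiteness assumption. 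It then invokes $D^b_c(X_{\et},\Lambda) \cong D^b(\Sh_c(X_{\et},\Lambda))$ (from \cite[Theorem 2.9.1]{Bockle_Pink_Cohomological_Theory_of_crystals_over_function_fields} together with \cite[{\href{https://stacks.math.columbia.edu/tag/09YV}{Tag 09YV}} and {\href{https://stacks.math.columbia.edu/tag/03SA}{Tag 03SA}}]{Stacks_Project}) and the exactness of base change to deduce the derived equivalence formally. You should either reproduce that abelian-level argument, or replace your finiteness claim with the correct compactness argument: the objects $j_!\bF_{p^r}$ are compact generators, so $\RHom$ out of them commutes with the filtered colimit of coefficients, and finiteness never enters.

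Your remaining concern --- that $i^!_{\overline{x}}$ commutes with the finite flat coefficient extension --- is genuine but routine (restriction is both left and right adjoint to $-\otimes_{\bF_{p^r}}\bF_{p^{rs}}$ since the extension is finite free, so the adjunction $i_{\overline{x},!}\dashv i^!_{\overline{x}}$ base-changes), and the paper treats it in one sentence. The real gap is the one above.
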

\begin{proof}
	By \cite[Theorem 2.9.1]{Bockle_Pink_Cohomological_Theory_of_crystals_over_function_fields} and \cite[{\href{https://stacks.math.columbia.edu/tag/09YV}{Tag 09YV}} and {\href{https://stacks.math.columbia.edu/tag/03SA}{Tag 03SA}}]{Stacks_Project}, we obtain that $D^b_c(X_{\et}, \overline{\bF}_p) \cong D^b(\Sh_c(X_{\et}, \overline{\bF}_p))$ (and similarly for each $\bF_{p^r}$). Let us show that the natural functor \[ \colim_r \Sh_c(X_{\et}, \bF_{p^r}) \to \Sh_c(X_{\et}, \overline{\bF}_p) \] is an equivalence. First, note that for any $r \geq 1$ and étale $\bF_{p^r}$-sheaves $\cF$, $\cG$ with $\cF$ constructible, then \[ \Hom_{\overline{\bF}_p}(\cF \otimes_{\bF_{p^r}} \overline{\bF}_p, \cG \otimes_{\bF_{p^r}} \overline{\bF}_p) \cong \Hom_{\bF_{p^r}}(\cF, \cG) \otimes_{\bF_{p^r}} \overline{\bF}_p. \] Indeed, by \cite[{\href{https://stacks.math.columbia.edu/tag/095N}{Tag 095N}}]{Stacks_Project} and flatness of $\bF_{p^r} \to \overline{\bF}_p$, it is enough to show the result when $\cF$ is of the form $j_!\bF_{p^r}$, where $j : V \to X$ is étale. This is immediate.
	
	With this observation, faithful flatness is immediate. Essential surjectivity follows from \cite[{\href{https://stacks.math.columbia.edu/tag/095N}{Tag 095N}}]{Stacks_Project}.
	
	In particular, since our base change functors are exact, we formally obtain an induced equivalence of categories \[ D^b_c(X_{\et}, \overline{\bF}_p) \cong \colim D^b_c(X_{\et}, \bF_{p^r}). \] Given that the functors appearing in the definitions of the middle perverse t-structure on étale sheaves preserve base change, the result is proven.
\end{proof}

\begin{cor}
	Let $\cF$, $\cG \in \Perv_c(X_{\et}, \overline{\bF}_p)$. Then the $\Hom$-set $\Hom(\cF, \cG)$ in the category $\Perv_c(X_{\et}, \overline{\bF}_p)$ is a finite-dimensional $\overline{\bF}_p$-vector space.
\end{cor}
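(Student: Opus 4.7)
The plan is to reduce from $\overline{\bF}_p$-coefficients to $\bF_{p^r}$-coefficients. By \autoref{perverse Fp-bar sheaves are base change of finite perverse sheaves}, there exist $r \geq 1$ and objects $\cF_0, \cG_0 \in \Perv_c(X_{\et}, \bF_{p^r})$ such that $\cF$ and $\cG$ are the images of $\cF_0$ and $\cG_0$ in the colimit category (by passing to a common level, I can take the same $r$ for both). Moreover, by construction of the colimit category $\colim_r \Perv_c(X_{\et}, \bF_{p^r})$, one has
\[ \Hom_{\Perv_c(\overline{\bF}_p)}(\cF, \cG) \cong \colim_t \Hom_{\Perv_c(\bF_{p^{rt}})}((\cF_0)_t, (\cG_0)_t). \]

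Next, I would establish the base change formula
\[ \Hom_{\Perv_c(\bF_{p^{rt}})}((\cF_0)_t, (\cG_0)_t) \cong \Hom_{\Perv_c(\bF_{p^r})}(\cF_0, \cG_0) \otimes_{\bF_{p^r}} \bF_{p^{rt}}. \]
Since $\Perv_c$ is the heart of a t-structure, its Hom groups coincide with those computed in $D^b_c$. By the adjunction between restriction and extension of scalars along $\bF_{p^r} \to \bF_{p^{rt}}$, the Hom on the left is canonically isomorphic to $\Hom_{D^b_c(\bF_{p^r})}(\cF_0, \cG_0 \otimes_{\bF_{p^r}} \bF_{p^{rt}})$. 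The key point is that $\bF_{p^{rt}}$ is free of finite rank over $\bF_{p^r}$, so tensoring with it is an exact endofunctor of $\bF_{p^r}$-sheaves which is a finite direct sum on underlying sheaves, and hence commutes with $\Hom_{D^b_c(\bF_{p^r})}(\cF_0, -)$. Passing to the colimit over $t$ then yields
\[ \Hom_{\Perv_c(\overline{\bF}_p)}(\cF, \cG) \cong \Hom_{\Perv_c(\bF_{p^r})}(\cF_0, \cG_0) \otimes_{\bF_{p^r}} \overline{\bF}_p. \]

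Finally, by \autoref{Hom between finite perverse sheaves is finite}, the vector space $\Hom_{\Perv_c(\bF_{p^r})}(\cF_0, \cG_0)$ is finite-dimensional over $\bF_{p^r}$, so the right-hand side is finite-dimensional over $\overline{\bF}_p$, concluding the proof. The main point requiring care is the base change formula, but since $\bF_{p^r} \to \bF_{p^{rt}}$ is a finite free extension and the perverse $t$-structure on $D^b_c$ is compatible with this base change (as used in the proof of \autoref{perverse Fp-bar sheaves are base change of finite perverse sheaves}), all required commutations are formal.
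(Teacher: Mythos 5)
Your proof is correct and takes essentially the same route as the paper: reduce to a fixed level $r$, establish the scalar-extension formula
\[
\Hom_{\overline{\bF}_p}(\cF,\cG)\;\cong\;\Hom_{\bF_{p^r}}(\cF_0,\cG_0)\otimes_{\bF_{p^r}}\overline{\bF}_p,
\]
and then invoke finiteness at level $r$. The only cosmetic difference is in how the intermediate base-change isomorphism is established: you argue via the (extension $\dashv$ restriction) adjunction together with the fact that $\bF_{p^{rt}}$ is finite free over $\bF_{p^r}$, while the paper's proof of \autoref{perverse Fp-bar sheaves are base change of finite perverse sheaves} reduces to generators of the form $j_!\bF_{p^r}$; both are routine verifications of the same compatibility, and both correctly handle the passage to the colimit.
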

\begin{proof}
	By \autoref{perverse Fp-bar sheaves are base change of finite perverse sheaves} and its proof, there exists $r \geq 0$ and $\cF'$, $\cG' \in \Perv_c(X_{\et}, \bF_{p^r})$ such that 
	\[ \begin{cases*}
		\cF \cong \cF' \otimes_{\bF_{p^r}} \overline{\bF}_p; \\
		\cF \cong \cF' \otimes_{\bF_{p^r}} \overline{\bF}_p; \\
		\Hom_{\overline{\bF}_p}(\cF, \cG) \cong \Hom_{\bF_{p^r}}(\cF', \cG') \otimes_{\bF_{p^r}} \overline{\bF}_p.
	\end{cases*} \]
	The result now follows from \autoref{Hom between finite perverse sheaves is finite}.
\end{proof}

\begin{notation}
	Given a Noetherian ring $\Gamma$, we let $\Loc^1(X_{\et}, \Gamma)$ denote the group of rank one étale $\Gamma$-local systems. 
	
	Moreover, given $n \geq 1$, $\Pic(X)[n]$ denotes the subgroup of $n$-torsion line bundles, and $\Pic(X)^{(p)}$ dentoes the subgroup of prime-to-$p$-torsion line bundles.
\end{notation}

\begin{lemma}\label{local systems and torsion line bundles}
	Assume that $X$ is proper over a separably closed field $k$, and let $r \geq 1$. The group morphism $\theta \colon \Loc^1(X_{\et}, \bF_{p^r}) \to \Pic(Y)$ given by \[ \cL \mapsto \left(\cL \otimes_{\bF_{p^r}} \cO_{X_{\et}}\right)\Big|_{X_{\Zar}} \] induces an isomorphism \[ \Loc^1(X_{\et}, \bF_{p^r}) \cong \Pic(Y)[p^r - 1]. \] Furthermore, for all $\cL \in \Loc^1(X_{\et}, \bF_{p^r})$, $\Sol(\theta(\cL)) = \cL$, where the Frobenius module structure on $\theta(\cL)$ is induced by that on $\cO_X$. In particular, \[ \Pic(X)^{(p)} \cong \Loc^1(X_{\et}, \overline{\bF}_p). \]
\end{lemma}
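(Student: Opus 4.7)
The map $\theta$ is a group homomorphism (tensoriality is immediate from the construction) whose image lies in $\Pic(X)[p^r - 1]$: since $\Loc^1(X_{\et}, \bF_{p^r}) \cong \Hom_{\operatorname{cts}}(\pi_1^{\et}(X), \bF_{p^r}^\times)$, every such $\cL$ satisfies $\cL^{\otimes(p^r-1)} \cong \bF_{p^r}$, giving $\theta(\cL)^{\otimes(p^r-1)} \cong \cO_X$. Injectivity of $\theta$ will follow by proving $\Sol \circ \theta = \id$: by construction the $F^r$-module structure on $\theta(\cL)$ is $\id_{\cL} \otimes \tau_{\cO_X}$, and the (generalized) Artin--Schreier exact sequence
\[ 0 \to \bF_{p^r} \to \cO_{X_{\et}} \xrightarrow{F^r - 1} \cO_{X_{\et}} \to 0 \]
on the étale site, tensored with the flat $\bF_{p^r}$-module $\cL$, yields $\Sol(\theta(\cL)) = \cL \otimes_{\bF_{p^r}} \bF_{p^r} = \cL$.

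For surjectivity, I would start with $M \in \Pic(X)[p^r - 1]$ and pick a trivialization $\phi : M^{\otimes(p^r-1)} \xrightarrow{\sim} \cO_X$. This produces a unit $F^r$-module structure on $M$ via the composition $F^{r, *}M \cong M^{\otimes p^r} = M \otimes M^{\otimes(p^r-1)} \xrightarrow{\id \otimes \phi} M$. Set $\cL := \Sol(M)$. To see $\cL$ is a rank-one $\bF_{p^r}$-local system, I work étale-locally on a trivialization $M|_U \cong \cO_U$: the adjoint structure morphism becomes multiplication by some unit $\lambda \in \cO_U^\times$, and $\cL|_U$ is identified with the kernel of $f \mapsto f^{p^r}\lambda - f$. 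The polynomial $T^{p^r}\lambda - T$ has derivative $-1$, so it is separable, its zero locus is finite étale of degree $p^r$ over $U$, and its sections form (étale-locally) a rank-one $\bF_{p^r}$-vector space. The quasi-inverse of $\Sol$ from \autoref{main thm Bhatt Lurie}, namely $\cL \mapsto \cL \otimes_{\bF_{p^r}} \cO_{X_{\et}}$ equipped with the $\id \otimes \tau_{\cO_X}$-structure, applied to our $\cL$ recovers $M$; restricting to the Zariski site gives $\theta(\cL) \cong M$.

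The $\overline{\bF}_p$-statement follows by passing to the colimit over $r$. Any continuous character $\pi_1^{\et}(X) \to \overline{\bF}_p^\times$ has finite image (since $\pi_1^{\et}(X)$ is profinite), so it factors through some $\bF_{p^r}^\times$; hence $\Loc^1(X_{\et}, \overline{\bF}_p) = \colim_r \Loc^1(X_{\et}, \bF_{p^r})$, while $\Pic(X)^{(p)} = \bigcup_r \Pic(X)[p^r - 1]$ by definition. The individual isomorphisms $\theta_r$ are compatible with base change along $\bF_{p^r} \inj \bF_{p^{rs}}$, so they assemble in the colimit. The main technical obstacle is the étale-local rank-one check for $\Sol(M)$ in the surjectivity step: it crucially uses unitality of the $F^r$-structure (equivalently, invertibility of the local parameter $\lambda$) to ensure the Artin--Schreier polynomial is separable, which in turn is what allows the local system to be constructible of constant rank rather than some more general étale sheaf.
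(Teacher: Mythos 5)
Your proof is correct, but it takes a genuinely different route from the one in the paper. The paper gets the isomorphism $\Loc^1(X_{\et}, \bF_{p^r}) \cong \Pic(X)[p^r - 1]$ all at once from the Kummer sequence for $\mu_{p^r-1} \cong \bF_{p^r}^\times$ on a proper scheme over a separably closed field (plus a remark that tracking the boundary map explicitly produces $\theta$), and only afterwards verifies $\Sol \circ \theta = \id$ by the local Artin--Schreier computation; for $\overline{\bF}_p$ it simply takes a colimit of $H^1$'s. You instead dispense with Kummer entirely: you observe the image of $\theta$ lands in $\Pic(X)[p^r-1]$ via the character description of rank-one local systems, get injectivity from the Artin--Schreier argument (which the paper also uses, essentially the same computation), and prove surjectivity constructively by equipping any $M \in \Pic(X)[p^r - 1]$ with a unit $F^r$-structure via a trivialization $\phi: M^{\otimes(p^r-1)} \xrightarrow{\sim} \cO_X$ and the identification $F^{r,*}M \cong M^{\otimes p^r}$, then checking étale-locally that $\Sol(M)$ is a rank-one $\bF_{p^r}$-local system via separability of the polynomial $T^{p^r}\mu - T$ for $\mu$ a unit. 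The two approaches have complementary costs: the Kummer-sequence argument is shorter but leaves the ``tracking down'' identification of the boundary map with $\theta$ somewhat implicit, whereas your argument makes the inverse explicit but invokes the specific form of the quasi-inverse of $\Sol$ (namely $\cL \mapsto \cL \otimes_{\bF_{p^r}} \cO_{X_{\et}}$) for lisse sheaves, which is not literally stated in the cited Theorem \ref{main thm Bhatt Lurie} and deserves a pointer to the relevant construction in B\"ockle--Pink or Bhatt--Lurie, or else the more self-contained observation that $\Sol(\theta(\cL)) = \cL = \Sol(M)$ together with $\Sol$ being an equivalence on crystals already forces $\theta(\cL) \cong M$ (as line bundles, since nil-isomorphic unit structures on line bundles are honestly isomorphic). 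Your colimit argument for $\overline{\bF}_p$ is also fine and slightly more down to earth than the paper's invocation of continuity of $H^1$.
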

\begin{proof}
	Since $X$ is proper over a separably closed field, we know by the Kummer sequence that \[ \Loc^1(X_{\et}, \bF_{p^r}) \cong H^1(X_{\et}, \bF_{p^r}) \cong \Pic(X)[p^r - 1]. \] Tracking down these isomorphisms explicitly, we see that this composition is exactly $\theta$. In addition, there is a natural morphism $\cL \to \theta(\cL)$, which by definition maps into $\Sol(\theta(\cL))$. To show that it is an isomorphism, we can work étale locally, and hence assume that $\cL$ is trivial. In this case, this boils down to the fact that \[ \Sol\left((\cO_X, (\cdot)^{p^r})\right) = \bF_{p^r}, \] which is immediate.
	
	The statement after ``In particular'' follows, since \[ \Loc^1(X_{\et}, \overline{\bF}_p) \cong H^1(X_{\et}, \GL_1(\overline{\bF}_p)) \expl{\cong}{see \cite[{\href{https://stacks.math.columbia.edu/tag/073E}{Tag 073E}}]{Stacks_Project}} \colim_rH^1(X_{\et}, \GL_1(\bF_{p^r})) \cong \Pic(X)^{(p)}. \]
\end{proof}

\subsection{The generic vanishing}
Let $A$ be an abelian variety of dimension $g$ over an algebraically closed field $k$ of characteristic $p > 0$, with dual abelian variety $\bighat{A} = \Pic^0(A)$. Since $k$ is in particular perfect, it has a canonical unit dualizing complex given by $\Ocal_{\Spec k}$, together with the inverse of the Frobenius. If $f \colon A \ra \Spec k$ denotes the structural map, then by \autoref{induced unit dualizing complex by a morphism}, $f^!\Ocal_{\Spec k}$ is also naturally a unit dualizing complex on $A$. By \cite[{\href{https://stacks.math.columbia.edu/tag/0BRT}{Tag 0BRT}}]{Stacks_Project}, $p^!\Ocal_{\Spec k} \cong \omega_{A/k}[g]$, so we obtain the structure of a unit Cartier module on $\omega_{A/k}$. By \cite[Lemmas 4.1 and 4.4]{Stabler_Test_Module_filtrations_for_unit_F_modules}, this Cartier structure agrees with the usual Cartier operator on top forms. \\

Recall that $\bighat{A}^{(p)}$ denotes the subset of prime-to-$p$ torsion points of $\bighat{A}$.

\begin{defn}
	Let $\cF \in \Perv_c(X_{\et}, \overline{\bF}_p)$. For $i \in \bZ$, We define \[ S^i(\cF) \coloneqq \bigset{\cL \in \Loc^1(X_{\et}, \overline{\bF}_p)}{H^i(X_{\et}, \cF \otimes \cL) \neq 0}. \] By \autoref{local systems and torsion line bundles}, we see $S^i(\cF)$ as a subset of $\bighat{A}^{(p)}$. In particular, we can consider its closure $\overline{S^i(\cF)} \inc \bighat{A}$.
\end{defn}

\begin{theorem}\label{main thm generic vanishing}
	Let $\cF \in \Perv_c(A_{\et}, \overline{\bF}_p)$. Then the following holds:
	\begin{enumerate}
		\item\label{itm:vanishing_for_some_degrees} $S^i(\Fcal) = \emptyset$ for all $i \notin \{-g, \dots, 0\}$;
		\item\label{itm:bound_on_codimension} for all $0 \leq i \leq g$, $\codim \overline{S^{-i}(\Fcal)} \geq i$. In particular, for $\cL \in \Loc^1(A, \overline{\bF}_p)$ general, \[ H^i(A_{\et}, \cF \otimes \cL) = 0 \: \: \: \forall i \neq 0 \] and hence \[ \chi(A_{\et}, \cF \otimes \cL) \coloneqq \sum_{i \in \bZ} (-1)^i\dim H^i(A_{\et}, \cF \otimes \cL) \geq 0. \]  
		\item\label{itm:vanishing_support_loci_are_p_stable} There exists $r \geq 0$ such that for all $i \in \bZ$, $[p^r]S^i(\Fcal) = S^i(\Fcal)$,
	\end{enumerate}
	where $[p^j]$ means multiplication by $p^j$ in the group law of $\bighat{A}$. If $A$ is ordinary, then in addition we have:
	\begin{enumerate}[start=4]
		\item\label{itm:inclusion_of_vanishing_support_loci} $S^{-g}(\Fcal) \inc \dots \inc S^0(\Fcal)$;
		\item\label{itm:vanishing_support_loci_are_linear} each $\overline{S^i(\Fcal)}$ is a finite union of torsion translates of abelian subvarieties;
		\item\label{itm:Euler_char_non_negative} $\chi(A_{\et}, \cF) \geq 0$.
	\end{enumerate}
\end{theorem}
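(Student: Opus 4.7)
The plan is to transfer known generic vanishing statements for Cartier crystals to perverse $\overline{\bF}_p$-sheaves via the duality of \autoref{eq cat Cartier crystals and perverse Fp sheaves}. First, using \autoref{perverse Fp-bar sheaves are base change of finite perverse sheaves}, I would write $\cF \cong \cF_0 \otimes_{\bF_{p^r}} \overline{\bF}_p$ for some $r \geq 1$ and some $\cF_0 \in \Perv_c(A_{\et}, \bF_{p^r})$. By the duality, $\cF_0 \cong \Sol(\Dd(\cM))$ for a unique Cartier crystal $\cM \in \Crys_A^{C^r}$.

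Next I would establish the key dictionary relating cohomological support loci of $\cF$ to those of $\cM$. For $\cL \in \Loc^1(A_{\et}, \overline{\bF}_p)$ with corresponding line bundle $L \in \widehat{A}^{(p)}$ (\autoref{local systems and torsion line bundles}), after enlarging $r$ so that $\cL = \Sol(L)$ for $L$ equipped with its canonical unit $F^r$-module structure, I claim
\[ H^i(A_{\et}, \cF \otimes \cL) \neq 0 \iff H^{-i}(A, \cM \otimes L^{-1}) \not\sim_C 0. \]
Combining the tensor-compatibility of $\Sol$ (\autoref{main thm Bhatt Lurie}) with the tensor formula for $\cR\HHom$ (\autoref{behaviour Hom and tensor product}) yields $\Dd(\cM) \otimes L \cong \Dd(\cM \otimes L^{-1})$, hence $\cF \otimes \cL \cong \Sol(\Dd(\cM \otimes L^{-1}))$. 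Applying the compatibility of duality with proper pushforwards (\autoref{compatibility between duality and pushforwards}) to $\pi \colon A \to \Spec k$, and using that $\Dd_{\Spec k}$ is just linear duality on finite-dimensional $F$-modules (in particular reversing cohomological degree), this becomes $\Sol((H^{-i}(A, \cM \otimes L^{-1}))^\vee)$. By \autoref{support F-crystals}, this is non-zero precisely when $H^{-i}(A, \cM \otimes L^{-1}) \not\sim_C 0$.

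With this dictionary in hand, $S^i(\cF)$ corresponds, under the involution $L \mapsto L^{-1}$ of $\widehat{A}^{(p)}$, to the $(-i)$-th crystalline cohomological support locus $T^{-i}(\cM) := \{L \in \widehat{A}^{(p)} : H^{-i}(A, \cM \otimes L) \not\sim_C 0\}$. The statements (1)--(6) are then the direct translations of the corresponding generic vanishing theorems for Cartier crystals proven in \cite{Hacon_Pat_GV_Geom_Theta_Divs} and \cite{Baudin_Kodaira_dimension_and_Euler_Characteristic_of_varieties_of_maximal_Albanese_dimension_in_positive_characteristic}. Specifically, (1) reduces to Grothendieck vanishing for a coherent representative of $\cM \otimes L^{-1}$ on the $g$-dimensional $A$; (2) is the codimension bound; (3) reflects Frobenius invariance encoded by the $C^r$-structure on $\cM$ (multiplying by $p^r$ on $\widehat{A}$ corresponds to pulling back $L$ along $F^r$, and the $C^r$-structure identifies the resulting Cartier crystals up to nil-isomorphism); and (4)--(6) exploit the additional control available in the ordinary case.

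The main obstacle is not a serious mathematical one, but rather careful bookkeeping: one must verify that base changes between $\bF_{p^r}$ and $\overline{\bF}_p$ commute with each of the functors involved (the compatibilities assembled in \autoref{subsection functors behave well uner base change}), and that indexing and sign conventions align across the successive dualities on $A$ and on $\Spec k$. Once the dictionary $S^i(\cF) \leftrightarrow T^{-i}(\cM)$ is in place, the theorem becomes a black-box application of the Cartier-crystalline generic vanishing of \emph{loc.\ cit.}
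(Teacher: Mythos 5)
Your overall strategy matches the paper's: pass from $\cF$ to a finite coefficient level $\bF_{p^r}$, identify it under $\Sol \circ \Dd$ with a Cartier crystal $\cM$ on $A$, and relate the cohomological support loci of $\cF$ to those of $\cM$ (with a dualization on $\Spec k$ flipping the cohomological degree). Your dictionary
\[
H^i(A_{\et}, \cF \otimes \cL) \neq 0 \iff H^{-i}(A, \cM \otimes L^{-1}) \not\sim_C 0
\]
is correct and is essentially (5.3.a) in the paper (the latter phrases it as $\lim_e H^{-i}(A, F^{er}_*\cM_0 \otimes \cE_0^{-1}) \neq 0$, which over the perfect field $k$ is the same as non-nilpotence of the induced Cartier structure).

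However, the final step is not a ``black-box application'' of the Cartier-crystalline generic vanishing, and the argument as written has a genuine gap in part (2). What \cite{Baudin_Kodaira_dimension_and_Euler_Characteristic_of_varieties_of_maximal_Albanese_dimension_in_positive_characteristic} (Theorem 1.2, as quoted in the paper) gives is \emph{not} that the crystalline support locus $T^{-i}(\cM)$ has closure of codimension $\geq i$; rather, it gives the containment $T^{-i}(\cM) \subseteq \bigcup_{s \geq 0} [p^{rs}]^{-1} W^i$ for some closed $W^i$ of codimension $\geq i$ satisfying $[p^r]W^i = W^i$. Since $[p^{rs}]^{-1}$ does not control codimension (this union could a priori be dense), one cannot simply read off $\codim \overline{T^{-i}(\cM)} \geq i$. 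The point you are missing is precisely where the prime-to-$p$ restriction enters: on $\widehat{A}^{(p)}$ the map $[p^r]$ is a bijection of finite order on each torsion subgroup, so for $x,y \in \widehat{A}^{(p)}$ one has $y = [p^{re}](x)$ for some $e$ iff $x = [p^{rf}](y)$ for some $f$. Intersecting with $\widehat{A}^{(p)}$ therefore collapses $\bigcup_s [p^{rs}]^{-1}W^i$ down to $\bigcup_s [p^{rs}](W^i \cap \widehat{A}^{(p)}) \subseteq W^i$, using $[p^r]W^i = W^i$. This is the actual proof of the codimension bound and it is not a formal translation; without it part (2) is unproved. (A similar care is needed in part (3), and part (5) additionally invokes the structure theorem of Pink--R\"ossler via Remark~\ref{rem_structure_of_p-stable_subsets}.)
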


\begin{proof}
	Let $r \geq 0$ be such that $\cF = \cF_0 \otimes_{\bF_{p^r}} \overline{\bF}_p$, for some $\cF_0 \in \Perv_c(A_{\et}, \bF_{p^r})$ (see \autoref{perverse Fp-bar sheaves are base change of finite perverse sheaves}). Let $\cL \in \Loc^1(A_{\et}, \overline{\bF}_p)$, and assume that $\cL = \cL_0 \otimes_{\bF_{p^r}} \overline{\bF}_p$ for some $\cL_0 \in \Loc(A_{\et}, \bF_{p^r})$ (see \autoref{local systems and torsion line bundles}). Note that we do not lose any generality, since we can replace $r$ by any of its multiples. 
	
	Let $\RH \colon \Sh_c(A_{\et}, \bF_{p^r}) \to \Crys_X^{F^r}$ denote the inverse of $\Sol$ (see \autoref{main thm Bhatt Lurie}), let $\cM_0 \coloneqq \bD(\RH(\cF_0))$ and let $\cE_0 \coloneqq \RH(\cL_0)$. By \autoref{compatibility between duality and pushforwards}, \autoref{main thm Bhatt Lurie} and \autoref{behaviour Hom and tensor product}, we have 
	\[ H^{-i}(A_{\et}, \cF_0 \otimes \cL_0) \neq 0 \iff \lim H^i(A, F^{er}_*(\cM_0 \otimes \cE_0^{-1})). \] 
	To simply notations, let $\kappa$ denote the Cartier module $\cM_0 \otimes \cE_0^{-1}$ (see \autoref{rem def Cartier module}.\autoref{tensor product of Cartier mod and unit F-mod}). Using the projection formula and the isomorphism $F^{r, *}\cE_0^{-1} \cong \cE_0^{-1}$, the projective system 
	\[ \begin{tikzcd}
		\dots \arrow[rr] && F^{(e + 1)r}_*(\cM_0 \otimes \cE_0^{-1}) \arrow[rr, "F^{er}_*\kappa"] && F^{er}_*(\cM_0 \otimes \cE_0^{-1}) \arrow[rr] && \dots
	\end{tikzcd} \] is isomorphic to the projective system \[ \begin{tikzcd}
		\dots \arrow[rr] && F^{(e + 1)r}_*\cM_0 \otimes \cE_0^{-1} \arrow[rr, "F^{er}_*\kappa_{\cM_0} \otimes id"] && F^{er}_*\cM_0 \otimes \cE_0^{-1} \arrow[rr] && \dots.
	\end{tikzcd} \]
	Thus,
	\begin{equation}\label{equation_generic_vanishing}
		H^{-i}(A_{\et}, \cF \otimes \cL) \neq 0 \iff \lim H^i(A, F^{er}_*\cM_0 \otimes \cE_0^{-1}) \neq 0.
	\end{equation} 
	By \autoref{Hom behaves well under base change} and \autoref{Sol preserves base change}, we deduce that \[ S^{-i}(\cF)^{-1} = \bigset{\cE \in \bighat{A}(k)}{\lim H^i(A, F^{er}_*\cM_0 \otimes \cE) \neq 0} \cap \bighat{A}^{(p)} \] where for $B \inc \bighat{A}$, we denote by $B^{-1}$ the image of $B$ under the inverse map $\bighat{A} \to \bighat{A}$.
	
	In particular, we automatically obtain \autoref{itm:vanishing_for_some_degrees}. By \cite[Theorem 3.3.4]{Baudin_Generic_vanishing_theory_in_positive_characteristic} and \autoref{equation_generic_vanishing}, there exist closed subsets $W^i \inc \bighat{A}$ of codimension $\geq i$ such that \[ [p^r]W^i = W^i \: \: \mbox{   and   } \: \: S^{-i}(\Fcal)^{-1} \inc \bigcup_{s \geq 0} [p^{rs}]^{-1}W^i. \]
	Since $S^{-i}(\Fcal)^{-1} \inc \bighat{A}^{(p)}$, we deduce that \[ S^{-i}(\Fcal)^{-1} \inc \bigcup_{s \geq 0}\left([p^{rs}]^{-1}W^i \cap \bighat{A}^{(p)}\right) = \bighat{A}^{(p)} \cap \bigcup_{s \geq 0}[p^{rs}]^{-1}\left(W^i \cap \bighat{A}^{(p)}\right).  \] 
	Note that for any $x$, $y \in \bighat{A}^{(p)}$, \[ \exists e \geq 0, \mbox{ s.t. } [p^{re}](x) = y \: \iff \: \exists f \geq 0, \mbox{ s.t. } [p^{rf}](y) = x. \] Thus, \[ S^{-i}(\Fcal)^{-1} \inc \bigcup_{s \geq 0}[p^{rs}]\left(W^i \cap \bighat{A}^{(p)}\right) \inc W^i \cap \bighat{A}^{(p)} \inc W^i.\]
	
	This proves \autoref{itm:bound_on_codimension}. Point \autoref{itm:vanishing_support_loci_are_p_stable} follows from \cite[Lemma 4.1]{Baudin_Generic_vanishing_theory_in_positive_characteristic}, and \autoref{itm:Euler_char_non_negative} follows from \cite[Proposition E]{Baudin_Euler_characteristic_of_weakly_ordinary_irregular_varieties}. Finally, \autoref{itm:vanishing_support_loci_are_linear} follows from \autoref{itm:vanishing_support_loci_are_p_stable} and \autoref{rem_structure_of_p-stable_subsets}.
\end{proof}

\begin{rem}\label{rem_structure_of_p-stable_subsets}
	The condition $[p^r]Z = Z$ for a closed subset $Z \inc \bighat{A}$ is a rather strong condition. Indeed, first note that there exists $s > r$ such that $[p^s]Z_i = Z_i$ for each irreducible component $Z_i$ or $Z$. 
	
	Then we can apply \cite[Theorem 3.1]{Pink_Roessler_Manin_Mumford_conjecture} on each $Z_i$. Let $B_i \coloneqq \Stab_A(Z_i)^{\red}$. Then there exists finitely many morphisms of abelian varieties $h_{\alpha} \colon A_{\alpha} \to \bighat{A}/B_i$, where each $A_{\alpha}$ is supersingular and defined over a finite field, and there exist irreducible closed subsets $X_{\alpha} \inc A_{\alpha}$ such that \[ h_i \coloneqq \sum_{\alpha}h_{\alpha} \colon  \prod_{\alpha}A_{\alpha} \to \bighat{A}/B_i \] has finite kernel, and for some $\overline{a}_i \in A/B$, \[ Z_i/B = \overline{a} + h\left(\prod_{\alpha}X_{\alpha}\right). \]
	In particular, if $A$ has no supersingular factor, then $Z$ is a finite union of torsion translates of abelian subvarieties.
\end{rem}

\bibliographystyle{alpha}
\bibliography{Bibliography}

\Addresses
 
\end{document}